\newtheorem{theorem}{Theorem}[section]
\newtheorem{lemma}[theorem]{Lemma}
\newtheorem{proposition}[theorem]{Proposition}
\newtheorem{corollary}[theorem]{Corollary}
 \theoremstyle{definition}
\newtheorem{definition}[theorem]{Definition}
\theoremstyle{remark}
\newtheorem{remark}[theorem]{Remark}
\numberwithin{equation}{section}
\begin{document}

\title[Fully non-linear elliptic equations]
{Regularity of fully non-linear elliptic equations on Hermitian manifolds. II}

 \author{Rirong Yuan}
\email{rirongyuan@stu.xmu.edu.cn}

\thanks{Research supported in part by the National Natural Science Foundation of China (Grant No. 11801587)
}
 
\date{}

\begin{abstract}

In this paper we investigate the regularity and solvability of solutions to Dirichlet problem for 
 fully non-linear elliptic equations with gradient terms on Hermitian manifolds, which include among others 
 the Monge-Amp\`ere equation for $(n-1)$-plurisubharmonic functions. 
Some significantly new features of regularity assumptions on the boundary and boundary 
data are obtained, which reveal how the shape of the boundary influences such regularity assumptions.
Such new features follow from quantitative boundary estimates which specifically enable us to 
apply a blow-up argument to derive the gradient estimate.  
  Interestingly, the subsolutions are constructed when the background space is moreover a product of
 a closed Hermitian manifold with a compact Riemann surface with boundary.



 \end{abstract}

 \maketitle

\tableofcontents

 \section{Introduction}



Let $(M,J,\omega)$ be a compact Hermitian manifold of complex
dimension $n\geq 2$ possibly with  boundary, $\partial M$, 
and $\omega= \sqrt{-1}  g_{i\bar j} dz^i\wedge d\bar z^j$ denote the K\"ahler form being compatible with complex structure $J$.

\vspace{1mm} 
This paper is primarily devoted to investigating second order fully nonlinear elliptic equations 
for deformation of real $(1,1)$-forms depending linearly on gradient terms,
\begin{equation}
\label{mainequ-ios}
\begin{aligned}
F(\mathfrak{g}[u]) := f(\lambda(\mathfrak{g}[u]))= \psi \mbox{ in  } M,
\end{aligned}
\end{equation} 
where $\lambda(\mathfrak{g}[u])$ 
are eigenvalues of $\mathfrak{g}[u]$  with respect to $\omega$.
In addition if $M$ has  
boundary, $\bar M:=M\cup \partial M$,  we study equation \eqref{mainequ-ios} with prescribing  boundary data
 \begin{equation}
\label{mainequ1}
\begin{aligned}
u=  \varphi,   \mbox{ on }\partial M.
\end{aligned}
\end{equation}
Here $\psi$ and $\varphi$ are sufficiently smooth functions. 

 \vspace{1mm} 
 The study of equations generated by symmetric functions of eigenvalues  
 goes back to the work of Caffarelli-Nirenberg-Spruck \cite{CNS3} concerning the Dirichlet problem in bounded domains of $\mathbb{R}^n$, 
 and to the work of Ivochkina \cite{Ivochkina1981} which considers some special cases.
  As in \cite{CNS3},  $f$ is a smooth symmetric function defined in an open symmetric and convex cone $\Gamma$ 
with vertex at origin, $\Gamma_n\subseteq\Gamma\subset \Gamma_1$ and  boundary 
$\partial \Gamma\neq \emptyset$, where 
 $\Gamma_k=\{\lambda\in\mathbb{R}^n: \sigma_j(\lambda)>0, \forall 1\leq j\leq k\}$, 
  and $\sigma_j$ is the $j$-th elementary symmetric function. 
 Moreover,  $f$ satisfies the following 
   fundamental conditions:
\begin{equation}
\label{elliptic}
 f_{i}:=f_{\lambda_i}(\lambda)=\frac{\partial f}{\partial \lambda_{i}}(\lambda)> 0  \mbox{ in } \Gamma,\  1\leq i\leq n,
\end{equation}
\begin{equation}
\label{concave}
 f \mbox{ is  concave in } \Gamma,
\end{equation}
\begin{equation}
 \label{nondegenerate}
\delta_{\psi,f}:= \inf_{M} \psi  -\sup_{\partial \Gamma} f >0,
 \end{equation}
where 
 $\sup_{\partial \Gamma}f :=\sup_{\lambda_{0}\in \partial \Gamma } \limsup_{\lambda\rightarrow \lambda_{0}}f(\lambda).$

   \vspace{1mm} 
      In order to study equation  \eqref{mainequ-ios} 
    within the framework of elliptic equations,  we shall look for  solutions  
in the class of $C^2$-\textit{admissible} functions $u$  
satisfying  $\lambda(\mathfrak{g}[u])\in \Gamma$.   
The constant $\delta_{\psi,f}$  measures whether  or not the equation is degenerate.
More explicitly, if $\delta_{\psi,f}>0$ (respectively, $\delta_{\psi,f}$ vanishes) then the equation
 is called non-degenerate (respectively, degenerate).
Moreover, $\sup_M\psi<\sup_{\Gamma} f$ is necessary for the solvability of equation \eqref{mainequ-ios} within the framework of elliptic equations, which is automatically satisfied when $\sup_{\Gamma}f=+\infty$ or there is  
a subsolution satisfying \eqref{existenceofsubsolution} or \eqref{existenceofsubsolution2}.

  \vspace{1mm}
 A notion of subsolution is used to study Dirichlet problem \eqref{mainequ-ios}-\eqref{mainequ1}.
 We call  $\underline{u}$ an \textit{admissible} subsolution if  
  it is an   \textit{admissible} function $\underline{u}\in C^2(\bar M)$ satisfying
 \begin{equation}
\label{existenceofsubsolution}
\begin{aligned}
f(\lambda(\mathfrak{g}[\underline{u}])) \geq  \psi      \mbox{ in } M,  \mbox{  and  }
\underline{u}=  \varphi  \mbox{ on } \partial  M.
\end{aligned}
\end{equation}
   Such a subsolution is a key ingredient in
 deriving \textit{a priori} estimates, especially for boundary estimate for Dirichlet problem (cf. \cite{Guan1993Boundary,Hoffman1992Boundary,Guan1998The}), moreover, it 
  relaxes geometric restrictions to boundary and so
  plays important roles in some  geometric problems (cf. \cite{Chen,GuanP2002The,Guan2009Zhang}).
Recently, influenced by the work of Guan \cite{Guan12a}, an extended notion of  a subsolution was proposed
by Sz\'ekelyhidi  \cite{Gabor}.
 Following  Sz\'ekelyhidi, a $C^2$ function $\underline{u}$ is a $\mathcal{C}$-subsolution of equation \eqref{mainequ-ios},
 if  for each $z\in \bar M$ the set $(\lambda(\mathfrak{g}[\underline{u}](z))+\Gamma_n)\cap \partial\Gamma^{\psi(z)}$ is   bounded, 
 where $\partial\Gamma^\sigma =\{\lambda\in \Gamma:$ $f(\lambda)=\sigma\}$ denotes the level hypersurface. Namely,
\begin{equation}
\label{existenceofsubsolution2}
\begin{aligned}
\lim_{t\rightarrow +\infty}f(\lambda(\mathfrak{g}[\underline{u}])+te_i)>\psi, \mbox{ in } \bar M \mbox{ for each } i=1,\cdots, n,
\end{aligned}
\end{equation}
where $e_i$ is the $i$-$\mathrm{th}$   standard basis vector. 
       The notion of a $\mathcal{C}$-subsolution  turns out to be applicable for the setting of closed manifolds (cf. \cite{Gabor,GTW15,CollinsJacobYau}).
        For the $J$-flow introduced by Donaldson \cite{Donaldson99J} and  Chen \cite{Chen2004J},
   it is exactly same as the condition $(1.6)$  of Song-Weinkove \cite{Song2008On}
   (see Fang-Lai-Ma \cite{Fang11LaiMa} for  extension to complex inverse $\sigma_k$ flow).

 \vspace{1mm}
The most important equation of this type  is perhaps the complex Monge-Amp\`{e}re  equation
  that is closely related to   Calabi's conjecture.
 In his celebrated work \cite{Yau78}, Yau proved Calabi's conjecture and obtained
 Calabi-Yau theorem on prescribed volume form  
which implies in particular that each closed K\"ahler manifold of vanishing first Chern class endows with a Ricci flat K\"ahler metric. 
Yau also showed  that the existence of K\"ahler-Einstein metric on closed K\"{a}hler manifolds of  $c_1(M)<0$,
which was also proved  by Aubin \cite{Aubin1976Equations} independently.
(Yau's work was partially extended by Tosatti-Weinkove \cite{Tosatti2010Weinkove} to closed Hermitian manifolds).
 Another fundamental work  concerning
  complex Monge-Amp\`{e}re equation was done by Bedford-Taylor \cite{BT76, BT82} on weak solutions and pluripontential theory, and by
   Caffarelli-Kohn-Nirenberg-Spruck \cite{Caffarelli1985The} who treated with
  Dirichlet problem for  complex Monge-Amp\`{e}re equation on strictly pseudoconvex domains in $\mathbb{C}^n$. 
  See \cite{Kolodziej1998Acta} for deep results and extension.
Recently, the Monge-Amp\`ere equation for $(n-1)$-plurisubharmonic (PSH) functions attracts a lot of attention and interests due to its relation to Gauduchon's conjecture from non-K\"ahler geometry.

\vspace{1mm}
 Our work is motivated by 
increasing interests and problems from non-K\"ahler geometry: 
\begin{itemize}
\item The deformation for $\tilde{\chi}+\partial\overline{\gamma}+\overline{\partial}\varsigma$,  corresponding to \eqref{mainequ} 
if $\eta^{1,0}$ is holomorphic, in Aeppli cohomology group $H_A^{1,1}(M)$. 
\item The deformation of $(n-1,n-1)$-forms via the map
 $\bigwedge^{n-1,n-1}(M)\overset{*}\longrightarrow \bigwedge^{1,1}(M)$,  
 induced by Hodge Star-operator $*$ with respect to $\omega$. 
It is closely related to the Calabi-Yau theorem for Gauduchon and balanced metrics (see for example \eqref{MA-n-1} and \eqref{FWW-equ} respectively). 
 
 \end{itemize}
 
The latter topic is studied in last section; while the rest of this paper is  
devoted to investigating the former one in which 
\begin{equation}
\label{mainequ}
\begin{aligned}
  f(\lambda(\mathfrak{g}[u]))= \psi,  \mbox{  }
  \mathfrak{g}[u] =\tilde{\chi}+\sqrt{-1}\partial \overline{\partial} u+\sqrt{-1} (\partial u\wedge  \overline{\eta^{1,0}}+ \eta^{1,0} \wedge  \overline{\partial} u), \mbox{ in } M,
\end{aligned}
\end{equation}
where  $\tilde{\chi}$ is a smooth real $(1,1)$-form and
   $\eta^{1,0} =\eta_{i}d z^i$  is a smooth $(1,0)$-form  on the background Hermitian manifold.
   Here, we write $\eta_{\bar i}=\bar\eta_i$.

 \vspace{1mm}  
    Before we state our main results, we first present some notation. 
    Let ${L}_{\partial M}$  denote the Levi form of $\partial M$, 
    and $\Delta$  the complex Laplacian operator with respect to $\omega$.
In addition,  we denote as in  \cite{Trudinger95}
\begin{equation}
\begin{aligned}
 \Gamma_{\infty}=\{(\lambda_1,\cdots,\lambda_{n-1})\in \mathbb{R}^{n-1}: 
 (\lambda_1,\cdots,\lambda_{n-1},R)\in \Gamma \mbox{ for some } R\}, \nonumber
\end{aligned}
\end{equation}
 the projection of $\Gamma$ onto $\mathbb{R}^{n-1}$. Also, we denote 
\begin{equation}\begin{aligned}\Gamma_{\mathbb{R}^1}^\infty=\{c\in\mathbb{R}: (t,\cdots, t, c)\in\Gamma, \mbox{ for some } t>0\}. \nonumber\end{aligned}\end{equation}
Moreover, $\overline{\Gamma}_{\infty}$ and $\overline{\Gamma}_{\mathbb{R}^1}^{\infty}$ are respective the closure of $\Gamma_{\infty}$ and $\Gamma_{\mathbb{R}^1}^{\infty}$.

 \vspace{1mm}
There is an outstanding problem in deriving gradient bound for 
equation \eqref{mainequ-ios},
which is  
widely open in the general case. 
 In this paper,  we apply  blow-up argument to establish gradient estimate for equations \eqref{mainequ} and \eqref{mainequ-gauduchon-general*} 
 with the assumption 
   \begin{equation}
\label{bdry-assumption1}
\begin{aligned}
\lambda_{\omega'}(- {L}_{\partial M})\in \overline{\Gamma}_{\infty}, \mbox{  } \omega'=\omega|_{T_{\partial M}\cap JT_{\partial M}},
\end{aligned}
\end{equation} 
 \begin{equation}
\label{bdry-assumption1-Gauduchon}
\begin{aligned}
\mathrm{tr}_{\omega'}(- {L}_{\partial M})\in \overline{\Gamma}_{\mathbb{R}^1}^{\infty},
\end{aligned}
\end{equation} 
correspondingly, which includes among others
  \textit{pseudoconcave} and \textit{mean pseudoconcave}\renewcommand{\thefootnote}{\fnsymbol{footnote}}\footnote{
We say $\partial M$ is \textit{pseudoconcave} (respectively, is \textit{mean pseudoconcave}) if   
 the Levi form 
  is negative semidefinite $({L}_{\partial M}\leq0)$ (repectively, has nonpositive trace
 $\mathrm{tr}_{\omega'}({L}_{\partial M})\leq 0$), which includes among others  \textit{holomorphically flat} in the sense that there exist holomorphic coordinates $(z_1, \cdots,z_n)$ such that $\partial M$ is locally given by $\mathfrak{Re} (z_n) = 0$.
See also \cite{Cartan-1933} for Cartan's theorem on characterization of real analytic Levi flat hypersurfaces.} boundary, respectively.
In order to apply Sz\'ekelyhidi's \cite{Gabor} Liouville type
 theorem extending a result of Dinew-Ko{\l}odziej \cite{Dinew2017Kolo},
 we further assume
  \begin{equation}
\label{addistruc}
\begin{aligned}
\mbox{For each $\sigma<\sup_{\Gamma}f$ and } \lambda\in \Gamma, \mbox{   } \lim_{t\rightarrow +\infty}f(t\lambda)>\sigma.
\end{aligned}
\end{equation}

  An interesting fact is that  \eqref{bdry-assumption1} and  \eqref{bdry-assumption1-Gauduchon} are automatic 
   if $\Gamma$ is of type 2 in the sense of \cite{CNS3}, since $\Gamma_\infty=\mathbb{R}^{n-1}$,  
 $\Gamma_{\mathbb{R}^1}^{\infty}=\mathbb{R}$.
 As a somewhat surprising consequence of main results presented below, without imposing 
 restrictions to Levi form of boundary, we can solve Dirichlet problem
  for  degenerate 
  equations 
  with $\Gamma$ being of type 2.
  
 \vspace{1mm} 
 Our strategy proposes a new and unified approach to the study of fully nonlinear elliptic equations possibly with degenerate right-hand side.  It would be applied to further geometric problems.
 Our main results can be stated as follows.


 \begin{theorem}
\label{thm1-diri-estimate}
Suppose the boundary is smooth and
 satisfies \eqref{bdry-assumption1}.
 Suppose, in addition to \eqref{elliptic}, \eqref{concave}, \eqref{nondegenerate}, \eqref{addistruc},  $\psi\in C^\infty(\bar M)$ and
  $\varphi\in C^\infty (\partial M)$, that 
    Dirichlet problem  \eqref{mainequ} and \eqref{mainequ1} admits an admissible subsolution
   $\underline{u}\in C^{2,1}(\bar M)$.
Then the Dirichlet problem   admits a unique smooth admissible solution $u$ with
 \begin{equation}
 \label{c2alpha-estimate1}
\sup_{ M} \Delta u\leq C,
 \end{equation}
where $C$ is a uniformly positive constant depending on   
$\partial M$ up to third order derivatives, $|\varphi|_{C^{3}(M)}$,  $\sup_M|\nabla\psi|$, $\inf_{z\in M}\inf_{\xi\in T^{1,0}_z M, |\xi|=1} \partial\overline{\partial}\psi(\xi,\bar\xi)$  and other known data  but not on
 $(\delta_{\psi,f})^{-1}$.
 In addition, if $\partial M$ is \textit{holomorphically flat}  and $\varphi\equiv \mathrm{constant}$ then $C$ depends on
 $\partial M$  up to second order derivatives and other known data.
  
\end{theorem}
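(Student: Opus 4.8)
The plan is to obtain the solution by the method of continuity, so that the substance of the proof is a complete set of a priori estimates; uniqueness and the lower bound $u\ge\underline{u}$ come directly from the comparison principle. Since $\mathfrak{g}[u]$ involves $u$ only through its derivatives, for admissible $u,v$ with $F(\mathfrak{g}[u])\ge F(\mathfrak{g}[v])$ in $M$ and $u\le v$ on $\partial M$ one integrates $F$ along the Hermitian segment from $\mathfrak{g}[v]$ to $\mathfrak{g}[u]$ -- which stays inside the cone $\{\lambda(\cdot)\in\Gamma\}$, convex because $\Gamma$ is symmetric and convex -- to get a linear elliptic inequality $a^{i\bar j}(u-v)_{i\bar j}+(\text{terms linear in }\nabla(u-v))\ge 0$ with $a^{i\bar j}>0$ by \eqref{elliptic} and no zeroth order term, whence $u\le v$ by the maximum principle. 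Taking $v=\underline{u}$ and using \eqref{existenceofsubsolution} gives $u\ge\underline{u}$; applying the principle to two solutions in both directions gives uniqueness. Since $M$ is compact, $\sup_M|u|\le\sup_{\partial M}|\varphi|+\mathrm{diam}(M)\sup_M|\nabla u|$, so the $C^0$ bound will follow once the gradient is under control.

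The next ingredient is the global second order estimate $\sup_M\Delta u\le C(1+\sup_M|\nabla u|^2)$, proved by the Hou--Ma--Wu / Sz\'ekelyhidi maximum principle argument applied to a test quantity of the form $\log\Delta u+\phi(|\nabla u|^2)+h(\underline{u}-u)$: the concavity \eqref{concave} together with the subsolution inequality \eqref{existenceofsubsolution} -- used in place of the nondegeneracy gap \eqref{nondegenerate} -- dominates the third order terms, so that $C$ does not depend on $(\delta_{\psi,f})^{-1}$; the contribution of the gradient perturbation $\eta^{1,0}$ in \eqref{mainequ} is linear of lower order and is absorbed in the same way.

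The heart of the argument is the gradient estimate, in which the boundary and interior parts are intertwined. On $\partial M$ one builds, out of $\underline{u}$, an extension of $\varphi$, and a defining function $\rho$ of $\partial M$, barriers of the shape $\pm(\underline{u}-u)+(\text{tangential corrections})+N(\rho-K\rho^2)$; hypothesis \eqref{bdry-assumption1}, $\lambda_{\omega'}(-L_{\partial M})\in\overline{\Gamma}_\infty$, is exactly what keeps $\mathfrak{g}$ of these barriers admissible in a one-sided neighbourhood of $\partial M$, and a careful accounting produces not merely $|\nabla u|\le C$ on $\partial M$ but a \emph{quantitative} bound on a boundary collar whose width is controlled by the data. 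The interior estimate is then by contradiction: if $\sup_M|\nabla u|=L\to\infty$ along a sequence of admissible solutions, one rescales around points where $|\nabla u|$ is comparable to $L$; the quantitative boundary bound forces the scaled distance to $\partial M$ to tend to infinity, so the blow-up is interior, and the second order estimate above (which after rescaling reads $\Delta v\le C(1+L^{-2})$) provides uniform local $C^{1,1}$ bounds, hence a nonconstant entire limit $v$ on $\mathbb{C}^n$ admissible for the scale-invariant limiting equation. By \eqref{addistruc} this is precisely the setting of Sz\'ekelyhidi's Liouville theorem \cite{Gabor} (extending Dinew--Ko{\l}odziej \cite{Dinew2017Kolo}), forcing $v$ to be constant -- a contradiction. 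Thus $\sup_M|\nabla u|\le C$, hence $\sup_M|u|\le C$, and the second order estimate gives \eqref{c2alpha-estimate1}; the remaining double normal second derivatives on $\partial M$ are controlled by one more barrier, again with constant free of $(\delta_{\psi,f})^{-1}$, and when $\partial M$ is holomorphically flat and $\varphi$ is constant the tangential and mixed boundary second derivatives are immediate and the normal barrier uses only second derivatives of $\partial M$, which accounts for the sharpened dependence. With $\Delta u$ bounded, \eqref{mainequ} becomes uniformly elliptic and concave, so Evans--Krylov (interior and boundary) yields a $C^{2,\alpha}(\bar M)$ bound, Schauder theory and bootstrapping yield $u\in C^\infty(\bar M)$, and the method of continuity along $\psi_t=(1-t)f(\lambda(\mathfrak{g}[\underline{u}]))+t\psi$ -- for which $\underline{u}$ remains an admissible subsolution, $\delta_{\psi_t,f}\ge\delta_{\psi,f}>0$, and $u=\underline{u}$ solves the $t=0$ problem -- completes the existence proof, with openness from invertibility of the linearized operator (linear elliptic with zero Dirichlet data) and closedness from the a priori estimates.

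The hard part is the coupling inside the gradient estimate: under the weak geometric hypothesis \eqref{bdry-assumption1}, which allows for instance a pseudoconcave boundary, one must extract from the subsolution a boundary gradient bound that is not just finite but \emph{quantitative} on a collar of controlled width, since this is exactly what prevents the blow-up sequence from concentrating on $\partial M$; simultaneously, every constant must be kept independent of $(\delta_{\psi,f})^{-1}$, both so that \eqref{c2alpha-estimate1} has the asserted form and so that the degenerate case can afterwards be reached by approximation.
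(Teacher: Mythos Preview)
Your overall architecture (continuity method, comparison for uniqueness and $u\ge\underline u$, interior second order estimate via Hou--Ma--Wu/Sz\'ekelyhidi, blow-up plus Liouville for the gradient, then Evans--Krylov and bootstrapping) matches the paper. But the boundary part, which is the whole point of the theorem, is misdescribed in two intertwined ways that amount to a genuine gap.

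First, you misplace hypothesis \eqref{bdry-assumption1}. You say it ``is exactly what keeps $\mathfrak{g}$ of these barriers admissible'' for the mixed/tangential-normal estimate. In the paper, the barrier argument for the mixed derivatives (Proposition~\ref{mix-general}) holds on \emph{any} $C^3$ boundary and does not use \eqref{bdry-assumption1} at all. Condition \eqref{bdry-assumption1} enters only in the double normal estimate $\mathfrak{g}_{n\bar n}$.

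Second, and this is the real issue, you claim the double normal second derivative on $\partial M$ is ``controlled by one more barrier.'' Under a hypothesis as weak as \eqref{bdry-assumption1}, which in particular allows pseudoconcave boundary, the classical CNS-type barrier for the double normal does not work: there is no strictly pseudoconvex defining function to absorb the bad sign. The paper's mechanism (Proposition~\ref{proposition-quar-yuan1}) is algebraic rather than barrier-based: using the boundary relation \eqref{bdr-ind} one writes $(\mathfrak{g}_{i\bar j})$ at $x_0$ as the subsolution block $(\underline{\mathfrak g}_{\alpha\bar\beta})$ plus $(u-\underline u)_\nu$ times a matrix whose upper-left block is $-L_{\partial M}+\epsilon' I$, and \eqref{bdry-assumption1} is precisely what puts that matrix in the cone so that Lemma~\ref{asymptoticcone1} yields $F(A(R))\ge F(\underline A(\epsilon',R))$. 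The quantitative eigenvalue perturbation Lemma~\ref{yuan's-quantitative-lemma} then says that once $R=\mathfrak{g}_{n\bar n}$ exceeds a threshold $R_c$ that is \emph{quadratic in the mixed entries $\mathfrak{g}_{\alpha\bar n}$}, the eigenvalues of $\underline A(\epsilon',R_c)$ are pinned near $(\underline{\mathfrak g}_{\alpha\bar\alpha},R_c)$, forcing $F(A(R_c))>\psi$ and hence $\mathfrak{g}_{n\bar n}\le R_c$. This is the step your sketch does not contain, and without it the argument does not close under \eqref{bdry-assumption1}.

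A related correction: what has to be ``quantitative'' for the blow-up is not the boundary gradient (that is a plain constant, from $\underline u\le u\le w$ with $w$ a linear supersolution), but the boundary \emph{second order} bound in the form $\sup_{\partial M}\Delta u\le C(1+\sup_M|\nabla u|^2)$. Combined with the interior estimate $\sup_M\Delta u\le C(1+\sup_M|\nabla u|^2+\sup_{\partial M}\Delta u)$, this yields $\sup_M\Delta u\le C(1+\sup_M|\nabla u|^2)$, which is the scale-invariant input for the Liouville step. Finally, for the sharpened dependence when $\partial M$ is holomorphically flat and $\varphi\equiv\mathrm{const}$, the gain comes from the mixed-derivative barrier (one can take the genuinely tangential operator $D$ of \eqref{tangential-oper-Leviflat1} and drop the $(u_{y_n})^2$ correction), not from the double normal step, which already uses only second derivatives of $\partial M$.
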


\begin{remark}
Throughout this paper we say $C$ does not depend  on $(\delta_{\psi, f})^{-1}$ 
if it remains uniformly bounded   as $\delta_{\psi, f}$ tends to zero, while we say  
 $\kappa$ depends not on $\delta_{\psi, f} $ if $\kappa$ has a uniformly positive lower bound as $\delta_{\psi, f}\rightarrow 0$.  Moreover, in  the theorems, we assume  
$\varphi$ is extended to $\bar M$ with the same regularity, still denoted $\varphi$.
\end{remark}

 \begin{theorem}
 \label{thm3-diri-estimate-de}
Suppose $\partial M$ is  smooth and satisfies \eqref{bdry-assumption1}. 
Let $\psi\in C^{1,1}(\bar M)$, $\varphi\in C^{2,1}(\partial M)$ and 
$f\in C^\infty(\Gamma)\cap C(\bar \Gamma)$, $\bar\Gamma=\Gamma\cup\partial\Gamma$.
 Suppose that there is  a strictly  admissible subsolution  satisfying for some $\delta_0>0$ there holds
 \begin{equation}
\label{existenceofsubsolution-de}
\begin{aligned}
f(\lambda(\mathfrak{g}[\underline{u}])) \geq  \psi +\delta_0  \mbox{ in } \bar M, \mbox{   }
\underline{u}= \varphi   \mbox{ on } \partial  M. 
\end{aligned}
\end{equation}
Suppose in addition that  $\underline{u}\in C^{2,1}(\bar M)$,   \eqref{elliptic}, \eqref{concave} and \eqref{addistruc} hold. Then
  Dirichlet problem \eqref{mainequ} and \eqref{mainequ1} for degenerate equations  with
  $\delta_{\psi,f}=0$ supposes a  (weak)   solution $u\in C^{1,\alpha}(\bar M)$, $\forall 0<\alpha<1$,
 with $\lambda(\mathfrak{g}[u])\in \bar \Gamma$ and $\Delta u \in L^{\infty}(\bar M)$.
 \end{theorem}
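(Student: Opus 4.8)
\emph{Overview.} The plan is to realize $u$ as a limit of smooth solutions of non-degenerate approximating problems, the crucial point being that the \emph{a priori} estimates behind Theorem~\ref{thm1-diri-estimate} stay uniform as the degeneracy constant tends to $0$.

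\emph{Approximating problems.} For small $\varepsilon>0$ I would choose smooth data $\psi_\varepsilon\in C^\infty(\bar M)$, $\varphi_\varepsilon\in C^\infty(\partial M)$ and a smooth strictly admissible $\underline u_\varepsilon\in C^\infty(\bar M)$ with $\psi_\varepsilon\to\psi$ in $C^{1,\alpha}(\bar M)$, $\varphi_\varepsilon\to\varphi$ in $C^{2,\alpha}(\partial M)$, $\underline u_\varepsilon\to\underline u$ in $C^2(\bar M)$, $\|\psi_\varepsilon\|_{C^{1,1}(\bar M)}\le C$, and $\underline u_\varepsilon=\varphi_\varepsilon$ on $\partial M$. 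Since $\bar M$ is compact the image of $\lambda(\mathfrak{g}[\underline u])$ is a compact subset of $\Gamma$, so by the $\delta_0$-slack in \eqref{existenceofsubsolution-de} and $f\in C(\bar\Gamma)$ one can arrange, for the regularization fine enough relative to $\varepsilon$, that $f(\lambda(\mathfrak{g}[\underline u_\varepsilon]))\ge\psi_\varepsilon+\varepsilon$ on $\bar M$ and $\inf_M\psi_\varepsilon+\varepsilon-\sup_{\partial\Gamma}f>0$. Then, for the problem with right-hand side $\psi_\varepsilon+\varepsilon$ and boundary value $\varphi_\varepsilon$, condition \eqref{nondegenerate} holds, $\underline u_\varepsilon$ is an admissible subsolution in the sense of \eqref{existenceofsubsolution}, and \eqref{elliptic}, \eqref{concave}, \eqref{addistruc}, \eqref{bdry-assumption1} are untouched; hence Theorem~\ref{thm1-diri-estimate} furnishes a smooth admissible solution $u_\varepsilon$ of \eqref{mainequ}, \eqref{mainequ1} for that problem.

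\emph{Uniform estimates.} Next I would show $\|u_\varepsilon\|_{C^{1,\alpha}(\bar M)}\le C_\alpha$ uniformly in $\varepsilon$. The comparison principle gives $u_\varepsilon\ge\underline u_\varepsilon$, and a standard $C^0$ argument (using $\Gamma\subset\Gamma_1$) bounds $\sup_M|u_\varepsilon|$. The gradient estimate obtained through the paper's blow-up argument has, like the bound \eqref{c2alpha-estimate1} in Theorem~\ref{thm1-diri-estimate}, a constant independent of $(\delta_{\psi,f})^{-1}$ depending only on quantities bounded uniformly here — in particular $\sup_M|\nabla\psi_\varepsilon|$ and the lower bound of $\partial\overline{\partial}\psi_\varepsilon$ stay bounded because $\psi\in C^{1,1}$ — so $\sup_M|\nabla u_\varepsilon|\le C$. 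Then \eqref{c2alpha-estimate1} gives $\Delta u_\varepsilon\le C$, while admissibility, $\mathrm{tr}_\omega\mathfrak{g}[u_\varepsilon]>0$, together with the gradient bound gives $\Delta u_\varepsilon\ge -C$; thus $\|\Delta u_\varepsilon\|_{L^\infty(\bar M)}\le C$. As $\Delta$ is a fixed uniformly elliptic second order operator on $\bar M$, interior and boundary $L^p$ estimates now give $\|u_\varepsilon\|_{W^{2,p}(M)}\le C_p$ for every $p<\infty$, whence $\|u_\varepsilon\|_{C^{1,\alpha}(\bar M)}\le C_\alpha$ for every $\alpha<1$.

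\emph{Passage to the limit and main difficulty.} Along a subsequence, $u_\varepsilon\to u$ in $C^{1,\alpha}(\bar M)$ and weakly in $W^{2,p}(M)$, with $u=\varphi$ on $\partial M$ and $\Delta u\in L^\infty(\bar M)$. Since $\Gamma$ is an open convex symmetric cone, $\{A:\lambda(A)\in\bar\Gamma\}$ is a closed convex set of Hermitian forms, hence weakly closed in $L^p$, so $\lambda(\mathfrak{g}[u])\in\bar\Gamma$ a.e.; and the concavity of $A\mapsto f(\lambda(A))$ together with $\mathfrak{g}[u_\varepsilon]\rightharpoonup\mathfrak{g}[u]$ and $\psi_\varepsilon+\varepsilon\to\psi$ lets one pass $f(\lambda(\mathfrak{g}[u_\varepsilon]))=\psi_\varepsilon+\varepsilon$ to the limit, yielding the asserted weak solution. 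The main obstacle is precisely the \emph{uniformity} in $\varepsilon$ of the a priori estimates — above all the gradient estimate — which is exactly what the quantitative boundary estimates and the blow-up argument are built to provide, and which is the reason Theorem~\ref{thm1-diri-estimate} is stated with a constant independent of $(\delta_{\psi,f})^{-1}$; once that is in hand the remaining steps form a routine approximation/compactness scheme, the one genuinely delicate point being the nonlinear passage to the limit in the equation.
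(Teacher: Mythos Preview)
Your approach is correct and is essentially the one the paper takes: the paper does not spell out a standalone proof of this theorem but treats it as the routine approximation consequence of Theorem~\ref{thm1-diri-estimate}, the whole point of stating that the constant in \eqref{c2alpha-estimate1} is independent of $(\delta_{\psi,f})^{-1}$; the analogous degenerate results (e.g.\ the degenerate part of Theorem~\ref{mainthm-10-degenerate} and Theorem~\ref{thm4-diri-de-c2alpha}) are proved exactly by this scheme. One small remark: your final step, passing the \emph{equation} to the limit via concavity and weak $L^p$ convergence of $\mathfrak{g}[u_\varepsilon]$, is more than the statement requires and is also not quite justified as written (concavity alone gives only an inequality under weak convergence); the paper sidesteps this by \emph{defining} ``weak solution'' through the approximation itself (Definition~\ref{def-c0-weak} in Section~\ref{uniqueness-weak-solution}), so the very sequence $u_\varepsilon$ you built already certifies $u$ as a weak $C^0$-solution, and the remaining conclusions $\lambda(\mathfrak g[u])\in\bar\Gamma$ a.e.\ and $\Delta u\in L^\infty$ follow from your $W^{2,p}$ bounds exactly as you argue.
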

 
Without specific clarification,  throughout this paper, $(X, J_X, \omega_X)$ is a closed 
Hermitian manifold of complex dimension $(n-1)$ and $(S, J_S, \omega_S)$ is a compact Riemann surface with boundary.
 Also, we denote 
   \begin{equation}
   \label{cone-set}
  \begin{aligned}
   \mathfrak{F}(\varphi)=\{w\in C^{2}(\bar M): w|_{\partial M}=\varphi, \mbox{  } \lambda (\mathfrak{g}[w]+ c\pi_2^*\omega_S)\in \Gamma  \mbox{ in } \bar M  \mbox{ for some } c>0\}  \nonumber
  \end{aligned}
\end{equation}
for $\varphi\in C^2(\partial M)$, where and hereafter 
$$
\pi_1: X\times S\rightarrow X \mbox{ and }
\pi_2: X\times S\rightarrow S
$$
denote the nature projections, and $\nu$ denotes the unit inner normal vector along boundary.

\vspace{1mm}
Let $(M,J,\omega)=(X\times S,J,\omega)$ be a  product of  
$(X, J_X, \omega_X)$ with 
$(S, J_S, \omega_S)$,  
which equips with the induced complex structure $J$ and with Hermitian metric $\omega$ being compatible with $J$ 
($\omega$ is not necessary to be $\omega=\pi_1^*\omega_X+\pi_2^*\omega_S$). 
On such products,  we are able to construct  (strictly) \textit{admissible} subsolutions with 
  $\frac{\partial}{\partial \nu} \underline{u}|_{\partial M}< 0$ for Dirichlet problem \eqref{mainequ} and \eqref{mainequ1}, provided
  $\eta^{1,0}=\pi_2^*\eta^{1,0}_S$ for  a smooth $(1,0)$-form $\eta^{1,0}_S$ on 
 $S$, and there holds 
  \begin{equation}
  \label{cone-condition1}
  \begin{aligned}
 \lim_{t\rightarrow +\infty} f(\lambda(\mathfrak{g}[w]+ t\pi_2^*\omega_S))>\psi  \mbox{ in } \bar M \mbox{ for some } w\in \mathfrak{F}(\varphi).
 \end{aligned}
\end{equation}
Condition \eqref{cone-condition1}  is always satisfied if    $\mathfrak{F}(\varphi)\neq \emptyset$
  (which is necessary for the solvability of Dirichlet problem  
  within the framework of elliptic equations) and
  \begin{equation}
  \label{unbound} 
  \begin{aligned}
\lim_{t\rightarrow+\infty}f(\lambda_1,\cdots,\lambda_n+t)=\sup_\Gamma f, \mbox{  } \forall \lambda=(\lambda_1,\cdots,\lambda_n)\in \Gamma.
\end{aligned}
\end{equation}
  Also, please refer to Section \ref{constructionofsubsolutions} for idea and details.
As a result, we obtain

   \begin{theorem}
  \label{mainthm-10-degenerate}  
 Let  $(M, J,\omega)=(X\times S,J, \omega)$
 be as mentioned above.  
 Suppose, in addition to \eqref{elliptic}, \eqref{concave} and \eqref{addistruc}, that   $\eta^{1,0}=\pi_2^*\eta^{1,0}_S$ and condition
  \eqref{cone-condition1} holds for some  $w\in C^{2,1}(\bar M)\cap \mathfrak{F}(\varphi)$.
   Then the following statements are true. 
   \begin{itemize}
   \item Suppose in addition that the given data $\partial S\in C^3$, $\varphi\in C^{3}(\partial M)$,  
     $\psi \in C^{2}(\bar M)$ and $\inf_M\psi>\sup_{\partial \Gamma}f$. Then 
  Dirichlet problem \eqref{mainequ} and \eqref{mainequ1}  admits a 
   unique $C^{2,\alpha}$-smooth admissible solution   for some $0<\alpha<1$. 
    In particular, the solution is smooth,
   if the given data $ \varphi$, $\psi$, $\partial S$ are all smooth.
\item 
If $\partial S\in C^{2,1}$, 
$\varphi\in C^{2,1}(\partial M)$, $\psi\in C^{1,1}(\bar M)$, $f\in C^\infty(\Gamma)\cap C(\bar\Gamma)$ and  
 $\inf_{M} \psi=sup_{\partial \Gamma}f$,
    then the Dirichlet problem for degenerate equation
    has a weak solution 
   $u\in C^{1,\alpha}(\bar M)$, $\forall 0<\alpha<1$,
 with  $\lambda(\mathfrak{g}[u])\in \bar \Gamma$ and $\Delta u \in L^{\infty}(\bar M)$. 
 \end{itemize}

\end{theorem}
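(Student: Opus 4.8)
The plan is to deduce both statements of Theorem~\ref{mainthm-10-degenerate} from the solvability results already established --- Theorem~\ref{thm1-diri-estimate} in the non-degenerate regime and Theorem~\ref{thm3-diri-estimate-de} in the degenerate one --- by supplying, for the product $M=X\times S$, the only two ingredients of those theorems that are not already among the hypotheses of Theorem~\ref{mainthm-10-degenerate}: the boundary condition \eqref{bdry-assumption1}, and a (strictly) admissible subsolution $\underline{u}\in C^{2,1}(\bar M)$ attached to the prescribed data. Conditions \eqref{elliptic}, \eqref{concave}, \eqref{addistruc} and the regularity hypotheses on $f,\psi,\varphi,\partial S$ are assumed outright, so once these two ingredients are in place both statements follow by quoting the two theorems.

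First I would check that the boundary of a product is holomorphically flat, so that \eqref{bdry-assumption1} holds automatically. Since $S$ is a Riemann surface, $\partial S$ has no complex tangent directions, hence at each point of $\partial M=X\times\partial S$ the maximal complex subspace of the tangent space is exactly $T^{1,0}X$. Choosing a defining function of $\partial M$ of the form $\pi_2^*\rho$ with $\rho$ a defining function of $\partial S$ in $S$, one has $\sqrt{-1}\partial\overline{\partial}(\pi_2^*\rho)=\pi_2^*\bigl(\sqrt{-1}\partial\overline{\partial}\rho\bigr)$, a form pulled back from $S$, which annihilates $T^{1,0}X$. Thus ${L}_{\partial M}\equiv 0$, that is, $\partial M$ is holomorphically flat in the sense of the footnote, and $\lambda_{\omega'}(-{L}_{\partial M})=0\in\overline{\Gamma}_{\infty}$ since the closure of the open convex cone $\Gamma_{\infty}$ contains the origin. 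Hence \eqref{bdry-assumption1} holds with no restriction whatsoever on the shape of $\partial M$; this Levi-flatness is the structural reason the product case is tractable.

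Next I would produce the subsolution. By hypothesis $\eta^{1,0}=\pi_2^*\eta^{1,0}_S$ and \eqref{cone-condition1} holds for some $w\in C^{2,1}(\bar M)\cap\mathfrak{F}(\varphi)$. The construction of Section~\ref{constructionofsubsolutions} --- obtained by adding to $w$ a suitably large multiple of a function pulled back from $S$ that vanishes on $\partial S$ and has negative inner normal derivative there, the correction terms staying controlled because $\eta^{1,0}=\pi_2^*\eta^{1,0}_S$ is itself pulled back from $S$ --- then returns a strictly admissible subsolution $\underline{u}\in C^{2,1}(\bar M)$ with $\underline{u}=\varphi$ on $\partial M$ and $\frac{\partial}{\partial\nu}\underline{u}|_{\partial M}<0$; by compactness of $\bar M$ and the strict inequality in \eqref{cone-condition1} one may arrange $f(\lambda(\mathfrak{g}[\underline{u}]))\geq\psi+\delta_0$ on $\bar M$ for some $\delta_0>0$, so that \eqref{existenceofsubsolution-de} holds and, a fortiori, $\underline{u}$ is an admissible subsolution in the sense of \eqref{existenceofsubsolution}.

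Finally I would assemble the two cases. For the first, $\inf_M\psi>\sup_{\partial\Gamma}f$ is precisely \eqref{nondegenerate}, so with \eqref{elliptic}, \eqref{concave}, \eqref{addistruc}, the admissible subsolution just built, and \eqref{bdry-assumption1} verified above, Theorem~\ref{thm1-diri-estimate} applies: it yields directly the unique smooth admissible solution when $\varphi,\psi,\partial S$ are smooth, and for $\psi\in C^{2}(\bar M)$, $\varphi\in C^{3}(\partial M)$, $\partial S\in C^{3}$ the a priori estimates there --- in particular \eqref{c2alpha-estimate1} --- have constants depending only on quantities finite for such data, so combined with the Evans--Krylov--Trudinger and Schauder theory for the concave operator $F$ they give a uniform $C^{2,\alpha}(\bar M)$ bound, and a continuity/degree argument then produces a $C^{2,\alpha}$ admissible solution, smooth when the data are; uniqueness is the comparison principle for $F$. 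For the second case $\inf_M\psi=\sup_{\partial\Gamma}f$ means $\delta_{\psi,f}=0$, the strictly admissible subsolution satisfies \eqref{existenceofsubsolution-de}, and with \eqref{elliptic}, \eqref{concave}, \eqref{addistruc}, \eqref{bdry-assumption1}, $\psi\in C^{1,1}(\bar M)$, $\varphi\in C^{2,1}(\partial M)$ and $f\in C^\infty(\Gamma)\cap C(\bar\Gamma)$, Theorem~\ref{thm3-diri-estimate-de} applies and gives the weak solution $u\in C^{1,\alpha}(\bar M)$ for every $0<\alpha<1$, with $\lambda(\mathfrak{g}[u])\in\bar\Gamma$ and $\Delta u\in L^\infty(\bar M)$. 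The hard part is not this reduction but what underlies the two quoted theorems --- the blow-up gradient estimate and, decisively for the degenerate statement, the independence of the a priori constants from $(\delta_{\psi,f})^{-1}$, which is exactly what permits replacing $\psi$ by $\psi+\varepsilon$ and letting $\varepsilon\to 0$; within the present argument the one point that needs genuine care is this uniformity as the right-hand side degenerates, together with checking that the construction of Section~\ref{constructionofsubsolutions} really returns $\underline{u}$ with the claimed $C^{2,1}$ regularity, and that the boundary estimates of the quoted theorems use $\partial M$ only up to the order available here (where the holomorphic flatness of $\partial M$ enters).
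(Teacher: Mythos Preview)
Your reduction is correct: the boundary of $X\times S$ is Levi-flat, so \eqref{bdry-assumption1} holds; the construction of Section~\ref{constructionofsubsolutions} supplies a strictly admissible $C^{2,1}$ subsolution; and then the non-degenerate and degenerate existence theorems finish the job. This is the same skeleton as the paper's proof.

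Where you diverge from the paper is in how you bridge the regularity gap. You apply Theorems~\ref{thm1-diri-estimate} and~\ref{thm3-diri-estimate-de} directly on $M$ and argue that their constants depend only on derivatives of $\partial M$ up to the order actually available (second, thanks to holomorphic flatness), so the a~priori estimates survive for $\partial S\in C^3$ or $C^{2,1}$. The paper instead approximates $S$ from the inside by smooth $S_k$, sets $M_k=X\times S_k$, solves $F(\mathfrak{g}[u^{(k)}])=\psi$ on $M_k$ with boundary data $w|_{\partial M_k}$, builds the subsolution on each $M_k$ separately via the Poisson problem $\Delta_S h^{(k)}=1$ on $S_k$, and then passes to the limit using the uniform bounds \eqref{approxi-boundary1}--\eqref{key-proof-newregularity1}. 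Your route is cleaner conceptually but leans on the reader to track exactly which boundary-derivative order each estimate uses (Theorem~\ref{mix-Leviflat-thm1} versus Theorem~\ref{mix-Leviflat-thm1-product}, and whether $\varphi\in C^3$ is genuinely needed or only $C^{2,1}$); the paper's subdomain approximation sidesteps this by always working on smooth boundaries where the quoted theorems apply verbatim, at the cost of constructing a fresh subsolution on each $M_k$.
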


Applying each one of Theorems \ref{thm3-diri-estimate-de} and  \ref{mainthm-10-degenerate} to 
  homogeneous  complex Monge-Amp\`ere equation on $M=X\times A$, 
 where $A=\mathbb{S}^1\times [0,1]$ and $X$ is a closed K\"ahler manifold, one immediately obtains Chen's \cite{Chen}  result on existence and regularity of (weak) geodesics in the space of K\"ahler metrics \cite{Donaldson99,Semmes92,Mabuchi87}. See for instance \cite{Arezzo2003Tian,Blocki09geodesic,Chen2008Tian,Donaldson2002Holomorphic,Lempert2013Vivas,Phong-Sturm2010}  
for  complement and progress on understanding how geodesics, homogeneous complex Monge-Amp\`ere equation, are related the geometry of $X$.

  \vspace{1mm}
 Due to the importance and interests of Calabi-Yau theorem from 
 K\"ahler geometry,  algebraic geometry and mathematical physics, several generalizations of Calabi-Yau 
 theorem to other special non-K\"ahler metrics, 
 including Gauduchon and balanced metrics introduced in \cite{Gauduchon77,Michelsohn1982Acta},
have been studied by many specialists over the past several decades. 
We are also guided towards the study of Dirichlet problem of
Monge-Amp\`ere equation for $(n-1)$-PSH functions  associated with Gauduchon's conjecture.

\begin{theorem}
\label{Gauduchon-yuan}
Let $(M,J,\omega)$ be a compact Hermitian manifold with smooth \textit{mean pseudoconcave} boundary, 
$\varphi$ and $\phi$ be smooth. 
For $u$ we denote
\begin{equation}
\label{Omega_u}
\begin{aligned}
\Omega_u^{n-1}=\omega_0^{n-1}+\partial\gamma+\overline{\partial \gamma},
\mbox{  } \gamma=\frac{\sqrt{-1}}{2}\overline{\partial} u\wedge\omega^{n-2}.
\end{aligned}
\end{equation}
Suppose the Dirichlet problem of Monge-Amp\`ere equation for $(n-1)$-PSH functions
\begin{equation}
\label{MA-n-1}
\begin{aligned}
\left(*\frac{1}{(n-1)!}\Omega_u^{n-1}\right)^{n}=e^{(n-1)\phi} \omega^n \mbox{ in } \bar M, \mbox{  } u=\varphi \mbox{ on } \partial M
\end{aligned} 
\end{equation}
has a $C^{2,1}$-smooth subsolution $\underline{u}$ with $*\Omega_{\underline{u}}^{n-1}>0$ in $\bar M$.
Then the Dirichlet problem admits a unique smoothly solution $u$ with $*\Omega_{{u}}^{n-1}>0$.

 Moreover, if $(M, J, \omega)=(X\times S, J, \pi_1^*\omega_X+\pi_2^*\omega_S)$ is a product with factor $(X,J_X,\omega_X)$ being balanced,
then such subsolutions can be constructed.
 
\end{theorem}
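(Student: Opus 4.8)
The plan is to recognize equation \eqref{MA-n-1} as a special instance of the general framework \eqref{mainequ} and then invoke Theorem \ref{thm1-diri-estimate} (together with the continuity method) for existence and regularity, and finally to carry out the product construction for the last sentence. First I would recall the standard linear-algebra identity relating the Hodge star of an $(n-1,n-1)$-form to a $(1,1)$-form: writing $\Omega_u^{n-1} = *\tilde\omega_u$ with $\tilde\omega_u$ a real $(1,1)$-form, one has the pointwise correspondence $\tilde\omega_u = (\tr_\omega \tilde\omega_u)\,\omega - \frac{(n-1)}{?}\,\dots$, or more cleanly, in terms of eigenvalues: if $\tilde\omega_u$ has eigenvalues $\mu = (\mu_1,\dots,\mu_n)$ with respect to $\omega$, then $*\frac{1}{(n-1)!}\Omega_u^{n-1}$ has eigenvalues $\lambda_i = \sigma_1(\mu) - \mu_i = \sum_{j\neq i}\mu_j$. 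Consequently $(*\frac{1}{(n-1)!}\Omega_u^{n-1})^n = \omega^n$ becomes $\prod_i(\sigma_1(\mu)-\mu_i) = e^{(n-1)\phi}$, i.e. the equation $f(\lambda(\mathfrak{g}[u])) = \psi$ with $f = (\sigma_n)^{1/n}$ (or $\log\sigma_n$), $\Gamma = \Gamma_n$, $\mathfrak{g}[u]$ the $(1,1)$-form whose eigenvalues are the $\lambda_i$ above, and $\psi = e^\phi$ (resp. $\frac{n-1}{n}\phi + \text{const}$). The key point is that the map $\mu \mapsto \lambda$ is linear and invertible for $n\geq 2$, so $\mathfrak{g}[u]$ depends on $u$ through $\sqrt{-1}\partial\bar\partial u$ and first-order terms exactly in the form \eqref{mainequ} for a suitable background form $\tilde\chi$ and $(1,0)$-form $\eta^{1,0}$ determined by $\omega_0$ and the torsion of $\omega$; the first-order terms arise precisely because $\partial\gamma + \overline{\partial\gamma}$ with $\gamma = \frac{\sqrt{-1}}{2}\bar\partial u\wedge\omega^{n-2}$ produces $\partial u$-type contributions when $\omega$ is not closed.

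The next step is to verify the structural hypotheses. For $f = (\sigma_n)^{1/n}$ on $\Gamma_n$: ellipticity \eqref{elliptic} and concavity \eqref{concave} are classical; $\sup_{\partial\Gamma_n} f = 0$, so \eqref{nondegenerate} reduces to $\inf_M e^\phi > 0$, automatic since $\phi$ is smooth on $\bar M$; and \eqref{addistruc} holds because $f(t\lambda) = t\,f(\lambda)\to+\infty$ for $\lambda\in\Gamma_n$. Also $\Gamma_n$ is of type 2 is \emph{false} — $\Gamma_n$ is of type 1 — so here one genuinely needs the boundary hypothesis: \emph{mean pseudoconcave} means $\tr_{\omega'}(L_{\partial M})\leq 0$, which should translate into \eqref{bdry-assumption1} for this $\Gamma$; I would check that for $\Gamma_n$ the condition $\lambda_{\omega'}(-L_{\partial M})\in\overline{\Gamma}_\infty$ is implied by (indeed essentially equivalent to, given the particular shift in $\mathfrak{g}[u]$) the trace being nonnegative after the linear change $\mu\mapsto\lambda$, i.e. mean pseudoconcavity. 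Given a $C^{2,1}$ subsolution $\underline u$ with $*\Omega_{\underline u}^{n-1}>0$ — which says exactly $\lambda(\mathfrak{g}[\underline u])\in\Gamma_n$, i.e. $\underline u$ is admissible, and then \eqref{MA-n-1} being an equation with the same right side makes it a subsolution in the sense \eqref{existenceofsubsolution} after matching constants — Theorem \ref{thm1-diri-estimate} delivers $\sup_M\Delta u\leq C$, hence uniform ellipticity along the continuity path, and then Evans--Krylov plus Schauder give $C^\infty$; uniqueness follows from the comparison principle for the concave elliptic operator. The main obstacle I anticipate is bookkeeping the exact form of $\tilde\chi$ and $\eta^{1,0}$ from $\partial\gamma+\overline{\partial\gamma}$ and confirming the precise dictionary between mean pseudoconcavity and \eqref{bdry-assumption1} under the eigenvalue shift — this is where the non-closedness of $\omega$ enters and must be handled carefully.

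For the final assertion, suppose $(M,J,\omega) = (X\times S, J, \pi_1^*\omega_X + \pi_2^*\omega_S)$ with $(X,J_X,\omega_X)$ balanced, i.e. $d(\omega_X^{n-2}) = 0$ (equivalently $\omega_X^{n-2}$ closed, the $(n-1)$-dimensional balanced condition). The plan is to produce a subsolution of the form $\underline u = \underline u(s)$ depending only on the $S$-variable, or $\underline u = A\rho$ for a defining function $\rho$ of $\partial S$ pulled back via $\pi_2$, with $A$ large. Because $\omega_X$ is balanced the torsion terms coming from the $X$-factor drop out of $\partial\gamma+\overline{\partial\gamma}$, so $\mathfrak{g}[\underline u]$ splits as $\pi_1^*(\text{positive }(1,1)\text{ on }X) + \pi_2^*(\text{form on }S)$ up to controlled terms, and on the Riemann surface $S$ one has only one complex dimension so $\sqrt{-1}\partial\bar\partial\underline u$ can be made as large as desired in the $S$-direction by taking $A$ large while the shift structure $\lambda_i = \sum_{j\neq i}\mu_j$ converts a single large $\mu$ in the $S$-slot into $\lambda$'s that are all large. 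This is exactly the mechanism in Section \ref{constructionofsubsolutions} (conditions \eqref{cone-condition1}, \eqref{unbound}): since $f = (\sigma_n)^{1/n}$ satisfies \eqref{unbound} with $\sup_{\Gamma_n} f = +\infty$, it suffices to exhibit one $w\in\mathfrak{F}(\varphi)$, and $w = $ (harmonic extension of $\varphi$ on $S$, pulled back) $+ \pi_2^*(\text{large multiple of }\omega_S\text{-potential})$ works; then \eqref{cone-condition1} is automatic and the construction of Section \ref{constructionofsubsolutions} produces the strictly admissible subsolution with $\partial_\nu\underline u|_{\partial M}<0$. Matching the boundary condition $\underline u = \varphi$ and the subsolution inequality with a margin $\delta_0$ completes the argument; the only care needed is that $\varphi$ need not be constant on the $S$-factor, handled by adding its harmonic (in $s$) extension, which contributes only first-order terms absorbed into the large-$A$ estimate.
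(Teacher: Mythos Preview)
Your reduction to the framework \eqref{mainequ} and Theorem~\ref{thm1-diri-estimate} does not work, for two concrete reasons.

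First, the gradient terms in the $(n-1)$-PSH Monge--Amp\`ere equation are \emph{not} of the form $\sqrt{-1}(\partial u\wedge\overline{\eta^{1,0}}+\eta^{1,0}\wedge\overline{\partial}u)$ for a single $(1,0)$-form $\eta^{1,0}$. Writing $*\frac{1}{(n-1)!}\Omega_u^{n-1}=\tilde\chi+\frac{1}{n-1}((\Delta u)\omega-\sqrt{-1}\partial\overline{\partial}u)+Z$ and passing to $\mathfrak{g}[u]=\sqrt{-1}\partial\overline{\partial}u+\check\chi+\frac{1}{n-1}W$, the tensor $W$ (equivalently $Z$, displayed in \eqref{Z-tensor1}) depends on $\partial u$ through the full torsion of $\omega$ and contains pieces such as $g_{i\bar j}\,g^{p\bar q}\bar T^l_{ql}u_p$ and $g_{i\bar q}\bar T^q_{lj}u_k$ that are not captured by $\chi_{i\bar j,\zeta_\alpha}=\delta_{i\alpha}\bar\eta_j$. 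So the structural identity \eqref{key-chi}, on which the second-order estimate for \eqref{mainequ} hinges, is unavailable here; this is precisely why the paper treats \eqref{mainequ-gauduchon-general*} separately in Section~\ref{appendix-gauduchon} and deduces Theorem~\ref{Gauduchon-yuan} from Theorem~\ref{thm1-general-equation}, with the double-normal step coming from Proposition~\ref{proposition1-normal} rather than Proposition~\ref{proposition-quar-yuan1}.

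Second, even granting your reduction, the boundary hypothesis would not match. For $\Gamma=\Gamma_n$ one has $\overline{\Gamma}_\infty$ equal to the closed positive orthant in $\mathbb{R}^{n-1}$, so \eqref{bdry-assumption1} reads $-L_{\partial M}\ge 0$, i.e.\ \emph{pseudoconcave}, which is strictly stronger than the mean pseudoconcave hypothesis of Theorem~\ref{Gauduchon-yuan}. The paper instead uses the scalar condition \eqref{bdry-assumption1-Gauduchon}, and for $\Gamma_n$ one has $\overline{\Gamma}^\infty_{\mathbb{R}^1}=[0,\infty)$, so \eqref{bdry-assumption1-Gauduchon} is exactly $\mathrm{tr}_{\omega'}(L_{\partial M})\le 0$. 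The reason the weaker trace condition suffices is the computation in the proof of Proposition~\ref{proposition1-normal}: at a boundary point, $Z_{n\bar n}=\underline{Z}_{n\bar n}$ (tangential derivatives only), whence $\sum_{\alpha<n}\mathfrak{g}_{\alpha\bar\alpha}-\sum_{\alpha<n}\underline{\mathfrak{g}}_{\alpha\bar\alpha}=-(u-\underline u)_\nu\,\mathrm{tr}_{\omega'}(L_{\partial M})$, so only the \emph{trace} of the Levi form enters.

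For the product construction your mechanism is also inverted. Taking $h$ on $S$ with $\Delta_S h=1$, $h|_{\partial S}=0$ and $\underline u=\underline v+Ah$, one computes (using $\omega=\pi_1^*\omega_X+\pi_2^*\omega_S$) that $*\big(\sqrt{-1}\partial\overline{\partial}h\wedge\omega^{n-2}\big)=(n-2)!\,\pi_1^*\omega_X$, and the balanced condition $d\omega_X^{n-2}=0$ kills the gradient contribution $\partial h\wedge\overline{\partial}\omega^{n-2}$. Hence $*\Phi[\underline u]=*\Phi[\underline v]+A\,\pi_1^*\omega_X$: the construction pushes in the $\pi_1^*\omega_X$ direction and the relevant cone condition is \eqref{cone-condition1-general}, not \eqref{cone-condition1}. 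Your heuristic that a large $S$-eigenvalue in $\mathfrak{g}[u]$ makes all $\mu_i$ large fails for $\mu_n=\sum_{j<n}\lambda_j$, which does not see $\lambda_n$.
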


Theorem \ref{Gauduchon-yuan} is a consequence of Theorem \ref{thm1-general-equation} below, in which 
we prove the existence for more general equations than Monge-Amp\`ere equation for $(n-1)$-PSH functions
\begin{equation}
\label{mainequ-gauduchon-general*}
\begin{aligned}
 f(\lambda(*\Phi[u]))=\psi  
\end{aligned}
\end{equation}
where 
  $\Phi[u] =*\chi+\frac{1}{(n-2)!}\sqrt{-1}\partial\overline{\partial}u\wedge\omega^{n-2}+\frac{\varrho}{(n-1)!}\mathfrak{Re}(\sqrt{-1}\partial u\wedge \overline{ \partial}\omega^{n-2})$, $\chi$ is a smooth real $(1,1)$-form, and $\varrho$ is a function.
 In particular,    
  $\Phi[u]=*\chi+\frac{1}{(n-2)!}(\partial\gamma+\overline{\partial \gamma})$  if $\varrho=n-1$; while $\varrho=0$
  it goes back to equation \eqref{mainequ} as special cases.

 \begin{theorem}
\label{thm1-general-equation}
Let $(M, J, \omega)$ be a compact Hermitian manifold with smooth boundary.
 Suppose,  in addition to \eqref{elliptic}, \eqref{concave}, \eqref{nondegenerate} and \eqref{addistruc}, that
  $\psi$, $\varphi$, $\varrho$  are all smooth.
Assume that there is an $C^{2,1}$-admissible subsolution to 
equation  \eqref{mainequ-gauduchon-general*}  with prescribing boundary data \eqref{mainequ1}. 
In addition we assume  \eqref{bdry-assumption1-Gauduchon} holds. 
Then the Dirichlet problem admits a unique smooth admissible solution.

Furthermore, if $(M, J, \omega)=(X\times S, J,\pi_1^*\omega_X+\pi_2^*\omega_S)$ and $\omega_X$ is balanced, then we can construct such (strictly) subsolutions, provided 
\begin{equation}
\label{cone-condition1-general}
\begin{aligned}
\lim_{t\rightarrow +\infty}f(\lambda(*\Phi[\underline{v}]+t\pi_1^*\omega_X))>\psi \mbox{ in } \bar M
\end{aligned}
\end{equation}
for an admissible function $\underline{v}\in C^{2,1}(\bar M)$ with $\underline{v}|_{\partial M}=\varphi$.

\end{theorem}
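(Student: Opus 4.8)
The strategy is to reduce the Dirichlet problem for \eqref{mainequ-gauduchon-general*} to the framework already established for \eqref{mainequ} in Theorems \ref{thm1-diri-estimate} and \ref{thm3-diri-estimate-de}, and then to treat the construction of subsolutions on the product $X\times S$ by a separate direct argument. The key algebraic observation is that the Hodge star $*$ with respect to $\omega$ realizes a linear isomorphism $\bigwedge^{n-1,n-1}(M)\overset{*}\longrightarrow\bigwedge^{1,1}(M)$, under which the operator $u\mapsto *\Phi[u]$ becomes an operator of exactly the type $\mathfrak{g}[u]=\tilde\chi+\sqrt{-1}\partial\overline\partial u+\sqrt{-1}(\partial u\wedge\overline{\eta^{1,0}}+\eta^{1,0}\wedge\overline\partial u)$ appearing in \eqref{mainequ}: the term $\frac{1}{(n-2)!}\sqrt{-1}\partial\overline\partial u\wedge\omega^{n-2}$ maps under $*$ to $(\Delta u)\,\omega-\sqrt{-1}\partial\overline\partial u$ plus lower-order curvature terms, so the leading symbol is a fixed invertible linear transformation of $\sqrt{-1}\partial\overline\partial u$; the first-order term $\frac{\varrho}{(n-1)!}\mathfrak{Re}(\sqrt{-1}\partial u\wedge\overline{\partial}\omega^{n-2})$ becomes, after applying $*$, a real $(1,1)$-form depending linearly on $\partial u$, $\overline\partial u$, i.e. it has precisely the shape $\sqrt{-1}(\partial u\wedge\overline{\eta^{1,0}}+\eta^{1,0}\wedge\overline\partial u)$ for a suitable smooth $(1,0)$-form $\eta^{1,0}$ built from $\omega$, $\partial\omega$ and $\varrho$. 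Thus after composing $f$ with this fixed linear change of variables one obtains a new symmetric function $\tilde f$ on a new cone $\tilde\Gamma$ still satisfying \eqref{elliptic}, \eqref{concave}, \eqref{nondegenerate}, \eqref{addistruc}, and an equation of the form \eqref{mainequ} whose admissible subsolution is exactly the given $\underline u$. The hypothesis \eqref{bdry-assumption1-Gauduchon} is the boundary condition adapted to this setting (the analogue of \eqref{bdry-assumption1}), so Theorem \ref{thm1-diri-estimate} applied to the transformed problem yields a unique smooth admissible solution, together with the Laplacian bound; uniqueness is then immediate from the comparison principle for concave elliptic $F$.

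The one point requiring care in this reduction is that the linear change of variables must be checked to preserve the structural hypotheses in the precise quantitative form needed — in particular that \eqref{nondegenerate} and \eqref{addistruc} for $\tilde f$ follow from those for $f$, and that the cone $\tilde\Gamma$ inherits the properties $\Gamma_n\subseteq\tilde\Gamma\subset\Gamma_1$ (up to the obvious linear conjugation). Since the transformation $T$ sending $\lambda(\sqrt{-1}\partial\overline\partial u)$ to $\lambda(*\frac{1}{(n-2)!}\sqrt{-1}\partial\overline\partial u\wedge\omega^{n-2})$ is, on the level of Hermitian matrices $\mathfrak h$, the map $\mathfrak h\mapsto (\tr\mathfrak h)\,I-\mathfrak h$, which is an involution diagonal in any common eigenbasis, this is routine: $\tilde f(\mu)=f(T^{-1}\mu)$, $\tilde\Gamma=T(\Gamma)$, and all of \eqref{elliptic}–\eqref{addistruc} transport verbatim (note $T$ is linear and its inverse sends $e_i$ to $\frac{1}{n-1}(e_1+\cdots+\widehat{e_i}+\cdots)$, with positive coefficients preserving the limit conditions). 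This is exactly the reduction implicit in passing from \eqref{mainequ-ios} to \eqref{mainequ}, and it lets us quote the already-proved a priori estimates and the continuity/degree argument without repeating them.

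For the second assertion — construction of strictly admissible subsolutions on a product $(X\times S,J,\pi_1^*\omega_X+\pi_2^*\omega_S)$ with $(X,\omega_X)$ balanced — the plan is to follow the ansatz already used for Theorem \ref{mainthm-10-degenerate} and the construction in Section \ref{constructionofsubsolutions}. Starting from the admissible test function $\underline v\in C^{2,1}(\bar M)$ with $\underline v|_{\partial M}=\varphi$, set $\underline u=\underline v+h$, where $h=h(p)$ is a function of the $S$-variable only (pulled back by $\pi_2$), chosen so that $\partial_\nu\underline u<0$ on $\partial M$ and so that $\sqrt{-1}\partial\overline\partial h$ contributes a large positive multiple of $\pi_2^*\omega_S$ in the $S$-directions away from the boundary; because $\omega_X$ is balanced one has $\partial\overline\partial\omega_X^{n-2}=0$, so $\partial\omega^{n-2}$ has no component producing uncontrolled first-order terms in the $X$-directions, and the operator $\Phi$ applied to $\underline u$ acquires a term of the form $c\,(*\pi_1^*\omega_X\wedge(\text{fixed }(n-2,n-2)\text{-form}))$ which, via the Hodge star, is comparable to $c\,\pi_1^*\omega_X$. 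Feeding this into \eqref{cone-condition1-general} shows that for $c$ large $\lambda(*\Phi[\underline u])$ is pushed into $\Gamma$ and $f(\lambda(*\Phi[\underline u]))>\psi$, i.e. $\underline u$ is a strict admissible subsolution; the inequality $\partial_\nu\underline u|_{\partial M}<0$ is arranged by taking $h'$ suitably signed at $\partial S$, using the compactness of $S$ and $\partial S\in C^{2,1}$. The main obstacle in the whole proof is precisely this last construction: one must verify that the cross term between the $X$-Laplacian of $\underline v$, the curvature $\partial\overline\partial\omega$, and the added $h$-term does not destroy admissibility in the mixed $X$–$S$ directions, and that the single parameter $c$ (together with the profile of $h$) can be chosen uniformly in $\bar M$; this is where the balanced hypothesis on $\omega_X$ and the structural condition \eqref{cone-condition1-general} are genuinely used, and it is carried out in detail in Section \ref{constructionofsubsolutions}.
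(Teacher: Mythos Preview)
Your reduction to Theorem \ref{thm1-diri-estimate} has a genuine gap: the first–order term in \eqref{mainequ-gauduchon-general*} is \emph{not} of the shape $\sqrt{-1}(\partial u\wedge\overline{\eta^{1,0}}+\eta^{1,0}\wedge\overline\partial u)$ for any $(1,0)$–form $\eta^{1,0}$. After applying $*$ one gets $U[u]=\chi+(\Delta u)\omega-\sqrt{-1}\partial\overline\partial u+\varrho\,Z(\partial u,\overline\partial u)$ with (in coordinates where $g_{i\bar j}=\delta_{ij}$)
\[
2(n-1)\,Z_{i\bar j,\zeta_\alpha}=\bar T^{l}_{\alpha l}\,\delta_{ij}-\bar T^{\,i}_{\alpha j}-\bar T^{l}_{jl}\,\delta_{i\alpha},
\]
which is not of the form $\delta_{i\alpha}\bar\eta_j$ required by \eqref{key-chi}. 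The structural identity \eqref{key-chi} is exactly what makes the mixed third–order terms cancel in the global second–order estimate (Theorem \ref{globalsecond-Diri}); without it that proof does not go through, so you cannot simply quote Theorem \ref{thm1-diri-estimate}. The linear change of cones $\mathfrak h\mapsto(\tr\mathfrak h)I-\mathfrak h$ that you describe is correct and is used in the paper, but it only transforms the \emph{principal} part; the gradient dependence remains outside the class \eqref{mainequ}.

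What the paper actually does in Section \ref{appendix-gauduchon} is to redo the key estimates for \eqref{mainequ-gauduchon-general*} directly. The interior estimate \eqref{quantitative-2nd-boundary-estimate} follows the outline of Sz\'ekelyhidi--Tosatti--Weinkove, using two special features of this equation: (i) the tensor $Z$ satisfies their structural assumption, and (ii) for the linearized coefficients one has $\partial\tilde f/\partial\lambda_i\ge\frac{1}{n(n-1)}\sum_k\partial\tilde f/\partial\lambda_k$ for all $i\ge 2$. The quantitative boundary estimate is obtained from Proposition \ref{boundary-mixed1} (tangential--normal, same barrier scheme as Proposition \ref{mix-general}) and Proposition \ref{proposition1-normal} (double normal), the latter being where the hypothesis \eqref{bdry-assumption1-Gauduchon} enters: because of the transformation $T$, the role of the double--normal entry is played by the tangential trace $\sum_\alpha\mathfrak g_{\alpha\bar\alpha}$, and one verifies $Z_{n\bar n}=\underline Z_{n\bar n}$ at boundary points so that mean pseudoconcavity yields $\sum_\alpha\mathfrak g_{\alpha\bar\alpha}\ge\sum_\alpha\underline{\mathfrak g}_{\alpha\bar\alpha}-\epsilon'(u-\underline u)_\nu$. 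This is why the boundary condition is on $\mathrm{tr}_{\omega'}(-L_{\partial M})$ rather than on $\lambda_{\omega'}(-L_{\partial M})$; your claim that \eqref{bdry-assumption1-Gauduchon} is just ``the analogue of \eqref{bdry-assumption1}'' under the cone change glosses over this.

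Your sketch of the subsolution construction on $X\times S$ is essentially the right idea and matches Section \ref{construction2-subsection}: with $h$ solving $\Delta_S h=1$, $h|_{\partial S}=0$, one takes $\underline u=\underline v+Ah$ and computes $*\Phi[\underline u]=*\Phi[\underline v]+A\,\pi_1^*\omega_X$, then invokes \eqref{cone-condition1-general}. One correction: the balanced hypothesis is $d\omega_X^{n-2}=0$ (equivalently $\overline\partial\omega_X^{n-2}=0$), not merely $\partial\overline\partial\omega_X^{n-2}=0$; it is the vanishing of $\overline\partial\omega_X^{n-2}$ itself that kills the obstruction term $\partial h\wedge\overline\partial\omega^{n-2}=\partial h\wedge\pi_1^*(\overline\partial_X\omega_X^{n-2})$.
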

 
Also, corresponding results for degenerate equations are presented in Section \ref{appendix-gauduchon}.

 \vspace{1mm}
   If there exists an admissible function $\underline{v}$ with boundary data
   $\underline{v}|_{\partial M}=\varphi$,
then  \eqref{cone-condition1-general}  leads naturally to the following condition being much more broader than \eqref{unbound},
 \begin{equation}
 \label{unbound-strong} 
 \begin{aligned}
\lim_{t\rightarrow+\infty}f(\lambda_1+t,\lambda_2+t, \cdots,\lambda_{n-1}+t, \lambda_n)=\sup_\Gamma f, \mbox{  } \forall \lambda=(\lambda_1,\cdots,\lambda_n)\in \Gamma.
\end{aligned}
\end{equation}
 Such an admissible function $\underline{v}$ exists if there is  $\underline{w}$ with prescribing boundary data
  $\underline{w}|_{\partial M}=\varphi$ and 
  \begin{equation}
 \label{admiss-value2} 
 \begin{aligned}
 \lambda(*\Phi[\underline{w}]+t\pi_1^*\omega_X)\in \Gamma \mbox{ for }t\gg1.
 \end{aligned}
 \end{equation}
  
 \vspace{1mm}
In contrast with the condition $(1.6)$ of \cite{Song2008On}  for $J$-flow, cone condition in \cite{Fang11LaiMa} for 
complex inverse $\sigma_k$ flow as well as the notion of $\mathcal{C}$-subsolution,
our conditions \eqref{cone-condition1} and \eqref{cone-condition1-general} are much more 
easy to verify, since these two cone conditions restrict only to asymptotic behavior along the
  directions $\pi_2^* \omega_S$ and $\pi_1^* \omega_X$, respectively.
   
  \vspace{1mm}
 It is noteworthy that if $\Gamma$ is of type 2, then
  $\mathfrak{F}(\varphi)\neq \emptyset$ and \eqref{admiss-value2} hold automatically 
  and furthermore, condition \eqref{addistruc} implies condition \eqref{unbound}.
 Moreover, when $\Gamma=\Gamma_k$, $2\leq k\leq n-1$,
   \eqref{admiss-value2}  is clearly satisfied for arbitrary $C^2$ boundary data
   since $\Gamma_{\mathbb{R}^1}^{\infty}= \mathbb{R}.$

 \vspace{1mm}
In conclusion, with appropriate assumptions on boundary imposed above, 
we settle the gradient estimate for fully nonlinear elliptic equations \eqref{mainequ} and  \eqref{mainequ-gauduchon-general*}.  
 However, on complex manifolds  without imposing such assumptions,
 the gradient estimate still lacks of understanding except for certain special and restricted equations.
We are referred to  \cite{Blocki09gradient,Guanp2008Gradient,Guan2010Li,Hanani1996,Guan2015Sun,Zhangxw2010,yuan2018CJM}
  for direct proof, without using second order estimate, of
 gradient  estimate  for $\omega$-plurisubharmonic solutions 
 (with $\omega+\sqrt{-1}\partial\overline{\partial}u\geq 0$) to complex Monge-Amp\`ere equation, complex inverse $\sigma_k$ equation and complex $k$-Hessian equation. In \cite{yuan17K} the author 
 offers a unified and straightforward approach to gradient  estimate for
$\omega$-plurisubharmonic solutions to equations with \eqref{unbound}.

\vspace{2mm}
The   paper is organized as follows.
In Section \ref{mainproof} we sketch proof of gradient estimate.
In Section \ref{preliminaries} we present some useful lemmas, notation and computation. A criterion for condition \eqref{addistruc} is also obtained there.
In  Section \ref{Diri-pseudoconcave} we derive second order estimate which in particular includes quantitative boundary estimate. 
  In Section \ref{constructionofsubsolutions} we construct subsolutions on products.
  In Section \ref{Dirichlet-problem-less-rugularity} we discuss Dirichlet problem on complex manifolds with  less regular boundary and boundary data.
In Section \ref{appendix-gauduchon} we solve the Dirichlet problem for equation \eqref{mainequ-gauduchon-general*} which includes Monge-Amp\`ere equation for $(n-1)$-PSH functions associated to Gauduchon's conjecture as a special case.
 In Section \ref{further-discussion}    uniqueness of weak solution for degenerate equations and quantitative boundary estimate with different assumptions are briefly discussed.
In Appendix \ref{appendix},  we finally append the proof of Lemma \ref{yuan's-quantitative-lemma}, which is a key ingredient in the proof of Propositions \ref{proposition-quar-yuan1} and  \ref{proposition1-normal}  and so of quantitative boundary estimates.
  
  \vspace{1mm}
I wish to thank  Professors Bo Guan,  Chunhui Qiu  and Xi Zhang for their support and encouragement. 
Thanks also go to Professor Xinan Ma for useful suggestions on the presentation.

\section{Sketch of proof of gradient estimate} 
\label{mainproof}
  This paper is part of series of papers that are devoted to deriving gradient estimate and to investigating 
 fully nonlinear second order elliptic equations on Hermitian manifolds. 
 See also earlier work \cite{yuan2017,yuan17K} and follow-up work \cite{yuan2019Kahlercone}.
 
\vspace{1mm}
The gradient estimate in this paper is based on blow-up argument used in \cite{Dinew2017Kolo,Gabor,Chen}.
 To do it, it is required to establish second order estimate of the form
 \begin{equation}
 \label{sec-estimate-quar1}
\begin{aligned}
\sup_M \Delta u \leq C(1+\sup_M |\nabla u|^2).
\end{aligned}
\end{equation}
 
When the  background space is a closed K\"ahler manifold,  such a second order estimate was obtained by Hou-Ma-Wu  \cite{HouMaWu2010} for complex $k$-Hessian equation. 
Recently, Hou-Ma-Wu's result was extended extensively by  Sz\'ekelyhidi \cite{Gabor} to more general fully nonlinear elliptic equations on closed  Hermitian manifolds
under the assumption of the existence of $\mathcal{C}$-subsolutions, also by Tosatti-Weinkove  \cite{Tosatti2017Weinkove,Tosatti2013Weinkove} and Zhang \cite{ZhangDk}
for the Monge-Amp\`ere equation for $(n-1)$-PSH functions (without gradient terms, $\varrho=0$) and complex $k$-Hessian equation on closed Hermitian manifolds, respectively.

\vspace{1mm}
When $M$ has boundary, i.e. $\partial M\neq \emptyset$, the proof of \eqref{sec-estimate-quar1} is much more complicated. 
By treating with 
two different types of complex derivatives due to the gradient terms in equation carefully,  we prove in Theorem \ref{globalsecond-Diri} that
\begin{equation}
\label{quantitative-2nd-boundary-estimate}
\begin{aligned}
\sup_{ M} \Delta u
\leq C(1+ \sup_{M}|\nabla u|^{2} +\sup_{\partial M}|\Delta u|).  
\end{aligned}
\end{equation}
 To achieve our goal, with the estimate above at hand, a specific problem that we have in mind is to 
   derive the following quantitative boundary estimate  
  \begin{equation}
  \label{bdy-sec-estimate-quar1}
\begin{aligned}
\sup_{\partial M} \Delta u \leq C(1+\sup_M |\nabla u|^2).
\end{aligned}
\end{equation}
This is done in Theorem \ref{mix-general-thm1} 
when $\partial M$ satisfies \eqref{bdry-assumption1}. 
 
The proof is based on the following proposition. 
\begin{proposition}
\label{proposition-quar-yuan1}
Let $(M, J,\omega)$ be a compact Hermitian manifold with $C^2$ boundary. 
In addition we assume condition \eqref{bdry-assumption1} is satisfied.
We denote   
$T^{1,0}_{\partial M}=T^{1,0}_{\bar M} \cap T^{\mathbb{C}}_{ \partial M}$,   
$\xi_n=\frac{1}{\sqrt{2}}(\mathrm{{\bf \nu}}-\sqrt{-1}J\nu)$ and 
   $\nu$ denotes as defined above the unit inner normal vector along the boundary.
Suppose, in addition to \eqref{elliptic}, \eqref{concave}, \eqref{nondegenerate}, \eqref{addistruc},
$\psi\in C^0(\bar M)$ and $\varphi\in C^2(\partial M)$,  that 
 there is a $C^2$-admissible subsolution obeying \eqref{existenceofsubsolution}.
Let $u\in  C^{2}(\bar M)$ be an \textit{admissible} solution
 to Dirichlet problem \eqref{mainequ} and \eqref{mainequ1}. Fix $x_0\in \partial M$.
Then for   $\xi_\alpha,\xi_\beta \in T^{1,0}_{\partial M, x_0}$  $(\alpha,\beta=1,\cdots, n-1)$ which satisfy
  $g(\xi_\alpha, \bar\xi_{\beta}) =\delta_{\alpha\beta}$ at $x_0$,
we have
\begin{equation}
\label{yuan-prop1}
\begin{aligned}
 \mathfrak{g}(\xi_n, J\bar \xi_n)(x_0) 
  \leq C\left(1 +  \sum_{\alpha=1}^{n-1} |\mathfrak{g}(\xi_\alpha, J\bar \xi_n)(x_0)|^2\right), 
\end{aligned}
\end{equation}
where $C$ is a uniformly positive constant  depending  only on   $|u|_{C^0(\bar M)}$,   
$|\underline{u}|_{C^{2}(\bar M)}$, $\partial M$ up to second order derivatives 
 and other known data
 (but neither on $\sup_{\bar M}|\nabla u|$ nor on $(\delta_{\psi,f})^{-1}$).
 In addition, if $\partial M$ is pseudoconcave, then condition \eqref{addistruc} can  be removed.
\end{proposition}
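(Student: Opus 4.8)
The plan is to pass to a frame adapted to $\partial M$ at $x_0$, read off the block structure of $\mathfrak{g}[u]$ in it, and reduce the inequality to a purely algebraic fact about Hermitian matrices, namely Lemma \ref{yuan's-quantitative-lemma}. First I would choose local $(1,0)$-fields $\xi_1,\dots,\xi_{n-1}$ tangent to $\partial M$ along $\partial M$, unitary at $x_0$, together with $\xi_n=\frac{1}{\sqrt{2}}(\nu-\sqrt{-1}J\nu)$; all structure functions of this frame --- and hence the Levi form $L_{\partial M}$ and the correction passing from coordinate to covariant second derivatives --- are controlled by $\partial M$ up to second order derivatives. Since $\mathfrak{g}(\xi,J\bar\eta)$ is the Hermitian pairing $\mathfrak{g}_{i\bar j}\xi^i\bar\eta^{\bar j}$ and the frame is unitary at $x_0$, evaluating $\mathfrak{g}[u]$ on $\{\xi_i\}$ presents it at $x_0$ as an $n\times n$ Hermitian matrix with spectrum $\lambda(\mathfrak{g}[u])(x_0)\in\Gamma$, tangential block $\big(\mathfrak{g}[u](\xi_\alpha,J\bar\xi_\beta)\big)_{\alpha,\beta\le n-1}$, mixed column $\mathbf{c}=\big(\mathfrak{g}[u](\xi_\alpha,J\bar\xi_n)\big)_{\alpha\le n-1}$, and the scalar $a_{nn}=\mathfrak{g}[u](\xi_n,J\bar\xi_n)$ to be estimated.

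Second, I would compute the tangential block at $x_0$. Because $u-\underline u\equiv0$ on $\partial M$, every tangential first derivative of $u-\underline u$ vanishes there, so the gradient terms of $\mathfrak{g}[u]-\mathfrak{g}[\underline u]$ contribute nothing to that block, and the complex Hessian of $u-\underline u$ in the directions $\xi_\alpha,\bar\xi_\beta$ reduces, by the standard identity for a function vanishing on a hypersurface, to $t\,(-L_{\partial M})(\xi_\alpha,\bar\xi_\beta)$, where $t:=\partial_\nu(u-\underline u)(x_0)$. The comparison principle gives $u\ge\underline u$ in $M$, whence $t\ge0$. Therefore at $x_0$ the matrix of $\mathfrak{g}[u]$ is
\[ A=\begin{pmatrix} B'+t\,(-L) & \mathbf{c}\\ \mathbf{c}^{*} & a_{nn}\end{pmatrix},\qquad \lambda(A)\in\Gamma,\qquad f(\lambda(A))=\psi(x_0)\le\sup_M\psi<\sup_\Gamma f, \]
where $B'=\big(\mathfrak{g}[\underline u](\xi_\alpha,J\bar\xi_\beta)(x_0)\big)$ and $L=\big(L_{\partial M}(\xi_\alpha,\bar\xi_\beta)\big)$ are controlled by $|\underline u|_{C^2(\bar M)}$ and by $\partial M$ to second order, the last strict inequality coming from the existence of the subsolution \eqref{existenceofsubsolution}, while $t\ge0$, $\mathbf{c}$, $a_{nn}$ are a priori unbounded. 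Here I would use two standard facts about $\Gamma$: it is monotone, $\Gamma+\overline{\Gamma}\subseteq\Gamma$, and Hermitian matrices with spectrum in $\Gamma$ form a convex cone stable under adding matrices with spectrum in $\overline{\Gamma}$. Combined with the admissibility of $\underline u$ --- which via Cauchy interlacing puts $\lambda(B')\in\Gamma_\infty$ at a height controlled by $|\underline u|_{C^2(\bar M)}$ --- and with \eqref{bdry-assumption1} --- which puts $\lambda(-L)\in\overline{\Gamma}_\infty$ at a controlled height --- these give $\lambda\big(B'+t(-L)\big)\in\Gamma_\infty$, with $\big(\lambda(B'+t(-L)),R\big)\in\Gamma$ for an $R=R(t)$ affine in $t$ whose coefficients depend only on the stated data. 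When $\partial M$ is pseudoconcave one has $-L\ge0$, so $B'+t(-L)\ge B'$ and this height stays bounded uniformly in $t$.

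At this point the geometry is over and the remaining content is the algebraic Lemma \ref{yuan's-quantitative-lemma}: for a Hermitian $A$ of the above shape with $\lambda(A)\in\Gamma$ and $f(\lambda(A))$ at most a fixed value strictly below $\sup_\Gamma f$, one has $a_{nn}\le C(1+|\mathbf{c}|^2)$. I would prove it by contradiction: if $a_{nn}$ is enormous relative to $1+|\mathbf{c}|^2$ --- hence to $R(t)$, $\|B'\|$ and $\|L\|$ --- then the secular equation and interlacing for the bordered matrix $A$ yield $\lambda_1(A)\in[a_{nn},\,a_{nn}+O(|\mathbf{c}|^2/a_{nn})]$ and $(\lambda_2(A),\dots,\lambda_n(A))=\lambda(B'+t(-L))-\mathbf{w}$ with $0\le\mathbf{w}$, $|\mathbf{w}|=O(|\mathbf{c}|^2/a_{nn})$; thus $\lambda(A)$ is within $o(1)$ of the point $\big(\lambda(B'+t(-L)),a_{nn}\big)\in\Gamma$, whose last slot tends to $+\infty$, and feeding this into \eqref{addistruc} --- the mechanism behind Sz\'ekelyhidi's Liouville-type theorem --- forces $f(\lambda(A))>\psi(x_0)$, a contradiction. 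In the pseudoconcave case the relevant heights are uniform in $t$, so monotonicity of $\Gamma$ together with $f_i>0$ suffices in place of \eqref{addistruc}, which is why that hypothesis can be dropped there.

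The main obstacle is exactly this algebraic lemma, and inside it the uniformity of the constant: one must verify that the threshold past which a large $a_{nn}$ produces the contradiction does not degrade either as $t\to+\infty$ (so that $C$ is independent of $\sup_{\bar M}|\nabla u|$) or as $\delta_{\psi,f}\to0$ (so that $C$ is independent of $(\delta_{\psi,f})^{-1}$). Equivalently, one must extract from \eqref{bdry-assumption1}, the admissibility of $\underline u$ and \eqref{addistruc} a quantitative lower bound for $f$ along the rays in play that survives both limits; this is the estimate I would defer to the Appendix. Assembling it with the reduction above then gives \eqref{yuan-prop1} with $C$ depending only on $|u|_{C^0(\bar M)}$, $|\underline u|_{C^2(\bar M)}$, $\partial M$ to second order, and the other fixed data.
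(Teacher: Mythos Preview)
There is a genuine gap in your treatment of the general case (boundary satisfying \eqref{bdry-assumption1} but not pseudoconcave). You invoke \eqref{addistruc} to conclude that $f(\lambda(A))>\psi(x_0)$ once the last slot $a_{nn}$ of $\big(\lambda(B'+t(-L)),a_{nn}\big)$ is large; but \eqref{addistruc} only asserts $\lim_{s\to\infty}f(s\lambda)>\sigma$ for fixed $\lambda\in\Gamma$, i.e.\ growth under \emph{scaling}, and says nothing about $f(\lambda',R)$ as $R\to\infty$ with $\lambda'$ fixed. The latter is condition \eqref{unbound}, which is \emph{not} assumed here; for instance $f(\lambda)=\min_i\lambda_i$ on $\Gamma_n$ satisfies \eqref{addistruc} but not \eqref{unbound}. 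So your contradiction does not close. Note also that Lemma~\ref{yuan's-quantitative-lemma} is purely an eigenvalue--localization statement (large $(n,n)$-entry $\Rightarrow$ eigenvalues near the diagonal), not the bound $a_{nn}\le C(1+|\mathbf c|^2)$ you attribute to it.

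The paper uses \eqref{addistruc} quite differently, via its equivalent form in Lemma~\ref{asymptoticcone1}: $\sum_i f_i(\lambda)\mu_i>0$ for all $\lambda,\mu\in\Gamma$. The device you are missing is the decomposition
\[
A(R)=\underline{A}(\epsilon',R)+(u-\underline u)_\nu
\begin{pmatrix}-L+\epsilon' I_{n-1}&0\\0&C(\epsilon',L)\end{pmatrix},
\]
where by \eqref{bdry-assumption1} the second matrix has eigenvalues in $\Gamma$, and $\underline{A}(\epsilon',R)$ has tangential diagonal $\underline{\mathfrak g}_{\alpha\bar\alpha}-\epsilon'(u-\underline u)_\nu$. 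Concavity together with Lemma~\ref{asymptoticcone1} give $F(A(R_c))\ge F(\underline{A}(\epsilon',R_c))$; Lemma~\ref{yuan's-quantitative-lemma} is then applied to $\underline{A}(\epsilon',R_c)$, whose tangential entries are \emph{bounded} once one observes $(u-\underline u)_\nu\le(w-\underline u)_\nu$ is controlled by the supersolution $w$ of \eqref{supersolution} (so your worry about $t\to\infty$ is unfounded; this is precisely how $\epsilon'$ is chosen in \eqref{bdry-assump-4}). The subsolution inequality $f(\lambda(\underline{\mathfrak g}))\ge\psi$ enters through \eqref{opppp}, giving fixed $R_0,\varepsilon_0$ with $f(\underline{\mathfrak g}_{\alpha\bar\alpha}-\varepsilon_0,R_0)\ge\psi$, and the chain $F(A(R_c))\ge F(\underline{A}(\epsilon',R_c))>\psi$ forces $\mathfrak g_{n\bar n}\le R_c$. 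In the pseudoconcave case $-L\ge0$, so $A(R)\ge\underline A(R)$ directly and the decomposition is unnecessary; that part of your sketch is correct.
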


When the boundary is supposed to be Levi flat,
 Proposition \ref{proposition-quar-yuan1}  ($\eta^{1,0}=0$) was first proved by the author in an earlier work \cite{yuan2017}, 
 in which the following lemma proposed there is crucial.   
  
\begin{lemma}
[\cite{yuan2017}]
\label{yuan's-quantitative-lemma}
Let $\mathrm{A}$ be an $n\times n$ Hermitian matrix
\begin{equation}\label{matrix3}\left(\begin{matrix}
d_1&&  &&a_{1}\\ &d_2&& &a_2\\&&\ddots&&\vdots \\ && &  d_{n-1}& a_{n-1}\\
\bar a_1&\bar a_2&\cdots& \bar a_{n-1}& \mathrm{{\bf a}} \nonumber
\end{matrix}\right)\end{equation}
with $d_1,\cdots, d_{n-1}, a_1,\cdots, a_{n-1}$ fixed, and with $\mathrm{{\bf a}}$ variable. 
Let $\epsilon>0$ be a fixed positive constant.
Suppose that the parameter $\mathrm{{\bf a}}$ in $\mathrm{A}$ satisfies  the quadratic 
 growth condition
  \begin{equation}
 \begin{aligned}
\label{guanjian1-yuan}
\mathrm{{\bf a}}\geq \frac{2n-3}{\epsilon}\sum_{i=1}^{n-1}|a_i|^2 +(n-1)\sum_{i=1}^{n-1} |d_i|+ \frac{(n-2)\epsilon}{2n-3}.
\end{aligned}
\end{equation}
 Then the eigenvalues $\lambda=(\lambda_1,\cdots, \lambda_n)$ of $\mathrm{A}$ (possibly with an order) behavior like
\begin{equation}
\begin{aligned}
 |d_{\alpha}-\lambda_{\alpha}|
<   \epsilon, \mbox{  }\forall 1\leq \alpha\leq n-1; \mbox{  }
0\leq \lambda_{n}-\mathrm{{\bf a}}
< (n-1)\epsilon. \nonumber 
\end{aligned}
\end{equation}
\end{lemma}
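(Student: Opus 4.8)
The plan is to exploit that $\mathrm{A}$ is Hermitian, hence has real eigenvalues, and to read the quadratic growth condition \eqref{guanjian1-yuan} as the assertion that the corner entry $\mathbf{a}$ dominates: it is large compared both with the coupling $\sum_i|a_i|^2$ and with $\sum_i|d_i|$, and in particular is separated from every $d_\alpha$. A convenient preliminary reduction is to eliminate vanishing off-diagonal entries: if $a_\alpha=0$ for some $\alpha$, then the $\alpha$-th coordinate decouples, $d_\alpha$ is an eigenvalue, and the statement reduces to the same one for the $(n-1)\times(n-1)$ matrix obtained by deleting the $\alpha$-th row and column; since each of the three terms on the right-hand side of \eqref{guanjian1-yuan} is no larger for the smaller matrix, the hypothesis is inherited and one may induct on $n$ to assume $a_\alpha\neq0$ for all $\alpha$. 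Under this assumption \eqref{guanjian1-yuan} gives $\mathbf{a}-d_\alpha\geq\frac{2n-3}{\epsilon}\sum_i|a_i|^2>0$ for each $\alpha$, and $\mathbf{a}\geq(n-1)\sum_i|d_i|>0$.

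First I would isolate the large eigenvalue $\lambda_n$. Testing the Rayleigh quotient of $\mathrm{A}$ against the last standard basis vector gives $\lambda_n\geq\mathbf{a}$, so $\lambda_n>\max_\alpha d_\alpha$ and any eigenvector for $\lambda_n$ has nonzero last coordinate; normalizing that coordinate to $1$ and solving the first $n-1$ eigenvalue equations yields $v_\alpha=a_\alpha/(\lambda_n-d_\alpha)$, and the last equation becomes the secular identity $\lambda_n-\mathbf{a}=\sum_\alpha|a_\alpha|^2/(\lambda_n-d_\alpha)$. Since $\lambda_n-d_\alpha\geq\mathbf{a}-d_\alpha\geq\frac{2n-3}{\epsilon}|a_\alpha|^2$, each summand is at most $\frac{\epsilon}{2n-3}$, and therefore $0\leq\lambda_n-\mathbf{a}\leq\frac{(n-1)\epsilon}{2n-3}<(n-1)\epsilon$.

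For the remaining $n-1$ eigenvalues I would use the same computation structurally: every eigenvalue of $\mathrm{A}$ is either equal to some $d_\alpha$ — and then trivially within $\epsilon$ of a diagonal entry — or is a root of the secular function $\phi(\lambda):=\lambda-\mathbf{a}-\sum_\alpha|a_\alpha|^2/(\lambda-d_\alpha)$, and in fact $\det(\lambda I-\mathrm{A})=\prod_\alpha(\lambda-d_\alpha)\,\phi(\lambda)$. Because $\phi'(\lambda)=1+\sum_\alpha|a_\alpha|^2/(\lambda-d_\alpha)^2>0$, a sign-change count between the consecutive distinct values $\delta_1<\cdots<\delta_m$ of the $d_\alpha$ shows $\phi$ has exactly one simple root in each of $(-\infty,\delta_1),(\delta_1,\delta_2),\dots,(\delta_m,+\infty)$, the one in the last interval being $\lambda_n$. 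It remains to show that each of the other $m$ roots $\rho$ lies within $\epsilon$ of some $d_\alpha$: if $\rho$ were at distance at least $\epsilon$ from every $d_\alpha$, then $|\rho-\mathbf{a}|=\big|\sum_\alpha|a_\alpha|^2/(\rho-d_\alpha)\big|\leq\frac1\epsilon\sum_i|a_i|^2\leq\frac{\mathbf{a}}{2n-3}$, forcing $\rho\geq\frac{2n-4}{2n-3}\mathbf{a}$, which contradicts $\rho<\delta_m\leq\sum_i|d_i|\leq\frac{\mathbf{a}}{n-1}$ once $n\geq3$. To identify which endpoint $\rho$ is near — so that one can match $\rho$ to a definite $d_\alpha$ and, after sorting the $d_\alpha$ increasingly, read off $|\lambda_k-d_k|<\epsilon$ for $1\leq k\leq n-1$ — I would evaluate $\phi$ at $\delta_{j+1}-\epsilon$ and use \eqref{guanjian1-yuan} once more to get $\phi(\delta_{j+1}-\epsilon)<0$, so that the root in $(\delta_j,\delta_{j+1})$ exceeds $\delta_{j+1}-\epsilon$; the root below $\delta_1$ is handled the same way, and the case $n=2$ is checked directly from the explicit eigenvalues of a $2\times2$ Hermitian matrix.

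The heart of the argument, and the step I expect to demand the most care, is the quantitative localization of the roots of $\phi$ just described: it is exactly there that the precise shape of \eqref{guanjian1-yuan}, with its constants $\frac{2n-3}{\epsilon}$, $n-1$ and $\frac{(n-2)\epsilon}{2n-3}$, gets consumed, and where one must keep track of multiplicities among the $d_\alpha$ and of the signs of the individual terms $|a_\alpha|^2/(\lambda-d_\alpha)$ near the poles of $\phi$.
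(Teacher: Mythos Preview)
Your argument is correct and takes a genuinely different route from the paper's.

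The paper first proves an auxiliary result (its Lemma~\ref{refinement}) via a trace contradiction: defining the ``bad'' set $\mathbf{B}=\{\alpha:|\lambda_\alpha-d_i|\geq\epsilon\ \forall i\}$, one shows from the characteristic equation that $\lambda_\alpha\geq\mathbf{a}-\frac{1}{\epsilon}\sum|a_i|^2$ for $\alpha\in\mathbf{B}$, and then $|\mathbf{B}|\geq1$ forces $\mathrm{tr}(A)$ to exceed itself. This gives closeness to \emph{some} $d_i$ but not the matching. For the matching the paper reapplies this with $\epsilon$ replaced by $\epsilon/(2n-3)$, forms the connected components $J_k$ of $\bigcup_\alpha(d_\alpha-\frac{\epsilon}{2n-3},d_\alpha+\frac{\epsilon}{2n-3})$, and runs a deformation argument: the integer-valued counting function $\widetilde{\mathrm{Card}}_k(\mathbf{a})$ is continuous in $\mathbf{a}$ on $[P_0',\infty)$ (eigenvalues cannot cross the gaps, by the auxiliary lemma), hence constant, and its value is read off from the asymptotics $\mathbf{a}\to\infty$ via Lemma~1.2 of \cite{CNS3}. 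The bound $|\lambda_\gamma-d_\gamma|<\frac{(2(j_i-j_{i-1})-1)\epsilon}{2n-3}\leq\epsilon$ then follows from the diameter of $J_i$.

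You instead exploit the arrowhead structure directly through the secular function $\phi(\lambda)=\lambda-\mathbf{a}-\sum_\alpha|a_\alpha|^2/(\lambda-d_\alpha)$: monotonicity of $\phi$ between poles gives the interlacing, and your sign computation $\phi(\delta_{j+1}-\epsilon)<0$ pins each root to the \emph{right} endpoint of its interval, so the matching falls out immediately once multiplicities are accounted for by the factor $\prod_j(\lambda-\delta_j)^{k_j-1}$ in $\det(\lambda I-A)$. This is more elementary---no continuity/deformation step, no appeal to \cite{CNS3}---and actually gives the sharper one-sided localization $\lambda_\alpha\in(\delta_j-\epsilon,\delta_j]$ (after sorting). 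The paper's approach, on the other hand, is somewhat more robust in that the trace contradiction and the counting argument do not depend as delicately on the precise pole structure of $\phi$, which may be why the author chose it.
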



 \vspace{1mm}
 Finally, we prove in Section \ref{appendix-gauduchon} 
 the quantitative boundary estimates \eqref{bdy-sec-estimate-quar1} 
 and solve the Dirichlet problem for Monge-Amp\`ere equation for $(n-1)$-PSH functions associated to Gauduchon's conjecture.
Also, the key ingredient is to set up 

 \begin{proposition}
 \label{proposition1-normal}
 Let $(M,\omega)$ be a compact Hermitian manifold with $C^2$  
 boundary. 
Suppose in addition that \eqref{bdry-assumption1-Gauduchon} holds.
 Let  $u$ be the $C^2$ admissible solution of Dirichlet problem
  \eqref{mainequ-gauduchon-general*} and \eqref{mainequ1}. We denote 
  $\mathfrak{g}[u]=\frac{1}{n-1}(\mathrm{tr}_{\omega}(*\Phi[u]))\omega-(*\Phi[u])$.
 Suppose, in addition to  \eqref{elliptic}, \eqref{concave}, \eqref{nondegenerate} and \eqref{addistruc}, that there is an admissible subsolution $\underline{u}\in C^2(\bar M)$.
 Then the key inequality \eqref{yuan-prop1} also holds for
   a uniformly positive constant depending only on $|u|_{C^0(\bar M)}$, $|\underline{u}|_{C^2(\bar M)}$, $\partial M$
up to second order derivatives and other known data (but neither on $(\inf_M \psi)^{-1}$ nor on $\sup_{ M}|\nabla u|$).
Moreover,   \eqref{addistruc} can  be removed in the case when  $\partial M$ is mean pseudoconcave.
 \end{proposition}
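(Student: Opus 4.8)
The plan is to reduce this to Proposition \ref{proposition-quar-yuan1} by exhibiting $\mathfrak{g}[u] = \frac{1}{n-1}(\mathrm{tr}_\omega(*\Phi[u]))\omega - (*\Phi[u])$ as a deformation of $(1,1)$-forms of precisely the type treated there, and by checking that the boundary hypothesis \eqref{bdry-assumption1-Gauduchon} on $\mathrm{tr}_{\omega'}(-L_{\partial M})$ translates into the hypothesis \eqref{bdry-assumption1} for the transformed operator. First I would record the algebraic relation between the eigenvalues of $*\Phi[u]$ (with respect to $\omega$) and those of $\mathfrak{g}[u]$: writing $\mu = \lambda(*\Phi[u])$, one has $\lambda(\mathfrak{g}[u])_i = \frac{1}{n-1}\big(\sum_j \mu_j\big) - \mu_i$, which is the standard linear bijection between the cone $\Gamma$ and its image $\tilde\Gamma$ under the map $P(\mu)_i = \frac{1}{n-1}\sum_j\mu_j - \mu_i$. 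The admissibility condition $\lambda(*\Phi[u])\in\Gamma$ becomes $\lambda(\mathfrak{g}[u])\in\tilde\Gamma$, and the structural conditions \eqref{elliptic}, \eqref{concave}, together with \eqref{nondegenerate} and \eqref{addistruc}, pass to the composed function $\tilde f := f\circ P^{-1}$ on $\tilde\Gamma$ (this is a routine check already used in the theory of the $(n-1)$-PSH Monge–Ampère equation; the concavity of $\tilde f$ follows since $P$ is linear, and ellipticity from $\partial\tilde f/\partial(\mathfrak{g})_i = \frac{1}{n-1}\sum_j f_j - f_i > 0$ because $f$ is defined on $\Gamma\subseteq\Gamma_1$ so $\sum_j f_j > (n-1) f_i$ is not quite immediate and needs the elementary inequality $f_i \le \frac{1}{n}\sum_j f_j$ valid on $\Gamma_1$-type cones — this is where a small lemma is invoked).

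Next I would verify that $\mathfrak{g}[u]$ really has the form $\tilde\chi + \sqrt{-1}\partial\bar\partial u + \sqrt{-1}(\partial u\wedge\overline{\tilde\eta^{1,0}} + \tilde\eta^{1,0}\wedge\bar\partial u)$ for a suitable smooth background form $\tilde\chi$ and $(1,0)$-form $\tilde\eta^{1,0}$ determined by $\chi$, $\omega$ and $\varrho$. The point is that the Hodge-star and the contraction with $\omega^{n-2}$ applied to $\sqrt{-1}\partial\bar\partial u\wedge\omega^{n-2}$ produce, modulo lower-order terms absorbed into $\tilde\chi$, exactly the operator $u\mapsto (\Delta u)\,\omega - \sqrt{-1}\partial\bar\partial u$ up to the normalization $\frac{1}{n-1}$; after composing with $\frac{1}{n-1}(\mathrm{tr}_\omega)\,\omega - (\cdot)$ the second-order part returns to $\sqrt{-1}\partial\bar\partial u$ plus a multiple of $(\Delta u)\omega$ that is again of the required shape since $\Delta u\,\omega$ is the trace part. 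The first-order term $\frac{\varrho}{(n-1)!}\mathfrak{Re}(\sqrt{-1}\partial u\wedge\bar\partial\omega^{n-2})$ is linear in $\partial u$ and $\bar\partial u$ with smooth coefficients, hence after applying the same linear transformation it is of the form $\sqrt{-1}(\partial u\wedge\overline{\tilde\eta^{1,0}} + \tilde\eta^{1,0}\wedge\bar\partial u) + (\text{smooth})$ — here the crucial observation is that the transformation sends the space of real $(1,1)$-forms to itself and respects the decomposition into trace and trace-free parts, so no genuinely new type of first-order term appears.

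Then I would track the boundary term: in the proof of Proposition \ref{proposition-quar-yuan1} the relevant boundary quantity is the restriction $\lambda_{\omega'}(-L_{\partial M})$ of the eigenvalues of $-L_{\partial M}$, and one needs it to lie in $\overline{\tilde\Gamma}_\infty$ where $\tilde\Gamma_\infty$ is the projection of $\tilde\Gamma$. Under the eigenvalue transformation $P$, the projection $\overline{\tilde\Gamma}_\infty$ is characterized by a single scalar condition on the trace of the $(n-1)$-dimensional data — this is precisely why the hypothesis degenerates from the vector condition \eqref{bdry-assumption1} to the scalar condition \eqref{bdry-assumption1-Gauduchon} $\mathrm{tr}_{\omega'}(-L_{\partial M})\in\overline\Gamma^\infty_{\mathbb{R}^1}$; I would make this correspondence precise (the point being that $P$ applied to a vector all of whose entries are equal is again such a vector, and $\tilde\Gamma^\infty_{\mathbb{R}^1}$ matches $\Gamma^\infty_{\mathbb{R}^1}$ after the shift). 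Likewise, "$\partial M$ mean pseudoconcave" ($\mathrm{tr}_{\omega'}L_{\partial M}\le 0$) should correspond to the situation in which, in the $\mathfrak{g}$-picture, one can dispense with \eqref{addistruc} for the same reason that "$\partial M$ pseudoconcave" does in Proposition \ref{proposition-quar-yuan1}.

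With these three translations in place, I would simply \emph{invoke} Proposition \ref{proposition-quar-yuan1} for the data $(\tilde f, \tilde\Gamma, \tilde\chi, \tilde\eta^{1,0})$ and read off \eqref{yuan-prop1} for $\mathfrak{g}[u]$, noting that $\mathfrak{g}(\xi_n, J\bar\xi_n)$ and $\mathfrak{g}(\xi_\alpha, J\bar\xi_n)$ are literally the same tangential and mixed components as in that proposition, and that the constant depends on the stated data because $\tilde\chi, \tilde\eta^{1,0}$ depend smoothly on $\chi, \omega, \varrho$ and the transformation $P$ is fixed and universal. The subtle point worth isolating is that $\mathfrak{g}[u]$ is built from $\mathfrak{g}(\xi_n, J\bar\xi_n)$ itself linearly, so the "variable entry $\mathbf{a}$" in Lemma \ref{yuan's-quantitative-lemma} — which in Proposition \ref{proposition-quar-yuan1} is the normal-normal component of the Hessian — here becomes an affine function of that component with a nonzero (in fact, by the normalization, nonvanishing) coefficient; I would check this coefficient does not vanish so that the quadratic growth hypothesis \eqref{guanjian1-yuan} can still be arranged. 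This last verification — confirming that the transformed normal derivative still appears with the right sign and nondegenerate coefficient so that the Lemma \ref{yuan's-quantitative-lemma} mechanism applies verbatim — is the main obstacle; everything else is the bookkeeping of the linear change of variables $P$ and its effect on cones, the function, the background forms, and the boundary condition.
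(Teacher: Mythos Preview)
Your reduction to Proposition \ref{proposition-quar-yuan1} has a genuine gap at the step where you claim the first-order term in $\mathfrak{g}[u]$ is of the form $\sqrt{-1}(\partial u\wedge\overline{\tilde\eta^{1,0}}+\tilde\eta^{1,0}\wedge\bar\partial u)$. It is not. From \eqref{g-gau-form1} the gradient contribution is $\frac{\varrho}{n-1}W(\partial u,\bar\partial u)$ with $W_{i\bar j}=(\mathrm{tr}_\omega Z)g_{i\bar j}-(n-1)Z_{i\bar j}$, and the explicit formula \eqref{Z-tensor1} for $Z_{i\bar j}$ shows that, e.g., $Z_{\alpha\bar\beta}$ contains terms like $-\bar T^{\alpha}_{n\beta}u_n$ involving the \emph{normal} derivative of $u$. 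The special structure $\chi_{i\bar j,\zeta_\alpha}=\delta_{i\alpha}\bar\eta_j$ in \eqref{key-chi}, which underlies Proposition \ref{proposition-quar-yuan1}, forces $\chi_{\alpha\bar\beta}$ to depend only on $u_\alpha,u_{\bar\beta}$ with tangential indices; this is exactly what makes $\mathfrak{g}_{\alpha\bar\beta}-\underline{\mathfrak{g}}_{\alpha\bar\beta}=-(u-\underline u)_\nu L_{\partial M}(\partial_\alpha,\bar\partial_\beta)$ hold at the boundary and drives the decomposition \eqref{cha1}. For the $W$-term this identity fails entry by entry, so Proposition \ref{proposition-quar-yuan1} cannot be invoked as a black box. (Incidentally, your ellipticity check also slips: $\partial\tilde f/\partial\lambda_i=\sum_{k\ne i}f_k$, not $\frac{1}{n-1}\sum_j f_j-f_i$.)

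The paper's proof exploits a finer fact: while individual $W_{\alpha\bar\beta}$ depend on $u_n$, the \emph{trace} $\sum_{\alpha<n}W_{\alpha\bar\alpha}=(n-1)Z_{n\bar n}$ does not --- one computes directly from \eqref{Z-tensor1} at the boundary point that $Z_{n\bar n}$ involves only tangential derivatives $u_\alpha$, hence $Z_{n\bar n}=\underline Z_{n\bar n}$ and $\sum_{\alpha<n}\mathfrak{g}_{\alpha\bar\alpha}=\sum_{\alpha<n}\underline{\mathfrak{g}}_{\alpha\bar\alpha}-(u-\underline u)_\nu\,\mathrm{tr}_{\omega'}(L_{\partial M})$. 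For this reason the paper works with the matrix $U[u]=*\Phi[u]$ rather than with $\mathfrak{g}[u]$: the $(n,n)$ entry of $U$ at the boundary is precisely $\sum_{\alpha<n}\mathfrak{g}_{\alpha\bar\alpha}$, which is under control, while the variable to be bounded ($\mathrm{tr}_\omega\mathfrak{g}$) sits in the first $n-1$ diagonal entries. Lemma \ref{yuan's-quantitative-lemma} is then applied in this permuted configuration. Your intuition that the boundary condition collapses to a trace condition is correct and this computation explains it, but the mechanism is this specific cancellation in $Z_{n\bar n}$, not a formal change of variables on the cone.
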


   Our quantitative boundary estimate  
   significantly weakens the real analytic Levi flat/\textit{holomorphically flat} assumption on boundary imposed in earlier work  \cite{yuan2017}, in which the author  
extensively extends results of Chen \cite{Chen}  and Phong-Strum \cite{Phong-Sturm2010}  
for Dirichlet problem of complex Monge-Amp\`ere equation on compact K\"ahler manifolds with  
 \textit{holomorphically flat} boundary.
It would be interesting to extend quantitative boundary estimate 
 \eqref{bdy-sec-estimate-quar1}   
  to complex manifolds with general boundary,\renewcommand{\thefootnote}{\fnsymbol{footnote}}\footnote{Shortly after T. Collins and S. Picard posted their paper \cite{Collins-Picard2019} to arXiv.org, 
 I learned  that, they prove quantitative boundary estimate \eqref{bdy-sec-estimate-quar1}
 for complex $k$-Hessian equation on general complex manifolds. 
Meanwhile,
T. Collins also informed me that 
a rather different version of  Lemma \ref{yuan's-quantitative-lemma} 
  was also proved 
 in  \cite{Collins2018Yau} independently.  Indeed it was used in a very different way than our application.  
 I want to thank T. Collins for informing me their  work.}
 as  \cite{Boucksom2012} did for complex Monge-Amp\`ere equation.

 \section{Preliminaries}
\label{preliminaries}
 We denote $\Gamma^\sigma=\{\lambda\in \Gamma: f(\lambda)>\sigma\}$  
  $Df(\lambda)=(f_1(\lambda),\cdots, f_n(\lambda))$.
 Conditions \eqref{elliptic}-\eqref{concave} tell   $\partial\Gamma^\sigma=\{\lambda\in \Gamma: f(\lambda)=\sigma\}$ is convex and
 \begin{equation}
\label{bb}
\begin{aligned}
f(\lambda)-f(\mu)\geq \sum_{i=1}^n f_i(\lambda)(\lambda_i-\mu_i), \mbox{ for } \lambda, \mu\in \Gamma.
\end{aligned}
\end{equation}
Moreover, we have some stronger results. 
 
 \begin{lemma}
[\cite{GSS14}]
\label{guan2014}
 Suppose 
  \eqref{elliptic} and \eqref{concave} hold.
 Let $K$ be a compact subset of $\Gamma$ and $\beta>0$. There is a constant $\varepsilon>0$ such that,
 for  $\mu\in K$ and $\lambda\in \Gamma$, when $|\nu_{\mu}-\nu_{\lambda}|\geq \beta$,
\begin{equation}
\label{2nd}
\begin{aligned}
\sum_{i=1}^n f_{i}(\lambda)(\mu_{i}-\lambda_{i})\geq f(\mu)-f(\lambda)+\varepsilon   (1+\sum_{i=1}^n f_{i}(\lambda)),
\end{aligned}
\end{equation}
where $\nu_{\lambda}=Df(\lambda)/|Df(\lambda)|$ denotes the unit normal vector to the level surface $\partial\Gamma^{f(\lambda)}$.

\end{lemma}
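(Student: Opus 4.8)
\textbf{Proof proposal for Lemma \ref{guan2014}.}

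The plan is to exploit the convexity of the level sets $\partial\Gamma^{\sigma}$ together with the positivity \eqref{elliptic} to upgrade the first-order concavity inequality \eqref{bb} to the stronger form \eqref{2nd}, where the improvement is quantitatively controlled by the angular deviation $|\nu_\mu-\nu_\lambda|$ of the two normal directions. First I would recall that \eqref{bb} already gives $\sum_i f_i(\lambda)(\mu_i-\lambda_i)\geq f(\mu)-f(\lambda)$; the content of the lemma is the extra term $\varepsilon(1+\sum_i f_i(\lambda))$, so the whole game is to produce a definite gap in that inequality whenever the unit normals differ by at least $\beta$. I expect to reduce to the case $f(\mu)=f(\lambda)=\sigma$: indeed if $f(\mu)\neq f(\lambda)$ one can move $\mu$ along the ray $\mu+te_1+\cdots$ (or use condition \eqref{addistruc}-type monotonicity, though here just \eqref{elliptic} and the structure of $\Gamma$ suffice) to a point on $\partial\Gamma^{f(\lambda)}$ while only decreasing $\sum_i f_i(\lambda)(\mu_i-\lambda_i)$ by the corresponding change in $f$-value; then prove the on-level statement and add back. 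So the heart is: for $\mu,\lambda$ on the same convex level set $\partial\Gamma^\sigma$ with $|\nu_\mu-\nu_\lambda|\geq\beta$, show $\sum_i f_i(\lambda)(\mu_i-\lambda_i)\geq \varepsilon(1+\sum_i f_i(\lambda))$ with $\varepsilon$ depending only on $K,\beta$.

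For the on-level case I would argue geometrically. Write $\nu_\lambda=Df(\lambda)/|Df(\lambda)|$. Since $\partial\Gamma^\sigma$ is convex and $\mu$ lies in it, $\mu$ lies in the half-space $\{\xi:\langle\nu_\lambda,\xi-\lambda\rangle\geq 0\}$ (the supporting hyperplane at $\lambda$ points into the sublevel-complement), which already recovers $\langle Df(\lambda),\mu-\lambda\rangle\geq 0$. To get a strictly positive lower bound, note that if $\langle\nu_\lambda,\mu-\lambda\rangle$ were very small then $\mu$ would be very close to the supporting hyperplane at $\lambda$; combined with $\mu$ lying on the convex hypersurface and with $\mu$ also satisfying $\langle\nu_\mu,\lambda-\mu\rangle\geq 0$ (the supporting hyperplane at $\mu$), a compactness/continuity argument forces $\nu_\mu$ and $\nu_\lambda$ to be close — contradicting $|\nu_\mu-\nu_\lambda|\geq\beta$. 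Making this quantitative: on the compact set $K$ (and its image under $Df$, which stays in a compact subset of the positive orthant away from $0$ by \eqref{elliptic}), the map $\lambda\mapsto\nu_\lambda$ is uniformly continuous and the hypersurface has a uniform modulus of convexity on the relevant compact region, so there is $\delta=\delta(K,\beta)>0$ with $\langle\nu_\lambda,\mu-\lambda\rangle\geq\delta$ whenever $|\nu_\mu-\nu_\lambda|\geq\beta$ and $\mu,\lambda\in\partial\Gamma^\sigma$. Multiplying through by $|Df(\lambda)|$ gives $\sum_i f_i(\lambda)(\mu_i-\lambda_i)\geq\delta|Df(\lambda)|$, and since $|Df(\lambda)|\geq c(K)(1+\sum_i f_i(\lambda))$ on the relevant range (all $f_i>0$, so $|Df|$ and $\sum f_i$ are comparable, and both are bounded below on $K$), we conclude with $\varepsilon=\delta c(K)$.

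The main obstacle I anticipate is making the compactness argument fully uniform in $\sigma$ and in the possibly unbounded location of $\mu$: $\lambda$ ranges over the compact set $K$, but $\mu$ ranges over all of $\Gamma$, so $\mu$ can be far away and the level set $\partial\Gamma^\sigma$ is noncompact. I would handle this by observing that the inequality $\langle Df(\lambda),\mu-\lambda\rangle\geq 0$ already holds for \emph{all} such $\mu$, and that the deficit in \eqref{bb} can only \emph{increase} as $\mu$ moves farther out along a fixed direction in $\Gamma_n$ (because the $f_i(\lambda)$ are all positive); hence it suffices to check the improved bound when $\mu$ is confined to a bounded neighborhood, e.g. after projecting $\mu$ back toward $\lambda$ along the segment until it first touches a fixed large sphere or until $|\nu_\mu-\nu_\lambda|$ drops to $\beta$ — at that moment $\mu$ lies in a compact region determined by $K$ and $\beta$, the angular hypothesis is still (marginally) satisfied, and the uniform-convexity estimate applies. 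One then uses monotonicity of the deficit to transfer the bound back to the original $\mu$. I would cite \cite{GSS14} for the precise execution but the above is the structure; the only genuinely delicate point is extracting the uniform modulus of convexity of $\partial\Gamma^\sigma$ on compacta, which follows from \eqref{concave}, \eqref{elliptic} and smoothness of $f$ on $\Gamma$.
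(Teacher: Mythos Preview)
The paper does not prove this lemma; it is quoted from \cite{GSS14}, and Remark~\ref{addcond-remark-psiu} only records that the constant $\varepsilon$ depends on $K$ and $\beta$ but not on $(\delta_{f(\lambda),f})^{-1}$. So there is no in-paper proof to compare against, and I am assessing your sketch against the argument one actually needs.

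Your geometric picture is the right one: convexity of the level hypersurfaces $\partial\Gamma^{\sigma}$ together with the angular separation $|\nu_\mu-\nu_\lambda|\geq\beta$ is precisely what upgrades the concavity inequality \eqref{bb} to \eqref{2nd}. However, the execution contains a genuine gap: you have reversed the roles of $\mu$ and $\lambda$. In the lemma it is $\mu$ that sits in the compact set $K$ while $\lambda$ ranges freely over $\Gamma$; your last paragraph states the opposite explicitly, and the earlier steps depend on that reversal. This is not cosmetic, because every gradient in the inequality is $Df(\lambda)$, evaluated at the \emph{noncompact} variable. Two places where the argument then breaks:
\begin{itemize}
\item The assertion ``$|Df(\lambda)|\geq c(K)\bigl(1+\sum_i f_i(\lambda)\bigr)$ \ldots both are bounded below on $K$'' is false as written: $\lambda\notin K$, and for $\lambda\in\Gamma$ one can have $|Df(\lambda)|\to 0$ (e.g.\ $f=\sum_i\log\lambda_i$ with $\lambda=t\vec 1$, $t\to\infty$). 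So the passage from $\langle\nu_\lambda,\mu-\lambda\rangle\geq\delta$ to a bound containing the additive ``$+\varepsilon$'' collapses.
\item The compactness step producing the uniform $\delta$ needs compactness in the variable whose normal you are fixing, namely $\lambda$; you do not have it. Your proposed reduction ``move $\mu$ onto $\partial\Gamma^{f(\lambda)}$'' makes this worse, since $f(\lambda)$ is unrestricted and the displaced $\mu$ leaves every compact set.
\end{itemize}

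To repair the sketch, reorganise everything around the compact variable $\mu$. Because $K\Subset\Gamma$ there is $\eta>0$ with $\mu-\eta\vec 1\in\Gamma$ for all $\mu\in K$; applying \eqref{bb} at $\lambda$ with $\mu$ replaced by $\mu-\eta\vec 1$ already yields the piece $\eta\sum_i f_i(\lambda)$ on the right, at the cost of a term $f(\mu)-f(\mu-\eta\vec 1)$ that is uniformly bounded on $K$. The remaining task --- manufacturing the additive constant $\varepsilon$ under the hypothesis $|\nu_\mu-\nu_\lambda|\geq\beta$ --- is where the convex-hypersurface geometry genuinely enters, and it must be argued with $\mu$ in $K$ and $\lambda$ possibly far away (splitting into the cases $\lambda$ bounded versus $|\lambda|\to\infty$, or equivalently comparing $\lambda$ with the point on $\partial\Gamma^{f(\mu)}$ sharing its normal direction). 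Once the roles are set straight the outline you wrote becomes essentially the proof in \cite{GSS14}; as it stands, the swapped roles make the key quantitative steps invalid.
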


\begin{lemma}
 [\cite{Gabor}]
\label{gabor'lemma}
Suppose there exists a  $\mathcal{C}$-subsolution $\underline{u}\in C^{2}(\bar M)$.
Then there exist two positive constants $R_0$ and $ \varepsilon$ with the  property: If $|\lambda|\geq R_0$, 
then either
\begin{equation}
\label{haha1}
\begin{aligned}
F^{i\bar j}(\underline{\mathfrak{g}}_{i\bar j}-\mathfrak{g}_{i\bar j})
\geq  \varepsilon F^{i\bar j}g_{i\bar j},
\end{aligned}
\end{equation}
or
\begin{equation}
\label{2nd-case}
\begin{aligned}
F^{i\bar j}\geq  \varepsilon (F^{p\bar q}g_{p\bar q})g^{i\bar j}.
\end{aligned}
\end{equation}
Here $ (g^{i\bar j})=(g_{i\bar j})^{-1}$.
\end{lemma}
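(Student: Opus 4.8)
The plan is to reduce the statement to a finite‑dimensional inequality about eigenvalue vectors and then combine Lemma \ref{guan2014} with the defining property \eqref{existenceofsubsolution2} of a $\mathcal{C}$‑subsolution. First I would diagonalize at the point in question: pick a unitary frame in which $g_{i\bar j}=\delta_{ij}$ and $\mathfrak{g}_{i\bar j}=\lambda_i\delta_{ij}$, where $\lambda=\lambda(\mathfrak{g}[u])\in\Gamma$ is listed in increasing order; then $F^{i\bar j}=f_i(\lambda)\delta_{ij}$ and $g^{i\bar j}=\delta_{ij}$, so \eqref{2nd-case} reads ``$f_i(\lambda)\geq\varepsilon\sum_j f_j(\lambda)$ for all $i$'', while $F^{i\bar j}g_{i\bar j}=\sum_i f_i(\lambda)$ and, by the standard concavity/rearrangement inequality (cf.\ \cite{Gabor}), $F^{i\bar j}(\underline{\mathfrak{g}}_{i\bar j}-\mathfrak{g}_{i\bar j})\geq\sum_i f_i(\lambda)(\mu_i-\lambda_i)$ with $\mu=\lambda(\mathfrak{g}[\underline u])\in\Gamma$ listed increasingly. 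Since the lemma is applied to a solution, $f(\lambda)=\psi$ at the point. Hence it suffices to produce $\varepsilon>0$ — uniform over the compact sets $K:=\{\lambda(\mathfrak{g}[\underline u](z)):z\in\bar M\}\subset\Gamma$ and $\Sigma:=\psi(\bar M)$ — such that for every $\mu\in K$ and every $\lambda\in\Gamma$ with $f(\lambda)\in\Sigma$, either $\sum_i f_i(\lambda)(\mu_i-\lambda_i)\geq\varepsilon\sum_i f_i(\lambda)$ or $f_i(\lambda)\geq\varepsilon\sum_j f_j(\lambda)$ for all $i$ (the threshold $R_0$ will play no role in this argument).

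Two uniform facts are recorded first. \textbf{(i)} As $K$ is compact in $\Gamma$ and each $f_i>0$ there, the unit normals $\nu_\mu:=Df(\mu)/|Df(\mu)|$ have every component $\geq c_1$ for some $c_1>0$ and all $\mu\in K$. \textbf{(ii)} The pointwise hypothesis \eqref{existenceofsubsolution2} upgrades to a \emph{quantitative, uniform} one: there are $\delta>0$, $T_*>0$ with $f\big(\lambda(\mathfrak{g}[\underline u](z))+T_*e_i\big)\geq\psi(z)+\delta$ for all $z\in\bar M$ and all $i$. Indeed, for fixed $T$ the map $z\mapsto f(\lambda(\mathfrak{g}[\underline u](z))+Te_i)-\psi(z)$ is continuous and nondecreasing in $T$; its supremum over $T$ is positive at every $z$ by \eqref{existenceofsubsolution2}, hence exceeds a positive constant on a neighbourhood of each $z$ for a suitable finite $T$, and one covers the compact $\bar M$ by finitely many such neighbourhoods. (Here one uses $\lambda(\mathfrak{g}[\underline u](z))+Te_i\in\Gamma$, which is part of the set‑up.)

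Now set $\beta:=c_1/2$, let $\varepsilon_1>0$ be the constant of Lemma \ref{guan2014} for this $K$ and $\beta$, let $C\geq 0$ bound $|f(\mu)-\sigma|$ over $(\mu,\sigma)\in K\times\Sigma$, put $C':=2C/\varepsilon_1+1$, and choose $\kappa_0:=\min\{c_1/2,\ \delta/(2T_*C')\}>0$. Given $\lambda$ as above, if $\nu_{\lambda,i}\geq\kappa_0$ for every $i$ then $f_i(\lambda)=\nu_{\lambda,i}|Df(\lambda)|\geq(\kappa_0/\sqrt{n})\sum_j f_j(\lambda)$, which is the second alternative. Otherwise $\nu_{\lambda,i_0}<\kappa_0\leq c_1/2<c_1\leq\nu_{\mu,i_0}$ for some $i_0$, so $|\nu_\mu-\nu_\lambda|\geq\beta$, and Lemma \ref{guan2014} gives $\sum_i f_i(\lambda)(\mu_i-\lambda_i)\geq f(\mu)-f(\lambda)+\varepsilon_1(1+\sum_i f_i(\lambda))\geq-C+\varepsilon_1\sum_i f_i(\lambda)$. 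If $\sum_i f_i(\lambda)\geq C'$ this is $\geq\frac{\varepsilon_1}{2}\sum_i f_i(\lambda)$, the first alternative. If $\sum_i f_i(\lambda)<C'$, then $f_{i_0}(\lambda)<\kappa_0|Df(\lambda)|\leq\kappa_0\sum_j f_j(\lambda)<\kappa_0 C'\leq\delta/(2T_*)$, whence, using the tangent‑plane inequality of $f$ at $\lambda$ along $e_{i_0}$ together with \textbf{(ii)}, $\delta\leq f(\mu+T_*e_{i_0})-f(\lambda)\leq\sum_i f_i(\lambda)(\mu_i-\lambda_i)+T_*f_{i_0}(\lambda)$, so $\sum_i f_i(\lambda)(\mu_i-\lambda_i)\geq\delta/2>\frac{\delta}{2C'}\sum_i f_i(\lambda)$, again the first alternative. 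Thus $\varepsilon:=\min\{\kappa_0/\sqrt n,\ \varepsilon_1/2,\ \delta/(2C')\}$ works.

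The step I expect to be the main obstacle is \textbf{(ii)}: quantifying and uniformizing the $\mathcal{C}$‑subsolution condition over $\bar M$, together with the bookkeeping ensuring the final $\varepsilon$ depends on $\underline u,\psi$ only through the compact data $K,\Sigma$ (so that it is genuinely uniform). Everything else is a short chain of elementary inequalities. An equivalent route is Sz\'ekelyhidi's original contradiction argument \cite{Gabor}: along a hypothetical sequence $\lambda^{(k)}\in\Gamma$ with $|\lambda^{(k)}|\to\infty$ violating both alternatives, one normalizes $Df(\lambda^{(k)})$, extracts a limiting normal with a vanishing component, and derives a contradiction from boundedness of $(\mu+\Gamma_n)\cap\partial\Gamma^{\psi}$; the route via Lemma \ref{guan2014} is, however, more transparent.
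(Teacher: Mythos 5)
The paper does not supply its own proof of Lemma~\ref{gabor'lemma}; it is quoted from Sz\'ekelyhidi \cite{Gabor} (Proposition~6 there, see also Remark~\ref{addcond-remark-psiu}), whose argument proceeds by contradiction and compactness: one assumes a sequence $\lambda^{(k)}\in\Gamma$ with $|\lambda^{(k)}|\to\infty$ on which both alternatives fail, normalizes $Df(\lambda^{(k)})$, extracts a limiting normal with a vanishing component, and contradicts the boundedness of $(\underline{\lambda}+\Gamma_n)\cap\partial\Gamma^{\sigma}$ built into the $\mathcal{C}$-subsolution hypothesis. Your proof is correct but takes a genuinely different and more quantitative route. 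After the standard reduction to eigenvalues, you split on whether the unit normal $\nu_{\lambda}$ has all components $\ge\kappa_0$: if so, \eqref{2nd-case} is immediate; if not, $|\nu_{\mu}-\nu_{\lambda}|\ge\beta$, and you close the case of large $\sum f_i$ with Lemma~\ref{guan2014} and the case of small $\sum f_i$ with the concave tangent-plane inequality at $\lambda$ along $e_{i_0}$ together with a uniform quantitative form of \eqref{existenceofsubsolution2}. This buys two things: every constant is explicit (no contradiction/subsequence step), and the threshold $R_0$ is in fact never used, so you prove a slightly stronger statement valid for all admissible $\lambda$ satisfying the equation. The one caveat worth recording is that the dichotomy is established, and used throughout the paper, under the constraint $f(\lambda)\in\psi(\bar M)$; that restriction is present in Sz\'ekelyhidi's formulation and in the paper's standing convention that $\mathfrak{g}=\mathfrak{g}[u]$ for a solution $u$, although it is not repeated in the transcription of Lemma~\ref{gabor'lemma}, and you correctly flag and use it at the outset.
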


\begin{remark}
\label{addcond-remark-psiu}
 
 Lemmas \ref{guan2014} and \ref{gabor'lemma}, 
originated from 
 the work of Guan \cite{Guan12a,Guan14}, play important roles in the proof of \textit{a priori} 
 estimates. 
From the original proof of Lemma 2.2 in \cite{GSS14},
we know that the constant $\varepsilon$ in  Lemma \ref{guan2014} depends only on
$\underline{\lambda}$, $\beta$ and other known data but not on $(\delta_{f(\lambda),f})^{-1}$,
($\delta_{f(\lambda),f}=f(\lambda)-\sup_{\partial \Gamma}f$).  
By the original proof of Proposition 5 of \cite{Gabor}, the $R_0$ and $\varepsilon$ in Lemma \ref{gabor'lemma} depend only on $\underline{\lambda}$ but not on $(\delta_{f(\lambda),f})^{-1}$.
 Please refer to \cite{GSS14,Gabor} for details.
 

\end{remark}

In this paper we prove the following lemma
 which is used to prove quantitative boundary estimate and also to give a different proof of Lemma 9 of \cite{Gabor}.
Let's denote $$\vec{1}=(1,\cdots, 1)\in \mathbb{R}^n.$$

\begin{lemma} \label{asymptoticcone1}
If $f$ satisfies \eqref{elliptic} and \eqref{concave}, then the following three statements are equivalent each other.
\begin{itemize}
 \item f satisfies \eqref{addistruc}.
 \item $\sum_{i=1}^n f_i(\lambda)\mu_i>0 \mbox{ for any } \lambda, \mu\in \Gamma$. In particular 
 $\sum_{i=1}^n f_i(\lambda)\lambda_i>0.$
  \item  For each $\epsilon>0$ such that $\lambda-\epsilon\vec{1}\in \Gamma$,   
  $\sum_{i=1}^n f_i(\lambda)(\lambda_i-\epsilon)\geq 0$. 
\end{itemize}
\end{lemma}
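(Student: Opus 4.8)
The plan is to prove the chain of equivalences in a cycle, say (i) $\Rightarrow$ (ii) $\Rightarrow$ (iii) $\Rightarrow$ (i), exploiting throughout the concavity inequality \eqref{bb} together with the positivity \eqref{elliptic}. The guiding principle is that \eqref{addistruc} is a statement about the asymptotic slope of $f$ along rays $t\lambda$, and that concavity converts such limits into monotone-difference-quotient statements, which in turn are governed precisely by the sign of the linear functional $\mu\mapsto\sum_i f_i(\lambda)\mu_i$.

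First I would establish (i) $\Rightarrow$ (ii). Fix $\lambda,\mu\in\Gamma$. By convexity of $\Gamma$ and the cone property, $\lambda+t\mu\in\Gamma$ for all $t\geq 0$; consider $h(t):=f(\lambda+t\mu)$, which is concave and smooth, with $h'(t)=\sum_i f_i(\lambda+t\mu)\mu_i$. Concavity forces $h'$ to be nonincreasing, so $h'(0)=\sum_i f_i(\lambda)\mu_i \geq \lim_{t\to\infty}h'(t)$; on the other hand, concavity also gives $h(t)-h(0)\leq t\,h'(0)$, and if $h'(0)\leq 0$ this would bound $h$ above by $h(0)$ along the ray $\lambda+t\mu$, contradicting (i) applied with the direction $\mu$ after a rescaling argument (one must pass from the ray $\lambda+t\mu$ to the ray $t\mu$, which is handled by concavity: $f(t\mu)$ and $f(\lambda+t\mu)$ differ by a controlled amount, or more directly, $\lim_{t\to\infty} f(t\mu) > \sigma$ for all $\sigma<\sup_\Gamma f$ implies $f$ is unbounded along that ray, hence so is $f$ along $\lambda + t\mu$ by monotonicity of $f$ in each variable via \eqref{elliptic}). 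Thus $h'(0)>0$, which is (ii). The strict inequality (as opposed to $\geq 0$) needs the full strength of \eqref{addistruc}, i.e. that the limit strictly exceeds \emph{every} $\sigma<\sup_\Gamma f$.

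Next, (ii) $\Rightarrow$ (iii) is nearly immediate: given $\epsilon>0$ with $\lambda-\epsilon\vec 1\in\Gamma$, apply \eqref{bb} with the pair $(\lambda,\lambda-\epsilon\vec 1)$, or simply observe $\sum_i f_i(\lambda)(\lambda_i-\epsilon)=\sum_i f_i(\lambda)(\lambda-\epsilon\vec 1)_i$, and since $\lambda-\epsilon\vec 1\in\Gamma$, (ii) gives this is $>0$, hence in particular $\geq 0$. For (iii) $\Rightarrow$ (i): fix $\sigma<\sup_\Gamma f$ and $\lambda\in\Gamma$; I would show $g(t):=f(t\lambda)$ satisfies $\lim_{t\to\infty}g(t)=\sup_\Gamma f$, or at least exceeds $\sigma$. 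Compute $g'(t)=\sum_i f_i(t\lambda)\lambda_i$, and note that for $t\geq t_0$ large, $t\lambda-\epsilon\vec 1\in\Gamma$ for suitable $\epsilon=\epsilon(t)$; applying (iii) at the point $t\lambda$ with this $\epsilon$ gives $\sum_i f_i(t\lambda)((t\lambda)_i-\epsilon)\geq 0$, i.e. $t\,g'(t)\geq \epsilon\sum_i f_i(t\lambda)$. Combined with concavity (so $g'$ is nonincreasing and $g$ is either eventually constant or strictly increasing) and the fact that $g$ is bounded above by $\sup_\Gamma f$, one concludes $\lim_{t\to\infty}g(t)=\sup_\Gamma f>\sigma$; here one uses that if $g$ had a limit strictly below $\sup_\Gamma f$, concavity would force $g'(t)\to 0$ while the sum $\sum_i f_i(t\lambda)$ stays bounded away from $0$ (since $f$ is increasing and $\sup f$ is not yet attained), contradicting the inequality $t g'(t)\geq \epsilon \sum_i f_i(t\lambda)$ once $\epsilon$ is chosen not to decay too fast — e.g. $\epsilon$ a fixed small constant works once $t$ is large since then $t\lambda - \epsilon\vec 1 \in \Gamma$.

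The main obstacle I anticipate is the bookkeeping in (iii) $\Rightarrow$ (i): choosing the parameter $\epsilon$ in (iii) uniformly (or at least not decaying too fast) as $t\to\infty$, and correctly ruling out the degenerate scenario where $f$ along the ray $t\lambda$ converges to a finite value $<\sup_\Gamma f$. This requires a careful interplay between the concavity of $g$, the monotonicity \eqref{elliptic} (to control $\sum_i f_i(t\lambda)$ from below along the ray as long as we are below $\sup_\Gamma f$), and the inequality from (iii). A secondary subtlety, present already in (i) $\Rightarrow$ (ii), is the passage between the rays $\lambda+t\mu$ and $t\mu$ and making sure the strict inequality in (ii) is genuinely extracted rather than just $\geq 0$; the resolution is to note that if $\sum_i f_i(\lambda)\mu_i=0$ then concavity forces $f$ constant along $\lambda+s\mu$ for $s\geq 0$, and then a perturbation of $\mu$ within the open cone $\Gamma$ (replacing $\mu$ by $\mu+\delta\vec 1$, still in $\Gamma$) together with \eqref{elliptic} produces a direction along which $f$ fails to be bounded, contradicting constancy.
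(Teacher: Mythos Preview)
Your cycle is correct in outline, and (ii) $\Rightarrow$ (iii) is fine. But the argument for (iii) $\Rightarrow$ (i) has a genuine gap. You rely on the claim that $\sum_i f_i(t\lambda)$ stays bounded away from zero ``since $f$ is increasing and $\sup f$ is not yet attained'', yet this is false in general: for $f(\lambda)=\sum_i\log\lambda_i$ on $\Gamma_n$ one has $\sum_i f_i(t\lambda)=t^{-1}\sum_i\lambda_i^{-1}\to 0$. The differential inequality $t\,g'(t)\ge\epsilon\sum_i f_i(t\lambda)$ therefore does not integrate to anything useful by itself. (One can repair this by a dichotomy: if $\sum_i f_i(t_k\lambda)\to 0$ along a subsequence, the concavity bound $f(\mu)\le f(t_k\lambda)+\sum_i f_i(t_k\lambda)(\mu_i-(t_k\lambda)_i)$ together with $\sum_i f_i(t_k\lambda)(t_k\lambda)_i\ge 0$ from (iii) forces $f(\mu)\le\lim f(t_k\lambda)$ for every $\mu\in\Gamma$, whence $\sup_\Gamma f\le L$---but this is not the argument you gave.)

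The paper bypasses all of this with a single application of \eqref{bb}. Given $\sigma<\sup_\Gamma f$, pick $a>0$ with $f(a\vec 1)>\sigma$; this is possible because any $\mu\in\Gamma$ is dominated componentwise by some $c\vec 1$, so $\sup_c f(c\vec 1)=\sup_\Gamma f$ by \eqref{elliptic} alone. For $t$ large one has $t\lambda-a\vec 1\in\Gamma$, and then \eqref{bb} gives
\[
f(t\lambda)\ \ge\ f(a\vec 1)+\sum_i f_i(t\lambda)\bigl((t\lambda)_i-a\bigr),
\]
where the last sum is $\ge 0$ by (iii) with $\epsilon=a$. Hence $f(t\lambda)\ge f(a\vec 1)>\sigma$, and no lower bound on $\sum_i f_i$ is ever needed. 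Your (i) $\Rightarrow$ (ii) is also more roundabout than necessary, and the ``monotonicity'' hint fails when $\lambda$ has a negative component: the paper simply observes that $f(t\mu)>f(\lambda)$ for $t$ large (by (i)) and reads off $Df(\lambda)\cdot(t\mu-\lambda)\ge f(t\mu)-f(\lambda)>0$ directly from \eqref{bb}; taking $\mu=\lambda$ first gives $Df(\lambda)\cdot\lambda>0$, and then $Df(\lambda)\cdot\mu>t^{-1}Df(\lambda)\cdot\lambda>0$ in general.
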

\begin{proof}
Condition \eqref{addistruc}  yields for any given $\lambda$, $\mu \in \Gamma$, there is $T\geq1$ such that for each $t>T$, there holds
 $t\mu\in\Gamma^{f(\lambda)}$, which, together with the convexity of level sets, 
  implies $Df(\lambda)\cdot (t\mu-\lambda)>0$. Thus, $Df(\lambda)\cdot\lambda>0$ (if one takes $\mu=\lambda$)
  and $Df(\lambda)\cdot\mu>0$.
 
Given a $\sigma<\sup_\Gamma f$.  Let $a=1+c_\sigma$, where
 $c_\sigma$ is the positive  constant
 given by
  \begin{equation}
 \label{kappa1-sigma}
 f(c_\sigma \vec{1})=\sigma.
  \end{equation}
 For any $\lambda\in\Gamma$, one has $t\lambda-a\vec{1}\in\Gamma$  for
  $t\gg1$ (depending on $\lambda$ and $a$).  By \eqref{bb} and  the third statement, $f(t\lambda)\geq f(a\vec{1})+f_i(t\lambda)(t\lambda_i-a)\geq f(a\vec{1})>\sigma$.
\end{proof}
Let us sketch the proof of Lemma 9 in \cite{Gabor} by using Lemma \ref{asymptoticcone1}. 
With \eqref{bb} and the second statement,
$f(\lambda+c_\sigma\vec{1})\geq f(c_\sigma\vec{1})+f_i(\lambda+c_\sigma\vec{1})\lambda_i>\sigma$, which yields $\Gamma+c_\sigma\vec{1}\subset\Gamma^\sigma$, i.e. Part (a) of Lemma 9 in \cite{Gabor}. While the Part (b) holds for $\kappa =\frac{1}{1+ c_{\sigma}}(f((1+ c_{\sigma} )\vec{1})-\sigma)$, according to \eqref{bb} and $\sum_{i=1}^n f_i(\lambda)\lambda_i>0$ in $\Gamma$.

\vspace{2mm}
\noindent{\bf Notation.} In what follows  one uses the derivatives with respect to  Chern connection $\nabla$ of $\omega$.
In local coordinates $z=(z_1,\cdots, z_{n})$,
we write
$\partial_{i}=\frac{\partial}{\partial z_{i}}$, 
$\overline{\partial}_{i}=\frac{\partial}{\partial \bar z_{i}}$,
 $\nabla_{i}=\nabla_{\frac{\partial}{\partial z_{i}}}$,
 $\nabla_{\bar i}=\nabla_{\frac{\partial}{\partial \bar z_{i}}}$.
 For   a smooth function $v$
 \begin{equation}
 \label{jian5}
\begin{aligned}
v_i:=\,&
\partial_i v,  v_{\bar i}:=
\partial_{\bar i} v,
v_{i\bar j}:= 
\nabla_{\bar j}\nabla_{i} v=
\partial_i\overline{\partial}_j v, 
 v_{ij}:=
 \nabla_{j}\nabla_{i} v
 =\partial_i \partial_j v -\Gamma^k_{ji}v_k, 
\cdots
\end{aligned}
\end{equation}
where $\Gamma_{ij}^l$ are the Christoffel symbols
 defined  by 
$\nabla_{\frac{\partial}{\partial z_i}} \frac{\partial}{\partial z_j}=\Gamma_{ij}^k \frac{\partial}{\partial z_k}.$
The torsion and curvature tensors are  
$T^{k}_{ij}= g^{k\bar l}  (\frac{\partial g_{j\bar l}}{\partial z_i} - \frac{\partial g_{i\bar l}}{\partial z_j}),
 \mbox{ and }
 R_{i\bar jk\bar l}= -\frac{\partial^2 g_{k\bar l}}{\partial z_i \partial\bar z_j}
 + g^{p\bar q}\frac{\partial g_{k\bar q}}{\partial z_i}\frac{\partial g_{p\bar l}}{\partial \bar z_j}.$

For 
$\eta=\sqrt{-1}\eta_{i\bar j}dz_i\wedge d\bar z_j$, we denote $\eta_{i\bar j k}:=\nabla_k\eta_{i\bar j}, \mbox{   } \eta_{i\bar j k\bar l}:=\nabla_{\bar l}\nabla_k\eta_{i\bar j}.$
For simplicity,   
\begin{equation}
\label{def-chi}
\begin{aligned}
\chi[u]:=\chi(z,\partial u,\overline{\partial} u)=\tilde{\chi}+
\sqrt{-1} \partial u\wedge  \overline{\eta^{1,0}}+\sqrt{-1}  \eta^{1,0} \wedge  \overline{\partial} u.
\end{aligned}
\end{equation}
Moreover,  $\chi[u]$  satisfies \eqref{key-chi} below
 and then  
 the structural assumption $(1.6)$ in \cite{yuan2018CJM} holds automatically,
  which plays a key role in proof of global second estimate.  
 Note that $\chi[u]=\sqrt{-1}\chi_{i\bar j}dz^i \wedge d\bar z^j$ depends on both $\partial u$ and $\bar \partial u$. 
One shall use  the notation: 
\begin{equation}
\begin{aligned}
\chi_{i\bar j k} :=
\nabla_k \chi_{i\bar j}=\,&
 \nabla'_k \chi_{i\bar j}+\chi_{i\bar j,\zeta_{\alpha}}u_{\alpha k}
+\chi_{i\bar j,\bar\zeta_{\alpha}}u_{\bar \alpha k} 
= 
\chi_{i\bar j,k}+\chi_{i\bar j,\zeta_{\alpha}}u_{\alpha k}
+\chi_{i\bar j,\bar\zeta_{\alpha}}u_{\bar \alpha k},   \nonumber
\end{aligned}
\end{equation}
where $\chi_{i\bar j,k}=\nabla'_k \chi_{i\bar j}$, and $\nabla'_k \chi_{i\bar j}$
denotes the partial covariant derivative of $\chi(z, \zeta,\bar \zeta)$ when viewed as depending on $z\in M$ only,
 while the meanings of $\chi_{i\bar j,\zeta_{\alpha}}$ and  $\chi_{i\bar j,\bar\zeta_{\beta}}$ are explicit.
By \eqref{def-chi} we know that
\begin{equation}
\label{key-chi}
\begin{aligned}
\chi_{i\bar j,\zeta_{\alpha}}=\delta_{i\alpha}\bar\eta_{j}, \mbox{  } \chi_{i\bar j,\bar\zeta_{\alpha}}=\delta_{j\alpha}\eta_i, \mbox{  }
\chi_{i\bar j,k}=\tilde{\chi}_{i\bar jk}+\eta_{i,k}u_{\bar j}+u_i \eta_{\bar j,k}.
\end{aligned}
\end{equation}
So $\chi_{i\bar j k}=\chi_{i\bar j,k}+\chi_{i\bar j,\zeta_{i}}u_{i k}+\chi_{i\bar j,\bar\zeta_{j}}u_{k\bar j}.$
The above equality implies that the assumption $(1.6)$ in \cite{yuan2018CJM} automatically holds for equation \eqref{mainequ},
and thus our results partially extends the results there.
Moreover, 
\begin{equation}
\label{jinguang1}
\begin{aligned}
\chi_{i\bar j k\bar l} =\chi_{i\bar j,k\bar l}+\chi_{i\bar j,\zeta_{i}\bar l}u_{i k}+\chi_{i\bar j,\bar\zeta_{j}\bar l}u_{k\bar j}+\chi_{i\bar j,k\zeta_{i}} u_{i \bar l} 
+ \chi_{i\bar j,k\bar \zeta_{j}} u_{\bar j\bar l}+\chi_{i\bar j,\zeta_{i}} u_{i k\bar l}+\chi_{i\bar j,\bar\zeta_{j}} u_{k\bar j \bar l}.\end{aligned}
\end{equation}

\vspace{1mm}
 Given a Hermitian matrix $A=\left(a_{i\bar j}\right)$, we write
 $F^{i\bar j}(A) = \frac{\partial F}{\partial a_{i\bar j}} (A), \mbox{  }
F^{i\bar j, k\bar l}(A) = \frac{\partial^{2} F}{\partial a_{i\bar j}\partial a_{k\bar l}} (A).$
Throughout this paper, without specific clarification, we denote by $\mathfrak{g}=  \mathfrak{g}[u]$ and 
 $\underline{\mathfrak{g}}=\mathfrak{g}[\underline{u}]$ for the solution $u$ and subsolution $\underline{u}$.
 And we also denote
  $F^{i\bar j}=F^{i\bar j}((\mathfrak{g}_{i\bar j})), \mbox{  } F^{i\bar j, k\bar l}=F^{i\bar j, k\bar l}((\mathfrak{g}_{i\bar j})).$$
Then $$\sum_{i,j=1}^n F^{i\bar j} {g}_{i\bar j}=\sum_{i=1}^n  f_i,  \mbox{  }\sum_{i,j=1}^n F^{i\bar j}\mathfrak{g}_{i\bar j}=\sum_{i=1}^n  f_i\lambda_i.$

\section{Second order estimates}
\label{Diri-pseudoconcave}

 In this section we derive estimates up to second order for Dirichlet problem \eqref{mainequ} and \eqref{mainequ1}.
  First of all, we present the following lemma.
\begin{lemma}
\label{Lnablau}
Let   $u\in C^{3}(M)\cap C^1(\bar M)$ be the \emph{admissible} solution to   equation \eqref{mainequ}
 and $\mathcal{L}$ be the linearized operator which is locally given by
\begin{equation}
\label{linearoperator21}
\begin{aligned}
\mathcal{L}v
=\,& F^{i\bar j}v_{i\bar j}+F^{i\bar j}\chi_{i\bar j,\zeta_{k}}v_{k}
 + F^{i\bar j}\chi_{i\bar j,\bar\zeta_{k}}v_{\bar k}   
=F^{i\bar j}v_{i\bar j}+F^{i\bar j} v_{i}\bar\eta_{j} + F^{i\bar j} \eta_i v_{\bar j}
\end{aligned}
\end{equation}
for $v\in C^{2}(M)$.
Then, at the point where  $g_{i\bar j}=\delta_{ij}$,  one has the following identity
\begin{equation}
\label{L-gradient1}
\begin{aligned}
\mathcal{L}(|\nabla u|^2)
 =\,&
F^{i\bar j} (  u_{ki} u_{\bar k \bar j} + u_{ i\bar k} u_{ k \bar j})
 +F^{i\bar j} R_{i\bar j k\bar l}u_{l}u_{\bar k}   -2\mathfrak{Re}\{ F^{i\bar j}T^{l}_{ik} u_{\bar k} u_{l\bar j}\} \\
 \,& + 2\mathfrak{Re}\{(\psi_{ {k} }-F^{i\bar j}\chi_{i\bar j, k})u_{\bar k} \}
  + 2\mathfrak{Re}\{ F^{i\bar j}  T_{k i}^l \eta_{\bar j} u_l u_{\bar k}\}.
\end{aligned}
\end{equation}
\end{lemma}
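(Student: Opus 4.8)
The plan is to prove the identity \eqref{L-gradient1} by direct computation, expanding $\mathcal{L}(|\nabla u|^2)$ at a point where $g_{i\bar j}=\delta_{ij}$ and then substituting the equation \eqref{mainequ} in covariant-differentiated form to eliminate the third-order terms. First I would write $|\nabla u|^2 = g^{k\bar l} u_k u_{\bar l}$ and, at the chosen point, compute $(|\nabla u|^2)_{i\bar j}$ using the Chern connection: $\nabla_{\bar j}\nabla_i (u_k u_{\bar l} g^{k\bar l}) = u_{ki}u_{\bar k\bar j} + u_{ik\bar l} g^{k\bar l}\cdot(\text{terms}) + \dots$, carefully keeping the curvature term $R_{i\bar jk\bar l}$ that arises from commuting $\nabla_{\bar j}$ past $g^{k\bar l}$ and the torsion $T^l_{ik}$ that arises from the non-symmetry of $\nabla_j\nabla_i$ versus $\nabla_i\nabla_j$ on $u$. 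This is the standard Bochner-type expansion on a Hermitian manifold; the only subtlety compared with the K\"ahler case is the presence of the torsion tensor.

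Next I would differentiate the equation $F(\mathfrak{g}[u])=\psi$ once covariantly in the $z_k$ direction. This gives $F^{i\bar j}\nabla_k \mathfrak{g}_{i\bar j} = \psi_k$. Since $\mathfrak{g}_{i\bar j} = \chi_{i\bar j} + u_{i\bar j}$ (in the notation of \eqref{def-chi}), one has $\nabla_k\mathfrak{g}_{i\bar j} = \chi_{i\bar jk} + u_{i\bar jk}$, and using \eqref{key-chi} together with the commutation formula relating $u_{i\bar jk}$ and $u_{ik\bar j}$ (which again produces a torsion term $T^l_{ik}u_{l\bar j}$ and no curvature term at this order), I can solve for $F^{i\bar j} u_{i\bar jk}$ in terms of $\psi_k$, $F^{i\bar j}\chi_{i\bar j,k}$, and the torsion correction. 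Substituting the conjugate version as well and pairing against $u_{\bar k}$ (and $u_k$) produces the terms $2\mathfrak{Re}\{(\psi_k - F^{i\bar j}\chi_{i\bar j,k})u_{\bar k}\}$ and the mixed torsion terms $-2\mathfrak{Re}\{F^{i\bar j}T^l_{ik}u_{\bar k}u_{l\bar j}\}$ and $2\mathfrak{Re}\{F^{i\bar j}T^l_{ki}\eta_{\bar j}u_l u_{\bar k}\}$; the latter picks up $\eta_{\bar j}$ precisely because $\chi_{i\bar j,\zeta_\alpha} = \delta_{i\alpha}\bar\eta_j$ from \eqref{key-chi}, so the first-order part of $\mathcal{L}$ acting on $|\nabla u|^2$ interacts with the commutator term.

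Finally I would collect the first-order contribution of $\mathcal{L}$: the terms $F^{i\bar j}\chi_{i\bar j,\zeta_k}(|\nabla u|^2)_k + F^{i\bar j}\chi_{i\bar j,\bar\zeta_k}(|\nabla u|^2)_{\bar k}$ expand, using $(|\nabla u|^2)_k = u_{ki}u_{\bar i} + u_i u_{\bar i k}$ at the base point, into pieces that combine with the second-order expansion. Matching all terms and using $\chi_{i\bar j,\zeta_k}=\delta_{ik}\bar\eta_j$ once more, everything should assemble into the right-hand side of \eqref{L-gradient1}, with the good quadratic term $F^{i\bar j}(u_{ki}u_{\bar k\bar j} + u_{i\bar k}u_{k\bar j})$, the curvature term, and the three $\mathfrak{Re}$ correction terms. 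The main obstacle is bookkeeping: keeping the torsion and curvature terms straight through the two commutation steps (once in the Bochner expansion, once in differentiating the equation) and making sure the $\eta$-dependent term lands with the correct sign and index structure. I expect no conceptual difficulty, only the need for careful index manipulation; one should work at a point with $g_{i\bar j}=\delta_{ij}$ but remember that $\nabla g = 0$ does \emph{not} mean the second derivatives of $g$ vanish, which is exactly where $R_{i\bar jk\bar l}$ enters.
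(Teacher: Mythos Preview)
Your proposal is correct and outlines exactly the standard direct computation; the paper in fact states this lemma without proof, so there is nothing further to compare. Your plan---expand $(|\nabla u|^2)_{i\bar j}$ via the Chern connection, differentiate the equation once to replace the third-order terms by $\psi_k - F^{i\bar j}\chi_{i\bar j,k}$ plus torsion corrections, and then add in the first-order part of $\mathcal{L}$ using $\chi_{i\bar j,\zeta_k}=\delta_{ik}\bar\eta_j$---is the natural route, and the commutation identities you need are already recorded in the paper's notation section. One small clarification: since the Chern connection is metric-compatible ($\nabla g=0$), the curvature term $R_{i\bar j k\bar l}u_l u_{\bar k}$ does not come from second derivatives of $g^{k\bar l}$ but from commuting covariant derivatives on $u_k$ (i.e., from $u_{ki\bar j}$ versus $u_{k\bar j i}$); this does not affect your argument, only the phrasing of where the curvature enters.
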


 \subsection{Quantitative boundary estimate}
 \label{bdestimates}
We derive quantitative boundary estimate when the boundary obeys \eqref{bdry-assumption1}.
\begin{theorem}
 \label{mix-general-thm1}
 Suppose, in addition to $\partial M$ is a $C^3$-smooth   boundary, that $\varphi\in C^3(\partial M)$, 
 $\psi\in C^1(\bar M)$,  \eqref{elliptic}, \eqref{concave}, \eqref{nondegenerate},
 \eqref{existenceofsubsolution} and \eqref{addistruc} hold.  Suppose in addition that \eqref{bdry-assumption1} holds. 
 Then for any admissible solution 
$u\in C^3(M)\cap C^2(\bar M)$ to Dirichlet problem \eqref{mainequ} and \eqref{mainequ1}, we have
  \begin{equation}
 \label{good-quard-pseudoconcave}
 \begin{aligned}
 \sup_{\partial M} \Delta u \leq C(1+\sup_{\partial M}|\nabla u|^2)(1+\sup_M |\nabla u|^2),
 \end{aligned}
 \end{equation}
where $C$ is a uniformly positive constant $C$ depending on $|\varphi|_{C^{3}(\bar M)}$,  
$|\underline{u}|_{C^{2}(\bar M)}$, 
$\sup_{M}|\nabla \psi|$, $\partial M$ 
up to third derivatives,
and other known data (but neither on $\sup_{M}|\nabla u|$ nor on $(\delta_{\psi,f})^{-1}$).
Furthermore, if $\partial M$ is pseudoconcave, then \eqref{addistruc} can be removed.
 \end{theorem}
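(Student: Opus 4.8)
The plan is to establish \eqref{good-quard-pseudoconcave} at an arbitrary boundary point $x_0\in\partial M$. Choose local holomorphic coordinates near $x_0$ and a unitary frame $\xi_1,\dots,\xi_{n-1}\in T^{1,0}_{\partial M,x_0}$ together with the complex normal $\xi_n=\tfrac{1}{\sqrt{2}}(\nu-\sqrt{-1}J\nu)$, normalized so that $g(\xi_i,\bar\xi_j)=\delta_{ij}$ at $x_0$. From \eqref{mainequ} one has $\Delta u=\tr_\omega(\mathfrak{g}[u])-\tr_\omega\tilde\chi-(\text{a term linear in }\nabla u)$, and $\tr_\omega\mathfrak{g}[u]$ is the sum of the diagonal entries of $\mathfrak{g}[u]$ in this frame, so it is enough to bound the three blocks of $\mathfrak{g}[u](x_0)$: the tangential entries $\mathfrak{g}(\xi_\alpha,J\bar\xi_\beta)(x_0)$ for $1\le\alpha,\beta\le n-1$, the mixed entries $\mathfrak{g}(\xi_\alpha,J\bar\xi_n)(x_0)$, and the double normal $\mathfrak{g}(\xi_n,J\bar\xi_n)(x_0)$. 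The double normal is disposed of by Proposition \ref{proposition-quar-yuan1}, which bounds it by $C(1+\sum_\alpha|\mathfrak{g}(\xi_\alpha,J\bar\xi_n)(x_0)|^2)$ with $C$ independent of $\sup_M|\nabla u|$ and of $(\delta_{\psi,f})^{-1}$. The tangential entries are controlled using only $u-\varphi\equiv0$ on $\partial M$: two tangential complexified derivatives of $u-\varphi$ along $\partial M$ express the tangential complex Hessian of $u$ at $x_0$ through the second tangential derivatives of $\varphi$ and through $\partial_\nu u$ contracted with the Levi form $L_{\partial M}$, whence $|\mathfrak{g}(\xi_\alpha,J\bar\xi_\beta)(x_0)|\le C(1+\sup_{\partial M}|\nabla u|)$. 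So everything reduces to the mixed entries.

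For the mixed entries I would use a barrier argument for the linearized operator $\mathcal{L}$ of \eqref{linearoperator21}. Let $T$ be a first-order tangential differential operator defined near $x_0$, so that $T(u-\varphi)$ vanishes on $\partial M$. Differentiating the equation $F(\mathfrak{g}[u])=\psi$ along $T$ and using the explicit formulas \eqref{key-chi} for the derivatives of $\chi[u]$, then commuting covariant derivatives and absorbing the $(2,0)$-type second derivatives $u_{ik}$ back into $\mathcal{L}$ through the gradient terms in \eqref{linearoperator21} --- this is exactly where the two kinds of complex derivatives produced by $\eta^{1,0}$ must be handled separately --- one obtains a bound of the form $|\mathcal{L}(T(u-\varphi))|\le C(1+\sup_M|\nabla u|)\bigl(1+\sum_i F^{i\bar i}\bigr)$ on a boundary half-ball $M\cap B_\rho(x_0)$, the leftover second-order terms being treated as in the derivation of the global estimate \eqref{quantitative-2nd-boundary-estimate}; here $C$ depends on $\partial M$ up to third derivatives, on $|\varphi|_{C^3(\bar M)}$ and on $\sup_M|\nabla\psi|$.

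Next I would take a barrier $\Psi=A(u-\underline{u})+K(1-e^{-Nd})+B|z-x_0|^2$, where $d$ is the distance to $\partial M$ and $u\ge\underline{u}$ by the comparison principle. If the eigenvalues of $\mathfrak{g}[u]$ are bounded at a point then $\Delta u$ is bounded there directly; otherwise Lemma \ref{gabor'lemma} applies, and in its first alternative $\mathcal{L}(u-\underline{u})\le-\varepsilon\sum_iF^{i\bar i}$, while in its second alternative $F^{i\bar j}\ge\varepsilon(\sum_pF^{p\bar p})g^{i\bar j}$ makes $\mathcal{L}(1-e^{-Nd})\le-c\sum_iF^{i\bar i}$ near $x_0$ for $N$ large. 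In either case, for $A,K\gg B\gg1$ --- all independent of $(\delta_{\psi,f})^{-1}$ by Remark \ref{addcond-remark-psiu} --- with $B$ of size $\sup_M|\nabla u|$ and $A,K$ of size $\sup_M|\nabla u|/\varepsilon$, one gets $\mathcal{L}\Psi\le-|\mathcal{L}(\pm T(u-\varphi))|$ in $M\cap B_\rho(x_0)$ and $\Psi\ge\pm T(u-\varphi)$ on $\partial(M\cap B_\rho(x_0))$. The maximum principle then gives $\pm T(u-\varphi)\le\Psi$ with equality at $x_0$, so $|\partial_\nu T(u-\varphi)(x_0)|\le\partial_\nu\Psi(x_0)=A\,\partial_\nu(u-\underline{u})(x_0)+O(1)$; since $\partial_\nu(u-\underline{u})(x_0)\le\sup_{\partial M}|\nabla u|+|\underline{u}|_{C^1(\bar M)}$, this yields $|\mathfrak{g}(\xi_\alpha,J\bar\xi_n)(x_0)|\le C(1+\sup_{\partial M}|\nabla u|)(1+\sup_M|\nabla u|)$.

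Feeding this into Proposition \ref{proposition-quar-yuan1} bounds $\mathfrak{g}(\xi_n,J\bar\xi_n)(x_0)$ by $C(1+\sup_{\partial M}|\nabla u|^2)(1+\sup_M|\nabla u|^2)$, and summing the diagonal entries of $\mathfrak{g}[u](x_0)$ gives \eqref{good-quard-pseudoconcave}. For pseudoconcave $\partial M$ one has $-L_{\partial M}\ge0$, and condition \eqref{addistruc} then drops out because it enters only through the Liouville-type ingredient of Proposition \ref{proposition-quar-yuan1}, which is not needed in that case. I expect the real difficulty to be the mixed step, and specifically the differentiation of the equation: the genuinely second-order terms produced by the $\nabla u$-dependence of $\mathfrak{g}[u]$ must be reorganized so as to be absorbed either into $\mathcal{L}$ itself (the $u_{ik}\bar\eta_j$ contributions) or into the quantities already controlled by the global estimate, and every barrier constant must be checked to remain bounded as $\delta_{\psi,f}\to0$; the scaling $B\sim\sup_M|\nabla u|$, $A\sim\sup_M|\nabla u|$ is precisely what makes the final estimate come out in the product form $(1+\sup_{\partial M}|\nabla u|^2)(1+\sup_M|\nabla u|^2)$ appearing in the statement.
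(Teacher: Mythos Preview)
Your overall architecture (tangential via \eqref{bdr-ind}, mixed via a barrier, double normal via Proposition~\ref{proposition-quar-yuan1}) matches the paper's, but the mixed step has a real gap.

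The claimed inequality $|\mathcal{L}(T(u-\varphi))|\le C(1+\sup_M|\nabla u|)\bigl(1+\sum_i F^{i\bar i}\bigr)$ is not available on a general Hermitian manifold. Differentiating the equation along a tangential direction and commuting covariant derivatives produces torsion terms $2\mathfrak{Re}(F^{i\bar j}T^{l}_{ik}u_{l\bar j})$, which contribute $C\sum_i f_i|\lambda_i|$, not $C\sum_i f_i$; see \eqref{yuan-Bd2} and the line just after it. On a non--holomorphically-flat boundary the tangential operator has the correction $-\widetilde\eta\,\partial_{x_n}$, and $\mathcal{L}(\widetilde\eta\,u_{x_n})$ further generates a cross term $F^{i\bar j}(\widetilde\eta)_i(u_{y_n})_{\bar j}$ that cannot be bounded by $\sum_i f_i$ either. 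Neither of these is absorbed by the first-order part of $\mathcal{L}$: the gradient terms in \eqref{linearoperator21} involve first derivatives of the test function $v=T(u-\varphi)$, not the $(2,0)$-type quantities $u_{ik}$.

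The paper addresses exactly these two obstructions by enlarging the barrier. It replaces $\pm T(u-\varphi)$ by $\Phi$ in \eqref{Phi-def1}, whose extra piece $\frac{1}{\sqrt{b_1}}(u_{y_n}-\varphi_{y_n})^2$ produces $F^{i\bar j}u_{y_n i}u_{y_n\bar j}$ to cancel the $\widetilde\eta$-cross term (Lemma~\ref{yuan-key0}); and it adds $\frac{1}{\sqrt{b_1}}\sum_{\tau<n}|\widetilde u_\tau|^2$ to the full barrier \eqref{Psi}, which yields the good quadratic term $\frac{1}{8}\sum_{i\neq r}f_i\lambda_i^2$ via \eqref{L-u-2} and then controls $\sum_i f_i|\lambda_i|$ through \eqref{flambda}. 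The remaining negative $\sum_i f_i$-term is killed either by Lemma~\ref{guan2014} (Case~I, giving \eqref{guan-key1}) or, when all $f_i$ are comparable, by the $N\sigma^2$ piece of $v$ via \eqref{bbvvv} (Case~II). Your barrier $A(u-\underline u)+K(1-e^{-Nd})+B|z-x_0|^2$ supplies only $\sum_i f_i$-type positivity and has no mechanism to generate $\sum_i f_i\lambda_i^2$, so it cannot dominate the torsion contribution; the appeal to ``as in the derivation of the global estimate'' does not help, because there $\lambda_1$ itself is the quantity being maximized and the algebra is different.
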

 
 Combining with \eqref{herer} below one derives \eqref{bdy-sec-estimate-quar1}.
 Moreover, we have slightly delicate statement when $M=X\times S$ and more generally $M$ admits \textit{holomorphically flat} boundary.
 
 \begin{theorem}
 \label{mix-Leviflat-thm1}
 Suppose, in addition to \eqref{elliptic}, \eqref{concave}, \eqref{nondegenerate},
 \eqref{existenceofsubsolution},  $\varphi\in C^3(\partial M)$ and
 $\psi\in C^1(\bar M)$, that $\partial M$ is \textit{holomorphically flat}.
Then for any admissible solution $u\in C^3(M)\cap C^2(\bar M)$ to Dirichlet problem \eqref{mainequ} and \eqref{mainequ1}, 
there is a uniformly positive constant  $C$ depending only on $|\varphi|_{C^{3}(\bar M)}$, $|\psi|_{C^{1}(\bar M)}$, 
$|\underline{u}|_{C^{2}(\bar M)}$, $\partial M$
up to second derivatives 
and other known data (but neither on $\sup_{M}|\nabla u|$ nor on $(\delta_{\psi,f})^{-1}$) such that
 \begin{equation}
 \label{good-quard}
 \begin{aligned}
 \sup_{\partial M} \Delta u \leq C(1+\sup_{\partial M}|\nabla u|^2)(1+\sup_M |\nabla u|^2).  
 \end{aligned}
 \end{equation}
Moreover, if the boundary data $\varphi$ is exactly a constant
then the $C$ depends only on $\partial M$ up to second order derivatives, 
$|\underline{u}|_{C^{2}(\bar M)}$, $\sup_{M}|\nabla \psi|$
and other known data. 

 \end{theorem}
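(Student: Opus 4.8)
The plan is to run the classical three-part boundary argument for second derivatives --- tangential-tangential, tangential-normal, double-normal --- while exploiting the \emph{holomorphically flat} hypothesis to flatten $\partial M$ in holomorphic coordinates, which trivializes two of the three pieces and, crucially, removes any reference to a defining function of $\partial M$ (hence to its third derivatives), replacing it by the coordinate form $\{\mathfrak{Re}(z_n)=0\}$. Fix $x_0\in\partial M$ and choose holomorphic coordinates $z=(z_1,\dots,z_n)$ centered at $x_0$ with $\partial M=\{\mathfrak{Re}(z_n)=0\}$ locally and $\{\mathfrak{Re}(z_n)>0\}$ the interior side; a further linear change preserving this hyperplane lets one also assume $g_{i\bar j}(x_0)=\delta_{ij}$, so that $T^{1,0}_{\partial M,x_0}=\mathrm{span}(\partial_1,\dots,\partial_{n-1})$, the Levi form $L_{\partial M}$ vanishes identically, and $\xi_n=\tfrac1{\sqrt2}(\nu-\sqrt{-1}J\nu)=\partial_n$. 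Since $\mathfrak g[u]=\chi[u]+\sqrt{-1}\partial\bar\partial u$ with $\chi[u]-\tilde\chi$ linear in $\nabla u$ of bounded coefficient (see \eqref{def-chi}), bounding $\Delta u(x_0)$ reduces to bounding $\mathfrak g(\xi_\alpha,\bar\xi_\beta)(x_0)$, $\mathfrak g(\xi_\alpha,J\bar\xi_n)(x_0)$ and $\mathfrak g(\xi_n,J\bar\xi_n)(x_0)$ for $\alpha,\beta\le n-1$. For the tangential-tangential part, because $u-\varphi$ vanishes on $\{\mathfrak{Re}(z_n)=0\}$ and $L_{\partial M}=0$, the mixed derivative $(u-\varphi)_{\alpha\bar\beta}$ vanishes along $\partial M$ modulo Chern-Christoffel corrections of $\omega$ contracted with $\nabla(u-\varphi)$; hence $|\mathfrak g(\xi_\alpha,\bar\xi_\beta)(x_0)|\le C(1+\sup_{\partial M}|\nabla u|)$, with $C$ depending only on $\partial M$ to second order, $|\varphi|_{C^2}$ and the metric, and with the $\partial\bar\partial\varphi$ contribution absent when $\varphi\equiv\mathrm{const}$.

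The crucial step is the tangential-normal estimate. For each tangential first-order operator $D_T$ extending $\xi_\alpha+\overline{\xi_\alpha}$ and its $J$-rotate, I would work on a half-ball $B_\delta^+=B_\delta(x_0)\cap\bar M$ and construct a barrier $\Psi$ built from $A_1(\underline u-u)$, $A_2|z|^2$ and a suitably weighted multiple of $|\nabla u|^2$, designed to dominate $\pm D_T(u-\varphi)$ on $\partial B_\delta^+$. Differentiating the equation \eqref{mainequ} along $D_T$ and using \eqref{key-chi} to control the $\nabla u$-dependence of $\chi[u]$, one finds that $\mathcal L(D_T(u-\varphi))$ --- with $\mathcal L$ the linearized operator \eqref{linearoperator21} --- is bounded, after Cauchy-Schwarz, by $C(1+\sum F^{i\bar j})(1+|\nabla u|^2)$ plus second-derivative terms of $\partial\partial$-type that get absorbed by the good positive term $F^{i\bar j}(u_{ki}u_{\bar k\bar j}+u_{i\bar k}u_{k\bar j})$ in the identity \eqref{L-gradient1} for $\mathcal L(|\nabla u|^2)$; this is what forces the weighted gradient square into $\Psi$. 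Using the dichotomy of Lemma \ref{gabor'lemma} and the sharp concavity inequality of Lemma \ref{guan2014} applied to the subsolution $\underline u$, and choosing $A_1\gg A_2\gg1$ appropriately, one obtains $\mathcal L\Psi\le0$ in $B_\delta^+$, while $\Psi\ge\pm D_T(u-\varphi)$ on $\partial B_\delta^+$ with the constants fed by $\sup_{\partial M}|\nabla u|$ on the flat face and $\sup_M|\nabla u|^2$ on the spherical face. Since $\Psi(x_0)=D_T(u-\varphi)(x_0)=0$, the maximum principle bounds $\partial_\nu D_T(u-\varphi)(x_0)$, that is,
\[
|\mathfrak g(\xi_\alpha,J\bar\xi_n)(x_0)|\le C\big(1+\sup_{\partial M}|\nabla u|\big)\big(1+\sup_M|\nabla u|\big),\qquad \alpha\le n-1.
\]
By Remark \ref{addcond-remark-psiu} the constants produced by Lemmas \ref{guan2014} and \ref{gabor'lemma} are independent of $(\delta_{\psi,f})^{-1}$, so $C$ is too; since $\partial M$ has been flattened, the only geometric input is $\partial M\in C^2$, while differentiating the equation once tangentially needs $\psi\in C^1$ and $\varphi\in C^3$. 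When $\varphi$ is a constant, $D_T(u-\varphi)=D_Tu$ and all normalizations involving $\varphi$ disappear, leaving dependence only on $\partial M$ to second order, $|\underline u|_{C^2(\bar M)}$, $\sup_M|\nabla\psi|$ and the remaining known data.

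Finally, holomorphically flat implies pseudoconcave, so Proposition \ref{proposition-quar-yuan1} applies with \eqref{addistruc} removed, and \eqref{yuan-prop1} gives $\mathfrak g(\xi_n,J\bar\xi_n)(x_0)\le C(1+\sum_{\alpha=1}^{n-1}|\mathfrak g(\xi_\alpha,J\bar\xi_n)(x_0)|^2)$, with $C$ depending only on $|u|_{C^0(\bar M)}$, $|\underline u|_{C^2(\bar M)}$, $\partial M$ to second order and other known data. Plugging in the tangential-normal bound and combining with the tangential-tangential bound gives $\Delta u(x_0)\le C(1+\sup_{\partial M}|\nabla u|^2)(1+\sup_M|\nabla u|^2)$; taking the supremum over $x_0\in\partial M$ yields \eqref{good-quard} with the stated dependence of $C$, and the same computation with the $\varphi$-free normalizations gives the sharper dependence when $\varphi$ is constant. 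The hard part will be the tangential-normal step: arranging $\Psi$ so that $\mathcal L\Psi\le0$ throughout the half-ball while keeping its constant free of $(\delta_{\psi,f})^{-1}$ and growing no faster in $\sup_M|\nabla u|$ than claimed --- this demands careful bookkeeping of the gradient terms $\chi[u]$ through \eqref{def-chi}--\eqref{key-chi}, of the good term in \eqref{L-gradient1}, and of the structure dichotomy of Lemma \ref{gabor'lemma}.
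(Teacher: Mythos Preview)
Your three-part decomposition (tangential-tangential via Levi-flatness, tangential-normal via a barrier, double-normal via Proposition~\ref{proposition-quar-yuan1}) is exactly the paper's strategy, and your use of Proposition~\ref{proposition-quar-yuan1} in the pseudoconcave clause to close the argument is correct.

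The place where your barrier differs from the paper's is worth flagging. The paper (Proposition~\ref{mix-Leviflat}) does \emph{not} put the full $|\nabla u|^2$ into the barrier; instead it uses
\[
\widetilde\Psi \;=\; A_1\sqrt{b_1}\,v \;-\; A_2\sqrt{b_1}\,\rho^2 \;+\; \frac{1}{\sqrt{b_1}}\sum_{\tau<n}|\widetilde u_\tau|^2 \;+\; A_3\Phi,
\qquad v=(\underline u-u)-t\sigma+N\sigma^2,
\]
with $\Phi=\pm D(u-\varphi)$ in the holomorphically flat case. Two features of this barrier matter and are absent from your sketch. First, only the \emph{tangential} complex derivatives $\widetilde u_\tau=(u-\varphi)_\tau$, $\tau<n$, enter; these vanish on the flat face $\{\mathfrak{Re}(z_n)=0\}$ and in particular at $x_0$, so $\widetilde\Psi(x_0)=0$ holds automatically. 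Your proposed full $|\nabla u|^2$ does not vanish at $x_0$, so the normalization $\Psi(x_0)=0$ fails unless you subtract a constant or restrict to tangential derivatives --- a small but necessary repair. The computation of $\mathcal L\big(\sum_{\tau<n}|\widetilde u_\tau|^2\big)$ then yields, via Guan's Proposition~2.19, the good term $\tfrac18\sum_{i\neq r}f_i\lambda_i^2$ (one index missing), not the full sum; the paper handles the missing index by the split \eqref{flambda}. Second, the term $N\sigma^2$ in $v$ is essential: in Case~II of the dichotomy it produces $2NF^{i\bar j}\sigma_i\sigma_{\bar j}$, which combined with $f_i\ge \frac{\beta}{\sqrt n}\sum_j f_j$ absorbs all the bad $\sum f_i$ terms (see \eqref{bbvvv}). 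Your barrier has no analogue of this term.

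Finally, the dichotomy the paper uses here is that of Lemma~\ref{guan2014} (Case~I: $|\nu_\lambda-\nu_{\underline\lambda}|\ge\beta$ gives \eqref{guan-key1}; Case~II: the complement gives \eqref{2nd-case1}), not Lemma~\ref{gabor'lemma}. The latter is reserved for the $\mathcal C$-subsolution setting on closed manifolds; since the boundary estimate assumes a genuine subsolution \eqref{existenceofsubsolution}, Lemma~\ref{guan2014} suffices and keeps the constants independent of $(\delta_{\psi,f})^{-1}$ by Remark~\ref{addcond-remark-psiu}.
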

 
 In particular, if $M=X\times S$ and $\varphi\in C^2(\partial S)$, then one has a more subtle result:
  $C$ in  \eqref{good-quard} indeed depends on $|\varphi|_{C^2(\bar S)}$.
  
   \begin{theorem}
 \label{mix-Leviflat-thm1-product}
 Let $M=X\times S$, $\partial S\in C^2$ and $\varphi\in C^2(\partial S)$, and we suppose \eqref{elliptic}, \eqref{concave}, \eqref{nondegenerate}
 and \eqref{existenceofsubsolution} hold.  
 Then \eqref{good-quard} holds for $C$ depending only on  $|\varphi|_{C^{2}(\bar S)}$, $|\psi|_{C^{1}(\bar M)}$, 
$|\underline{u}|_{C^{2}(\bar M)}$, $\partial S$
up to second derivatives 
and other known data.
 \end{theorem}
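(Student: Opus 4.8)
The plan is to re-run the proof of Theorem~\ref{mix-Leviflat-thm1} while tracking precisely where the boundary datum $\varphi$ enters, and to use the product structure $M=X\times S$ to show this dependence is only through $|\varphi|_{C^2(\bar S)}$. Fix $x_0\in\partial M$ and a unitary frame $\xi_1,\dots,\xi_{n-1}\in T^{1,0}_{\partial M,x_0}$, $\xi_n=\frac{1}{\sqrt2}(\nu-\sqrt{-1}J\nu)$. Since $\sum_i\lambda_i(\mathfrak{g}[u])=\mathrm{tr}_\omega(\mathfrak{g}[u])$ is frame independent, $\Delta u(x_0)=\sum_{\alpha=1}^{n-1}\mathfrak{g}(\xi_\alpha,J\bar\xi_\alpha)(x_0)+\mathfrak{g}(\xi_n,J\bar\xi_n)(x_0)+O(1+|\nabla u(x_0)|)$, so \eqref{good-quard} follows once the tangential--tangential terms $\mathfrak{g}(\xi_\alpha,J\bar\xi_\alpha)(x_0)$ and the normal--normal term $\mathfrak{g}(\xi_n,J\bar\xi_n)(x_0)$ are bounded by its right-hand side with the allowed constants. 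A \textit{holomorphically flat} boundary is \textit{pseudoconcave}, so Proposition~\ref{proposition-quar-yuan1} applies without \eqref{addistruc}, and its constant depends only on $|u|_{C^0(\bar M)}$ (bounded by the subsolution and the maximum principle), $|\underline u|_{C^2(\bar M)}$, $\partial M$ up to second order, and other known data; hence $\mathfrak{g}(\xi_n,J\bar\xi_n)(x_0)\le C\big(1+\sum_{\alpha}|\mathfrak{g}(\xi_\alpha,J\bar\xi_n)(x_0)|^2\big)$, and it remains to bound the tangential--tangential and mixed tangential--normal components.

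For $M=X\times S$ one has $T^{1,0}_{\partial M}=T^{1,0}X$, while $\nu$ and $J\nu$ lie in the $S$-factor, and $\partial M=X\times\partial S$, being a product, has vanishing second fundamental form --- hence vanishing Levi form --- in every direction tangent to $X$. Choose the frame so that $\xi_\alpha\in T^{1,0}X$, and write $\xi_\alpha=\frac{1}{\sqrt2}(e_\alpha-\sqrt{-1}Je_\alpha)$ with $e_\alpha,Je_\alpha\in\Gamma(TX)$. Because $\varphi=\pi_2^*\varphi_S$ is constant along $X$, every $X$-directional first order operator annihilates $\varphi$: $e_\alpha\varphi\equiv(Je_\alpha)\varphi\equiv0$ and $\varphi_{\alpha\bar\beta}\equiv0$. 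Using $u|_{\partial M}=\varphi$ and the vanishing of the Levi form, $\mathfrak{g}(\xi_\alpha,J\bar\xi_\beta)(x_0)=\tilde\chi_{\alpha\bar\beta}(x_0)+u_\alpha(x_0)\bar\eta_\beta(x_0)+\eta_\alpha(x_0)u_{\bar\beta}(x_0)+u_{\alpha\bar\beta}(x_0)$ with $u_{\alpha\bar\beta}(x_0)=\varphi_{\alpha\bar\beta}(x_0)+(\text{bounded torsion/connection terms})=O(1)$, so $|\mathfrak{g}(\xi_\alpha,J\bar\xi_\alpha)(x_0)|\le C(1+\sup_{\partial M}|\nabla u|)$ with $C$ free of any derivative of $\varphi$. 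Likewise the purely tangential part of the mixed second derivative of $u$ --- the part along $e_\alpha$ and $\tau:=J\nu\in T\partial S$ --- is controlled by $C(1+|\varphi|_{C^2(\bar S)}+\sup_{\partial M}|\nabla u|)$, since $\mathrm{II}(e_\alpha,\tau)=0$ on the product and the remaining terms involve only the first tangential derivatives of $\varphi_S$ along the curve $\partial S$.

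The one genuinely analytic step is the mixed normal--tangential estimate: bounding $\nabla_\nu\nabla_{e_\alpha}(u-\varphi)(x_0)$. Here one argues exactly as in Theorem~\ref{mix-Leviflat-thm1}: on $\bar M\cap B_\delta(x_0)$ take $v=\underline u-u+Nd^2$, where $d=d_S(\cdot,\partial S)$ is $C^2$ near $\partial M$ precisely because $\partial S\in C^2$, so that (for suitable $N,\delta$) $\mathcal{L}v\le-\epsilon(1+\sum F^{i\bar j}g_{i\bar j})$, $v(x_0)=0$, $\partial_\nu v(x_0)>0$; then use the barrier $\Phi^\pm=Av+K|z-x_0|^2\pm\nabla_{e_\alpha}(u-\varphi)$. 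The decisive point is that $\nabla_{e_\alpha}(u-\varphi)=e_\alpha u$ as a function, since $e_\alpha\varphi\equiv0$, so $\mathcal{L}\big(\nabla_{e_\alpha}(u-\varphi)\big)=\mathcal{L}(e_\alpha u)$ is bounded --- by differentiating $F(\mathfrak{g}[u])=\psi$ in the direction $e_\alpha$ --- by $|\psi|_{C^1(\bar M)}$, $\sum F^{i\bar j}g_{i\bar j}$, smooth data and $|\nabla u|$-terms, and \emph{no} third derivative of $\varphi$ arises. Choosing $A$ of size $C(1+\sup_{\partial M}|\nabla u|)$ and then $K,\delta$ gives $\mathcal{L}\Phi^\pm\le0$ on $\bar M\cap B_\delta(x_0)$ and $\Phi^\pm\ge0$ on its boundary; the maximum principle yields $\pm\nabla_{e_\alpha}(u-\varphi)\le\Phi^\pm$, and since both sides vanish at $x_0$, comparing inward-normal derivatives gives $|\nabla_\nu\nabla_{e_\alpha}u(x_0)|\le\partial_\nu\Phi^\pm(x_0)+C\sup_{\partial M}|\nabla u|\le C(1+\sup_{\partial M}|\nabla u|^2)(1+\sup_M|\nabla u|^2)$, the constant depending only on $|\varphi|_{C^2(\bar S)}$, $|\psi|_{C^1(\bar M)}$, $|\underline u|_{C^2(\bar M)}$, $\partial S$ up to second order and other known data (and, as in Theorems~\ref{mix-general-thm1} and \ref{mix-Leviflat-thm1}, neither on $\sup_M|\nabla u|$ nor on $(\delta_{\psi,f})^{-1}$). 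Together with the tangential estimates of the previous paragraph this bounds $|\mathfrak{g}(\xi_\alpha,J\bar\xi_n)(x_0)|$; feeding it into Proposition~\ref{proposition-quar-yuan1} and the first paragraph yields \eqref{good-quard}.

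I expect the main obstacle to be exactly this bookkeeping: verifying that, after replacing a general $C^3(\partial M)$ datum by $\varphi=\pi_2^*\varphi_S$ with $\varphi_S\in C^2(\partial S)$, the barrier step and the identification of the boundary second-order data never require a derivative of $\varphi$ of order higher than two --- the would-be third-order terms all carry an $X$-directional derivative of $\varphi$ and hence vanish, whereas $|\varphi|_{C^2(\bar S)}$ enters only through the (at most) second tangential derivatives of $\varphi_S$ along the curve $\partial S$ which the boundary condition forces onto $u$, and $|\partial S|_{C^2}$ only through the regularity of $d_S(\cdot,\partial S)$ and of the adapted frame. The remaining estimates are a direct transcription of the proof of Theorem~\ref{mix-Leviflat-thm1}.
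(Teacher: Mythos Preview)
Your overall strategy matches the paper's: reduce to a normal--normal bound via Proposition~\ref{proposition-quar-yuan1} and a tangential--normal bound via a barrier, and exploit the product structure so that the tangential operator $D=\nabla_{e_\alpha}$ annihilates $\varphi=\pi_2^*\varphi_S$, whence no third derivative of $\varphi$ can enter. That part is correct and is exactly how the paper avoids $|\varphi|_{C^3}$.

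The gap is in your barrier. On a Hermitian manifold (the paper does \emph{not} assume $\omega$ is K\"ahler, or even the product metric), differentiating $F(\mathfrak{g}[u])=\psi$ along $e_\alpha$ and using the commutation formulas \eqref{yuan-Bd2} produces the torsion term $-2\mathfrak{Re}\big(F^{i\bar j}T^{l}_{i\alpha}u_{l\bar j}\big)$, so that $\mathcal{L}(e_\alpha u)$ contains a contribution of order $\sum_i f_i|\lambda_i|$. This is \emph{not} dominated by $(1+|\nabla u|)\sum_i f_i$, so your assertion that ``$\mathcal{L}(e_\alpha u)$ is bounded by $|\psi|_{C^1}$, $\sum F^{i\bar j}g_{i\bar j}$, smooth data and $|\nabla u|$-terms'' is false in general, and the simplified barrier $\Phi^\pm=Av+K|z-x_0|^2\pm e_\alpha u$ cannot close. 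Relatedly, your blanket claim $\mathcal{L}v\le-\epsilon(1+\sum F^{i\bar j}g_{i\bar j})$ is not always valid; it follows from Lemma~\ref{guan2014} only when $|\nu_\lambda-\nu_{\underline\lambda}|\ge\beta$.

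The paper's fix is to include in the barrier the additional term $\frac{1}{\sqrt{b_1}}\sum_{\tau<n}|\widetilde u_\tau|^2$ (see \eqref{Psi}); in the product case $\widetilde u_\tau=u_\tau$ since $\varphi_\tau\equiv0$ for $\tau<n$, so this term brings in no derivatives of $\varphi$. Via Guan's inequality this yields the good term $\frac{1}{8\sqrt{b_1}}\sum_{i\neq r}f_i\lambda_i^2$, which absorbs $\sum_i f_i|\lambda_i|$ through \eqref{flambda}. One must then still run the Case~I/Case~II dichotomy (Lemma~\ref{guan2014} versus \eqref{2nd-case1}--\eqref{bdy22}) exactly as in the joint proof of Propositions~\ref{mix-general}--\ref{mix-Leviflat-product}. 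The only simplifications specific to $M=X\times S$ are $\gamma=0$ in \eqref{tangential-operator123} and $\widetilde u_\tau=u_\tau$; the rest of the machinery is unchanged.
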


\subsubsection{Tangential operators on the boundary}
\label{Tangential-opera-1}
For  a  given point $x_0\in \partial M$,
we choose local holomorphic coordinates
\begin{equation}
\label{goodcoordinate1}
z=(z_{1},\ldots,z_{n}), \mbox{  } z_{j}=x_{j}+\sqrt{-1}y_{j},
 \end{equation}
  centered at $x_0$ in a neighborhood which we assume to be contained in $M_{\delta}$,
 such that $x_0=\{z=0\}$, $g_{i\bar j}(0)=\delta_{ij}$ and $\frac{\partial}{\partial x_{n}}$ is the interior normal direction to $\partial M$ at $x_0$.
 For convenience we set
\begin{equation}
t_{2k-1}=x_{k}, \ t_{2k}=y_{k},\ 1\leq k\leq n-1;\ t_{2n-1}=y_{n},\ t_{2n}=x_{n}.  \nonumber
\end{equation}

 \vspace{1mm} 
 Let  $\rho$ be the distance function to the fixed point $x_0\in \partial M$, and $\sigma$ denote 
 the distance function to the boundary. Let's denote 
 \begin{equation}
 \label{distances-domain} 
 \begin{aligned}
 \Omega_{\delta}:=\{z\in M: \rho(z)<\delta\}, \mbox{   }
 M_\delta:=\{z\in M: \sigma(z)<\delta\}.
 \end{aligned}
 \end{equation}
 Note that $|\nabla \sigma|=\frac{1}{2}$ on $\partial M$,
$(\rho^{2})_{i\bar j}(0)=\delta_{ij}$.
We know that
$\{\frac{1}{2}g_{i\bar j}\}\leq\{(\rho^{2})_{i\bar j}\}\leq  2\{g_{i\bar j}\}$,
 $\frac{1}{2}\{\delta_{ij}\}\leq \{g_{i\bar j}\}\leq 2\{\delta_{ij}\}$,
$|\nabla \sigma|\geq\frac{1}{4} $ and
$\sigma$ is $C^2$ in $\Omega_{\delta}$ for some small constant $\delta >0$.

\vspace{1mm} 
Now we derive the $C^0$-estimate,  boundary $C^1$  estimates and the boundary 
estimates for pure tangential derivatives.
Let  $w\in C^2(M)\cap C^{1}(\bar M)$ be a function satisfying
\begin{equation}
\label{supersolution}
\begin{aligned}
 \mathrm{tr}_{\omega}\mathfrak{g}[w] = 0 \mbox{ in } M, \mbox{  }
w =\varphi    \mbox{ on } \partial  M.
\end{aligned}
\end{equation}
 The solvability of Dirichlet problem \eqref{supersolution} can be found in   \cite{GT1983}.
 Together with the boundary value condition, the maximum principle yields
\begin{equation}
\label{daqindiguo1}
\begin{aligned}
  \underline{u}\leq u  \leq w,    \mbox{ in } M.  
\end{aligned}
\end{equation}
Moreover, 
\begin{equation}
\label{bdr-ind-2}
\begin{aligned}
(w-\underline{u})_\nu|_{\partial M}\geq (u-\underline{u})_{\nu}|_{\partial M}\geq 0.
\end{aligned}
\end{equation}
 Since $u-\varphi=0$ on $\partial M$, we can therefore write $u-\varphi=h\sigma \mbox{ in } \bar M_{\delta}$ where  $h=\frac{(u-\varphi)_{\nu}}{\sigma_{\nu}}=\frac{(u-\varphi)_{x_n}}{\sigma_{x_n}}$ on $\partial M$. We thus define
 the tangential operator on $\partial M$ 
 \begin{equation}
  \label{tangential-oper-general1}
\begin{aligned} 
 \mathcal{T}=\nabla_{\frac{\partial}{\partial t_{\alpha}}}- \widetilde{\eta}\nabla_{\frac{\partial}{\partial x_{n}}}, 
 \mbox{ for each fixed }
  1\leq \alpha< 2n,
\end{aligned}
\end{equation}
 where $\widetilde{\eta}=\frac{\sigma_{t_{\alpha}}}{\sigma_{x_{n}}}$, $\sigma_{x_{n}}(0)=1,$ $\sigma_{t_\alpha}(0)=0$.
  One has $\mathcal{T}(u-\varphi)=0$ on $\partial M\cap \bar\Omega_\delta$.
 The boundary value condition also gives for each $1\leq \alpha, \beta<n$, $u_\alpha(0)=\underline{u}_\alpha(0) $ and
\begin{equation}
\label{bdr-ind}
\begin{aligned}
(u_{\alpha\bar\beta}-\underline{u}_{\alpha\bar\beta})(0)=(u-\underline{u})_{\nu}(\frac{\sigma_{\alpha\bar\beta}}{\sigma_{\nu}})=
-(u-\underline{u})_{\nu}(0){L}_{\partial M}(\partial_\alpha,\overline{\partial}_{\beta})(0).
\end{aligned}
\end{equation}
Similarly, 
\begin{equation}\label{left-up}
(u-\varphi)_{t_{\alpha}t_{\beta}}(0)= (u-\varphi)_{x_{n}}(0)\sigma_{t_{\alpha}t_{\beta}}(0)=\frac{(u-\varphi)_\nu}{\sigma_\nu}(0)\sigma_{t_{\alpha}t_{\beta}}(0),
\mbox{  }\forall 1\leq \alpha,\beta<2n.
\end{equation}
Thus
\begin{equation}
\label{herer}
\begin{aligned}
\sup_{ M} |u|+\sup_{\partial M}|\nabla u |  \leq C, \mbox{  }
|u_{t_{\alpha}t_{\beta}}(0)| \leq \hat{C}, \mbox{ } \forall  1\leq \alpha,  \beta <2n,
\end{aligned}
\end{equation}
where  $C$ 
is a uniform constant depending on $|w|_{C^1(\bar M)}$ and $|\underline{u}|_{C^1(\bar M)}$,   $\hat{C}$  
is a positive constant  depending on
 $| \varphi|_{C^{2}(\bar M)}$ and other known data under control.

\vspace{1mm} 
Let's turn our attention to the setting of complex manifolds with \textit{holomorphically flat} boundary.
Given $x_0\in \partial M$, one can pick local holomorphic coordinates 
\begin{equation}
\begin{aligned}
\label{holomorphic-coordinate-flat}
(z_1,\cdots, z_n), \mbox{  } z_i=x_i+\sqrt{-1}y_i, 
\end{aligned}
\end{equation}
 centered at $x_0$ such that
 $\partial M$ is locally of the form $\mathfrak{Re}(z_n)=0$ and $g_{i\bar j}(x_0)=\delta_{ij}$. 
Under the holomorphic coordinate \eqref{holomorphic-coordinate-flat}, we can take
\begin{equation}
\label{tangential-oper-Leviflat1}
\begin{aligned}
\mathcal{T}=D:=   \pm\frac{\partial}{\partial x_\alpha}, \pm\frac{\partial}{\partial y_\alpha},  
  \mbox{  } 1\leq \alpha\leq n-1.
\end{aligned}
\end{equation}
It  is noteworthy that such local holomorphic coordinate system \eqref{holomorphic-coordinate-flat} is only needed in the proof  of Proposition \ref{mix-Leviflat}. In addition,  when $M=X\times S$,
$D=\pm \frac{\partial}{\partial x_{\alpha}},
\pm \frac{\partial}{\partial y_{\alpha}},$ where  $z'=(z_1,\cdots z_{n-1})$ is local holomorphic coordinate of $X$.

\vspace{1mm} 
For simplicity we denote  the tangential  operator on $\partial M$ by
\begin{equation}
\label{tangential-operator123}
\begin{aligned}
\mathcal{T}=\nabla_{\frac{\partial}{\partial t_{\alpha}}}-\gamma\widetilde{\eta}\nabla_{\frac{\partial}{\partial x_{n}}}.
\end{aligned}
\end{equation}
Here  $\gamma=0$ (i.e. $\mathcal{T}=\nabla_{\frac{\partial}{\partial t_{\alpha}}}=D$) if $\partial M$ is \textit{holomorphically flat}, while  for general boundary we take $\gamma=1$.
On $\partial M\cap \bar\Omega_\delta$,   $(u-\varphi)|_{\partial M}=0$  
 gives 
\begin{equation}\begin{aligned}\label{bdr-t}
\mathcal{T}(u-\varphi)=0 \mbox{ and } |(u-\varphi)_{t_{\alpha}}|\leq C\rho \sup_{\partial M}|\nabla (u-\varphi)|, \forall 1\leq \alpha<2n,
\end{aligned}
\end{equation}
see \cite{Guan1998The}.
See also \cite{Guan2010Li,Guan2015Sun}.

\subsubsection{Quantitative boundary estimates for tangential-normal derivatives}
\label{Quantitative-boundes-mix}
We derive quantitative boundary estimates for tangential-normal derivatives by using barrier functions. 
This type of construction of barrier functions  
goes back at least  to \cite{Hoffman1992Boundary,Guan1993Boundary,Guan1998The}. 
We shall point out that the constants in proof of quantitative boundary estimates,
such as $C, C_{\Phi},  C_1, C_1', C_2, A_1, A_2, A_3,$   etc, 
depend on  neither  $|\nabla u|$ nor  $(\delta_{\psi,f})^{-1}$.

By direct calculations, one derives  
$u_{x_{k} l}=u_{l x_{k}}+T^{p}_{kl}u_{p}$,
 $u_{y_{k} l}=u_{l y_{k}}+\sqrt{-1}T^{p}_{kl}u_{p}$, 
 $(u_{x_k})_{\bar j}=u_{x_k\bar j}+\overline{\Gamma_{kj}^l} u_{\bar l}$, 
 $(u_{y_k})_{\bar j}=u_{y_k\bar j}-{\sqrt{-1}}\overline{\Gamma_{kj}^l} u_{\bar l}$,
$(u_{x_k})_{i\bar j}=u_{x_ki\bar j}+\Gamma_{ik}^lu_{l\bar j}+\overline{\Gamma_{jk}^l} u_{i\bar l}-g^{l\bar m}R_{i\bar j k\bar m}u_l$,
$(u_{y_k})_{i\bar j}=u_{y_ki\bar j}+\sqrt{-1}(\Gamma_{ik}^l u_{l\bar j}-\overline{\Gamma_{jk}^l} u_{i\bar l})-\sqrt{-1}g^{l\bar m}R_{i\bar j k\bar m}u_l$,
and
\begin{equation}
\label{yuan-Bd2}
\begin{aligned}
F^{i\bar j}u_{x_{k}i\bar j}
\,&
= F^{i\bar j}u_{i\bar j x_{k}}+g^{l\bar m}F^{i\bar j}R_{i\bar j k\bar m}u_{l}-2\mathfrak{Re}(F^{i\bar j}T^{l}_{ik}u_{l\bar j}), \\
F^{i\bar j}u_{y_{k}i\bar j}
\,&
=F^{i\bar j}u_{i\bar j y_{k}}+\sqrt{-1}g^{l\bar m}F^{i\bar j}R_{i\bar j k\bar m}u_{l}+
2\mathfrak{Im}(F^{i\bar j}T^{l}_{ik}u_{l\bar j}).
\end{aligned}
\end{equation}
 Hence, one has 
 $\mathcal{L}(\pm u_{t_{\alpha}}) \geq \pm\psi_{t_{\alpha}}
 -C(1+|\nabla u|)\sum_{i=1}^n f_i -C\sum_{i=1}^n f_i |\lambda_i|$.

\vspace{1mm}
Writing $b_{1}=1+2\sup_{\bar \Omega_\delta} |\nabla u|^{2}+2\sup_{\bar \Omega_\delta} |\nabla \varphi|^{2}.$
\begin{lemma}
\label{yuan-key0}
Given $x_0\in\partial M$.
Let $u$ be a $C^3$  admissible solution to equation \eqref{mainequ}, and $\Phi$ is defined as
 \begin{equation}
 \label{Phi-def1}
 \begin{aligned}
 \Phi=\pm \mathcal{T}(u-\varphi)+\frac{\gamma}{\sqrt{b_1}}(u_{y_{n}}-\varphi_{y_{n}})^2 \mbox{ in } \Omega_\delta.
 \end{aligned}
 \end{equation}
 Then there is a positive constant $C_{\Phi}$ depending on
$|\varphi|_{C^{3}(\bar M)}$, $|\chi|_{C^{1}(\bar M)}$,
 $|\nabla \psi|_{C^{0}(\bar M)}$
 and other known data 
 such that
\begin{equation}
\label{yuan-1}
\begin{aligned}
\mathcal{L}\Phi \geq 
 -C_{\Phi}  \sqrt{b_1}  \sum_{i=1}^n f_i  - C_{\Phi}  \sum_{i=1}^n f_i|\lambda_i|
-C_\Phi  \mbox{ on } \Omega_{\delta} \nonumber
\end{aligned}
\end{equation}
 for some small positive constant $\delta$.
 In particular, if  $\partial M$ is holomorphically flat and $\varphi\equiv \mathrm{constant}$
then $C_{\Phi}$ depends on $|\chi|_{C^{1}(\bar M)}$,
 $|\nabla \psi|_{C^{0}(\bar M)}$
 and other known data.
Furthermore, if $M=X\times S$ and $\varphi\in C^2(\partial S)$ then we have barrier function $\Phi=  D u$,
 and then the constant $C_{\Phi}$ depends on $|\varphi|_{C^{2}(\bar S)}$, $|\chi|_{C^{1}(\bar M)}$,
 $|\nabla \psi|_{C^{0}(\bar M)}$ and other known data. 
  Where $D=\pm \frac{\partial}{\partial x_{\alpha}}, \pm \frac{\partial}{\partial y_{\alpha}}$ is as mentioned above.

\end{lemma}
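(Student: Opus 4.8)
The plan is to expand $\mathcal{L}\Phi$ via the Leibniz rule for the second order operator $\mathcal{L}$ (including its first order drift), to estimate each piece, and to use the nonnegative gradient term generated by the quadratic part of $\Phi$ to soak up the a priori uncontrolled second derivatives of $u$. Throughout one works at a fixed point in a frame with $g_{i\bar j}=\delta_{ij}$ where $F^{i\bar j}$ and $\mathfrak{g}_{i\bar j}$ are simultaneously diagonalized. First I would write $\Phi=\pm(u-\varphi)_{t_{\alpha}}\mp\gamma\widetilde{\eta}(u-\varphi)_{x_{n}}+\frac{\gamma}{\sqrt{b_{1}}}(u_{y_{n}}-\varphi_{y_{n}})^{2}$ and handle the pieces that are linear in $u$ — namely $\pm u_{t_{\alpha}}$, $\pm u_{x_{n}}$, $\pm u_{y_{n}}$ — by differentiating equation \eqref{mainequ} in the corresponding real direction and commuting covariant derivatives through \eqref{yuan-Bd2} together with the structural identities \eqref{key-chi}--\eqref{jinguang1} for $\chi[u]$; this reproduces the already recorded inequality $\mathcal{L}(\pm u_{t_{\alpha}})\geq\pm\psi_{t_{\alpha}}-C(1+|\nabla u|)\sum_{i}f_{i}-C\sum_{i}f_{i}|\lambda_{i}|$ (and likewise for $x_{n}$, $y_{n}$). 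The fixed smooth data $\varphi$ and $\widetilde{\eta}$ enter only through terms $F^{i\bar j}a_{i\bar j}$ with $\|a\|\leq C$, which are $O(\sum_{i}f_{i})$ since $g_{i\bar j}=\delta_{ij}$; and since $b_{1}=1+2\sup|\nabla u|^{2}+2\sup|\nabla\varphi|^{2}$ one has $1+|\nabla u|\leq C\sqrt{b_{1}}$, so all of this already fits the claimed right-hand side.

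The genuinely new point is the expansion $\mathcal{L}\big(\widetilde{\eta}(u-\varphi)_{x_{n}}\big)=\widetilde{\eta}\,\mathcal{L}\big((u-\varphi)_{x_{n}}\big)+(u-\varphi)_{x_{n}}\mathcal{L}(\widetilde{\eta})+2\mathfrak{Re}\big(F^{i\bar j}\nabla_{i}\widetilde{\eta}\,\nabla_{\bar j}(u-\varphi)_{x_{n}}\big)$. The first two summands are controlled as above, the second using $|(u-\varphi)_{x_{n}}|\leq C\sqrt{b_{1}}$ and $\widetilde{\eta}\in C^{2}$ — this is exactly where the $C^{3}$-regularity of $\partial M$, equivalently of the distance function $\sigma$, is used. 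In the cross term one uses $u_{x_{n}}=2u_{n}+\sqrt{-1}\,u_{y_{n}}$, so $\nabla_{\bar j}u_{x_{n}}=2u_{n\bar j}+\sqrt{-1}\,\nabla_{\bar j}u_{y_{n}}$; the $(1,1)$-part $u_{n\bar j}=\mathfrak{g}_{n\bar j}-\widetilde{\chi}_{n\bar j}-u_{n}\bar{\eta}_{j}-\eta_{n}u_{\bar j}$ is controlled through the equation, hence $F^{i\bar j}\nabla_{i}\widetilde{\eta}\,u_{n\bar j}$ is bounded by $C\sum_{i}f_{i}|\lambda_{i}|+C\sqrt{b_{1}}\sum_{i}f_{i}$, while the remaining piece is $2\mathfrak{Re}\big(\sqrt{-1}\,F^{i\bar j}\nabla_{i}\widetilde{\eta}\,\nabla_{\bar j}(u_{y_{n}}-\varphi_{y_{n}})\big)$ up to a further $O(\sqrt{b_{1}}\sum_{i}f_{i})$. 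This is precisely the careful separation of the two types of complex second derivatives advertised in the introduction.

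Finally, $\mathcal{L}\big((u_{y_{n}}-\varphi_{y_{n}})^{2}\big)=2F^{i\bar j}\nabla_{i}(u_{y_{n}}-\varphi_{y_{n}})\overline{\nabla_{j}(u_{y_{n}}-\varphi_{y_{n}})}+2(u_{y_{n}}-\varphi_{y_{n}})\mathcal{L}(u_{y_{n}}-\varphi_{y_{n}})$; the first summand is nonnegative and, carried by the factor $\gamma/\sqrt{b_{1}}$, furnishes a reservoir into which — by a weighted Cauchy--Schwarz with weight $1/\sqrt{b_{1}}$, paying $C\sqrt{b_{1}}F^{i\bar j}\nabla_{i}\widetilde{\eta}\,\overline{\nabla_{j}\widetilde{\eta}}\leq C\sqrt{b_{1}}\sum_{i}f_{i}$ — the surviving $\widetilde{\eta}$-cross term is absorbed, while the second summand contributes $O\big(\sqrt{b_{1}}\sum_{i}f_{i}+\sum_{i}f_{i}|\lambda_{i}|+1\big)$ after the division by $\sqrt{b_{1}}$, using $|u_{y_{n}}-\varphi_{y_{n}}|\leq C\sqrt{b_{1}}$ and the bound on $\mathcal{L}(u_{y_{n}})$. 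Collecting terms gives $\mathcal{L}\Phi\geq-C_{\Phi}\sqrt{b_{1}}\sum_{i}f_{i}-C_{\Phi}\sum_{i}f_{i}|\lambda_{i}|-C_{\Phi}$ with the stated dependence of $C_{\Phi}$. For the two special cases one simply sets $\gamma=0$: the quadratic term and all $\widetilde{\eta}$-contributions vanish, $\Phi$ reduces to $\pm D(u-\varphi)$ (respectively $Du$ on $X\times S$, since $D\varphi\equiv0$ on $X\times\partial S$ there), only $\mathcal{L}(\pm D(u-\varphi))=\pm D\psi+(\text{curvature, torsion, }\chi\text{-terms})$ survives, and the dependence on $\varphi$ drops to $|\varphi|_{C^{2}}$, vanishing entirely when $\varphi$ is constant.

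The main obstacle is the bookkeeping in the middle step: each time $\mathcal{L}$ falls on a first derivative of $u$ it produces third order derivatives of $u$, and one must consistently split every second order derivative of $u$ into its controlled $(1,1)$-part (absorbed into $\sum_{i}f_{i}|\lambda_{i}|$ through the equation) and its a priori uncontrolled remainder (absorbed into the nonnegative gradient term), all the while checking that the $\chi[u]$-terms coming from the nonzero $\eta^{1,0}$ do not create new uncontrolled contributions — which is guaranteed by the way $\mathcal{L}$ is built, that is, by \eqref{key-chi} and the structural assumption $(1.6)$ of \cite{yuan2018CJM}.
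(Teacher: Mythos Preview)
Your proposal is correct and follows essentially the same route as the paper: the key identity $u_{x_n}=2u_n+\sqrt{-1}u_{y_n}$ is used to split the cross term $F^{i\bar j}(\widetilde\eta)_i(u_{x_n})_{\bar j}$ into a $(1,1)$-piece controlled by $\sum f_i|\lambda_i|$ and a $u_{y_n}$-piece absorbed via weighted Cauchy--Schwarz into the nonnegative term $\frac{\gamma}{\sqrt{b_1}}F^{i\bar j}(u_{y_n})_i(u_{y_n})_{\bar j}$ generated by the quadratic part of $\Phi$. Your treatment of the special cases ($\gamma=0$, $D\varphi=0$) also matches the paper's.
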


\begin{proof}
By direct calculation and Cauchy-Schwarz inequality, one obtains $F^{i\bar j}(\widetilde{\eta})_i (u_{x_{n}})_{\bar j}=F^{i\bar j}(\widetilde{\eta})_i(2u_{n\bar j}
+\sqrt{-1}(u_{y_n})_{ \bar j})\leq C\sum_{i=1}^nf_i|\lambda_i|+ \frac{1}{\sqrt{b_1}}F^{i\bar j}u_{y_n i} u_{y_n \bar j} +C\sqrt{b_1}\sum_{i=1}^n f_i.$
Thus
\begin{equation}
\label{bdy-g1}
\begin{aligned}
\mathcal{L}(\pm\mathcal{T}u)
 \geq \,&
 -C\sqrt{b_1}\sum_{i=1}^n f_i  
  - C\sum_{i=1}^n f_i|\lambda_i|
 -\frac{\gamma}{\sqrt{b_1}} F^{i\bar j}u_{y_n i}u_{y_n \bar j}. \nonumber
 \end{aligned}
\end{equation}
Applying \eqref{yuan-Bd2}, we obtain
\begin{equation}
\label{yuan-hao2}
\begin{aligned}
\mathcal{L}((u_{y_{n}}-\varphi_{y_{n}})^2) \geq \,&
F^{i\bar j}u_{y_n i}u_{y_n \bar j}-C(1+|\nabla u|^2)   \sum_{i=1}^n f_i   - C|\nabla u|\sum_{i=1}^n f_i|\lambda_i|  - C(1+|\nabla u|). \nonumber
\end{aligned}
\end{equation}
We thus complete the proof.
\end{proof}

To estimate the quantitative boundary estimates for mixed derivatives, 
we should employ barrier function of the form
\begin{equation}
\label{barrier1}
\begin{aligned}
v= (\underline{u}-u)
- t\sigma
+N\sigma^{2}   \mbox{  in  } \Omega_{\delta},
\end{aligned}
\end{equation}
where $t$, $N$ are positive constants to be determined.

\vspace{1mm} 
In what follows we denote  $\widetilde{u}=u-\varphi$.
 Let $\delta>0$ and $t>0$ be sufficiently small with $N\delta-t\leq 0,$ such that, in $\Omega_{\delta}$,    $v\leq 0$, $\sigma$ is $C^2$ and
\begin{equation}
\label{bdy1}
\begin{aligned}
 \frac{1}{4} \leq |\nabla \sigma|\leq 2,  \mbox{  }
  |\mathcal{L}\sigma | \leq   C_2\sum_{i=1}^n f_i,     \mbox{  }
  |\mathcal{L}\rho^2| \leq C_2\sum_{i=1}^n f_{i}. 
\end{aligned}
\end{equation}
In addition, we  can choose $\delta$ and $t$  small enough  such that
\begin{equation}
\label{yuanbd-11}
\begin{aligned}
\max\{|2N\delta-t|, t\}\leq \min\{\frac{\varepsilon}{2C_{2}}, \frac{\beta}{16\sqrt{n}C_2} \},
\end{aligned}
\end{equation}
where $\beta:= \frac{1}{2}\min_{\bar M} dist(\nu_{\underline{\lambda} }, \partial \Gamma_n)$,
 $\varepsilon$ is the constant corresponding to $\beta$ in Lemma \ref{guan2014}, and $C_2$ is the constant in \eqref{bdy1}.

\vspace{1mm} 
We construct the barrier function as follows:
\begin{equation}
\label{Psi}
\begin{aligned} 
\widetilde{\Psi} =A_1 \sqrt{b_1}v -A_2 \sqrt{b_1} \rho^2 + \frac{1}{\sqrt{b_1}} \sum_{\tau<n}|\widetilde{u}_{\tau}|^2+ A_3 \Phi \mbox{ on } \Omega_\delta.
\end{aligned}
\end{equation}

\begin{proposition}
\label{mix-general}
 
 Let $(M,\omega)$ be a compact Hermitian manifold with $C^3(\bar M)$-smooth boundary (without any assumption on Levi form of boundary).
In addition we suppose the other assumptions as in Theorem  \ref{mix-general-thm1} hold. 
Then 
for any admissible solution $u\in C^3(M)\cap C^2(\bar M)$ to the Dirichlet problem
 there is a uniformly positive constant $C$ depending on $|\varphi|_{C^{3}(\bar M)}$, 
 $|\underline{u}|_{C^{2}(\bar M)}$,  
$\sup_{M}|\nabla\psi|$, $\partial M$ 
up to third derivatives
and other known data (but neither on $(\delta_{\psi,f})^{-1}$ nor on $\sup_{M}|\nabla u|$)
such that
\begin{equation}
\label{quanti-mix-derivative-00} 
|\nabla^2 u(X,\nu)|\leq C(1+\sup_{\partial M}|\nabla u|)(1+\sup_{M}|\nabla u|)
 \end{equation}
 for any $X\in T_{\partial M}$ with $ |X|=1$,   
where  $\nabla^2 u$ denotes the real Hessian of $u$.
\end{proposition}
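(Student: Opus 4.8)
\medskip
\noindent\textbf{Proof strategy.} The plan is to run a barrier argument at an arbitrary fixed point $x_0\in\partial M$, organized around the function $\widetilde{\Psi}$ of \eqref{Psi}, showing that after a hierarchical choice of the constants $\widetilde{\Psi}$ is an $\mathcal{L}$-subsolution on $\Omega_\delta$, is nonpositive on $\partial\Omega_\delta$, and vanishes at $x_0$; the maximum principle then forces $\widetilde{\Psi}\le 0$ on $\Omega_\delta$ with a maximum at $x_0$, and comparing inner normal derivatives there extracts the estimate. Concretely I would first record that $v(x_0)=0$ (by \eqref{barrier1} and $\underline{u}=u=\varphi$, $\sigma=0$), $\rho^2(x_0)=0$, $\widetilde{u}_\tau(x_0)=0$ for the tangential indices $\tau<n$, and $(u-\varphi)_{y_n}(x_0)=0$, whence $\widetilde{\Psi}(x_0)=0$; that $\partial_\nu\rho^2(x_0)=0$, $\partial_\nu\big(\tfrac{1}{\sqrt{b_1}}\sum_{\tau<n}|\widetilde{u}_\tau|^2\big)(x_0)=0$ and $\partial_\nu\big(\tfrac{\gamma}{\sqrt{b_1}}(u_{y_n}-\varphi_{y_n})^2\big)(x_0)=0$; and that $\partial_\nu v(x_0)=(\underline{u}-u)_\nu(x_0)-t\sigma_\nu(x_0)$. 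Then $\partial_\nu\widetilde{\Psi}(x_0)\le 0$ yields $\pm\,\partial_\nu\mathcal{T}(u-\varphi)(x_0)\le -\tfrac{A_1}{A_3}\sqrt{b_1}\big((\underline{u}-u)_\nu-t\sigma_\nu\big)(x_0)\le C\sqrt{b_1}\,(1+\sup_{\partial M}|\nabla u|)$, using \eqref{bdr-ind-2} and \eqref{herer} to bound $(\underline{u}-u)_\nu(x_0)$. Since $\widetilde{\eta}(x_0)=0$, the operator $\mathcal{T}$ differs from $\nabla_{\partial/\partial t_\alpha}$ at $x_0$ only by the lower order factor $\gamma\widetilde{\eta}\,\nabla_{\partial/\partial x_n}$, and all Christoffel corrections are bounded by $\sup_{\partial M}|\nabla u|+|\varphi|_{C^1}$, so this translates into $|\nabla^2 u(X,\nu)(x_0)|\le C(1+\sup_{\partial M}|\nabla u|)(1+\sup_M|\nabla u|)$ for every unit tangent $X$, uniformly in $x_0$ once one recalls $\sqrt{b_1}\le C(1+\sup_M|\nabla u|)$.

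For the boundary condition on $\partial\Omega_\delta$ I would split into two pieces. On $\partial M\cap\overline{\Omega_\delta}$ one has $v=0$, $\mathcal{T}(u-\varphi)=0$, and $|\widetilde{u}_\tau|+|(u-\varphi)_{y_n}|=O(\rho)$ by \eqref{bdr-t}; hence on this part $\widetilde{\Psi}\le\big(-A_2+C(A_3+1)\big)\sqrt{b_1}\,\rho^2\le 0$ as soon as $A_2\ge C(A_3+1)$. On the remaining piece $\{\rho=\delta\}\cap M$ the fixed negative term $-A_2\sqrt{b_1}\,\delta^2$ dominates the other terms, all of which are $O(\sqrt{b_1})$ since $\sup_{\overline{\Omega_\delta}}|\nabla u|\le C\sqrt{b_1}$, provided $A_2$ is chosen large depending on $\delta$ and $A_3$.

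The heart of the proof, and where I expect the main obstacle, is the differential inequality $\mathcal{L}\widetilde{\Psi}\ge 0$ on $\Omega_\delta$. The starting point is that concavity \eqref{bb}, the equation and the subsolution \eqref{existenceofsubsolution} give $\mathcal{L}(\underline{u}-u)=F^{i\bar j}(\underline{\mathfrak{g}}-\mathfrak{g})_{i\bar j}\ge f(\lambda(\underline{\mathfrak{g}}))-\psi\ge 0$ everywhere, and, since $\underline{\mathfrak{g}}$ has bounded entries, also $\sum_i f_i\lambda_i\le C\sum_i f_i$. I would then split according to Lemma \ref{gabor'lemma}: when $|\lambda|\le R_0$ all error terms are bounded; when $|\lambda|>R_0$, in case \eqref{haha1} one has $\mathcal{L}(\underline{u}-u)\ge\varepsilon\sum_i f_i$, so after \eqref{yuanbd-11} absorbs the $\sigma$-terms $A_1\sqrt{b_1}\,\mathcal{L}v\ge\tfrac{\varepsilon}{2}A_1\sqrt{b_1}\sum_i f_i$, while in case \eqref{2nd-case} the term $2NF^{i\bar j}\sigma_i\sigma_{\bar j}\ge\tfrac{N\varepsilon}{8}\sum_i f_i$ from the $N\sigma^2$ part of $v$ supplies the positivity once $N$ is large and $\delta$ small. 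Feeding in Lemma \ref{yuan-key0} for $\mathcal{L}\Phi$ and \eqref{yuan-Bd2} for $\mathcal{L}\big(\sum_{\tau<n}|\widetilde{u}_\tau|^2\big)$, and choosing $A_1\gg A_2\gg A_3\gg 1$, all terms are then dominated except the curvature- and torsion-induced contributions of size $C\sum_i f_i|\lambda_i|$ carried by $\mathcal{L}\Phi$ and by $\mathcal{L}\big(\sum_{\tau<n}|\widetilde{u}_\tau|^2\big)$. Absorbing these uniformly, without an a priori gradient bound, is the delicate point. The tool I would use is to retain the positive second-order terms generated by $\mathcal{L}\big(\sum_{\tau<n}|\widetilde{u}_\tau|^2\big)$ and by the quadratic term inside $\Phi$ --- whose $(1,1)$-components reproduce $\tfrac{1}{\sqrt{b_1}}\sum f_i\lambda_i^2$-type positivity in the tangential and $y_n$ directions up to a controlled error (here the cancellation already recorded in the proof of Lemma \ref{yuan-key0} is essential) --- and then absorb the whole of $C\sum_i f_i|\lambda_i|$ into these squares and into $A_1\sqrt{b_1}\sum_i f_i$ by Cauchy--Schwarz, the bound $\sum_i f_i\lambda_i\le C\sum_i f_i$ and the dichotomy of Lemma \ref{gabor'lemma} keeping the bookkeeping uniform in $|\lambda|$; this is precisely the mechanism of the boundary estimates in \cite{Hoffman1992Boundary,Guan1993Boundary,Guan1998The}. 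With the constants fixed in the order: first $\varepsilon$ and $\beta$, then $\delta,t,N$, then $A_3$, then $A_2$, then $A_1$, every inequality closes and \eqref{quanti-mix-derivative-00} follows.
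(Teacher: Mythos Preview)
Your barrier framework, the boundary verification for $\widetilde{\Psi}$, and the extraction of the mixed second derivative from $\partial_\nu\widetilde{\Psi}(x_0)\le 0$ are all correct and match the paper. The gap is in the step you flag yourself as ``the delicate point'': absorbing the terms $C\sum_i f_i|\lambda_i|$ produced by $\mathcal{L}\Phi$ and by $\mathcal{L}\big(\sum_{\tau<n}|\widetilde{u}_\tau|^2\big)$.

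You write that the quadratic terms in $\sum_{\tau<n}|\widetilde{u}_\tau|^2$ and in $\Phi$ ``reproduce $\tfrac{1}{\sqrt{b_1}}\sum f_i\lambda_i^2$-type positivity in the tangential and $y_n$ directions''. This is where the argument stalls: the positive part of $\mathcal{L}\big(\sum_{\tau<n}|\widetilde{u}_\tau|^2\big)$ is $\sum_{\tau<n}F^{i\bar j}\mathfrak{g}_{\bar\tau i}\mathfrak{g}_{\tau\bar j}$, which is a sum over \emph{coordinate} directions, not eigenvalue directions. The passage from one to the other is nontrivial; what the paper invokes (Proposition~2.19 in \cite{Guan12a}) gives only
\[
\sum_{\tau<n}F^{i\bar j}\mathfrak{g}_{\bar\tau i}\mathfrak{g}_{\tau\bar j}\ \ge\ \tfrac{1}{4}\sum_{i\neq r} f_i\lambda_i^2
\]
for some index $r$, so one eigenvalue is missing from the positive square term. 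Your Cauchy--Schwarz sketch would need the full $\sum_i f_i\lambda_i^2$ and does not explain how to handle $f_r|\lambda_r|$. The paper closes this by a sign split on $\lambda_r$: if $\lambda_r\ge 0$ one writes $\sum f_i|\lambda_i|=\sum f_i\lambda_i-2\sum_{\lambda_i<0}f_i\lambda_i$ and absorbs only the negative-eigenvalue part into the squares; if $\lambda_r<0$ one uses either Lemma~\ref{asymptoticcone1} (under \eqref{addistruc}) or the inequality $\sum f_i|\lambda_i|\le \sum f_i(\underline{\lambda}_i-\lambda_i)+(\text{positive-eigenvalue squares})+C\sum f_i$. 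Either way the missing index is bypassed, yielding the combined bound \eqref{flambda}, which then plugs cleanly into \eqref{bdy-main-inequality}.

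A second, smaller discrepancy: you organize the dichotomy around Lemma~\ref{gabor'lemma}, while the paper uses Lemma~\ref{guan2014} (the angle lemma). The paper's constants $\delta,t$ in \eqref{yuanbd-11} are calibrated to the $\varepsilon$ of Lemma~\ref{guan2014}, and in Case~I the gain $\sum f_i(\underline{\lambda}_i-\lambda_i)\ge \varepsilon(1+\sum f_i)$ is exactly what absorbs the extra $\sum f_i(\underline{\lambda}_i-\lambda_i)$ introduced by \eqref{flambda}; in Case~II the comparability $f_i\ge \tfrac{\beta}{\sqrt{n}}\sum f_j$ makes $F^{i\bar j}\sigma_i\sigma_{\bar j}$ large and, together with \eqref{bdy22} and \eqref{bound-lambda}, handles the remaining constant term. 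Your alternative via Lemma~\ref{gabor'lemma} could likely be made to work, but it would require reworking these absorptions; as written, the proposal does not close.
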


\begin{proposition}
\label{mix-Leviflat}
With the assumptions as in Theorem \ref{mix-Leviflat-thm1}, then $\mbox{ for any } X\in  T_{\partial M}  \cap J T_{\partial M}  \mbox{ with } |X|=1$,
the admissible solution $u\in C^3(M)\cap C^2(\bar M)$ to the Dirichlet problem satisfies
\begin{equation}
\label{quanti-mix-derivative-1}
|\nabla^2 u(X,\nu)|\leq C(1+\sup_{\partial M}|\nabla u|)(1+\sup_{M}|\nabla u|)
\end{equation}
where $C$ depends on $|\varphi|_{C^{3}(\bar M)}$, $|\psi|_{C^{1}(\bar M)}$ and $|\underline{u}|_{C^{2}(\bar M)}$,
$\partial M$ 
up to second derivatives
and other known data (but neither on $(\delta_{\psi,f})^{-1}$ nor on $\sup_{M}|\nabla u|$). 
Moreover, when the boundary data $\varphi$ is exactly a constant
then the $C$ depends only on $\partial M$ 
up to second derivatives, $|\underline{u}|_{C^{2}(\bar M)}$,  $\sup_{M}|\nabla\psi|$
and other known data. 
\end{proposition}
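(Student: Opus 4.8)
\emph{Overall strategy.} This is the \textit{holomorphically flat} specialisation ($\gamma=0$) of Proposition \ref{mix-general}, so the plan is to run the same barrier argument, the point being that here the distance function and the tangential operator become essentially trivial, which is exactly what will save one order of differentiation on $\partial M$. Fix $x_0\in\partial M$ and choose holomorphic coordinates \eqref{holomorphic-coordinate-flat} centred at $x_0$ in which $\partial M$ is locally $\{\mathfrak{Re}(z_n)=0\}$ and $g_{i\bar j}(x_0)=\delta_{ij}$; along $\partial M$ near $x_0$ the complex tangent space $T_{\partial M}\cap JT_{\partial M}$ is spanned by $\tfrac{\partial}{\partial x_\alpha},\tfrac{\partial}{\partial y_\alpha}$, $1\le\alpha\le n-1$, so it suffices to bound $|\nabla_\nu(Du)(x_0)|$ for each coordinate field $D=\pm\tfrac{\partial}{\partial x_\alpha},\pm\tfrac{\partial}{\partial y_\alpha}$ of \eqref{tangential-oper-Leviflat1}. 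By \eqref{herer} (obtained from \eqref{daqindiguo1}, \eqref{bdr-ind-2} and \eqref{left-up}) the quantities $\sup_M|u|$, $\sup_{\partial M}|\nabla u|$ and all pure tangential second derivatives $|u_{t_\alpha t_\beta}(x_0)|$ are already bounded by the stated data, so only the tangential--normal second derivative has to be produced, and for that I would use a barrier on $\Omega_\delta=\{\rho<\delta\}\cap M$.

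\emph{The barrier.} I would take $\widetilde\Psi$ as in \eqref{Psi}, which for $\gamma=0$ reads $\widetilde\Psi=A_1\sqrt{b_1}\,v-A_2\sqrt{b_1}\,\rho^2+\tfrac{1}{\sqrt{b_1}}\sum_{\tau<n}|\widetilde u_\tau|^2+A_3\Phi$ with $v=(\underline{u}-u)-t\sigma+N\sigma^2$ and, by Lemma \ref{yuan-key0}, $\Phi=\pm D(u-\varphi)$ satisfying $\mathcal{L}\Phi\ge-C_\Phi\sqrt{b_1}\sum_i f_i-C_\Phi\sum_i f_i|\lambda_i|-C_\Phi$ on $\Omega_\delta$, where $b_1=1+2\sup_{\bar\Omega_\delta}|\nabla u|^2+2\sup_{\bar\Omega_\delta}|\nabla\varphi|^2$ and $C_\Phi$ depends on $|\varphi|_{C^3}$, $|\psi|_{C^1}$, $|\chi|_{C^1}$ and other known data but not on $\sup_M|\nabla u|$ or $(\delta_{\psi,f})^{-1}$. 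The decisive point is that here $\sigma$ may be taken equal to $\mathfrak{Re}(z_n)$, which is linear, and $\mathcal{T}=D$ is a coordinate field with no coefficient $\widetilde\eta$ --- this is why only \emph{second} order data of $\partial M$ enters $C_\Phi$, whereas in Proposition \ref{mix-general} the operator $\mathcal{T}=\nabla_{\partial/\partial t_\alpha}-\widetilde\eta\,\nabla_{\partial/\partial x_n}$ and a genuine distance function drag second derivatives of $\sigma$, hence third order data of $\partial M$, into the estimate. Using $\mathcal{L}(\underline{u}-u)=F^{i\bar j}(\underline{\mathfrak{g}}_{i\bar j}-\mathfrak{g}_{i\bar j})$ (from \eqref{linearoperator21} and the structure of $\mathfrak{g}[\cdot]$), together with \eqref{bdy1} and \eqref{yuan-Bd2}, I would estimate $\mathcal{L}v$, $\mathcal{L}\rho^2$ and $\mathcal{L}\big(\tfrac{1}{\sqrt{b_1}}\sum_{\tau<n}|\widetilde u_\tau|^2\big)$, then fix parameters in the order $\delta,t$ small (so \eqref{yuanbd-11} holds), then $A_3,A_1,A_2$ large, to force $\mathcal{L}\widetilde\Psi\ge0$ in $\Omega_\delta$ and $\widetilde\Psi\le0$ on $\partial\Omega_\delta$ --- on $\partial M\cap\bar\Omega_\delta$ using $\sigma\equiv0$, $\Phi\equiv0$ and $|\widetilde u_\tau|\le C\rho\sup_{\partial M}|\nabla(u-\varphi)|$ from \eqref{bdr-t}, and on $\{\rho=\delta\}$ using that $-A_2\sqrt{b_1}\rho^2$ dominates. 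The maximum principle then gives $\widetilde\Psi\le0$ in $\Omega_\delta$ with $\widetilde\Psi(x_0)=0$; since $\rho(x_0)=0$ and $\widetilde u_\tau(x_0)=0$ for $\tau<n$ (tangential derivatives of $u-\varphi$ vanish on $\partial M$), only the $v$- and $\Phi$-terms have nonzero inner normal derivative at $x_0$, so $\nabla_\nu\widetilde\Psi(x_0)\le0$ reads $A_3\nabla_\nu\Phi(x_0)\le-A_1\sqrt{b_1}\,\nabla_\nu v(x_0)$; since $\nabla_\nu v(x_0)$ is negative with $|\nabla_\nu v(x_0)|\le C$ (by \eqref{bdr-ind-2}), taking both signs in $\Phi$ gives $|\nabla_\nu(D(u-\varphi))(x_0)|\le C\sqrt{b_1}$, whence $|\nabla^2u(X,\nu)|\le C\sqrt{b_1}\le C(1+\sup_M|\nabla u|)$ at $x_0$ and \eqref{quanti-mix-derivative-1} follows.

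\emph{The main difficulty, and the refinements.} The delicate step is making $\mathcal{L}\widetilde\Psi\ge0$ hold along \emph{both} alternatives of Lemma \ref{gabor'lemma}, with all constants free of $(\delta_{\psi,f})^{-1}$ --- legitimate because $R_0,\varepsilon$ there and $\varepsilon$ in Lemma \ref{guan2014} are $\delta_{\psi,f}$-independent by Remark \ref{addcond-remark-psiu} --- and of $\sup_M|\nabla u|$. In the alternative \eqref{haha1}, $A_1\sqrt{b_1}\,\mathcal{L}v$ produces a gain $-cA_1\varepsilon\sqrt{b_1}\sum_i f_i$ that for $A_1$ large absorbs $A_3C_\Phi\sqrt{b_1}\sum_i f_i$ and $A_3C_\Phi\sum_i f_i|\lambda_i|$, the latter after bounding $\sum_i f_i|\lambda_i|$ via concavity \eqref{bb} and the subsolution; here \eqref{addistruc}, equivalently $\sum_i f_i\lambda_i>0$ in $\Gamma$ by Lemma \ref{asymptoticcone1}, is what lets this go through without a degeneracy constant. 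The genuinely hard alternative is \eqref{2nd-case}, where $F^{i\bar j}\ge\varepsilon(\sum_p f_p g_{p\bar q})g^{i\bar j}$ is uniformly elliptic but $\mathcal{L}v$ gains nothing: there one must use the positive squared-Hessian terms $\tfrac{1}{\sqrt{b_1}}\sum_{\tau<n}F^{i\bar j}(\widetilde u_\tau)_i(\widetilde u_\tau)_{\bar j}$ coming out of $\mathcal{L}\big(\tfrac{1}{\sqrt{b_1}}\sum_{\tau<n}|\widetilde u_\tau|^2\big)$, combined with uniform ellipticity, to swallow the bad term $A_3C_\Phi\sum_i f_i|\lambda_i|$, while $-A_2\sqrt{b_1}\,\mathcal{L}\rho^2$ keeps the construction localised to a small half-ball so the remaining errors stay absorbable; this is the mechanism by which the already-controlled tangential second derivatives feed back into the mixed estimate. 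Finally the two refinements are immediate: if $\varphi\equiv\mathrm{constant}$ then $D\varphi\equiv0$, so $\Phi=\pm Du$ and $C_\Phi$ loses its dependence on $|\varphi|_{C^3}$; and if $(M,J,\omega)=(X\times S,J,\omega)$ with the operators $D=\pm\tfrac{\partial}{\partial x_\alpha},\pm\tfrac{\partial}{\partial y_\alpha}$ coming from holomorphic coordinates of $X$ and $\varphi$ depending only on the $S$-variable, then again $D\varphi\equiv0$ and inspecting Lemma \ref{yuan-key0} shows $C_\Phi$ depends only on $|\varphi|_{C^2(\bar S)}$ and the other listed data, which is the sharpening recorded in Theorem \ref{mix-Leviflat-thm1-product}.
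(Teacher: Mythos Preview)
Your barrier setup with $\widetilde\Psi$ and the reduction to $\mathcal{L}\widetilde\Psi\ge 0$ on $\Omega_\delta$ is exactly the paper's argument, and your explanation of why holomorphic flatness saves one derivative of $\partial M$ (no $\widetilde\eta$ in $\mathcal{T}=D$, hence $\gamma=0$ in $\Phi$) is correct. Two points where you diverge from the paper deserve comment.

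\textbf{The dichotomy.} You split via Lemma~\ref{gabor'lemma} (Sz\'ekelyhidi's alternatives \eqref{haha1}/\eqref{2nd-case}), whereas the paper uses Lemma~\ref{guan2014}: Case~I is $|\nu_\lambda-\nu_{\underline\lambda}|\ge\beta$, giving \eqref{guan-key1} with its extra additive constant $+\varepsilon$ (not merely $\varepsilon\sum_i f_i$), which directly absorbs the bare constant $-(C_1'+A_3C_\Phi)$ in \eqref{bdy-main-inequality}; Case~II is $|\nu_\lambda-\nu_{\underline\lambda}|<\beta$, giving \eqref{2nd-case1}. In Case~II the decisive positive term is $2A_1N\sqrt{b_1}\,F^{i\bar j}\sigma_i\sigma_{\bar j}$, which under \eqref{2nd-case1} yields \eqref{bbvvv} and, combined with \eqref{bdy22} and \eqref{bound-lambda}, absorbs everything; the term $-A_2\sqrt{b_1}\rho^2$ you cite serves the boundary condition on $\{\rho=\delta\}$, not the differential inequality. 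Your route via Lemma~\ref{gabor'lemma} can be closed as well, but these structural details would need adjusting.

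\textbf{The gap.} You invoke \eqref{addistruc} to control $\sum_i f_i|\lambda_i|$, but Proposition~\ref{mix-Leviflat} inherits its hypotheses from Theorem~\ref{mix-Leviflat-thm1}, which (unlike Theorem~\ref{mix-general-thm1}) does \emph{not} assume \eqref{addistruc}. The paper handles $\sum_i f_i|\lambda_i|$ without it: when $\lambda_r\ge 0$ one uses \eqref{inequ-1}; when $\lambda_r<0$ one uses \eqref{inequ-2} (a trick from \cite{Guan14}), which produces the extra term $\sum_i f_i(\underline\lambda_i-\lambda_i)$ on the right and this is then absorbed into the first line of \eqref{bdy-main-inequality} by taking $A_1\sqrt{b_1}\ge A_3C_\Phi+C_1'$. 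Together these give \eqref{flambda}. As written, your argument for bounding $\sum_i f_i|\lambda_i|$ relies on an assumption you do not have; you should replace the appeal to \eqref{addistruc} by \eqref{inequ-1}--\eqref{inequ-2}.
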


 \begin{proposition}
 \label{mix-Leviflat-product}
With the assumptions as in Theorem \ref{mix-Leviflat-thm1-product}, then there exists a positive constant depending on 
$|\varphi|_{C^{2}(\bar S)}$, $|\psi|_{C^{1}(\bar M)}$, 
$|\underline{u}|_{C^{2}(\bar M)}$, $\partial S$
up to second derivatives 
and other known data such that \eqref{quanti-mix-derivative-1} holds.
 \end{proposition}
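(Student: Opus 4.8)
The plan is to adapt the barrier argument of Proposition \ref{mix-Leviflat} to the product setting $M=X\times S$, the only changes being in the choice of tangential coordinates and in the barrier, both of which the product structure makes available essentially for free. Fix $x_0=(p_0,q_0)\in\partial M$, and recall $\partial M=X\times\partial S$. First I would choose local holomorphic coordinates adapted to the product, $z=(z',z_n)$ with $z'=(z_1,\dots,z_{n-1})$ on $X$ centred at $p_0$ and $z_n$ on $S$ centred at $q_0$, normalised (by a linear change mixing only within $X$, plus a rescaling and rotation of $z_n$, which preserve adaptedness) so that $g_{i\bar j}(x_0)=\delta_{ij}$ and $\partial/\partial x_n=\nu$ at $x_0$. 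Because a path that moves only within the $X$-factor stays in $\partial M$, the coordinate fields $\partial/\partial x_\alpha,\partial/\partial y_\alpha$ with $1\le\alpha\le n-1$ are tangent to $\partial M$ globally, not merely at $x_0$, so one may take the tangential operator to be $\mathcal T=D:=\pm\partial/\partial x_\alpha,\ \pm\partial/\partial y_\alpha$ as in \eqref{tangential-oper-Leviflat1}, i.e. $\gamma=0$; this is the substitute for holomorphic flatness, and it does not require $\partial S$ to be real analytic. Choosing the extension of $\varphi$ to $\bar M$ to be $\pi_2^*$ of an extension to $\bar S$, one has $D\varphi\equiv0$ on $\bar M$, hence $Du=D(u-\varphi)$ vanishes on $\partial M\cap\bar\Omega_\delta$ and, by \eqref{bdr-t}, $|(Du)_{t_\beta}|\le C\rho\sup_{\partial M}|\nabla u|$ there. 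This is the crucial point: the barrier supplied by Lemma \ref{yuan-key0} in the product case is simply $\Phi=Du$, carrying no derivative of $\varphi$ at all, so its constant $C_\Phi$ depends only on $|\chi|_{C^1(\bar M)}$, $|\nabla\psi|_{C^0(\bar M)}$, $|\varphi|_{C^2(\bar S)}$ and known data, never on $|\varphi|_{C^3}$.

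With these choices I would run the maximum-principle scheme of Proposition \ref{mix-Leviflat} with the barrier $\widetilde\Psi$ of \eqref{Psi}, now with $\Phi=Du$, the auxiliary function $v=(\underline u-u)-t\sigma+N\sigma^2$ of \eqref{barrier1}, $b_1=1+2\sup_{\bar\Omega_\delta}|\nabla u|^2+2\sup_{\bar\Omega_\delta}|\nabla\varphi|^2$, and $t,\delta$ small as in \eqref{bdy1}--\eqref{yuanbd-11}; here $\sigma$ is the $\omega$-distance to $\partial M$, of class $C^2$ on $\Omega_\delta$ because $\partial S\in C^2$. Using the admissible subsolution of \eqref{existenceofsubsolution}, concavity and Lemma \ref{guan2014} (with $\beta=\tfrac12\min_{\bar M}\mathrm{dist}(\nu_{\underline{\lambda}},\partial\Gamma_n)$ and its constant $\varepsilon$) to control $\mathcal L(\underline u-u)$; the bounds $|\mathcal L\sigma|,|\mathcal L\rho^2|\le C_2\sum_i f_i$ of \eqref{bdy1}; the identities \eqref{yuan-Bd2} and Lemma \ref{Lnablau}, which give $\mathcal L\big(\tfrac1{\sqrt{b_1}}\sum_{\tau<n}|\widetilde u_\tau|^2\big)\ge\tfrac1{\sqrt{b_1}}F^{i\bar j}\widetilde u_{\tau i}\widetilde u_{\tau\bar j}-C\sqrt{b_1}\sum_i f_i-C\sum_i f_i|\lambda_i|-C\sqrt{b_1}$; and $\mathcal L\Phi\ge-C_\Phi\sqrt{b_1}\sum_i f_i-C_\Phi\sum_i f_i|\lambda_i|-C_\Phi$ from Lemma \ref{yuan-key0}; one fixes $1\ll A_3\ll A_2\ll A_1$, all independent of $\sup_M|\nabla u|$ and of $(\delta_{\psi,f})^{-1}$ by Remark \ref{addcond-remark-psiu}. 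For a suitable orientation of $D$ this makes $\widetilde\Psi\mp D(u-\varphi)$ an $\mathcal L$-supersolution on $\Omega_\delta$ that is nonnegative on $\partial\Omega_\delta$ — on the spherical part via $v$, on $\partial M\cap\bar\Omega_\delta$ via the $\rho^2$- and $\tfrac1{\sqrt{b_1}}\sum_{\tau<n}|\widetilde u_\tau|^2$-terms together with \eqref{bdr-t} — and vanishes at $x_0$; hence $x_0$ is its minimum, its inner-normal derivative there is $\ge0$, and this gives $|(Du)_\nu(x_0)|\le C\sqrt{b_1}(1+\sup_{\partial M}|\nabla u|)$. Since $D$ ranges over $\pm\partial/\partial x_\alpha,\pm\partial/\partial y_\alpha$ with $\alpha\le n-1$ and $T_{\partial M}\cap JT_{\partial M}=T_{p_0}X$ in the product case, this is precisely \eqref{quanti-mix-derivative-1}.

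Finally I would trace the constant. The pointwise bounds $|u_{t_\alpha t_\beta}(x_0)|\le\hat C$ of \eqref{herer} involve only $|\varphi|_{C^2}$ along $\partial M=X\times\partial S$, i.e. $|\varphi|_{C^2(\bar S)}$; the barrier constants $C_2,t,N,A_i$ involve $\partial S$ up to second derivatives, $|\underline u|_{C^2(\bar M)}$ and $|\psi|_{C^1(\bar M)}$; and $C_\Phi$ is as above. So the final $C$ depends only on $|\varphi|_{C^2(\bar S)}$, $|\psi|_{C^1(\bar M)}$, $|\underline u|_{C^2(\bar M)}$, $\partial S$ up to second derivatives and other known data, and on neither $\sup_M|\nabla u|$ nor $(\delta_{\psi,f})^{-1}$. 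The main obstacle, inherited from Propositions \ref{mix-general} and \ref{mix-Leviflat}, is the uniform absorption of the terms $\sum_i f_i|\lambda_i|$ and $\sqrt{b_1}\sum_i f_i$ produced by $\mathcal L$ of each ingredient without ever invoking $\sup_M|\nabla u|$ or $(\delta_{\psi,f})^{-1}$; this rests on the special structure \eqref{key-chi} of the gradient terms in $\chi[u]$ and on the $\delta_{\psi,f}$-independent forms of Lemmas \ref{guan2014} and \ref{gabor'lemma} noted in Remark \ref{addcond-remark-psiu}. The only genuinely new point is to confirm that replacing the general barrier by $\Phi=Du$, which is legitimate precisely because $D\varphi\equiv0$, eliminates every occurrence of higher-order norms of $\varphi$, leaving only the $|\varphi|_{C^2(\bar S)}$-dependence that already enters through the $C^{1,1}$-boundary data \eqref{herer}.
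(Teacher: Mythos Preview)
Your proposal is correct and follows essentially the same approach as the paper: the paper proves Propositions \ref{mix-general}, \ref{mix-Leviflat} and \ref{mix-Leviflat-product} simultaneously via the barrier $\widetilde{\Psi}$ of \eqref{Psi}, and for the product case takes $\Phi=Du$ (as recorded in Lemma \ref{yuan-key0}) with the observation that $\widetilde{u}_\tau=u_\tau$ for $\tau<n$ since $\varphi$ depends only on $S$, which is exactly the mechanism you identify. Two minor corrections: in the paper's convention $\widetilde{\Psi}\le 0$ on $\Omega_\delta$ with $\widetilde{\Psi}(x_0)=0$, so $x_0$ is a \emph{maximum} and $(\nabla_\nu\widetilde{\Psi})(x_0)\le 0$ (you wrote minimum/$\ge 0$); and Lemma \ref{gabor'lemma} is not used in this boundary argument---only Lemma \ref{guan2014} enters, in Case I.
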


\begin{proof}
[Proof of Propositions \ref{mix-general}, \ref{mix-Leviflat} and \ref{mix-Leviflat-product}]

If $A_2\gg A_3\gg1$ then one has  $\widetilde{\Psi}\leq 0 \mbox{ on } \partial \Omega_\delta$, here we use  \eqref{bdr-t}.
Note $\widetilde{\Psi}(x_0)=0$.  
It suffices  to prove $$\mathcal{L}\widetilde{\Psi}\geq 0 \mbox{ on } \Omega_\delta.$$
(Then $\widetilde{\Psi}\leq 0 $ in $\Omega_{\delta}$, and  $(\nabla_\nu \widetilde{\Psi})(x_0)\leq 0$ gives the bound).

\vspace{1mm} 
 By a direct computation one has
\begin{equation}
\label{L-v}
\begin{aligned}
\mathcal{L}v\geq F^{i\bar j} (\mathfrak{\underline{g}}_{i\bar j}-\mathfrak{g}_{i\bar j})-C_2 |2N\sigma-t|\sum_{i=1}^n f_i +2N F^{i\bar j}\sigma_i \sigma_{\bar j}.
\end{aligned}
\end{equation}
By Lemma 6.2 in \cite{CNS3},
 $F^{i\bar j}   \mathfrak{\underline{g}}_{i\bar j}=F^{i\bar j}(\mathfrak{g})  \mathfrak{\underline{g}}_{i\bar j} \geq \sum_{i=1}^n f_i(\lambda) \underline{\lambda}_i=\sum_{i=1}^n f_i  \underline{\lambda}_i$.
Some straightforward computations yield
\begin{equation}
\begin{aligned}
\mathcal{L} (\sum_{\tau<n}|\widetilde{u}_{\tau}|^2  )
\geq
 \frac{1}{2}\sum_{\tau<n}F^{i\bar j} \mathfrak{g}_{\bar \tau i} \mathfrak{g}_{\tau \bar j}
   -C_1'\sqrt{b_1} \sum_{i=1}^n f_{i}|\lambda_{i}|  -C_1'\sqrt{ b_1}-C_1' b_1\sum_{i=1}^n f_{i}, \nonumber
\end{aligned}
\end{equation}
where we use  the elementary inequality $|a-b|^2\geq  \frac{1}{2}|a|^2- |b|^2$.
By Proposition 2.19 in  \cite{Guan12a}, there is an index $r$ so that $\sum_{\tau<n} F^{i\bar j}\mathfrak{g}_{\bar\tau i}\mathfrak{g}_{\tau \bar j}\geq   \frac{1}{4}\sum_{i\neq r} f_{i}\lambda_{i}^{2}.$
So,
\begin{equation}
\label{L-u-2}
\begin{aligned}
\mathcal{L} (\sum_{\tau<n}|\widetilde{u}_{\tau}|^2  )
\geq
 \frac{1}{8}\sum_{i\neq r} f_{i}\lambda_{i}^{2}
   -C_1'\sqrt{b_1} \sum_{i=1}^n  f_{i}|\lambda_{i}|-C_1' b_1 \sum_{i=1}^n f_{i} -C_1'\sqrt{ b_1}.
\end{aligned}
\end{equation}
In particular, if $M=X\times S$ and $\varphi\in C^2(\partial S)$, then $\widetilde{u}_{\tau}=u_{\tau}$ for each $1\leq\tau\leq n-1$. 

\vspace{1mm}  
When $\lambda_r\geq 0$ where $r$ is the index as in \eqref{L-u-2},
 we can use the following inequality  
 \begin{equation}
 \label{inequ-1}
\begin{aligned}
\sum_{i=1}^n f_i |\lambda_i| = \sum_{i=1}^n f_i\lambda_i -2\sum_{\lambda_i<0} f_{i} \lambda_i
\leq  \frac{\epsilon}{16\sqrt{b_1}}\sum_{\lambda_i< 0} f_i\lambda_i^2 +(\sup_{\bar M}|\underline{\lambda}|+\frac{16\sqrt{b_1}}{\epsilon})\sum_{i=1}^n f_i  
\end{aligned}
\end{equation}
to control the bad term $\sum_{i=1}^n f_i |\lambda_i|$.

\vspace{1mm} 
The case $\lambda_r<0$ is slightly harder than the formal case $\lambda_r\geq 0$. 
If $f$ further satisfies  \eqref{addistruc}, as in \cite{yuan2017}, we have
$\sum_{i=1}^n f_i |\lambda_i| 
< 2\sum_{\lambda_i\geq 0} f_i\lambda_i \leq  \frac{\epsilon}{16\sqrt{b_1}}\sum_{\lambda_i\geq0} f_i\lambda_i^2 +\frac{16\sqrt{b_1}}{\epsilon} \sum_{i=1}^n f_i$
according to Lemma  \ref{asymptoticcone1}, then we 
can control  $\sum_{i=1}^n f_i |\lambda_i|$. 
While for general functions without assuming \eqref{addistruc},
we use a trick used in 
second version of \cite{Guan14} to treat it:
\begin{equation}
 \label{inequ-2}
\begin{aligned}
\sum_{i=1}^n f_i |\lambda_i| 
\leq  \frac{\epsilon}{16\sqrt{b_1}}\sum_{\lambda_i\geq 0} f_i\lambda_i^2
  +(\sup_{\bar M}|\underline{\lambda}|+\frac{16\sqrt{b_1}}{\epsilon})\sum_{i=1}^n f_i +\sum_{i=1}^n f_{i}(\underline{\lambda}_i-\lambda_i). 
\end{aligned}
\end{equation}
 By \eqref{inequ-1} and \eqref{inequ-2} we thus obtain
 \begin{equation}
 \label{flambda}
\begin{aligned}
\sum_{i=1}^n f_i |\lambda_i|
\leq  \frac{\epsilon}{16\sqrt{b_1}}\sum_{i\neq r} f_i\lambda_i^2 +(\sup_{\bar M}|\underline{\lambda}|+\frac{16\sqrt{b_1}}{\epsilon})\sum_{i=1}^n f_i +\sum_{i=1}^n f_{i}(\underline{\lambda}_i-\lambda_i).
\end{aligned}
\end{equation}

Taking  $\epsilon$ 
as $\epsilon=\frac{1}{C_1'+A_3C_\Phi }$.
 By straightforward but careful computation, one gets
\begin{equation}
\label{bdy-main-inequality}
\begin{aligned}
\mathcal{L}\widetilde{\Psi} \geq \,&
\{A_1 \sqrt{b_1}-(A_3 C_\Phi+C_1')   \} \sum_{i=1}^n f_{i}(\underline{\lambda}_i-\lambda_i)
 +\frac{1}{16\sqrt{b_1}} \sum_{i\neq r} f_i\lambda_i^2
\\ \,&
+ 2A_1N \sqrt{b_1} F^{i\bar j}\sigma_i \sigma_{\bar j}
   -   \{   C_1'+ A_2C_2+A_3C_\Phi  +A_1C_2  |2N\sigma-t|
    \\
  \,&
  +16(C_1'+A_3C_\Phi  )^2      + \frac{1}{\sqrt{b_1}}\sup_{\bar M}|\underline{\lambda}|(C_1'+A_3C_\Phi)
 \} \sqrt{b_1}\sum_{i=1}^n f_i
 \\ \,&
 -(C_1' +A_3 C_\Phi ).
\end{aligned}
\end{equation}

{\bf Case I}: If $|\nu_{\lambda }-\nu_{\underline{\lambda} }|\geq \beta$,   then by Lemma \ref{guan2014}
 we have
\begin{equation}
\label{guan-key1}
\begin{aligned}
\sum_{i=1}^n f_{i}(\underline{\lambda}_i-\lambda_i)  \geq \varepsilon  (1+\sum_{i=1}^n f_i ),
\end{aligned}
\end{equation}
where we take $\beta= \frac{1}{2}\min_{\bar M} dist(\nu_{\underline{\lambda} }, \partial \Gamma_n)$ as above,
$\varepsilon$ is the positive constant in Lemma \ref{guan2014}. 
Note that \eqref{yuanbd-11} implies $A_1C_2 |2N\sigma-t|\leq \frac{1}{2}A_1 \varepsilon$. Taking $A_1\gg 1$ we can derive $$\mathcal{L}\widetilde{\Psi} \geq 0 \mbox{ on } \Omega_\delta.$$

{\bf Case II}: Suppose that $|\nu_{\lambda }-\nu_{\underline{\lambda} }|< \beta$ and therefore
$\nu_{\lambda }-\beta \vec{1} \in \Gamma_{n}$ and
\begin{equation}
\label{2nd-case1}
\begin{aligned}
f_{i} \geq  \frac{\beta }{\sqrt{n}} \sum_{j=1}^n f_{j}. 
\end{aligned}
\end{equation}

As in \cite{GSS14}   there exist  two uniformly positive constants $c_0$ and $C_0$, such that
\begin{equation}
\label{bdy22}
\begin{aligned}
\sum_{i\neq r} f_i \lambda_i^2 \geq c_0 |\lambda|^2 \sum_{i=1}^n f_i -C_0 \sum_{i=1}^n f_{i}.
\end{aligned}
\end{equation}
The original  proof  uses $\sum_{i=1}^n f_i (\underline{\lambda}_i-\lambda_i)\geq 0$.
We can check that $c_0$ depends only on $\beta$ and $n$, and $C_0$ depends only on $\beta$, $n$ and $\sup_{\bar M}|\underline{\lambda} |$.

\vspace{1mm} 
All the bad terms containing $\sum_{i=1}^n f_i$ in \eqref{bdy-main-inequality} can be controlled by the good term
\begin{equation}
\label{bbvvv}
\begin{aligned}
A_1N \sqrt{b_1} F^{i\bar j}\sigma_i \sigma_{\bar j} \geq \frac{A_1N \beta \sqrt{b_1}}{16\sqrt{n}}\sum_{i=1}^n f_i  \mbox{ on } \Omega_\delta.
\end{aligned}
\end{equation}
On the other hand,   
the bad term  $ C_1' +A_3 C_\Phi $
can be dominated by \eqref{bbvvv} and
\begin{equation}
\label{lambda-sumf-1}
 \frac{c_0 }{32 \sqrt{b_1}}|\lambda|^2 \sum_{i=1}^n f_i + \frac{A_1N\beta \sqrt{b_1}}{32\sqrt{n}}\sum_{i=1}^n f_i
 \geq  \frac{\sqrt{c_0\beta A_1N}  }{16\sqrt{n}} |\lambda|\sum_{i=1}^n f_i.  \nonumber
\end{equation}
Then  $\mathcal{L}(\widetilde{\Psi}) \geq 0 $ on $\Omega_\delta$, if one chooses $A_1N\gg 1$.
Here we use
 \begin{equation}
 \label{bound-lambda}
 \begin{aligned}
  |\lambda|\sum _{i=1}^n f_{i}(\lambda)\geq b_0,   \mbox{ if } |\lambda|\geq R_0; \mbox{  }
   \sum_{i=1}^n f_i(\lambda) \geq b_0',   \mbox{ if } |\lambda|\leq R_0,
 \end{aligned}
 \end{equation}
 where  $R_0= 1+c_{\sup_{\bar M} \psi}$, $b_0= \frac{1}{2} (f(R_0\vec 1)-\sup_{\bar M} \psi)$  and $b_0'=\frac{1}{1+2R_0}  (f((1+R_0)\vec{1})-f(R_0\vec{1}) )$.
 Here  $c_{\sup_{\bar M} \psi}$ is the constant defined as in \eqref{kappa1-sigma}.
The proof uses \eqref{bb}. 
\end{proof}

\subsubsection{Proof of Proposition \ref{proposition-quar-yuan1}}

The proof follows the outline of proof of  Proposition 4.1 of \cite{yuan2017}.
Fix $x_0\in \partial M$. 
\vspace{1mm}  In what follows all the discussions
  will be given at $x_0$, and  the Greek letters $\alpha, \beta$ range from $1$ to $n-1$.
  For the local coordinate $z=(z_1,\cdots,z_n)$   given  by \eqref{goodcoordinate1},
we   assume further that $ (\underline{\mathfrak{g}}_{\alpha\bar \beta})$ is diagonal at $x_0$. 
Let's denote
\begin{equation}
\underline{A}(R)=\left(
\begin{matrix}
\mathfrak{\underline{g}}_{1\bar 1}&&  &&\mathfrak{g}_{1\bar n}\\
&\mathfrak{\underline{g}}_{2\bar 2}&& &\mathfrak{g}_{2\bar n}\\
&&\ddots&&\vdots \\
&& &  \mathfrak{\underline{g}}_{{(n-1)} \overline{(n-1)}}& \mathfrak{g}_{(n-1)\bar n}\\
\mathfrak{g}_{n\bar 1}&\mathfrak{g}_{n\bar 2}&\cdots& \mathfrak{g}_{n\overline{(n-1)}}& R  \nonumber
\end{matrix}
\right),
\end{equation} 
 \begin{equation}
A(R)=\left(
\begin{matrix}
\mathfrak{g}_{1\bar 1}&\mathfrak{g}_{1\bar 2}&\cdots &\mathfrak{g}_{1\overline{(n-1)}} &\mathfrak{g}_{1\bar n}\\
\mathfrak{g}_{2\bar 1} &\mathfrak{g}_{2\bar 2}&\cdots& \mathfrak{g}_{2\overline{(n-1)}}&\mathfrak{g}_{2\bar n}\\
\vdots&\vdots&\ddots&\vdots&\vdots \\
\mathfrak{g}_{(n-1)\bar 1}&\mathfrak{g}_{(n-1)\bar 2}& \cdots&  \mathfrak{g}_{{(n-1)}\overline{(n-1)}}& \mathfrak{g}_{(n-1)\bar n}\\
\mathfrak{g}_{n\bar 1}&\mathfrak{g}_{n\bar 2}&\cdots& \mathfrak{g}_{n \overline{(n-1)}}& R  \nonumber
\end{matrix}
\right).
\end{equation}

 The ellipticity and concavity of equation, couple with Lemma 6.2 in \cite{CNS3}, 
 yield that
\begin{equation}
\label{A-B}
\begin{aligned}
F(A)-F(B)\geq F^{i\bar j}(A)(a_{i \bar j}-b_{i \bar j})  
\end{aligned}
\end{equation}
for Hermitian matrices $A=\{a_{i\bar j} \}$ and $B=\{b_{i\bar j}\}$ with $\lambda(A)$, $\lambda(B)\in \Gamma$.
As in proof of Proposition 4.1 of \cite{yuan2017},  we can apply 
Lemma  \ref{yuan's-quantitative-lemma} 
and \eqref{A-B}
to show that there exist two uniformly positive constants $\varepsilon_{0}$, $R_{0}$
 depending  on  $\mathfrak{\underline{g}}$  and $f$, such that
\begin{equation}
\label{opppp}
\begin{aligned}
\,& f(\mathfrak{\underline{g}}_{1\bar 1}-\varepsilon_{0}, \cdots, \underline{\mathfrak{g}}_{(n-1)\overline{(n-1)}}-\varepsilon_0, R_0)\geq \psi
\end{aligned}
\end{equation}
and $(\mathfrak{\underline{g}}_{1\bar 1}-\varepsilon_{0}, \cdots, \underline{\mathfrak{g}}_{(n-1)\overline{(n-1)}}-\varepsilon_0, R_0)\in \Gamma$.
Here the openness of $\Gamma$ and the fact that if $A$ is diagonal then so is $F^{i\bar j}(A)$
 are both needed.  
Besides, the author  gives in  \cite{yuan2017} another proof for \eqref{opppp},  
with only using Lemma \ref{yuan's-quantitative-lemma} but without using \eqref{A-B}.

\vspace{1mm}
 Fix a uniformly positive constant $\epsilon'$ such that 
\begin{equation}
\label{bdry-assump-4}
\begin{aligned}
\epsilon' \sup_{\partial M} (w-\underline{u})_{\nu}\leq \frac{\varepsilon_0}{4},
\end{aligned}
\end{equation}
where $w$ is construction by \eqref{supersolution}, and $\varepsilon_0$ obeys \eqref{opppp}.
Since $\lambda_{\omega'}(-{L}_{\partial M})\in \overline{\Gamma}_{\infty}$, for the given $\epsilon'$, there exists a uniformly positive constant $C(\epsilon',{L}_{\partial M})$ (depending only on $\epsilon'$ and ${L}_{\partial M}$) so that
\begin{equation}
\label{bdr-assump-2}
\begin{aligned}
\lambda\left(\left(\begin{matrix}
 -{L}_{\partial M}(\partial_\alpha,\overline{\partial}_{\beta})+\epsilon' I_{n-1}&   0_{(n-1)\times 1} \\
0_{1\times (n-1)}&  C(\epsilon',{L}_{\partial M})
\end{matrix}\right)\right)\in \Gamma.
\end{aligned}
\end{equation}
By  \eqref{bdr-ind} one has
\begin{equation}
\label{cha1}
\begin{aligned}
A(R)=\,& 
\underline{A}(\epsilon',R)+(u-\underline{u})_{\nu}
\left(\begin{matrix}
 -{L}_{\partial M}(\partial_\alpha,\overline{\partial}_{\beta})+\epsilon' I_{n-1}&   0_{(n-1)\times 1} \\
0_{1\times (n-1)}&  C(\epsilon',{L}_{\partial M})
\end{matrix}\right),  
\end{aligned}
\end{equation}
where
\begin{equation}
\begin{aligned}
\underline{A}(\epsilon',R) 
=\,&\left(\begin{matrix}
 \mathfrak{\underline{g}}_{1\bar1}-\epsilon' (u-\underline{u})_{\nu}&   &&\mathfrak{g}_{1\bar n}\\ 
 & \ddots&&\vdots \\ 
 &&    \mathfrak{\underline{g}}_{(n-1)\overline{(n-1)}}-\epsilon' (u-\underline{u})_{\nu}& \mathfrak{g}_{(n-1)\bar n}\\
 \mathfrak{g}_{n\bar 1} &\cdots& \mathfrak{g}_{n\overline{(n-1)}} & R-(u-\underline{u})_{\nu}C(\epsilon',{L}_{\partial M}) 
\end{matrix}\right). \nonumber
\end{aligned}
\end{equation}

\vspace{1mm} 
Next, Lemma  \ref{yuan's-quantitative-lemma}
 together with \eqref{opppp} enable us to
 establish the quantitative boundary estimates for double normal derivative.

\vspace{1mm} 
Let's pick  $\epsilon=\frac{\varepsilon_0}{4}$ in 
 Lemma  \ref{yuan's-quantitative-lemma}, and set
\begin{equation}
\begin{aligned}
 R_c=\,& \frac{4(2n-3)}{\varepsilon_0}
\sum_{\alpha=1}^{n-1} | \mathfrak{g}_{\alpha\bar n}|^2
 + (n-1)\sum_{\alpha=1}^{n-1}  | \mathfrak{\underline{g}}_{\alpha\bar \alpha}|
    +\frac{(n-2)\varepsilon_0}{4(2n-3)} +R_0+ 
    (u-\underline{u})_{\nu} C(\epsilon',{L}_{\partial M}), \nonumber
\end{aligned}
\end{equation}
where $\varepsilon_0$ and $R_0$ are fixed constants so that \eqref{opppp} holds.

\vspace{1mm}
It follows from Lemma   \ref{yuan's-quantitative-lemma} 
   that the eigenvalues of $\underline{A}(\epsilon', R_c)$, 
   $$\lambda(\underline{A}(\epsilon', R_c))=(\lambda_1(\underline{A}(\epsilon', R_c)),\cdots \lambda_{n-1}(\underline{A}(\epsilon', R_c),\lambda_n(\underline{A}(\epsilon', R_c))$$
   (possibly with an order) shall behavior like
\begin{equation}
\label{lemma12-yuan}
\begin{aligned}
\,& \lambda_{\alpha}(\underline{A}(\epsilon', R_c)\geq \mathfrak{\underline{g}}_{\alpha\bar \alpha}-\epsilon' (u-\underline{u})_{\nu}-\frac{\varepsilon_0}{4}, \mbox{  } 1\leq \alpha\leq n-1, \\\,&
\lambda_{n}(\underline{A}(\epsilon', R_c)\geq R_c-(u-\underline{u})_{\nu} 
 C(\epsilon',{L}_{\partial M}),
\end{aligned}
\end{equation}
in particular, $\lambda(\underline{A}(\epsilon', R_c))\in \Gamma$. So $\lambda(A(R_c))\in \Gamma$.
We then obtain
\begin{equation}
\label{bdry-assump-3}
\begin{aligned}
F(A(R_c))\geq F(\underline{A}(\epsilon',R_c))+F^{i\bar j}\left(A(R_c))(A(R_c)-\underline{A}(\epsilon',R_c)\right)_{i\bar j}\geq F(\underline{A}(\epsilon',R_c)),
\end{aligned}
\end{equation}
here we use \eqref{bdr-ind-2},  \eqref{bdr-assump-2},
 \eqref{cha1},  Lemma 6.2 in \cite{CNS3} and Lemma \ref{asymptoticcone1} 
 to derive the last inequality; and  \eqref{concave} is used to get the first one.
 Moreover, here is the only place where we use  \eqref{addistruc} (Lemma \ref{asymptoticcone1});
 and it can be removed if $\partial M$ is pseudoconcave, since $A(R)\geq \underline{A}(R)$ ($F(A(R))\geq F(\underline{A}(R))$) in this case.

\vspace{1mm}
 Applying \eqref{opppp}, \eqref{bdry-assump-4},  \eqref{lemma12-yuan} and \eqref{bdry-assump-3},  one hence has
\begin{equation}
\begin{aligned}
F(A(R_c))\geq \,& F(\underline{A}(\epsilon',R_c))  \\
\geq \,&
f(\mathfrak{\underline{g}}_{1\bar 1}-\frac{\varepsilon_0}{2},\cdots,
 \mathfrak{\underline{g}}_{(n-1) \overline{(n-1)}}-\frac{\varepsilon_0}{2},R_c-\sup_{\partial M}(u-\underline{u})_{\nu} C(\epsilon',{L}_{\partial M}))
\\
> \,&
 f(\mathfrak{\underline{g}}_{1\bar 1}-\varepsilon_{0}, \cdots, \underline{\mathfrak{g}}_{(n-1)\overline{(n-1)}}-\varepsilon_0, R_c-\sup_{\partial M}(u-\underline{u})_{\nu} C(\epsilon',{L}_{\partial M}))\geq \psi. \nonumber
\end{aligned}
\end{equation}
Therefore,
 \begin{equation}
 \label{nornor}
\begin{aligned}
\mathfrak{g}_{n\bar n}(x_0) \leq R_c.  \nonumber
\end{aligned}
\end{equation}
We thus complete the proof of  Proposition  \ref{proposition-quar-yuan1}.

\subsection{Global second order estimate}
\label{eaution-closed}

   The primary obstruction  to establishing second order estimate for solutions of  fully nonlinear elliptic equations involving gradient terms in complex setting is two different types of complex derivatives due to the gradient terms in   equations.
 The following theorem gives global second order estimate for Dirichlet problem \eqref{mainequ} and \eqref{mainequ1}.

\begin{theorem}
\label{globalsecond-Diri} 
Suppose, in addition to \eqref{elliptic}-\eqref{nondegenerate},
that there is an \textit{admissible} subsolution $\underline{u}\in C^{2}(M)$ satisfying \eqref{existenceofsubsolution}.
Then for any admissible function
 $u\in C^{4}(M)\cap C^{2}(\bar M)$ solving Dirichlet problem \eqref{mainequ} and \eqref{mainequ1}
  with $ \psi\in C^2(\bar M)$, we have
\begin{equation}
\label{2-seglobal}
\begin{aligned}
\sup_{ M}\Delta u
\leq
C (1+ \sup_{M}|\nabla u|^{2} +\sup_{\partial   M}|\Delta u|),
\end{aligned}
\end{equation}
where $C$ is a uniform positive constant depending only on $|u|_{C^{0}(\bar M)}$, 
$|\underline{u}|_{C^{2}(\bar M)}$, $|\chi|_{C^{2}(\bar M)}$, 
$\sup_M|\nabla\psi|$, $\inf_{z\in M}\inf_{\xi\in T^{1,0}_z M, |\xi|=1} \partial\overline{\partial}\psi(\xi,\bar\xi)$ 
 and other known data (but not on $(\delta_{\psi,f})^{-1}$).
\end{theorem}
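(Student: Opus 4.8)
The plan is a maximum-principle argument for the test function
\[
W=\log\lambda_1+\varphi(|\nabla u|^2)+A\,\phi(\underline u-u),
\]
where $\lambda_1$ is the largest eigenvalue of $\mathfrak g[u]$ with respect to $\omega$, $\varphi$ is the Hou-Ma-Wu weight $\varphi(t)=-\tfrac12\log(1-\tfrac t{2K})$ with $K=1+\sup_M|\nabla u|^2$ (so $\varphi'$, $\varphi'|\nabla u|^2$, $\varphi''/(\varphi')^2$ are bounded by absolute constants and $\varphi'\ge 1/(4K)>0$), $\phi$ a fixed increasing convex function with $\phi'$ bounded between positive constants, and $A\gg1$. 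Since $\lambda(\mathfrak g[u])\in\Gamma\subset\Gamma_1$ gives $\lambda_1>0$ and $\Delta u\le n\lambda_1+C(1+|\nabla u|)$, it suffices to bound $\lambda_1$; arguing by contradiction, suppose the maximum point $x_0$ of $W$ satisfies $\lambda_1(x_0)>\Lambda(1+\sup_M|\nabla u|^2)$ for a large $\Lambda$ to be fixed. If $x_0\in\partial M$ this already contradicts the boundary $C^1$-estimate \eqref{herer} together with the second-order boundary estimates of Subsection~\ref{bdestimates} (which bound $\lambda_1$ on $\partial M$ by $C(1+\sup_{\partial M}|\Delta u|+\sup_M|\nabla u|^2)$); so assume $x_0$ interior and choose coordinates there with $g_{i\bar j}=\delta_{ij}$, $\{\mathfrak g_{i\bar j}\}$ diagonal, $\lambda_1\ge\dots\ge\lambda_n$, replacing $\lambda_1$ by a smooth function near $x_0$ in the standard way.

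At $x_0$, $\mathcal LW\le0$ for the linearized operator $\mathcal L$ of \eqref{linearoperator21}. The point that makes the gradient terms harmless is the structural identity \eqref{key-chi}: since $\chi[u]=\chi(z,\partial u,\bar\partial u)$ is affine in $\partial u$ with $\chi_{i\bar j,\zeta_\alpha}=\delta_{i\alpha}\bar\eta_j$, $\chi_{i\bar j,\bar\zeta_\alpha}=\delta_{j\alpha}\eta_i$, equation \eqref{mainequ} satisfies the structural assumption of \cite{yuan2018CJM}; concretely, differentiating $F(\mathfrak g[u])=\psi$ twice in the $\lambda_1$-direction and commuting covariant derivatives past $\mathfrak g_{1\bar1}=\tilde\chi_{1\bar1}+u_{1\bar1}+u_1\bar\eta_1+\eta_1u_{\bar1}$ produces, via \eqref{key-chi}--\eqref{jinguang1} and besides the usual torsion and curvature corrections, only third-order terms of the two types $F^{i\bar j}\bar\eta_ju_{1\bar1i}$, $F^{i\bar j}\eta_iu_{1\bar1\bar j}$ (and $F^{i\bar j}\chi_{i\bar j,\zeta_\alpha}u_{\alpha11}$ and conjugates) — each \emph{linear} in $\nabla u$ — which, by Cauchy-Schwarz, feed a small multiple of the good third-order quantity $\tfrac1{\lambda_1}\sum_i f_i|\mathfrak g_{1\bar1 i}|^2$ (comparable, after $\nabla W(x_0)=0$, to the concavity-generated term $\tfrac1{\lambda_1}\sum_{p>1}\tfrac{f_p-f_1}{\lambda_1-\lambda_p}|\mathfrak g_{1\bar p,1}|^2\ge0$) plus a multiple of $\sum_i f_i$. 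Together with \eqref{concave} this gives
\[
\mathcal L(\log\lambda_1)\ \ge\ \frac{\inf_{|\xi|=1}\partial\bar\partial\psi(\xi,\bar\xi)}{\lambda_1}-\frac{F^{i\bar j}(\lambda_1)_i(\lambda_1)_{\bar j}}{\lambda_1^2}-C\Big(1+\frac1{\lambda_1}\Big)\sum_i f_i-\frac C{\lambda_1}\sum_i f_i|\lambda_i|
\]
modulo the good third-order terms (this is where $\inf_{|\xi|=1}\partial\bar\partial\psi(\xi,\bar\xi)$ enters). Next, Lemma \ref{Lnablau} and $u_{l\bar i}=\lambda_i\delta_{li}-\chi_{l\bar i}$ give $\mathcal L(|\nabla u|^2)\ge\sum_i f_i\sum_k|u_{i\bar k}|^2-C(1+|\nabla u|^2)\sum_i f_i-C|\nabla u|\sum_i f_i|\lambda_i|-C(1+|\nabla u|)$, so $\varphi'\mathcal L(|\nabla u|^2)$ together with $\varphi''F^{i\bar j}(|\nabla u|^2)_i(|\nabla u|^2)_{\bar j}$ (used against $-F^{i\bar j}(\lambda_1)_i(\lambda_1)_{\bar j}/\lambda_1^2$ via $\nabla W(x_0)=0$) nets a good term $c\,\varphi'\sum_i f_i\lambda_i^2$ at the cost of errors controlled by $C\sum_i f_i$ with $C$ independent of $\sup_M|\nabla u|$ — the very purpose of the Hou-Ma-Wu weight. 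Finally $A\mathcal L(\phi(\underline u-u))\ge A\phi'F^{i\bar j}(\underline{\mathfrak g}_{i\bar j}-\mathfrak g_{i\bar j})$; the admissible subsolution being a $\mathcal C$-subsolution by \eqref{nondegenerate}, Lemma \ref{gabor'lemma} gives the dichotomy: either \eqref{haha1}, whence $A\phi'F^{i\bar j}(\underline{\mathfrak g}_{i\bar j}-\mathfrak g_{i\bar j})\ge A\phi'\varepsilon\sum_i f_i$, which for $A$ large (depending only on the stated data, not on $(\delta_{\psi,f})^{-1}$ by Remark \ref{addcond-remark-psiu}) swallows the remaining $\sum_i f_i$-errors — the $\sum_i f_i|\lambda_i|$-terms included after the decomposition $\sum_i f_i|\lambda_i|\le\epsilon\sum_{\lambda_i\ge0}f_i\lambda_i^2+C_\epsilon\sum_i f_i+F^{i\bar j}(\underline{\mathfrak g}_{i\bar j}-\mathfrak g_{i\bar j})$ used in the proof of Proposition \ref{mix-general}; or \eqref{2nd-case}, i.e.\ $f_i\ge\varepsilon\sum_p f_p$ for all $i$, in which case $\sum_i f_i\lambda_i^2\ge\varepsilon\lambda_1^2\sum_p f_p$ alone dominates everything once $\lambda_1\gg1$.

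Collecting the three contributions and using $\lambda_1(x_0)>\Lambda(1+\sup_M|\nabla u|^2)$ with $\Lambda$ large, so that every $(1+\sup_M|\nabla u|^2)$-weighted error is beaten by the matching power of $\lambda_1$ inside $c\,\varphi'\sum_i f_i\lambda_i^2$, the concavity term, the good third-order term, and $A\phi'\varepsilon\sum_i f_i$, one is led to $0\ge\mathcal LW(x_0)>0$, a contradiction; hence $\lambda_1\le C(1+\sup_M|\nabla u|^2)$, which is \eqref{2-seglobal}. The main obstacle — the genuinely new point over the closed-manifold case — is controlling the two types of third-order terms generated by the gradient dependence of $\chi[u]$ while keeping every constant independent of $\delta_{\psi,f}$ and of $\sup_M|\nabla u|$; this is exactly what forces the use of the affine structure \eqref{key-chi}--\eqref{jinguang1}, of the Hou-Ma-Wu weight with $K=1+\sup_M|\nabla u|^2$, and of the trick from the second version of \cite{Guan14} for the $\sum_i f_i|\lambda_i|$-terms, and it requires fixing $K$, then $\phi$, then $\Lambda$ and $A$, in that order, to avoid circularity.
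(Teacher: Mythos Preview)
Your approach is essentially the paper's --- the Hou--Ma--Wu test function combined with a subsolution dichotomy --- and you correctly identify the decisive role of the affine structure \eqref{key-chi} in taming the gradient-generated third-order terms. The paper uses Lemma~\ref{guan2014} for the dichotomy while you invoke Lemma~\ref{gabor'lemma}; the paper itself remarks at the end of the subsection that this alternative also works.

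There is, however, a genuine gap in the step where you say the errors left after balancing $\varphi''F^{i\bar j}|(|\nabla u|^2)_i|^2$ against $-F^{i\bar j}|(\lambda_1)_i|^2/\lambda_1^2$ are ``$C\sum_i f_i$ with $C$ independent of $\sup_M|\nabla u|$''. Using $\nabla W(x_0)=0$ and $\varphi''=2\varphi'^2$ cancels only the $\varphi'$-contribution; the residual is $-2A^2\phi'^2\sum_i f_i|(\underline u-u)_i|^2$, of order $A^2K\sum_i f_i$. In the branch \eqref{haha1} of your dichotomy the only positive $\sum_i f_i$-term is $A\phi'\varepsilon\sum_i f_i$, which cannot dominate $A^2K\sum_i f_i$ with $A$ independent of $K$; and a generic convex $\phi$ gives $A\phi''$ too small to help. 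The paper closes this by two devices you omit: first, a further split $\delta\lambda_1\gtrless-\lambda_n$ together with the index set $I=\{i:f_i>\delta^{-1}f_1\}$, so that in Case~I the concavity term truly cancels $(1-2\delta)f_i|\widetilde{\lambda_1}_{,i}|^2/\lambda_1^2$ for $i\in I$ (this uses $\delta\lambda_1\ge-\lambda_n$), while for $i\in J$ one has $f_i\le\delta^{-1}f_1$ and the residual is absorbed by $\varphi' f_1\lambda_1^2$; second, the specific two-parameter weight $\Psi(x)=C_*(1+x-\inf(u-\underline u))^{-N}$, which makes $|\Psi'|$ large and $|\Psi'|/\Psi''$ small simultaneously --- a single convex $\phi$ with external multiplier $A$ cannot. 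Relatedly, your stated order ``$\Lambda$ then $A$'' is backwards: $A$ and $\delta$ must be fixed first, and $\Lambda$ depends on them. (Minor: for $x_0\in\partial M$ you need only $\lambda_1\le\mathrm{tr}_\omega\mathfrak g[u]=\Delta u+\mathrm{tr}_\omega\chi[u]$, not the estimates of Subsection~\ref{bdestimates}, which carry extra hypotheses on $\partial M$ that Theorem~\ref{globalsecond-Diri} does not assume.)
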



We use the method  used by Hou-Ma-Wu  \cite{HouMaWu2010} and Sz\'{e}kelyhidi \cite{Gabor}.
We denote the eigenvalues of the matrix
$A=\{A^i_{j}\}=\{g^{i\bar q }\mathfrak{g}_{j\bar q}\}$
by $(\lambda_{1},\cdots,\lambda_{n})$.
We also suppose $\lambda_1\geq \lambda_2\cdots\geq\lambda_n$ at each point.
We want to apply the maximum principle to $H$,
\begin{equation}
\label{gqy-81}
\begin{aligned}
H:=\lambda_{1}e^{\phi},
\end{aligned}
\end{equation}
where the test function $\phi$ is to be chosen later. Suppose $H$ achieves its maximum at an
 interior point $p_{0}\in M$.
 Since the eigenvalues of $A$  need not be distinct at the point $p_{0}$, 
 and so $\lambda_1(p_0)=\lambda_2(p_0)$ may occur,
  $H$ may only
 be continuous. To circumvent this
 difficulty we use the perturbation argument used in    \cite{Gabor}.
 To do this, we choose a local coordinates $z=(z_{1},\cdots,z_{n})$ around $p_0$, such that at $p_0$
 $$g_{i\bar j}=\delta_{ij}, \mathfrak{g}_{i\bar j}=\delta_{ij}\lambda_{i} \mbox{ and } F^{i\bar j}=\delta_{ij}f_i.$$
Let $B$ be a
diagonal matrix $B^{p}_{q}$ with real entries satisfying $B^{1}_{1}=0$, $B^{n}_{ n}>2 B^{2}_{ 2}$ and
$B^n_n<B^{n-1}_{ n-1}<\cdots<B^{2}_{ 2}<0$ are small.
Then we define the matrix $ \widetilde{A}=A+B$ with the eigenvalues $ \widetilde{\lambda}=( \widetilde{\lambda_{1}},\cdots, \widetilde{\lambda}_{n})$.
 At the origin, $ \widetilde{\lambda_{1}}=\lambda_{1}=\mathfrak{g}_{1\bar 1},  \widetilde{\lambda_{i}}=\lambda_{i}+B^{i}_{ i}\mbox{ if $i\geq2$ }$
 and the eigenvalues of $\tilde{A}$ define $C^{2}$-functions near the origin.

Notice $\widetilde{H}=\widetilde{\lambda_{1}}e^{\phi}$ also achieves its maximum at the same
 point $p_{0}$ (we may assume $\lambda_{1}(p_{0})=\widetilde{\lambda_{1}}(p_{0})>1$).
  In what follows,
 the computations will be given at the origin $p_{0}$.
By maximum principle one has
\begin{equation}
\label{mp1}
\begin{aligned}
 \widetilde{\lambda_{1}}_{,i} +\lambda_{1}\phi_{i}=0, \mbox{  }  \widetilde{\lambda_{1}}_{,\bar i} +\lambda_{1}\phi_{\bar i}=0, \mbox{  } 
\frac{\widetilde{\lambda_{1}}_{,i\bar i}}{\lambda_{1}}
 -\frac{|\widetilde{\lambda_{1}}_{,i}|^{2}}{\lambda_{1}^{2}}
 +\phi_{i\bar i}\leq 0.
\end{aligned}
\end{equation}
By straightforward calculations, one obtains
\begin{equation}
\label{mp2}
\begin{aligned}
 \widetilde{\lambda_{1}},_i\,& =\mathfrak{g}_{1\bar 1 k}+(B^{1}_{1})_{i}.    
\end{aligned}
\end{equation}

As in  \cite{Gabor}, one obtains
\begin{equation}
\label{xp1}
\begin{aligned}
\widetilde{\lambda_{1}}_{,k\bar k} \geq \,&
 \mathfrak{g}_{1\bar 1 k \bar k}
 +\frac{1}{2(n+1)\lambda_{1}}\sum_{p>1} (|\mathfrak{g}_{p\bar 1 k}|^{2}+|\mathfrak{g}_{1\bar p k}|^{2})-C.  \\
\end{aligned}
\end{equation}
 It follows from   \eqref{jinguang1} that
\begin{equation}
\begin{aligned}
\chi_{i\bar ik\bar k}=\,&
\chi_{i\bar i,k\bar k}
+\chi_{i\bar i,\zeta_{i}}R_{k\bar k i\bar l}u_{l}
+2\mathfrak{g}_{k\bar k}\mathfrak{Re}\left(\chi_{i\bar i, k\zeta_{k}}\delta_{ik}-\chi_{i\bar i,\zeta_{i}}T^{k}_{ki}\right)
\\
\,&
+2\mathfrak{Re}
\left(\chi_{i\bar i,\zeta_{i}\bar k}u_{i k}
\right) -2\mathfrak{Re}
\left(\chi_{i\bar i,\zeta_{i}}(\chi_{i\bar i,i}
-T^{l}_{k\alpha}\chi_{l\bar k}
)
+\chi_{i\bar i,k\bar\zeta_{i}}\chi_{i\bar k}
\right)
\\
\,&
+2\mathfrak{Re}\left(\chi_{i\bar i,\zeta_{i}}\mathfrak{g}_{k\bar k i}\right)-2\mathfrak{Re}
\left(
\chi_{i\bar i,\zeta_{i}}\chi_{k\bar k,\zeta_{k}}u_{k i}
+\chi_{i\bar i,\zeta_{i}}\chi_{k\bar k,\bar\zeta_{k}}u_{i\bar k}\right).
\end{aligned}
\end{equation}

Differentiating the equation \eqref{mainequ}  twice (using covariant derivative), we obtain
\begin{equation}
\label{mp3}
\begin{aligned}
F^{i\bar i}\mathfrak{g}_{1\bar 1 i\bar i}
\geq \,&
-F^{i\bar j,l\bar m }\mathfrak{g}_{i\bar j1}\mathfrak{g}_{l\bar m\bar 1}
-2\mathfrak{Re}(F^{i\bar i}\chi_{i\bar i,\zeta_{i}}\mathfrak{g}_{1\bar 1 i})
+ 2\mathfrak{Re}(F^{i\bar i}\bar T^{j}_{1i}u_{i\bar j 1})
\\
\,&
+2\mathfrak{Re}(F^{i\bar i}\chi_{1\bar 1,\zeta_{1}\bar i}u_{1 i})
+\psi_{1\bar 1}
-C\mathfrak{g}_{1\bar 1}\sum F^{i\bar i},              \nonumber
\end{aligned}
\end{equation}
here we use the structure of $\chi$ in \eqref{def-chi}  and also the following standard formula
\begin{equation}
\label{deco1}
\begin{aligned}
u_{1\bar j k}-u_{k\bar j 1}  = \,& T^l_{1k}u_{l\bar j},  \\  \nonumber
u_{1\bar 1 i\bar i}-u_{i\bar i 1\bar 1} =R_{i\bar i 1\bar p}u_{p\bar 1}-
R_{1\bar 1 i\bar p}u_{p\bar i} \,& +2\mathfrak{Re}\{\bar T^{j}_{1i}u_{i\bar j 1}\}+T^{p}_{i1}\bar T^{q}_{i1}u_{p\bar q}.    \nonumber
\end{aligned}
\end{equation}

Let's set $\phi=\Phi(|\nabla u|^{2})+\Psi(u-\underline{u}), $ and $K=1+\sup_{\bar M}(|\nabla u|^{2}+|\nabla (u-\underline{u})|^2),$
\begin{equation}
\begin{aligned}
\Phi(t)=-\frac{1}{2}\log  (1-\frac{t}{2K}), \mbox{  }  \nonumber
\end{aligned}
\label{test func}
\begin{aligned}
\Psi(x)=\frac{C_{*}}{(1+x -\inf_{\bar M} (u-\underline{u}))^{N}},   \nonumber
\end{aligned}
\end{equation}
 where  $\Phi(t)$ and  $\Psi(x)$  are  respectively used in   \cite{HouMaWu2010} and     \cite{yuan2018CJM}.
 Here 
   $C_{*}\geq 1$ and $N\in \mathbb{N}$ are constants to be chosen later.
We get
\begin{equation}
\begin{aligned}
\,& \Phi''=2\Phi'^{2}, \quad (4K)^{-1}<\Phi' < (2K)^{-1} \mbox{ for } t\in [0,\sup_{M}|\nabla u|^2] \\ \nonumber
\mathcal{L}\phi=\,&
\Phi'\mathcal{L}(|\nabla u|^{2})+\Psi'\mathcal{L}(u-\underline{u})
+\Phi'' F^{i\bar i}|(|\nabla u|^{2})_{i}|^{2}
+\Psi'' F^{i\bar i}|(u-\underline{u})_{i}|^{2}.  \nonumber
\end{aligned}
\end{equation}
Together with Lemma \ref{Lnablau} one has a differential inequality
\begin{equation}
\begin{aligned}
\label{key11}
0\geq \,&
\frac{1}{2}\Phi'F^{i\bar i} \left(|u_{ki}|^{2}+|u_{k\bar i}|^{2}\right)
+2\Phi'\mathfrak{Re}\{ \psi_{ {k} } u_{\bar k} \}
+\Psi'\mathcal{L}(u-\underline{u})
 \\ \,&
+\Phi'' F^{i\bar i}|(|\nabla u|^{2})_{i}|^{2}
 +\Psi'' F^{i\bar i}|(u-\underline{u})_{i}|^{2}
+2\mathfrak{Re} ( F^{i\bar i}\bar T^{1}_{1i}\frac{\widetilde{\lambda_{1}}_{,i}}{ {\lambda_{1}}_{ 1}} )
 \\ \,&
 -\frac{F^{i\bar j,l\bar m }\mathfrak{g}_{i\bar j1}\mathfrak{g}_{l\bar m\bar 1}}{\mathfrak{g}_{1\bar 1}}
 -F^{i\bar i}\frac{|\widetilde{\lambda_{1}}_{,i}|^2}{\lambda_{ 1}^2}
+\frac{\psi_{1\bar 1}}{\mathfrak{g}_{1\bar 1}}  -C \sum F^{i\bar i}.
\end{aligned}
\end{equation}

Note that in this computation $\widetilde{\lambda_{1}}$ denotes the largest eigenvalue of
the perturbed endomorphism $\widetilde{A}=A+B$. At the origin $p_{0}$ where
 we carry out the computation,  $\widetilde{\lambda_{1}}$ coincides the largest eigenvalue of $A$.
 However, at nearby points,  it is a small perturbation.
  We would take $B\rightarrow 0$, and obtain the above differential inequality
   \eqref{key11} as well.
It only holds in a viscosity
sense because the largest eigenvalue of $A$ may not be $C^{2}$ at the origin $p_{0}$, if some eigenvalues coincide.

\vspace{1mm}
\textbf{Case I}: Assume that $\delta\lambda_{1}\geq -\lambda_{n}$($0<\delta\ll \frac{1}{2}$).
Set $$I= \{i:F^{i\bar i}>\delta^{-1}F^{1\bar 1}\},\mbox{ and } J= \{i:F^{i\bar i} \leq \delta^{-1}F^{1\bar 1}\}.$$
Clearly $1\in J$ and identity \eqref{mp1} implies that
for any fixed index $i$
\begin{equation}
\label{zhuo22}
\begin{aligned}
-F^{i\bar i}\frac{|\widetilde{\lambda_{1}}_{,i}|^2}{\lambda_{1 }^2}
\geq \,&
 -2\Psi'^{2} F^{i\bar i}|(u-\underline{u})_{i}|^{2}-2\Phi'^{2} F^{i\bar i}|(|\nabla u|^{2})_{i}|^{2}\\
 =\,&
 -2\Psi'^{2} F^{i\bar i}|(u-\underline{u})_{i}|^{2}-\Phi'' F^{i\bar i}|(|\nabla u|^{2})_{i}|^{2}. \nonumber
\end{aligned}
\end{equation}
The assumption $\delta\lambda_{1}\geq -\lambda_{n}$ implies that
$\frac{1-\delta}{\lambda_{1}-\lambda_{i}}\geq \frac{1-2\delta}{\lambda_{1}}.$
Then
\begin{equation}
\begin{aligned}
\label{key33}
 -\frac{1}{\mathfrak{g}_{1\bar 1}}F^{i\bar j,l\bar m }\mathfrak{g}_{i\bar j1}\mathfrak{g}_{l\bar m\bar 1}
  -F^{i\bar i}\frac{|\widetilde{\lambda_{1}}_{, i}|^2}{\lambda_{ 1}^2}
  \geq \,&
   \sum_{i\in I}\frac{F^{i\bar i}
-F^{1\bar 1}}{\lambda_{1}-\lambda_{i}} \frac{|\mathfrak{g}_{i\bar 1 1}|^{2}}{\lambda_1}
  -F^{i\bar i}\frac{|\widetilde{\lambda_{1}}_{, i}|^2}{\lambda_{ 1}^2} \\
 \geq \,&
 (1-2\delta)\sum_{i\in I}F^{i\bar i}\frac{|\mathfrak{g}_{i\bar 1 1}|^2-|\widetilde{\lambda_{1}}_{, i}|^2}{\mathfrak{g}_{1\bar 1}^2}
 \\
 \,&
 -4\delta \Psi'^2 \sum_{i\in I} F^{i\bar i}|(u-\underline{u})_{i}|^2
 \\
 -2\Psi'^2\delta^{-1} \,& F^{1\bar 1}K
 -\Phi'' F^{i\bar i} |(|\nabla u|^2)_{i}|^2.
\end{aligned}
\end{equation}

The estimates for the lower bound of the differences $|\mathfrak{g}_{i\bar 1 1}|^{2}-|\mathfrak{g}_{1\bar 1 i}|^{2}$
for $i\in I$ are crucial. A straightforward computation gives
\begin{equation}
\begin{aligned}
\mathfrak{g}_{i\bar 1 1}
=\,&
\widetilde{\lambda_{1}}_{, i}+
\tau_{i}-\chi_{1\bar 1,\zeta_{\alpha}}u_{\alpha i}
+\chi_{i\bar 1,\zeta_{\beta}}u_{\beta 1} \\           \nonumber
=\,&
\widetilde{\lambda_{1}}_{, i}+
\tau_{i}-\chi_{1\bar 1,\zeta_{1}}u_{1 i}
+\chi_{i\bar 1,\zeta_{i}}u_{i 1},
\end{aligned}
\end{equation}
where
$\tau_i=\chi_{i\bar 1,1}-\chi_{1\bar 1, i}
+\chi_{i\bar 1,\bar\zeta_{ \alpha}} u_{\bar\alpha 1}
-\chi_{1\bar 1,\bar\zeta_{\alpha}}u_{\bar\alpha i}
+T_{i1}^lu_{l\bar 1}+(B^1_1)_i.$  Here, \eqref{key-chi} is crucial.
Combining it with \eqref{mp1} and \eqref{mp2} one has
\begin{equation}
\label{key119111}
\begin{aligned}
 F^{i\bar i} \frac{(|\mathfrak{g}_{i\bar 1 1}|^2-|\widetilde{\lambda_{1}}_{,i}|^2)}{\mathfrak{g}_{1\bar 1}^2} \geq \,&
 -CF^{i\bar i} \frac{ |\mathfrak{g}_{1\bar 1 i} |}{\mathfrak{g}_{1\bar 1 }}(1 +\frac{|u_{1i}|}{\mathfrak{g}_{1\bar 1}})   \\
 \geq \,&
 -\frac{C }{\sqrt{K}}F^{i\bar i}(|u_{k\bar i}|+|u_{ki}|)
  \\\,&
 -\frac{C}{\mathfrak{g}_{1\bar 1} \sqrt{2K}} F^{i\bar i} |u_{ki}|^2
 -C |\Psi'|F^{i\bar i}|(u-\underline{u})_i|.
\end{aligned}
\end{equation}

We now choose small $\delta$ such that $4\delta \Psi'^{2} < \frac{1}{2}\Psi''$.
It follows from \eqref{mp1}, \eqref{key11}, \eqref{key33},  \eqref{key119111} and an elementary inequality
$\frac{1}{2}\Psi''F^{k\bar k}|(u-\underline{u})_{k}|^{2}-C|\Psi'|F^{k\bar k}|(u-\underline{u})_{k}|\geq -\frac{C^2\Psi'^2}{2\Psi''}\sum F^{i\bar i}$ that
\begin{equation}
\begin{aligned}
\label{key001}
0\geq \,&
 \frac{1}{16K}F^{k\bar k} \left(|u_{ik}|^{2}+|u_{i\bar k}|^{2}\right)
+\Psi'\mathcal{L}(u-\underline{u})
 -2\Psi'^2\delta^{-1}F^{1\bar 1}K
\\\,&
+2\Phi'\mathfrak{Re}\{ \psi_{ {k} } u_{\bar k} \}
 -(\frac{C_0 \Psi'^2}{ \Psi''}+C) \sum F^{i\bar i}
   +\frac{\psi_{1\bar 1}}{\mathfrak{g}_{1\bar 1}}.
\end{aligned}
\end{equation}

\textbf{Case II}:
 We assume that $\delta\lambda_{1}<-\lambda_{n}$ with the constants $C_{*}, N$ and $\delta$ fixed as in the previous case.
Then $|\lambda_{i}|\leq \frac{1}{\delta}|\lambda_{n}| \mbox{ for all } i$. Moreover
\begin{equation}
\label{zhuo222}
\begin{aligned}
-\sum F^{i\bar i}\frac{|\widetilde{\lambda_{1}}_{,i}|^{2}}{\lambda_{1}^{2}}
\geq \,&
 -2\Psi'^{2}\sum F^{i\bar i}|(u-\underline{u})_{i}|^{2}-2\Phi'^{2} \sum F^{i\bar i}|(|\nabla u|^{2})_{i}|^{2}.  \nonumber
\end{aligned}
\end{equation}
Combining it with \eqref{mp1}  and \eqref{key11}, one derives that
\begin{equation}
\begin{aligned}
0\geq \,&
\frac{1}{16K}F^{i\bar i}(|u_{ki}|^{2}+|u_{k\bar i}|^{2})
+(\Psi''-2\Psi'^{2})F^{i\bar i}|(u-\underline{u})_{i}|^{2}
\\
\,&
+2\Phi' \mathfrak{Re}\{ \psi_{{k} } u_{\bar k} \}
- C |\Psi'| F^{i\bar i}|(u-\underline{u})_{i}|
+\frac{\psi_{1\bar 1}}{\mathfrak{g}_{1\bar 1}}-C \sum F^{i\bar i},    \nonumber
 \nonumber
\end{aligned}
\end{equation}
here we use  the elementary inequality $ax^2-bx\geq -\frac{b^2}{4a}$ for $a>0$.

\vspace{1mm} 
Note that $F^{n\bar n} \geq \frac{1}{n} \sum F^{i\bar i}$ and $|u_{n\bar n}|^{2}=
\frac{1}{2}|\mathfrak{g}_{n\bar n}|^2- |\chi_{n\bar n}|^{2}\geq  \frac{\delta^2}{2}|\mathfrak{g}_{1\bar 1}|^{2}-C_2(1+|\nabla u|^2),$
one has
\begin{equation}
\label{hananguo1}
\begin{aligned}
0\geq \,&
\frac{\delta^2}{32nK} |\mathfrak{g}_{1\bar 1}|^2\sum F^{i\bar i}
 +(\Psi''-2\Psi'^{2})F^{i\bar i}|(u-\underline{u})_{i}|^{2}
\\
\,&
+2\Phi' \mathfrak{Re}\{ \psi_{ {k} } u_{\bar k} \}
-C (1+|\Psi'| \sqrt{K}) \sum F^{i\bar i}
 +\frac{\psi_{1\bar 1} }{\mathfrak{g}_{1\bar 1}}.
\end{aligned}
\end{equation}

\begin{proof}
 [Proof of Theorem \ref{globalsecond-Diri}]

  Suppose    $\mathfrak{g}_{1\bar 1}\gg 1+\sup_M |\nabla u|^2$ (otherwise we are done).

\vspace{1mm} 
In case I, we assume $\delta\lambda_{1}\geq -\lambda_{n}$ ($0<\delta\ll \frac{1}{2}$), and then have  \eqref{key001}. In the proof we use  Lemma \ref{guan2014}.
In the first subcase one assumes
\begin{equation}
\label{bbbbb3}
\begin{aligned}
\mathcal{L}(\underline{u}-u)\geq \varepsilon (1+ \sum F^{i\bar i}).
\end{aligned}
\end{equation}
Moreover, based on \eqref{key001}, we choose $C_{*}\gg 1$ and $N\gg 1$   such that $\frac{C_0|\Psi'|}{ \Psi''}\leq\frac{\varepsilon}{2 }$ and
 \begin{equation}
\begin{aligned}
-\frac{\varepsilon}{2}\Psi'\geq
 \frac{\varepsilon C_{*}N}{2(1+\sup_{\bar M}(u-\underline{u})-\inf_{\bar M}(u-\underline{u}))^{N+1}} \gg 1. \nonumber
  \end{aligned}
\end{equation}
 Thus one derives
\begin{equation}
\label{wukong1}
\begin{aligned}
\mathfrak{g}_{1\bar 1} \leq CK.   
\end{aligned}
\end{equation}
%
%
%
%
%

 In the other subcase, we can assume $f_{i} \geq  \frac{\beta }{\sqrt{n}} \sum_{j=1}^n f_{j} \mbox{ for each } i.$  
Note that
\begin{equation}
\label{tangsen}
\begin{aligned}
|u_{i\bar i}|^{2}= |\mathfrak{g}_{i\bar i}-\chi_{i\bar i}|^{2}
\geq \frac{1}{2}|\mathfrak{g}_{i\bar i}|^{2}-C(1+|\nabla u|^2).       \nonumber
\end{aligned}
\end{equation}
Combining it with   \eqref{key001} and \eqref{bound-lambda}, we derive \eqref{wukong1}. 

\vspace{1mm} 
In  case II
 we assume that $\delta\lambda_{1}<-\lambda_{n}$ with the constants $C_{*}, N$ and $\delta$ fixed as in the previous case. By \eqref{hananguo1} we have \eqref{wukong1}.
Here we use  condition  \eqref{bound-lambda}  again.


\end{proof}

With \textit{admissible} subsolutions   replaced by 
  $\mathcal{C}$-subsolutions, 
 we can prove \eqref{2-seglobal}. Here we use Lemma \ref{gabor'lemma}.
Based on it,
as in \cite{Gabor}, we  derive $C^0$-estimate and   gradient estimate for equation \eqref{mainequ} on closed Hermitian manifolds.
  Together with  Evans-Krylov theorem, 
one then slightly extends Sz\'{e}kelyhidi's $C^{2,\alpha}$-estimates  
for  fully nonlinear elliptic equations with $\eta^{1,0}=0$  in \cite{Gabor} and also partially extends a result in  \cite{yuan2018CJM}.
See also \cite{CollinsJacobYau,Dinew2017Kolo,Sun2017CPAM,Tosatti2010Weinkove,Tosatti2017Weinkove,Tosatti2013Weinkove,ZhangDk} 
  for $C^{2,\alpha}$-estimates for some special equations.  

\begin{proposition}
\label{C2alpha-Thm1}
Let $(M,\omega)$ be a closed Hermitian manifold, and $\psi$
be a smooth function.
Suppose  \eqref{elliptic}, \eqref{concave}, \eqref{nondegenerate},  \eqref{addistruc} and \eqref{existenceofsubsolution2} hold. Then
for any \textit{admissible}  solution  $u\in C^4(M)$  of equation \eqref{mainequ} with $\sup_M u=0$,  we have
 $C^{2,\alpha}$-estimates 
$|u|_{C^{2,\alpha}(M)}\leq C$  for $0<\alpha<1$ and a uniformly positive constant $C$. 
\end{proposition}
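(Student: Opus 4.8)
The plan is to establish the $C^{2,\alpha}$ estimate by the standard chain: zeroth order estimate $\Rightarrow$ gradient estimate $\Rightarrow$ second order estimate $\Rightarrow$ Evans--Krylov. First I would record the $C^0$ estimate. Since $M$ is closed and $\underline{u}$ is a $\mathcal{C}$-subsolution, the estimate $|u|_{C^0(M)}\leq C$ follows from the Alexandrov--Bakelman--Pucci type argument of Sz\'ekelyhidi \cite{Gabor} (adapted as in \cite{Gabor} to the present setting, the gradient terms in $\chi[u]$ being lower order and harmless for this step), normalizing $\sup_M u=0$. Next comes the gradient estimate: by the blow-up argument of \cite{Dinew2017Kolo,Gabor,Chen} sketched in Section \ref{mainproof}, it suffices to have the second order estimate of the form \eqref{sec-estimate-quar1}, i.e. $\sup_M\Delta u\leq C(1+\sup_M|\nabla u|^2)$. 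On a closed manifold there is no boundary term, so \eqref{2-seglobal} from Theorem \ref{globalsecond-Diri} (with the $\mathcal{C}$-subsolution version of that theorem, using Lemma \ref{gabor'lemma} in place of Lemma \ref{guan2014} exactly as indicated after the proof of Theorem \ref{globalsecond-Diri}) gives precisely \eqref{sec-estimate-quar1}. Feeding this into the blow-up argument, which uses the Liouville theorem of \cite{Gabor} extending \cite{Dinew2017Kolo} and hence needs hypothesis \eqref{addistruc}, one obtains $\sup_M|\nabla u|\leq C$.

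With $|\nabla u|\leq C$ in hand, I would then revisit the second order estimate: the right side of \eqref{sec-estimate-quar1} is now bounded, so $\sup_M\Delta u\leq C$, and since $u$ is admissible ($\lambda(\mathfrak{g}[u])\in\Gamma\subset\Gamma_1$) the eigenvalues of $\mathfrak{g}[u]$, hence all second derivatives of $u$, are bounded: $|u|_{C^2(M)}\leq C$. At this point the equation \eqref{mainequ} is a uniformly elliptic, concave fully nonlinear equation in $\sqrt{-1}\partial\bar\partial u$ with bounded coefficients (the dependence on $z$, $u$, $\nabla u$ through $\chi[u]$ and $\psi$ being controlled), so the Evans--Krylov theorem yields an interior $C^{2,\alpha}$ bound $|u|_{C^{2,\alpha}(M)}\leq C$ for some $0<\alpha<1$; covering the closed manifold $M$ by finitely many coordinate balls and using the bound on a slightly larger ball gives the global estimate. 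This is exactly the argument of \cite{Gabor} in the case $\eta^{1,0}=0$, and the only new point is that the gradient terms in $\mathfrak{g}[u]$ satisfy the structural identities \eqref{key-chi}, so that Theorem \ref{globalsecond-Diri} applies; hence the result slightly extends \cite{Gabor} and partially \cite{yuan2018CJM}.

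The main obstacle is the gradient estimate, and more precisely the interaction between the two types of complex second derivatives $u_{ij}$ and $u_{i\bar j}$ created by the gradient terms $\sqrt{-1}(\partial u\wedge\overline{\eta^{1,0}}+\eta^{1,0}\wedge\bar\partial u)$ in $\mathfrak{g}[u]$. This is precisely what Theorem \ref{globalsecond-Diri} is designed to handle — its proof (Case I and Case II above) carefully separates these contributions using \eqref{key-chi} and absorbs the cross terms $\chi_{i\bar j,\zeta_\alpha}u_{\alpha k}$, $\chi_{i\bar j,\bar\zeta_\alpha}u_{\bar\alpha k}$ into the good terms $\tfrac12\Phi' F^{i\bar i}(|u_{ki}|^2+|u_{k\bar i}|^2)$ — so once Theorem \ref{globalsecond-Diri} (in its $\mathcal{C}$-subsolution form) is granted, the remaining steps are the now-routine blow-up and Evans--Krylov arguments. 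A secondary point to be careful about is that every constant must be independent of $(\delta_{\psi,f})^{-1}$ only where relevant; here $\psi$ is smooth and the equation is non-degenerate by \eqref{nondegenerate}, so this causes no difficulty.
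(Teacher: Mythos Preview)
Your proposal is correct and follows essentially the same route as the paper: replace the admissible subsolution by a $\mathcal{C}$-subsolution in Theorem~\ref{globalsecond-Diri} via Lemma~\ref{gabor'lemma} to obtain \eqref{sec-estimate-quar1} on the closed manifold, then run the $C^0$ estimate and blow-up gradient estimate of \cite{Gabor} (the latter using \eqref{addistruc}), and finish with Evans--Krylov. The paper's own treatment is just the brief paragraph preceding the proposition, and your write-up fleshes out exactly that outline.
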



Applying Proposition  \ref{C2alpha-Thm1} 
one has
 \begin{proposition}
\label{sol1}
Let $(M,\omega)$ be a closed Hermitian manifold of complex dimension, $\phi\in C^\infty(M)$, 
 $2\leq k\leq n$. 
Then there is a unique real valued smooth function 
$u\in C^\infty(M)$ with $\lambda(\mathfrak{g}[u]) \in \Gamma_k$ and $\sup_Mu=0$,
 and a unique constant $b$ such that 
\begin{equation}
\begin{aligned}
(\mathfrak{g}[u])^k\wedge \omega^{n-k}=e^{\phi+b} \omega^n, \mbox{ in } M. \nonumber
\end{aligned}
\end{equation}
\end{proposition}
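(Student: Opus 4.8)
The plan is to recognize the statement as the special case of \eqref{mainequ} on a closed manifold with $\tilde\chi=\omega$, $\eta^{1,0}=0$, $f=\sigma_k^{1/k}$ and $\Gamma=\Gamma_k$, so that $\mathfrak g[u]=\omega+\sqrt{-1}\partial\overline{\partial}u$ and the equation reads $f(\lambda(\mathfrak g[u]))=\psi$ with $\psi=\big(\binom nk e^{\phi+b}\big)^{1/k}$, and then to run the continuity method, taking the a priori estimates from Proposition \ref{C2alpha-Thm1}. First I would check the structural hypotheses: G\aa rding's theory gives \eqref{elliptic} and \eqref{concave}; the homogeneity $f(t\lambda)=tf(\lambda)$ gives \eqref{addistruc} and $\sup_\Gamma f=+\infty$; since $\sup_{\partial\Gamma_k}f=0$, condition \eqref{nondegenerate} will hold as soon as $b$ is under control; and because $\lambda(\mathfrak g[0])=(1,\dots,1)$ with $\lim_{s\to+\infty}\sigma_k^{1/k}((1,\dots,1)+se_i)=+\infty$ for each $i$, the constant $\underline u\equiv 0$ is a $\mathcal C$-subsolution in the sense of \eqref{existenceofsubsolution2}.

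For the continuity path I would set, for $t\in[0,1]$,
\[
(\mathfrak g[u_t])^k\wedge\omega^{n-k}=e^{t\phi+b_t}\omega^n,\qquad \lambda(\mathfrak g[u_t])\in\Gamma_k,
\]
normalized by $\int_M u_t\,\omega^n=0$; this normalization does not affect $b_t$ since $\mathfrak g[u+c]=\mathfrak g[u]$, and $t=0$ is solved by $u_0=0$, $b_0=0$. Let $I\subset[0,1]$ be the set of $t$ admitting a solution $(u_t,b_t)\in C^{2,\alpha}(M)\times\mathbb R$. Openness of $I$ I would obtain from the implicit function theorem: the linearization at a solution of $(w,c)\mapsto\log\!\big((\mathfrak g[w])^k\wedge\omega^{n-k}/\omega^n\big)-c$ has the form $(w,c)\mapsto F^{i\bar j}w_{i\bar j}-c$ with $(F^{i\bar j})>0$ and no lower order terms, and this is an isomorphism from $\{w\in C^{2,\alpha}(M):\int_M w\,\omega^n=0\}\times\mathbb R$ onto $C^\alpha(M)$ — injective by the strong maximum principle, and surjective because the constant $1$ is not in the range of $F^{i\bar j}\partial_i\partial_{\bar j}$, as one sees by pairing against the positive kernel element of the formal adjoint.

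Closedness is where Proposition \ref{C2alpha-Thm1} enters, and the point I would stress is the uniform control of $b_t$. Evaluating the equation at a maximum point of $u_t$, where $\sqrt{-1}\partial\overline{\partial}u_t\le0$ and hence $\mathfrak g[u_t]\le\omega$, and using the monotonicity of $\sigma_k$ on $\overline{\Gamma}_k$, yields $e^{t\phi+b_t}\le1$ there; the symmetric estimate at a minimum point yields $e^{t\phi+b_t}\ge1$ there. Hence $|b_t|\le\sup_M|\phi|$ for all $t$, so $\psi_t:=\big(\binom nk e^{t\phi+b_t}\big)^{1/k}$ stays bounded in $C^\infty(M)$ with $\inf_M\psi_t\ge\big(\binom nk\big)^{1/k}e^{-\sup_M|\phi|/k}>0=\sup_{\partial\Gamma_k}f$ uniformly in $t$, and $\underline u\equiv 0$ remains a $\mathcal C$-subsolution uniformly in $t$. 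Proposition \ref{C2alpha-Thm1} then gives a $t$-independent bound $|u_t|_{C^{2,\alpha}(M)}\le C$ for the normalization $\sup_M u_t=0$ (which differs from $\int_M u_t\,\omega^n=0$ by a bounded constant), so $I$ is closed by Arzel\`a--Ascoli, and since the limiting equation is uniformly elliptic with $C^\alpha$ coefficients and smooth right-hand side, a Schauder bootstrap upgrades the solution to $C^\infty$. Thus $1\in I$; replacing $u$ by $u-\sup_M u$ gives the asserted smooth admissible solution with $\sup_M u=0$.

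Finally I would prove uniqueness. If $(u_1,b_1)$ and $(u_2,b_2)$ both solve, then at a maximum of $u_1-u_2$ one has $\mathfrak g[u_1]\le\mathfrak g[u_2]$, so $e^{b_1}\le e^{b_2}$; by symmetry $b_1=b_2$. With $b_1=b_2$, the difference $w=u_1-u_2$ satisfies $a^{i\bar j}w_{i\bar j}=0$ with $a^{i\bar j}=\int_0^1F^{i\bar j}\big(s\mathfrak g[u_1]+(1-s)\mathfrak g[u_2]\big)\,ds$ positive definite and no zeroth order term, so $w$ is constant by the strong maximum principle, and $\sup_M u_1=\sup_M u_2=0$ forces $w\equiv0$. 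The main obstacle is not really internal to this argument: all the genuinely hard analysis — the global second order estimate and the $C^{2,\alpha}$ estimate — is already packaged in Proposition \ref{C2alpha-Thm1}, so within the present proof the only points demanding care are the a priori bound on the constant $b_t$ along the path and the mapping-theoretic bookkeeping in the openness step, i.e. the invertibility of the linearization together with the free parameter $b$.
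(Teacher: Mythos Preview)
Your argument is correct and follows the same route the paper takes: the paper's proof is literally the one line ``Applying Proposition~\ref{C2alpha-Thm1} one has'', and you have written out the standard continuity method that this line encodes, including the maximum/minimum-point bound on $b_t$, openness via the implicit function theorem, closedness from the uniform $C^{2,\alpha}$ bound, and uniqueness by the strong maximum principle.

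One point to flag: you specialized to $\tilde\chi=\omega$ and $\eta^{1,0}=0$, whereas $\mathfrak g[u]$ in the paper is the form \eqref{mainequ} with a general $\eta^{1,0}$, and Remark~\ref{application-equation1} (the comparison with \cite{TW123}) confirms that the gradient term is meant to be present. Nothing essential changes: the linearized operator is then $\mathcal L$ of \eqref{linearoperator21}, which carries first-order terms but still annihilates constants, so your injectivity/surjectivity and maximum-principle arguments go through verbatim; and in the bound on $b_t$ and in the uniqueness step, at an interior extremum of $u_t$ (resp.\ $u_1-u_2$) the gradient vanishes, so the $\partial u\wedge\overline{\eta^{1,0}}$ terms in $\mathfrak g$ drop out and your comparison $\mathfrak g[u_t]\le\tilde\chi$ (resp.\ $\mathfrak g[u_1]\le\mathfrak g[u_2]$) still holds. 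The only extra standing hypothesis you need to record in the general case is $\lambda(\tilde\chi)\in\Gamma_k$, so that $\underline u\equiv 0$ is admissible and hence a $\mathcal C$-subsolution.
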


\begin{remark} \label{application-equation1}  
 Proposition \ref{sol1} for complex Monge-Amp\`ere equation   
 was   by
Tosatti-Weinkove  \cite{TW123}  for higher dimensions independently, while for the case of $n=2$ it was proved  by  the author in earlier work \cite{yuan2018CJM}.
   
\end{remark}

  \section{Construction of subsolutions on products}
  \label{constructionofsubsolutions}
 
  The construction of subsolutions is a key ingredient to solve  the Dirichlet problem 
according to the above theorems. 

\vspace{1mm}  If one could construct a function $h$ with satisfying $h|_{\partial M}=0$,
 $\mathfrak{g}[h]-\tilde{\chi}\geq 0$ and $\mathrm{rank}(\mathfrak{g}[h]-\tilde{\chi})\geq 1$,
  which is extremely hard to construct in general, then one may use it to construct subsolutions. 
 
\vspace{1mm} 
 In this  section we construct the subsolutions by considering the special case that $\mathrm{rank}(\mathfrak{g}[h]-\tilde{\chi})=1$.
 The key ingredient  is that 
 if $\mathrm{rank}(\mathfrak{g}[h]-\tilde{\chi})=1$ then it corresponds to second order linear elliptic operator.
 This leads naturally to an interesting case that the background space is a product $(M,J,\omega)=(X\times S,J,\omega)$.  
 It is noteworthy that $J$ is the standard induced complex structure, 
 while $\omega$ is not needed to be $\omega=\pi_1^*\omega_X+\pi_2^*\omega_S$.
 On such products,\renewcommand{\thefootnote}{\fnsymbol{footnote}}\footnote{The construction of subsolutions applies to certain fibered manifolds.}
 we can construct  strictly \textit{admissible} subsolutions  with 
  $\frac{\partial}{\partial \nu} \underline{u}|_{\partial M}< 0$ for Dirichlet problem \eqref{mainequ} and \eqref{mainequ1},  
if    $\eta^{1,0}=\pi_2^*\eta^{1,0}_S$ and   \eqref{cone-condition1} holds.

 \vspace{1mm} 
  The subsolution is precisely given by 
    \begin{equation}   \begin{aligned}
   \underline{u}=w+N\pi_2^* h
  \end{aligned}\end{equation}
for  $N\gg1$
   ($\pi_2^* h=h\circ\pi_2$, still denoted by $h$ 
    for simplicity), where $h$ is the solution to 
 \begin{equation}
   \label{possion-def}
   \sqrt{-1}\partial_S\overline{\partial}_S h+\sqrt{-1}\partial_S h\wedge \overline{\eta_{S}^{1,0}} 
   +\sqrt{-1} \eta_{S}^{1,0}\wedge \overline{\partial}_S h
   =\omega_S \mbox{ in } S, \mbox{  } h=0 \mbox{ on } \partial S.
   \end{equation}
   In the local coordinate $z_n=x_n+\sqrt{-1}y_n$ of $S$, Dirichlet problem \eqref{possion-def} reduces to 
  $$\Delta_S h+ \frac{\overline{\eta}_{S; z_n,\bar z_n}}{g_{S; z_n,\bar z_n}} \frac{\partial h}{\partial z_n}+
 \frac{\eta_{S; z_n,\bar z_n} }{g_{S; z_n,\bar z_n}}\frac{\partial h}{\partial \bar z_n}=1\mbox{ in } S, \mbox{  } h=0 \mbox{ on } \partial S,$$
 where  $\omega_S=\sqrt{-1}g_{S; z_n,\bar z_n}dz_n\wedge d\bar{z}_n$, $\eta^{1,0}_S=\eta_{S; z_n,\bar z_n}dz_n$, and 
 $\Delta_S=\frac{1}{g_{S; z_n,\bar z_n}}\frac{\partial^2 }{\partial z_n\partial \bar z_n}$ is the complex Laplacian operator with respect to 
 $\omega_S$.
   In particular, if $\eta_S^{1,0}=0$ then 
    \begin{equation}  
    \label{possion-def-local}
    \begin{aligned}
   \Delta_S h =1 \mbox{ in } S, \mbox{  } h=0 \mbox{ on } \partial S.  
   \end{aligned}
  \end{equation}
  
     According to the theory of elliptic equations,  
       if $\partial S\in C^{2,\beta}$,
      Dirichlet problem \eqref{possion-def}   is uniquely solvable in 
   the class of $C^{2,\beta}$ functions, and the maximum principle further implies $h|_S<0$, $\frac{\partial}{\partial \nu}h|_{\partial S}<0$. 
   Thus  $\mathfrak{g}[\underline{u}]=\mathfrak{g}[w]+N\pi_2^*\omega_S, \mbox{ and }
    \frac{\partial}{\partial \nu}\underline{u}|_{\partial M}<0 \mbox{ if } N\gg1.$
    Such a condition  $\frac{\partial}{\partial \nu} \underline{u}|_{\partial M}< 0$ is important for 
Theorems  \ref{dege-thm-c2alpha}, \ref{thm4-diri-de-c2alpha} and \ref{mainthm-09}.


\vspace{1mm}
 In the special case if  $\varphi\in C^2(\partial S)$, $\eta^{1,0}=\pi_2^*\eta^{1,0}_S$ and $\lambda (\tilde{\chi}+ c\pi_2^*\omega_S)\in \Gamma$ for some $c>0$, then for each function
$w\in C^2(\bar S)$ with $w|_{\partial S}=\varphi$, $w$ always lies in $\mathfrak{F}(\varphi)$, and moreover    \eqref{cone-condition1}
can be derived from
  \begin{equation}
  \label{cone-condition3}
  \begin{aligned}
 \lim_{t\rightarrow +\infty} f(\lambda(\tilde{\chi}+ t\pi_2^*\omega_S))>\psi  \mbox{ in } \bar M.
 \end{aligned}
\end{equation}
From Theorem \ref{solvability1-equiv} below, such a condition guarantees the solvability in this special case
 when $\varphi\in C^2(\partial S)$.
Significantly, if $\omega=\pi_1^*\omega_X+\pi_2^*\omega_S$,
     $\tilde{\chi}$ splits into 
     $\tilde{\chi}=\pi_1^*\tilde{\chi}_1+ \pi_2^*\tilde{\chi}_2,$
 where $\omega_X$ is the K\"ahler form on $X$, $\tilde{\chi}_1$ is a real $(1,1)$-form on $X$, $\tilde{\chi}_2$  is a real $(1,1)$-form on $S$,
     by 
     Lemma \ref{yuan's-quantitative-lemma}, condition 
     \eqref{cone-condition3} then reduces to 
      \begin{equation}
  \label{cone-condition1-1}
  \begin{aligned}
 \lim_{t\rightarrow +\infty} f(\lambda_{\omega_{X}}(\tilde{\chi}_1), t)>\psi \mbox{ in } \bar M,  
  \mbox{ and  } \lambda_{\omega_X}(\tilde{\chi}_1)\in \Gamma_\infty,  
  \end{aligned}
\end{equation}
  where $\lambda_{\omega_{X}}(\tilde{\chi}_1)$ are the eigenvalues of $\tilde{\chi}_1$ 
  with respect to $\omega_X$, as in Trudinger \cite{Trudinger95}
   $\Gamma_\infty=\{(\lambda_1,\cdots,\lambda_{n-1}) \in \mathbb{R}^{n-1}: (\lambda_1,\cdots,\lambda_{n-1},c)\in \Gamma \mbox{ for some } c>0\}$ denotes the  projection of $\Gamma$ onto $\mathbb{R}^{n-1}$. 
   This shows that,  if $\tilde{\chi}=\pi_1^*\tilde{\chi}_1+ \pi_2^*\tilde{\chi}_2,$ the solvability of Dirichlet problem 
 is then heavily determined by $\tilde{\chi}_1$ rather than by $\tilde{\chi}_2$.  




\section{The Dirichlet problem  with less regular boundary and boundary data} 
\label{Dirichlet-problem-less-rugularity}

For purpose of investigating the equations on complex manifolds with  less regular boundary,\renewcommand{\thefootnote}{\fnsymbol{footnote}}\footnote{We emphasize that   the geometric quantities of $(M,\omega)$ (curvature $R_{i\bar j k\bar l}$ and the torsion $T^k_{ij}$) keep bounded as approximating to $\partial M$, and all derivatives of $\tilde{\chi}_{i\bar j}$ and $\eta^{1,0}$ have continues extensions to $\bar M$,
 whenever $M$ has less regularity boundary. Typical examples are as follows: $M\subset M'$, $\mathrm{dim}_{\mathbb{C}}M'=n$, 
$\omega=\omega_{M'}|_{M}$ and the given data $\tilde{\chi}, \eta^{1,0}$ can be smoothly defined on $M'$.} we need to seek 
those manifolds which allow us to apply Theorem \ref{thm1-diri-estimate} 
to do the submanifold/domain approximation. 
Clearly, there are two types of complex manifolds that fulfill the request. That is 
$M=X\times S$ 
 (or more generally $M$ is as mentioned in Theorem \ref{dege-thm-c2alpha} below),
 or the manifold whose boundary 
 satisfies 
 \begin{equation}
 \label{bdry-assumption1-strictly}
 \begin{aligned}
 \lambda_{\omega'}(-{L}_{\partial M})\in \Gamma_{\infty}
 \end{aligned}
 \end{equation}

   
\begin{theorem}
\label{opti-regularity-thm1}
 
Suppose  $M=X\times S$ 
 (or more generally $M$ is as mentioned in Theorem \ref{dege-thm-c2alpha} below but with  $\partial M_i$ converge to $\partial M$ in
  $C^{2,1}$-norm),
   or $\partial M$ satisfies \eqref{bdry-assumption1-strictly}. 
Suppose in addition that $\partial M\in C^3$, $\varphi\in C^3(\partial M)$, $\psi\in   C^{2}(\bar M)$ 
and the other assumptions of Theorem \ref{thm1-diri-estimate} hold. Then
  Dirichlet problem  \eqref{mainequ} and \eqref{mainequ1}  
   admits a unique  admissible solution $u\in C^{2,\alpha}(\bar M)$ for some $0<\alpha<1$. Moreover, 
    $u$ satisfies \eqref{c2alpha-estimate1}.
   \end{theorem}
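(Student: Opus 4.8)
The plan is to prove Theorem~\ref{opti-regularity-thm1} by combining the full a priori estimates from the earlier sections with a continuity/approximation argument, splitting into the two admissible geometric settings. In either setting, the first step is to establish the \emph{global} a priori estimates $|u|_{C^0(\bar M)}+|\nabla u|_{C^0(\bar M)}+|\Delta u|_{C^0(\bar M)}\leq C$ for any admissible solution, with $C$ independent of $(\delta_{\psi,f})^{-1}$. The $C^0$-bound comes from \eqref{daqindiguo1} (squeezing between $\underline{u}$ and the harmonic-type function $w$ of \eqref{supersolution}), and the boundary gradient bound from \eqref{herer}. The interior gradient bound is precisely the blow-up argument sketched in Section~\ref{mainproof}: if $\sup_M|\nabla u|$ were not controlled, one rescales, and using the quantitative second-order estimate \eqref{quantitative-2nd-boundary-estimate} together with the quantitative boundary estimate \eqref{bdy-sec-estimate-quar1}—which holds here because either $M=X\times S$ (apply Theorem~\ref{mix-Leviflat-thm1-product} / Theorem~\ref{mix-general-thm1}) or $\partial M$ satisfies \eqref{bdry-assumption1-strictly}, hence a fortiori \eqref{bdry-assumption1}, so Theorem~\ref{mix-general-thm1} applies—one produces an entire solution on $\mathbb{C}^n$ or a half-space contradicting Sz\'ekelyhidi's Liouville theorem (this is where \eqref{addistruc} is used). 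Feeding the resulting bound on $\sup_M|\nabla u|$ back into \eqref{sec-estimate-quar1} and \eqref{bdy-sec-estimate-quar1} yields \eqref{c2alpha-estimate1}, i.e. $\sup_M\Delta u\leq C$; since $\lambda(\mathfrak g[u])\in\Gamma\subset\Gamma_1$ this also gives a two-sided bound on the complex Hessian and hence a uniform bound $\lambda(\mathfrak g[u])\in K\Subset\Gamma$ away from $\partial\Gamma$ (using $\delta_{\psi,f}>0$ only qualitatively, the constants not depending on its reciprocal).

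The second step is to upgrade to $C^{2,\alpha}$. With the complex Hessian now uniformly elliptic and the eigenvalues confined to a compact subset of $\Gamma$ where $f$ is smooth, uniformly elliptic and concave, the Evans--Krylov theorem (in the complex/Hermitian form, e.g. as in \cite{Gabor,Tosatti2010Weinkove}) together with the boundary version (using $\partial M\in C^3$, $\varphi\in C^3$, $\psi\in C^2$, and the barrier/tangential machinery of Section~\ref{Diri-pseudoconcave}) gives an interior and boundary $C^{2,\alpha}(\bar M)$-estimate for some $0<\alpha<1$, with constant depending only on the data already controlled. Note that full smoothness is \emph{not} claimed here—only $C^{2,\alpha}$—so one does not need to bootstrap via Schauder beyond this order; this is consistent with the hypotheses $\psi\in C^2$, $\varphi,\partial M\in C^3$.

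The third step is solvability, via the method of continuity. One connects the equation to a solvable one along a path $F(\mathfrak g[u_t])=\psi_t$, $t\in[0,1]$, for instance interpolating $\psi$ (and if necessary the lower-order structure) so that at $t=0$ the subsolution $\underline u$ is itself a solution, keeping \eqref{nondegenerate} and the admissible-subsolution hypothesis valid along the path (one may shrink $\delta_0$ but never needs $(\delta_{\psi_t,f})^{-1}$ bounds, by the Remark after Theorem~\ref{thm1-diri-estimate}). Openness follows from the implicit function theorem in $C^{2,\alpha}$ since the linearized operator $\mathcal L$ of \eqref{linearoperator21} is elliptic with $F^{i\bar j}>0$ and invertible with the zero Dirichlet condition; closedness follows from the uniform $C^{2,\alpha}(\bar M)$-estimate of Steps~1--2 plus Arzel\`a--Ascoli. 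This yields an admissible $u\in C^{2,\alpha}(\bar M)$ solving \eqref{mainequ}--\eqref{mainequ1} and satisfying \eqref{c2alpha-estimate1}; uniqueness is immediate from the comparison principle for admissible solutions (ellipticity of $\mathcal L$ and concavity of $f$).

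The main obstacle is Step~1, and within it the quantitative boundary estimate \eqref{bdy-sec-estimate-quar1}: the blow-up scheme is viable only because one has a boundary second-derivative bound that is \emph{quadratic} in $\sup_M|\nabla u|$ rather than merely qualitative, and this is exactly what Propositions~\ref{proposition-quar-yuan1} and~\ref{mix-general} (via Lemma~\ref{yuan's-quantitative-lemma}) deliver under \eqref{bdry-assumption1}. Everything else—$C^0$, boundary $C^1$, global $\Delta u$ in terms of $|\nabla u|^2$, Evans--Krylov, continuity method—is by now standard once this quantitative boundary input is in hand; the role of the two geometric hypotheses ($M=X\times S$ or \eqref{bdry-assumption1-strictly}) is precisely to guarantee that the approximating domains/submanifolds used to justify the $C^3$-boundary regularity hypotheses can be taken inside the class where Theorem~\ref{thm1-diri-estimate} (hence the above estimates) applies, and to ensure \eqref{bdry-assumption1} passes to the approximants.
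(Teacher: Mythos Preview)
Your approach is essentially correct but differs from the paper's. The paper proves Theorem~\ref{opti-regularity-thm1} by \emph{approximation}: one exhausts $M$ by smooth submanifolds $M_i$ and applies Theorem~\ref{thm1-diri-estimate} directly on each $M_i$ to obtain smooth solutions, then passes to the limit using the uniform quantitative estimates. This is precisely why the theorem assumes the \emph{strict} condition \eqref{bdry-assumption1-strictly} (open cone) rather than \eqref{bdry-assumption1} (closed cone): the strict condition is $C^2$-stable, so the approximating boundaries $\partial M_i$ still satisfy \eqref{bdry-assumption1}, and hence Theorem~\ref{thm1-diri-estimate} applies to them. In the product case $M=X\times S$ the product structure is trivially inherited by $M_i=X\times S_i$, playing the same stability role.

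By contrast, your Steps~1--3 run the continuity method directly on $M$ and never approximate the domain. In that scheme only \eqref{bdry-assumption1} is used (via Theorem~\ref{mix-general-thm1}), so the strict hypothesis \eqref{bdry-assumption1-strictly} plays no role; your final paragraph, which invokes approximation to explain that hypothesis, is therefore disconnected from the argument you actually give. Two further remarks: (i) your starting point $\psi_0=F(\mathfrak g[\underline u])$ is only $C^{0,1}$ when $\underline u\in C^{2,1}$, so the $C^2$-dependence of the estimates fails there; this is easy to fix (smooth $\underline u$ slightly, or deform from a different reference), but should be said. (ii) Section~\ref{Diri-pseudoconcave} yields only $C^2$ boundary control; the boundary $C^{2,\alpha}$ step you need is Krylov's boundary estimate for concave uniformly elliptic equations (valid with $\partial M\in C^3$, $\varphi\in C^3$), not the barrier machinery. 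With these adjustments your direct route works and in fact gives a slightly sharper statement than what is claimed; the paper's approximation route is chosen because it dovetails with the subsequent theorems where the boundary is weakened below $C^3$.
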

   
   \begin{theorem}
   \label{opti-regularity-thm1-degenerate}
   Let $M$ be as in Theorem \ref{opti-regularity-thm1} but we assume $\partial M \in C^{2,1}$.
 Suppose in addition that the other assumptions of Theorem  \ref{thm3-diri-estimate-de}  hold. Then
     the Dirichlet problem has a  (weak)
   solution $u\in C^{1,\alpha}$  $(\forall 0<\alpha<1)$ with $\lambda(\mathfrak{g}[u])\in \bar\Gamma$ 
   and $\Delta u\in L^\infty(\bar M)$.
\end{theorem}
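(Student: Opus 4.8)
The plan is to obtain Theorem \ref{opti-regularity-thm1-degenerate} by combining the non-degenerate existence result (Theorem \ref{opti-regularity-thm1}) with an approximation-and-compactness scheme, exactly parallel to the passage from Theorem \ref{thm1-diri-estimate} to Theorem \ref{thm3-diri-estimate-de}. First I would reduce to the situation where the right-hand side has been perturbed to be non-degenerate: replace $\psi$ by $\psi_\varepsilon = \psi - \varepsilon$ (so that $\delta_{\psi_\varepsilon,f} = \delta_0 - \varepsilon > 0$ for small $\varepsilon$, using the strictly admissible subsolution $\underline u$ from \eqref{existenceofsubsolution-de}), and regularize the data: mollify $\psi$ to $\psi_{\varepsilon}\in C^2(\bar M)$ and, when $\partial M\in C^{2,1}$ only, exhaust $M$ by an increasing sequence of smooth domains $M_i\uparrow M$ (or, in the product case $M=X\times S$, by $X\times S_i$ with $\partial S_i\in C^3$ converging to $\partial S$ in $C^{2,1}$-norm) together with the corresponding boundary data $\varphi_i\in C^3(\partial M_i)$. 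The point of the hypothesis on the class of $M$ is precisely that each $M_i$ still lies in the class to which Theorem \ref{opti-regularity-thm1} applies — i.e. either $M_i=X\times S_i$, or $\partial M_i$ satisfies \eqref{bdry-assumption1-strictly} (the strict version of \eqref{bdry-assumption1} persists under small $C^{2,1}$-perturbation of the boundary because $\Gamma_\infty$ is open). Since $\underline u$ restricts to a strictly admissible subsolution on each $M_i$ for the perturbed equation, Theorem \ref{opti-regularity-thm1} yields a smooth admissible solution $u_{i,\varepsilon}$ on $\bar M_i$ with $u_{i,\varepsilon}=\varphi_i$ on $\partial M_i$.

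Next I would extract uniform estimates independent of $i$ and of $\varepsilon$. The $C^0$ bound follows from $\underline u\le u_{i,\varepsilon}\le w$ as in \eqref{daqindiguo1} (the supersolution $w$ of \eqref{supersolution} being available on each $M_i$), with constants controlled by $|\underline u|_{C^1}$ and $|w|_{C^1}$ uniformly. The second-order bound $\sup_{M_i}\Delta u_{i,\varepsilon}\le C$ is the crucial output: by Theorem \ref{globalsecond-Diri} one has $\sup_{M_i}\Delta u_{i,\varepsilon}\le C(1+\sup_{M_i}|\nabla u_{i,\varepsilon}|^2+\sup_{\partial M_i}|\Delta u_{i,\varepsilon}|)$ with $C$ not depending on $(\delta_{\psi_\varepsilon,f})^{-1}$, and the boundary term is absorbed via the quantitative boundary estimate — Theorem \ref{mix-general-thm1} in the general strictly-pseudoconcave case, or Theorem \ref{mix-Leviflat-thm1-product} in the product case — combined with \eqref{herer}; this gives $\sup_{\partial M_i}\Delta u_{i,\varepsilon}\le C(1+\sup_{M_i}|\nabla u_{i,\varepsilon}|^2)$ with $C$ again independent of $(\delta_{\psi_\varepsilon,f})^{-1}$. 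Feeding this into the second-order estimate reduces everything to a gradient bound $\sup_{M_i}|\nabla u_{i,\varepsilon}|\le C$, which is supplied by the blow-up argument of Section \ref{mainproof} (valid because \eqref{addistruc} holds and the quantitative boundary estimate \eqref{bdy-sec-estimate-quar1} is in force on the class of $M$ considered). Hence $\|u_{i,\varepsilon}\|_{C^{1,1}(\bar M_i)}\le C$ uniformly, with $\lambda(\mathfrak g[u_{i,\varepsilon}])\in\Gamma$ and $\Delta u_{i,\varepsilon}\in L^\infty$ uniformly bounded.

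Finally I would pass to the limit. By the uniform $C^{1,1}$ bound and Arzelà–Ascoli (diagonalizing over $i$ and then letting $\varepsilon\to 0$), a subsequence of $u_{i,\varepsilon}$ converges in $C^{1,\alpha}_{\mathrm{loc}}$, for every $0<\alpha<1$, to a limit $u\in C^{1,\alpha}(\bar M)$ with $\Delta u\in L^\infty(\bar M)$ and $u=\varphi$ on $\partial M$; since $\mathfrak g[u_{i,\varepsilon}]$ converges weakly-$*$ in $L^\infty$ and the eigenvalue cone is closed, $\lambda(\mathfrak g[u])\in\bar\Gamma$, and $u$ solves $f(\lambda(\mathfrak g[u]))=\psi$ in the weak (viscosity, or a.e.) sense with $\delta_{\psi,f}=0$. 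The main obstacle is the uniform second-order estimate near the now only $C^{2,1}$ boundary: one must check that the barrier constructions underlying Theorems \ref{mix-general-thm1} and \ref{mix-Leviflat-thm1-product} — in particular the distance function $\sigma$, the quantities $C_2$ in \eqref{bdy1}, and the bound \eqref{herer} — remain uniformly controlled as $\partial M_i\to\partial M$ in $C^{2,1}$, and that the strict cone condition \eqref{bdry-assumption1-strictly} (or the product structure) is stable under this degeneration; granting the class-of-$M$ hypotheses, these are precisely the points that have been arranged to hold, and the rest is the routine compactness argument already used for Theorem \ref{thm3-diri-estimate-de}.
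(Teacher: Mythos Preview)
Your overall strategy is exactly the one the paper intends: approximate $\partial M$ from inside by smooth boundaries that stay in the admissible class (product structure or the open condition \eqref{bdry-assumption1-strictly}), perturb the right-hand side to be non-degenerate, apply Theorem~\ref{opti-regularity-thm1}, use that all the constants in the second-order and quantitative boundary estimates are independent of $(\delta_{\psi,f})^{-1}$, and pass to the limit. This is the same scheme the paper uses for Theorems~\ref{thm3-diri-estimate-de} and~\ref{mainthm-10-degenerate}.

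However, your perturbation has the wrong sign. With $\psi_\varepsilon=\psi-\varepsilon$ one gets $\delta_{\psi_\varepsilon,f}=\inf_M\psi-\varepsilon-\sup_{\partial\Gamma}f=-\varepsilon<0$, which is degenerate in the wrong direction; also the formula ``$\delta_{\psi_\varepsilon,f}=\delta_0-\varepsilon$'' conflates the nondegeneracy constant $\delta_{\psi,f}$ with the subsolution gap $\delta_0$ from \eqref{existenceofsubsolution-de}, which are unrelated. The correct choice is $\psi_\varepsilon=\psi+\varepsilon$ (after mollifying $\psi\in C^{1,1}$ to $C^2$ if needed): then $\delta_{\psi_\varepsilon,f}\ge\varepsilon>0$, and the strict subsolution $\underline u$ remains a subsolution for the perturbed equation since $f(\lambda(\mathfrak g[\underline u]))\ge\psi+\delta_0\ge\psi_\varepsilon$ for $0<\varepsilon\le\delta_0$. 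With this correction the rest of your argument goes through verbatim.
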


   
A somewhat remarkable fact to us is that, 
  the regularity assumptions on boundary and boundary data
   can be further weakened under certain assumptions, which extends extensively a result of \cite{yuan2017} to general settings. 
  The motivation 
  is mainly based on the estimates which state that 
   \begin{itemize}
  \item If $\partial M$ is \textit{holomorphically flat}  and  boundary value is furthermore a constant, 
  then the constant in quantitative boundary estimate \eqref{bdy-sec-estimate-quar1} 
  depends only on $\partial M$ up to second derivatives and other known data (see  Theorem \ref{mix-Leviflat-thm1}). 
   \item If we furthermore assume $M=X\times S$ and the boundary data $\varphi\in C^2(\partial S)$,
     then 
     the constant in  \eqref{bdy-sec-estimate-quar1} is bounded from above 
     by a positive constant depending on
    $|\varphi|_{C^{2}(\bar S)}$, $|\psi|_{C^{1}(\bar M)}$, 
$|\underline{u}|_{C^{2}(\bar M)}$, $\partial S$
up to second derivatives and other known data (see Theorem \ref{mix-Leviflat-thm1-product}).
   \end{itemize}
Besides, we can use a result due to Silvestre-Sirakov \cite{Silvestre2014Sirakov} to derive
the $C^{2,\alpha}$ boundary regularity with only assuming $C^{2,\beta}$ boundary.

\begin{theorem}
\label{dege-thm-c2alpha}
  
   Let $(M,J,\omega)$ be a compact Hermitian manifold  with $C^{2,\beta}$  
 boundary which additionally satisfies that,
  for any sequence of smooth complex submanifolds $\{M_i\}$ (with the same complex dimension)
 whose boundaries $\partial M_i$ converge to $\partial M$ in the $C^{2,\beta}$ norm, there is $N\gg1$ 
 such that $\partial M_i$ is \textit{holomorphically flat} for any $i\geq N$. 
 Suppose in addition that \eqref{elliptic}, \eqref{concave}, \eqref{nondegenerate}, \eqref{addistruc} and
  $\psi\in C^{2}(\bar M)$.
Then  Dirichlet problem  \eqref{mainequ} and \eqref{mainequ1}
  with homogeneous boundary data $u|_{\partial M}= 0$
  supposes a unique $C^{2,\alpha}$ \textit{admissible} solution  for some $0<\alpha\leq\beta$, provided that 
      the Dirichlet problem has a $C^{2,\beta}$ admissible subsolution with either
       $\frac{\partial}{\partial \nu} \underline{u}|_{\partial M}\leq 0$ or $\frac{\partial}{\partial \nu} \underline{u}|_{\partial M}\geq 0$.
   
\end{theorem}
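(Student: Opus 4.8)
The plan is to solve the Dirichlet problem on a sequence of smooth domains whose boundaries are eventually \emph{holomorphically flat}, to collect \emph{a priori} estimates that remain uniform under $C^{2,\beta}$ convergence of the boundary, to pass to the limit, and to dispose of uniqueness separately. First I would set up the approximation: using the ambient manifold $M'\supset M$ on which $\omega$, $\tilde{\chi}$ and $\eta^{1,0}$ extend smoothly (the footnote above), construct smooth domains $\{M_i\}$ with $\partial M_i\to\partial M$ in the $C^{2,\beta}$ norm --- from inside ($M_i\subset M$, exhausting $M$) when $\frac{\partial}{\partial\nu}\underline{u}|_{\partial M}\le 0$, and from outside ($M\subset M_i\subset M'$) when $\frac{\partial}{\partial\nu}\underline{u}|_{\partial M}\ge 0$. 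By the standing hypothesis on $\partial M$ there is $N$ such that $\partial M_i$ is holomorphically flat for all $i\ge N$. Simultaneously I would fix smooth $\psi_i\to\psi$ in $C^{2}(\bar M)$.

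The core step is to produce, for each $i\ge N$, a smooth admissible subsolution $\underline{u}_i$ on $M_i$ for equation \eqref{mainequ} with the \emph{homogeneous} boundary value $\underline{u}_i|_{\partial M_i}=0$ and with $|\underline{u}_i|_{C^{2}(\bar M_i)}$ bounded independently of $i$. Here the sign hypothesis is exactly what is needed: when $\frac{\partial}{\partial\nu}\underline{u}|_{\partial M}\le 0$ and $\underline{u}|_{\partial M}=0$, then $\underline{u}\le 0$ on an inner collar, so $-\underline{u}|_{\partial M_i}\ge 0$ and is of size $O(\mathrm{dist}(\partial M_i,\partial M))\to 0$; one lifts $\underline{u}$ to $0$ along $\partial M_i$ by adding a small correction constructed from the second order linear elliptic operator attached to the gradient terms, exactly as in Section \ref{constructionofsubsolutions}, and the strict inequality $\delta_{\psi,f}>0$ coming from \eqref{nondegenerate} furnishes the room to keep $f(\lambda(\mathfrak{g}[\underline{u}_i]))\ge\psi_i$ on $M_i$ (and to replace $\underline{u}$ by a nearby smooth subsolution). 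The case $\frac{\partial}{\partial\nu}\underline{u}|_{\partial M}\ge 0$ is symmetric, after first extending $\underline{u}$ and $\psi$ across $\partial M$ into $M'$ and restoring the subsolution inequality on the thin outer shell, again using $\delta_{\psi,f}>0$.

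Granting this, Theorem \ref{thm1-diri-estimate} applies on each $M_i$: since $\partial M_i$ is holomorphically flat and the boundary data is the constant $0$, the $C^2$ estimate \eqref{c2alpha-estimate1} holds with $C$ depending only on $\partial M_i$ up to \emph{second} derivatives, on $|\underline{u}_i|_{C^{2}(\bar M_i)}$, on $\sup_M|\nabla\psi_i|$ and on known data --- and therefore uniformly in $i$, since $\partial M_i\to\partial M$ in $C^{2,\beta}$ controls second derivatives. Together with the $C^0$ bound from the comparison principle ($\underline{u}_i\le u_i\le w_i$, with $w_i$ solving $\mathrm{tr}_\omega\mathfrak{g}[w_i]=0$, $w_i|_{\partial M_i}=0$, uniformly bounded) and the blow-up gradient estimate of Section \ref{mainproof}, this gives $|u_i|_{C^{2}(\bar M_i)}\le C$ uniformly. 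The complex Evans--Krylov theorem then yields a uniform interior $C^{2,\alpha}$ bound, and --- crucially --- the result of Silvestre--Sirakov \cite{Silvestre2014Sirakov} yields a uniform boundary $C^{2,\alpha}$ bound requiring only the (uniformly bounded) $C^{2,\beta}$ norm of $\partial M_i$ and of the constant data. Passing to a subsequence gives $u_i\to u$ in $C^{2,\alpha'}(\bar M)$ for every $\alpha'<\alpha$; $u$ is admissible, $u|_{\partial M}=0$, $F(\mathfrak{g}[u])=\psi$, and $u\in C^{2,\alpha}(\bar M)$; Schauder bootstrapping makes $u$ smooth in the interior when $\psi$ and the data are smooth. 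Uniqueness follows from a standard comparison argument: at an interior maximum of $u_1-u_2$ the gradient terms of $\mathfrak{g}[u_1]-\mathfrak{g}[u_2]$ vanish, hence $\mathfrak{g}[u_1]\le\mathfrak{g}[u_2]$ there, and \eqref{elliptic}, \eqref{concave} (via \eqref{A-B}) together with the strong maximum principle force $u_1\equiv u_2$.

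The main obstacle is the subsolution construction: arranging, \emph{uniformly in $i$}, an admissible subsolution on $M_i$ with exactly the homogeneous boundary value and with controlled $C^2$ norm --- the sign condition on $\frac{\partial}{\partial\nu}\underline{u}|_{\partial M}$ and the strict sign of $\delta_{\psi,f}$ are what make this possible. Everything downstream is either a direct invocation of the \emph{a priori} estimates already proved in Sections \ref{mainproof}--\ref{Dirichlet-problem-less-rugularity} --- whose constants, in this holomorphically flat and constant-data setting, depend on the boundary only to second order and are independent of $(\delta_{\psi,f})^{-1}$ --- or a routine compactness-and-bootstrap argument.
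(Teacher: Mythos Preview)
Your overall architecture --- approximate by smooth domains with holomorphically flat boundary, invoke the constant-boundary-data second-order estimate (Theorem \ref{mix-Leviflat-thm1}), then Silvestre--Sirakov and pass to the limit --- matches the paper. But the step you flag as ``the main obstacle'' is indeed a gap as written, and the paper resolves it by a different and simpler device that you have not hit upon.

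You choose the approximating domains $M_i$ first and then try to modify $\underline{u}$ so that $\underline{u}_i|_{\partial M_i}=0$ while keeping the subsolution inequality and a uniform $C^2$ bound. Your proposed mechanism (``add a small correction constructed from the second order linear elliptic operator \ldots exactly as in Section \ref{constructionofsubsolutions}'') does not apply: that section builds correctors only on products $X\times S$ via a one-dimensional Dirichlet problem on $S$, and there is no analogue on a general $M$. More concretely, although $\underline{u}|_{\partial M_i}$ is $O(\mathrm{dist}(\partial M_i,\partial M))$ in $C^0$, its tangential second derivatives along $\partial M_i$ need not be small, so a $C^2$-small extension killing the boundary value is not available in general; without that, you cannot guarantee $f(\lambda(\mathfrak{g}[\underline{u}_i]))\ge\psi_i$ with uniformly controlled $|\underline{u}_i|_{C^2}$. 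The ``outside'' variant has the same issue plus the additional burden of extending the subsolution inequality across $\partial M$.

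The paper sidesteps this entirely: it first smooths $\underline{u}$ to $\underline{u}^{(k)}\to\underline{u}$ in $C^{2,\beta}$, and then \emph{defines} $M^{(k)}$ as the sublevel set $\{\underline{u}^{(k)}<-\alpha_k\}$ (after reducing to $\partial_\nu\underline{u}|_{\partial M}<0$ via a small perturbation by the boundary-distance function). By construction $\underline{u}^{(k)}|_{\partial M^{(k)}}\equiv-\alpha_k$ is already constant, so $\underline{u}^{(k)}$ itself serves as the subsolution on $M^{(k)}$ with no modification; one only relaxes the right-hand side to $\psi-\beta_k$ with $\beta_k\to 0$. Sard's theorem ensures regular level sets, and $\partial M^{(k)}\to\partial M$ in $C^{2,\beta}$. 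With this choice, the uniform $C^0$ and boundary gradient bounds \eqref{uniform-c0-c1} follow from the comparison \eqref{approxi-boundary1}--\eqref{key-proof-newregularity1}, and the rest of your outline (uniform second-order estimate via Theorems \ref{mix-Leviflat-thm1} and \ref{globalsecond-Diri}, blow-up gradient estimate, Evans--Krylov interior and Silvestre--Sirakov boundary $C^{2,\alpha}$, passage to the limit, uniqueness by comparison) goes through exactly as you describe. In short: tie the approximating domains to the level sets of the (smoothed) subsolution rather than choosing them independently; that is the missing idea.
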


 \begin{theorem}
 \label{thm4-diri-de-c2alpha}
 Let $(M,J,\omega)$ be 
 as mentioned in Theorem \ref{dege-thm-c2alpha}.
 Suppose, in addition to  \eqref{elliptic}, \eqref{concave}, \eqref{addistruc}, that
  $f\in C^\infty(\Gamma)\cap C(\bar \Gamma)$ and
   $\psi\in C^{1,1}(\bar M)$ is a function with $\delta_{\psi,f}=0$.
Suppose that 
 the Dirichlet problem has a $C^{2,\beta}$ strictly admissible subsolution
 with $\frac{\partial}{\partial \nu} \underline{u}|_{\partial M}\leq 0$ or $\frac{\partial}{\partial \nu} \underline{u}|_{\partial M}\geq 0$.
Then the Dirichlet problem with homogeneous boundary data  
 admits  a  (weak)   solution $u\in C^{1,\alpha}(\bar M)$, $\forall 0<\alpha<1$,
 with $\lambda(\mathfrak{g}[u])\in \bar \Gamma$ and $\Delta u \in L^{\infty}(\bar M)$.
 \end{theorem}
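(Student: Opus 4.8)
The plan is to realize the degenerate solution as a limit of non-degenerate ones, running the approximation scheme already used to pass from Theorem~\ref{thm1-diri-estimate} to Theorem~\ref{thm3-diri-estimate-de}, but now over the holomorphically flat submanifold approximations supplied by Theorem~\ref{dege-thm-c2alpha}. Fix a standard mollifier and let $\psi_\epsilon$ be the mollification at scale $\epsilon^2$ of $\psi+\epsilon$, so that $\psi_\epsilon\in C^\infty(\bar M)$, $\psi_\epsilon\to\psi$ uniformly on $\bar M$, $|\psi_\epsilon|_{C^{1,1}(\bar M)}$ stays bounded as $\epsilon\to 0$, and (using $\delta_{\psi,f}=0$, that is $\inf_M\psi=\sup_{\partial\Gamma}f$) $\delta_{\psi_\epsilon,f}\geq\epsilon/2>0$ once $\epsilon$ is small. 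Since the given strictly admissible subsolution obeys $f(\lambda(\mathfrak{g}[\underline u]))\geq\psi+\delta_0$, it remains an admissible subsolution for $\psi_\epsilon$ whenever $\epsilon<\delta_0/2$, and the sign of its normal derivative along $\partial M$ is unaffected. Hence Theorem~\ref{dege-thm-c2alpha}, applied with $\psi_\epsilon$ in place of $\psi$ and with the homogeneous boundary data $u|_{\partial M}=0$, produces an admissible solution $u_\epsilon\in C^{2,\alpha}(\bar M)$ of the non-degenerate Dirichlet problem.

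The crux is then to make all a priori estimates uniform in $\epsilon$, which is possible precisely because none of the estimates entering the proof of Theorem~\ref{dege-thm-c2alpha} involves $(\delta_{\psi_\epsilon,f})^{-1}$. Thus the comparison principle \eqref{daqindiguo1} gives $\underline u\leq u_\epsilon\leq w$ and hence $|u_\epsilon|_{C^0(\bar M)}\leq C$, while $\underline u|_{\partial M}=u_\epsilon|_{\partial M}=0$ forces $\sup_{\partial M}|\nabla u_\epsilon|\leq C$; on the holomorphically flat approximating submanifolds with constant boundary data the quantitative boundary estimate of Theorem~\ref{mix-Leviflat-thm1} gives $\sup_{\partial M}\Delta u_\epsilon\leq C(1+\sup_M|\nabla u_\epsilon|^2)$ with $C$ controlled by $\partial M$ up to second derivatives, $|\underline u|_{C^2(\bar M)}$, $\sup_M|\nabla\psi_\epsilon|$ (uniformly bounded) and other known data; the global second order estimate of Theorem~\ref{globalsecond-Diri} then yields $\sup_M\Delta u_\epsilon\leq C(1+\sup_M|\nabla u_\epsilon|^2)$; and the blow-up argument of Section~\ref{mainproof}, invoking via Lemma~\ref{asymptoticcone1} the Liouville theorem available under \eqref{addistruc}, upgrades this to $\sup_M|\nabla u_\epsilon|\leq C$. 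Consequently $|u_\epsilon|_{C^0(\bar M)}+|\nabla u_\epsilon|_{C^0(\bar M)}+\|\Delta u_\epsilon\|_{L^\infty(\bar M)}\leq C$ uniformly, so $|u_\epsilon|_{W^{2,p}(\bar M)}\leq C_p$ for every $p<\infty$ and $|u_\epsilon|_{C^{1,\alpha}(\bar M)}\leq C_\alpha$ for every $\alpha\in(0,1)$.

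Finally I would pass to the limit: extract a subsequence with $u_\epsilon\to u$ in $C^{1,\alpha}(\bar M)$ for every $\alpha\in(0,1)$ and $\Delta u_\epsilon\rightharpoonup\Delta u$ weakly-$*$ in $L^\infty(\bar M)$, so that $u\in C^{1,\alpha}(\bar M)\cap W^{2,\infty}(\bar M)$ with $u|_{\partial M}=0$. One then checks, as in the degenerate regularity arguments of Chen, Phong--Sturm and B{\l}ocki, that $\lambda(\mathfrak{g}[u])\in\bar\Gamma$ and $f(\lambda(\mathfrak{g}[u]))=\psi$ almost everywhere, which gives the desired weak solution with $\Delta u\in L^\infty(\bar M)$; if the submanifold approximation has not already been absorbed into the previous step, a diagonal argument over the $M_i$ takes care of the merely $C^{2,\beta}$ regularity of $\partial M$. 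The main obstacle I anticipate is the uniform gradient estimate in the degenerate limit: one must verify that the second order estimate, the quantitative boundary estimate, and the Liouville-type theorem behind the blow-up all remain effective with constants that stay bounded as $\delta_{\psi_\epsilon,f}\to 0$, and that this blow-up procedure is genuinely compatible with simultaneously approximating the $C^{2,\beta}$ boundary by holomorphically flat submanifolds carrying the prescribed sign of $\partial_\nu\underline u$.
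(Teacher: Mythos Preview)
Your approach is correct and matches the paper's (implicit) proof: the paper does not spell out a separate argument for Theorem~\ref{thm4-diri-de-c2alpha}, treating it as following from Theorem~\ref{dege-thm-c2alpha} by the standard non-degenerate approximation $\psi\mapsto\psi_\epsilon$, exactly as Theorem~\ref{thm3-diri-estimate-de} follows from Theorem~\ref{thm1-diri-estimate}. The essential observation you isolate---that the quantitative boundary estimate of Theorem~\ref{mix-Leviflat-thm1} (for holomorphically flat boundary with constant boundary data) and the global estimate of Theorem~\ref{globalsecond-Diri} carry constants independent of $(\delta_{\psi_\epsilon,f})^{-1}$---is precisely the point, and your handling of the $C^{2,\beta}$ boundary via the submanifold approximation already packaged inside Theorem~\ref{dege-thm-c2alpha} is the intended one.
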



   \begin{corollary}
\label{dege-thm-c2alpha-special}
Let $M$ be the complex manifold  as mentioned in Theorem \ref{dege-thm-c2alpha}.
Suppose, in addition to \eqref{elliptic}, \eqref{concave} and $\lambda(\tilde{\chi})\in \Gamma$, 
that $f$ is a homogeneous function of degree one with $f|_{\partial \Gamma}=0$. 
Then $f(\lambda(\mathfrak{g}[u]))=0$ with $u|_{\partial M}=0$ supposes a  weak 
solution $u\in C^{1,\alpha}(\bar M)$ $(\forall 0<\alpha<1)$,
 with  $\lambda(\mathfrak{g}[u])\in \bar \Gamma$ and $\Delta u \in L^{\infty}(\bar M)$.
  \end{corollary}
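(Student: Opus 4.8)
The plan is to deduce this from Theorem \ref{thm4-diri-de-c2alpha} by verifying its hypotheses for the degenerate equation $f(\lambda(\mathfrak{g}[u]))=0$ on $(M,J,\omega)$ as in Theorem \ref{dege-thm-c2alpha}, with $\varrho=0$ and $\tilde\chi$ the given real $(1,1)$-form. Since $f$ is homogeneous of degree one and positive on $\Gamma$, it automatically satisfies the asymptotic cone condition \eqref{addistruc}: indeed for $\lambda\in\Gamma$ and any $\sigma<\sup_\Gamma f$ we have $f(t\lambda)=tf(\lambda)\to+\infty>\sigma$. Also $\psi\equiv 0$ is trivially in $C^{1,1}(\bar M)$, and $\delta_{\psi,f}=\inf_M\psi-\sup_{\partial\Gamma}f=0-0=0$ because $f|_{\partial\Gamma}=0$. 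So the structural hypotheses \eqref{elliptic}, \eqref{concave}, \eqref{addistruc}, $f\in C^\infty(\Gamma)\cap C(\bar\Gamma)$ and $\psi\in C^{1,1}(\bar M)$ with $\delta_{\psi,f}=0$ are all in force, and it remains only to produce the strictly admissible subsolution with the required normal derivative sign on $\partial M$.

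The subsolution is $\underline{u}\equiv 0$. Since $u|_{\partial M}=0$ is the prescribed boundary data, $\underline u|_{\partial M}=\varphi$ holds, and $\frac{\partial}{\partial\nu}\underline u|_{\partial M}=0$, which satisfies the alternative $\frac{\partial}{\partial\nu}\underline u|_{\partial M}\geq 0$ (and $\leq 0$) required by Theorem \ref{thm4-diri-de-c2alpha}. It is $C^\infty$, hence certainly $C^{2,\beta}$. The one point to check is strict admissibility: by hypothesis $\lambda(\tilde\chi)\in\Gamma$, and $\mathfrak{g}[0]=\tilde\chi$ (as $\varrho=0$ and the $\sqrt{-1}\partial\bar\partial$ and gradient terms vanish for $u=0$), so $\lambda(\mathfrak{g}[\underline u])=\lambda(\tilde\chi)\in\Gamma$; since $\Gamma$ is open this is an interior point, and because $f$ is positive and continuous on $\Gamma$ we get $f(\lambda(\mathfrak{g}[\underline u]))\geq\delta_0$ in $\bar M$ for some $\delta_0>0$ by compactness of $\bar M$ and continuity of $z\mapsto\lambda(\tilde\chi(z))$. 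Thus $\underline u\equiv 0$ is a $C^{2,\beta}$ strictly admissible subsolution of the degenerate equation with $\frac{\partial}{\partial\nu}\underline u|_{\partial M}=0$, and Theorem \ref{thm4-diri-de-c2alpha} applies verbatim to give a weak solution $u\in C^{1,\alpha}(\bar M)$ for all $0<\alpha<1$ with $\lambda(\mathfrak{g}[u])\in\bar\Gamma$ and $\Delta u\in L^\infty(\bar M)$.

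The only genuine subtlety — and hence the step I would treat most carefully — is that $\psi\equiv 0$ lies exactly on the degenerate boundary value $\sup_{\partial\Gamma}f$, so one must be sure that the quoted theorem really covers the borderline case $\delta_{\psi,f}=0$ rather than merely $\delta_{\psi,f}>0$; but this is precisely the content of Theorem \ref{thm4-diri-de-c2alpha}, whose hypothesis is $\delta_{\psi,f}=0$, so no extra work is needed. Everything else is a direct substitution of $\underline u=0$, $\psi=0$ into the machinery already established, and the homogeneity-degree-one assumption on $f$ is used only to secure \eqref{addistruc} and to make $f|_{\partial\Gamma}=0$ compatible with $\delta_{\psi,f}=0$.
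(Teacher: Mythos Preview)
Your proof is correct and follows exactly the route the paper intends: the corollary is stated without proof precisely because it is a direct application of Theorem~\ref{thm4-diri-de-c2alpha} with $\underline u\equiv 0$ and $\psi\equiv 0$, and you have verified all the required hypotheses carefully. One cosmetic remark: the reference to ``$\varrho=0$'' is out of place here, since Theorems~\ref{dege-thm-c2alpha} and~\ref{thm4-diri-de-c2alpha} concern equation~\eqref{mainequ} rather than~\eqref{mainequ-gauduchon-general*}; the relevant point is simply that $\mathfrak g[0]=\tilde\chi$, which you use correctly.
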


   Together with Theorem \ref{solvability1-equiv} below, 
 Theorems \ref{dege-thm-c2alpha}, \ref{thm4-diri-de-c2alpha} and  the construction of subsolutions in Section \ref{constructionofsubsolutions} immediately lead to 
  
   \begin{theorem}
  \label{mainthm-09}
   Let  $(M, J,\omega)=(X\times S,J, \omega)$
   be  as mentioned above. Let  $\partial S\in C^{2,\beta}$,   $\varphi\in C^{2,\beta}(\partial S)$, 
   $\eta^{1,0}=\pi_2^*\eta^{1,0}_S$, and  $f$ satisfy \eqref{elliptic}, \eqref{concave} and \eqref{addistruc}.
   Suppose in addition that \eqref{cone-condition1} holds for some $w\in C^{2,\beta}(\bar M)\cap\mathfrak{F}(\varphi)$. 
   Then we have the  two conclusions:
   \begin{itemize}
   \item   Equation \eqref{mainequ} has a 
   unique $C^{2,\alpha}$ admissible solution with $u|_{\partial M}=\pi_2^*\varphi=\varphi\circ\pi_2$
   (still denoted by $u|_{\partial M}=\varphi$ for convenience) for some 
   $0<\alpha\leq\beta$, provided 
 $\psi\in C^2(\bar M)$ and $\inf_{M} \psi> \sup_{\partial \Gamma}f.$
   \item Suppose furthermore
     $f\in C^\infty(\Gamma)\cap C(\bar\Gamma)$, $\psi\in C^{1,1}(\bar M)$ and  
    $\inf_{M} \psi=sup_{\partial \Gamma}f$. Then the Dirichlet problem 
    has a weak solution 
   $u\in C^{1,\alpha}(\bar M)$, $\forall 0<\alpha<1$,
 with $u|_{\partial M}=\varphi$, $\lambda(\mathfrak{g}[u])\in \bar \Gamma$ and $\Delta u \in L^{\infty}(\bar M)$.
   \end{itemize}
   \end{theorem}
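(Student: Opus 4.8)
The plan is to deduce both statements from two things already at our disposal: the construction of subsolutions on products carried out in Section~\ref{constructionofsubsolutions}, and the solvability results proved (or about to be proved) above — Theorem~\ref{solvability1-equiv} (whose proof in turn rests on Theorem~\ref{dege-thm-c2alpha}) for the non-degenerate problem, and Theorem~\ref{thm4-diri-de-c2alpha} for the degenerate one.

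First I would produce the subsolution. Since $\eta^{1,0}=\pi_2^*\eta_S^{1,0}$ and $\partial S\in C^{2,\beta}$, let $h\in C^{2,\beta}(\bar S)$ be the unique solution of the linear Dirichlet problem \eqref{possion-def} on $S$; the maximum principle and the Hopf lemma give $h<0$ in $S$ and $\frac{\partial}{\partial \nu}h|_{\partial S}<0$. Taking $w\in C^{2,\beta}(\bar M)\cap\mathfrak{F}(\varphi)$ from the hypothesis \eqref{cone-condition1} and setting $\underline{u}=w+N\pi_2^*h$, the fact that $h$ and $\eta^{1,0}$ are pulled back from $S$ forces $\mathfrak{g}[\underline{u}]=\mathfrak{g}[w]+N\pi_2^*\omega_S$, hence $\underline{u}|_{\partial M}=\varphi$; and for $N$ large enough $\lambda(\mathfrak{g}[\underline{u}])\in\Gamma$, $\frac{\partial}{\partial \nu}\underline{u}|_{\partial M}<0$, and — using \eqref{cone-condition1}, the monotonicity of $f$ in the $\pi_2^*\omega_S$ direction (from \eqref{elliptic}) and compactness of $\bar M$ — there is a uniform $\delta_0>0$ with $f(\lambda(\mathfrak{g}[\underline{u}]))\ge\psi+\delta_0$ on $\bar M$. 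So $\underline{u}$ is a $C^{2,\beta}$ strictly admissible subsolution with the prescribed boundary values and $\frac{\partial}{\partial \nu}\underline{u}|_{\partial M}\le0$.

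With this subsolution in hand, the first (non-degenerate) statement is exactly the situation covered by Theorem~\ref{solvability1-equiv}: \eqref{cone-condition1} is its hypothesis, and it delivers a unique admissible solution $u\in C^{2,\alpha}(\bar M)$, for some $0<\alpha\le\beta$, with $u|_{\partial M}=\varphi$ and with \eqref{c2alpha-estimate1}. Unwinding that theorem, the argument is the product approximation $M_i=X\times S_i$ by smooth exhausting subdomains $S_i$ of $S$ — each boundary $X\times\partial S_i$ being \textit{holomorphically flat}, since a smooth real curve in a Riemann surface is locally $\mathfrak{Re}(z_n)=0$ in suitable holomorphic coordinates — combined (after the standard reduction to homogeneous boundary data) with the quantitative boundary estimate of Theorem~\ref{mix-Leviflat-thm1-product} and Proposition~\ref{mix-Leviflat-product}, the global estimate of Theorem~\ref{globalsecond-Diri}, the blow-up gradient estimate of Section~\ref{mainproof}, and Evans--Krylov in the interior together with Silvestre--Sirakov up to the $C^{2,\beta}$ boundary; passing to the limit $i\to\infty$ yields $u$, and uniqueness comes from the comparison principle. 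For the degenerate statement ($\delta_{\psi,f}=0$, $f\in C^\infty(\Gamma)\cap C(\bar\Gamma)$), I would take a smooth $\psi_\varepsilon\ge\psi$ with $\inf_M\psi_\varepsilon>\sup_{\partial\Gamma}f$, $\psi_\varepsilon\to\psi$ in $C^{1,1}$ and with the lower bound on $\partial\overline{\partial}\psi_\varepsilon$ uniform in $\varepsilon$; for $\varepsilon\le\delta_0$ the subsolution above still serves, so the non-degenerate case — equivalently Theorem~\ref{thm4-diri-de-c2alpha} — produces solutions $u_\varepsilon$, and because every estimate invoked is independent of $(\delta_{\psi,f})^{-1}$, the family $\{u_\varepsilon\}$ stays bounded in $C^{1,\alpha}(\bar M)$ with $\Delta u_\varepsilon$ uniformly in $L^\infty(\bar M)$. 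A subsequence converges in $C^{1,\alpha}(\bar M)$ to a weak solution $u$ with $u|_{\partial M}=\varphi$, $\lambda(\mathfrak{g}[u])\in\bar\Gamma$ and $\Delta u\in L^\infty(\bar M)$, where $f\in C(\bar\Gamma)$ is used to pass to the limit in the equation.

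The hard part is keeping the second-order boundary estimate quantitative with a constant governed by $|\varphi|_{C^2(\bar S)}$ rather than $|\varphi|_{C^3}$; this is precisely what lets the regularity of $(\partial S,\varphi)$ be lowered all the way to $C^{2,\beta}$, and it is where the product structure genuinely enters, since on $X\times S$ the tangential derivatives in the $X$-directions satisfy $\widetilde{u}_\tau=u_\tau$ and the barrier of Proposition~\ref{mix-Leviflat-product} may be taken to be $Du$ itself. Everything else — the subsolution construction, the global and interior second-order estimates, the blow-up gradient bound, and the Schauder-type bootstrap — then runs uniformly along $M_i=X\times S_i$ and, since none of these estimates involves $(\delta_{\psi,f})^{-1}$, survives both the domain limit $i\to\infty$ and the degenerate limit $\varepsilon\to0$.
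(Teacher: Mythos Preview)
Your overall strategy is correct and matches the paper's: construct the subsolution $\underline{u}=w+N\pi_2^*h$ exactly as in Section~\ref{constructionofsubsolutions}, then run the product approximation $M^{(k)}=X\times S^{(k)}$ with the quantitative boundary estimate of Theorem~\ref{mix-Leviflat-thm1-product}, the global estimate of Theorem~\ref{globalsecond-Diri}, blow-up for the gradient, and Silvestre--Sirakov at the $C^{2,\beta}$ boundary; the degenerate case follows by approximation since no constant involves $(\delta_{\psi,f})^{-1}$.

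There is, however, a mislabeling you should correct. Theorem~\ref{solvability1-equiv} is not a solvability result, it does not take \eqref{cone-condition1} as a hypothesis, and its proof does not rest on Theorem~\ref{dege-thm-c2alpha}: it is the one-line observation that $\mathfrak{g}[\tilde u+v]=\mathfrak{g}[\tilde u]$ when $v$ is pulled back from $S$ and solves \eqref{harmonic-varphi}. In the paper's first proof it is used in the opposite direction from what you wrote: it \emph{reduces} the boundary data $\varphi\in C^{2,\beta}(\partial S)$ to the homogeneous case $\varphi\equiv 0$, after which the subsolution becomes simply $\underline{u}=Nh$ and Theorem~\ref{dege-thm-c2alpha} (respectively Theorem~\ref{thm4-diri-de-c2alpha} for the degenerate case) applies directly. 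What you call ``unwinding Theorem~\ref{solvability1-equiv}'' is in fact the content of the proof of Theorem~\ref{dege-thm-c2alpha} specialized to the product (this is the paper's second proof). Either route is fine, but the dependency is $\text{\ref{solvability1-equiv}}+\text{\ref{dege-thm-c2alpha}}\Rightarrow\text{\ref{mainthm-09}}$, not the reverse.
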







\subsubsection{Proof of Theorem  \ref{dege-thm-c2alpha}}
In the proof we only need to consider the level sets of $\underline{u}$ or of approximating functions
  near the boundary.
Without loss of generality we assume $\underline{u}$ is not a constant (otherwise,  it is trivial if $\underline{u}=\mathrm{constant}$).  In what follows, the level sets what we use in the proof are all near the boundary $M_\delta$ for some
$0<\delta\ll1$, here $M_\delta$ is defined in \eqref{distances-domain}.

\vspace{1mm} 
Suppose $\frac{\partial \underline{u}}{\partial \nu}|_{\partial M}\neq 0$. 
Without loss of generality,  we assume $\frac{\partial \underline{u}}{\partial \nu}|_{\partial M}< 0$. Then
$\underline{u}<0$,
$\nabla\underline{u}\neq 0$ in 
 $M_{\delta_{0}}$ for some $\delta_0>0$. 
We first choose a sequence of smooth approximating functions  $\{\underline{u}^{(k)}\}$ 
such that $\underline{u}^{(k)}\rightarrow \underline{u}$ in $C^{2,\beta}(\bar M)$ as $k$ tends to infinity.
 Then
$\frac{\partial \underline{u}^{(k)}}{\partial \nu}|_{\partial M}\neq 0$ for $k\gg1$.
Next, for any $k\gg1$, 
 by the diagonal method and Sard's theorem if necessary,
we  carefully choose $\{\alpha_k\}$ satisfying $\alpha_{k}\rightarrow 0$ as $k\rightarrow +\infty$ and a sequence of level sets
 of $\underline{u}^{(k)}$, say $\{\underline{u}^{(k)}=-\alpha_k\}$,
 and use them to enclose  smooth
complex submanifolds with the same complex dimension, say $M^{(k)}$, such that 
 $\cup M^{(k)}=M$ and $\partial M^{(k)}$ converge to $\partial M$ in the norm of $C^{2,\beta}$. 
Moreover, we can choose
 $\{\beta_k\}$  $(\beta_k>0)$ with $ \beta_k\rightarrow 0$ as $k\rightarrow+\infty$, such that
 \begin{equation}
 \label{subsolution-newregularity}
\begin{aligned}
F(\mathfrak{g}[\underline{u}^{(k)}])\geq \psi-\beta_k \mbox{ in } M.
\end{aligned}
\end{equation}
 Furthermore,  $\underline{u}^{(k)}$ are \textit{admissible} functions for sufficiently large $k$, as $\Gamma$ is open.

\vspace{1mm} 
According to Theorem \ref{thm1-diri-estimate} we have a unique smooth \textit{admissible} function $u^{(k)}\in C^{\infty}(\overline{M^{(k)}})$ to solve
\begin{equation}
\label{solution-newregularity}
\begin{aligned}
F(\mathfrak{g}[u^{(k)}])= \psi-\beta_k \mbox{ in } M^{(k)}, \mbox{  } u^{(k)}=-\alpha_k \mbox{ on } \partial M^{(k)}.
\end{aligned}
\end{equation}
 Moreover, Theorems \ref{mix-Leviflat-thm1}
 and  \ref{globalsecond-Diri} imply
  \begin{equation}
 \label{uniform-00}
\begin{aligned}
\sup_{\overline{M^{(k)}}}\Delta u^{(k)}\leq C_k (1+\sup_{\partial M^{(k)}}|\nabla u^{(k)}|^2)(1+\sup_{M^{(k)}}|\nabla u^{(k)}|^2)
\end{aligned}
\end{equation}
holds for $C_k$ depending on $|\psi|_{C^{2}(M^{(k)})}$, $|\underline{u}^{(k)}|_{C^{2}(M^{(k)})}$, $|u^{(k)}|_{C^0(M^{(k)})}$, 
$\partial M^{(k)}$ up to second order derivatives  and other known data.

\vspace{1mm} 
 If we could prove there is a uniform constant $C$ depending not on $k$, such that
  \begin{equation}
 \label{uniform-c0-c1}
\begin{aligned}
|u^{(k)}|_{C^0(M^{(k)})}+\sup_{\partial M^{(k)}}|\nabla u^{(k)}|\leq C,
\end{aligned}
\end{equation}
together with the construction of approximating Dirichlet problems, 
 \eqref{uniform-00} then holds for a uniformly constant $C'$ which is independent of $k$. 
Thus we have $|u|_{C^2(M^{(k)})}\leq C$ depending not on $k$ (here we  use blow up argument to derive gradient estimate).  
Thus the equations are all uniformly elliptic, and as in  \cite{Guan2010Li} we can directly prove the bound of real Hessians of solutions.

\vspace{1mm} 
Finally, we are able to apply Silvestre-Sirakov's \cite{Silvestre2014Sirakov} result to 
derive $C^{2,\alpha'}$ estimates on the boundary, while
the convergence of  $\partial M^{(k)}$ in the norm $C^{2,\beta}$ 
 allows  us to take a limit ($\alpha'$ can be uniformly chosen).
 
\vspace{1mm} Next, we  prove \eqref{uniform-c0-c1}.  Let $w^{(k)}$ be the solution of
 \begin{equation}
 \label{supersolution-k}
\begin{aligned}
\Delta w^{(k)} + g^{i\bar j}w_i^{(k)} \eta_{\bar j} +g^{i\bar j}\eta_i w^{(k)}_{\bar j} +g^{i\bar j}\tilde{\chi}_{i\bar j} =0 \mbox{ in } M^{(k)}, \mbox{  } w^{(k)}=-\alpha_k \mbox{ on } \partial M^{(k)}.
\end{aligned}
\end{equation}
By  maximum principle and the boundary value condition $\underline{u}^{(k)}= u^{(k)}=w^{(k)} =-\alpha_k \mbox{ on } \partial M^{(k)}$ we have 
\begin{equation}
\label{approxi-boundary1}
\begin{aligned}
\underline{u}^{(k)}\leq u^{(k)}\leq w^{(k)} \mbox{ in } M^{(k)}, \mbox{ and } 
 \frac{\partial \underline{u}^{(k)}}{\partial \nu} \leq  \frac{\partial u^{(k)}}{\partial \nu} \leq \frac{\partial w^{(k)}}{\partial \nu}   \mbox{ on } \partial M^{(k)}.
\end{aligned}
\end{equation}
By the regularity theory of elliptic equations we have
\begin{equation}
\label{key-proof-newregularity1}
\begin{aligned}
\sup_{M^{(k)}} w^{(k)} + \sup_{\partial M^{(k)}} \frac{\partial w^{(k)}}{\partial \nu}  \leq C.
\end{aligned}
\end{equation}
We thus complete the proof of \eqref{uniform-c0-c1}, and then
obtain $C^{2,\alpha}$-\textit{admissible} solution of 
 \begin{equation}
 \label{approx-equ-homogeneous1}
\begin{aligned}
F(\mathfrak{g}[u])= \psi \mbox{ in } M, 
\mbox{  } u=0 \mbox{ on }  \partial M.
\end{aligned}
\end{equation}

If $\frac{\partial}{\partial \nu}\underline{u}|_{\partial M}\geq 0$ or 
$\frac{\partial}{\partial \nu}\underline{u}|_{\partial M}\leq 0$,  
we can apply  
 a $C^{2,\beta}$ function extended by the distance function to boundary to 
 make perturbation for $\underline{u}$ to construct 
 a sequence of approximating problems on $M$ with homogeneous boundary data.
We then  continue the perturbation process as previous case.

 \begin{remark}
The proof of Theorem   \ref{dege-thm-c2alpha}  is based on approximation method. The approximating Dirichlet problems are constructed as follows:  one first approximates $\underline{u}$ by smooth functions in the norm of $C^{2,\beta}$, and then uses the level sets of smooth approximating functions to enclose submanifolds/domains. 
\end{remark}
 
\subsubsection{The significant phenomena on \text{$M=X\times S$}}
Firstly, we have the observation:
\begin{theorem}
\label{solvability1-equiv}
Let $(M,J,\omega)=(X\times S,J,\omega)$, $\eta^{1,0}=\pi_2^*\eta^{1,0}_S$ be as in Theorem  \ref{mainthm-09},
 we assume $\tilde{u}$ 
 is the solution to $F(\mathfrak{g}[\tilde{u}])=\psi$ with boundary data $\tilde{u}|_{\partial M}=\tilde{\varphi}.$
Suppose $\hat{\varphi}=\tilde{\varphi}+\varphi$ for some $\varphi\in C^{2,\beta}(\partial S)$. Then  
 $\hat{u}=\tilde{u}+v$ coincides with the  solution of 
$$F(\mathfrak{g}[\hat{u}])=\psi \mbox{ in } M, \mbox{  } \hat{u}=\hat{\varphi} \mbox{ on } \partial M,$$
where $v$ solves
 \begin{equation}
   \label{harmonic-varphi}
    \begin{aligned}
 \sqrt{-1}\partial_S\overline{\partial}_S v+\sqrt{-1}\partial_S v\wedge \overline{\eta_{S}^{1,0}}  +\sqrt{-1} \eta_{S}^{1,0}\wedge \overline{\partial}_S v=0 
 \mbox{ in } S, \mbox{  } v=\varphi \mbox{ on } \partial S.
   \end{aligned} 
   \end{equation}
\end{theorem}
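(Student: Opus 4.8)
The plan is to exploit two structural facts. First, the map $u\mapsto\mathfrak{g}[u]$ is \emph{affine} in $u$: directly from the definition in \eqref{mainequ}, for any functions $u_1,u_2$ on $\bar M$,
\[
\mathfrak{g}[u_1+u_2]-\mathfrak{g}[u_1]=\sqrt{-1}\partial\overline{\partial}u_2+\sqrt{-1}\big(\partial u_2\wedge\overline{\eta^{1,0}}+\eta^{1,0}\wedge\overline{\partial}u_2\big).
\]
Second, on the product $M=X\times S$ with $\eta^{1,0}=\pi_2^*\eta_S^{1,0}$, and since $\pi_2\colon X\times S\to S$ is holomorphic (so $\pi_2^*$ commutes with $\partial,\overline{\partial}$, with complex conjugation, and with wedge products), taking $u_1=\tilde u$ and $u_2=\pi_2^*v$ for a function $v$ on $S$ gives
\[
\mathfrak{g}[\tilde u+\pi_2^*v]-\mathfrak{g}[\tilde u]=\pi_2^*\Big(\sqrt{-1}\partial_S\overline{\partial}_Sv+\sqrt{-1}\big(\partial_Sv\wedge\overline{\eta_S^{1,0}}+\eta_S^{1,0}\wedge\overline{\partial}_Sv\big)\Big).
\]
The form inside the parentheses is a $(1,1)$-form on the Riemann surface $S$, and \eqref{harmonic-varphi} says precisely that it vanishes. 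Hence, once $v$ solves \eqref{harmonic-varphi}, one has $\mathfrak{g}[\hat u]=\mathfrak{g}[\tilde u]$ on $M$, and the rest is immediate.

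Carrying this out: first I would solve \eqref{harmonic-varphi}. Since $\dim_{\mathbb C}S=1$, in a local holomorphic coordinate $z_n$ this is the scalar equation $v_{z_n\bar z_n}+\overline{\eta}_{S;z_n}v_{z_n}+\eta_{S;z_n}v_{\bar z_n}=0$, a second-order uniformly elliptic linear equation with no zeroth-order term on the compact Riemann surface with $C^{2,\beta}$ boundary $S$; by the maximum principle and the standard Schauder theory, the Dirichlet problem with data $\varphi\in C^{2,\beta}(\partial S)$ has a unique solution $v\in C^{2,\beta}(\bar S)$, which I pull back to $M$ (still denoted $v$). Second, set $\hat u=\tilde u+v$. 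By the identity above, $\mathfrak{g}[\hat u]=\mathfrak{g}[\tilde u]$ as $(1,1)$-forms, so in particular $\lambda(\mathfrak{g}[\hat u])=\lambda(\mathfrak{g}[\tilde u])\in\Gamma$ (admissibility is preserved regardless of whether $\omega$ is the product metric), $F(\mathfrak{g}[\hat u])=F(\mathfrak{g}[\tilde u])=\psi$ in $M$, and on the boundary $\hat u=\tilde\varphi+\varphi=\hat\varphi$. Third, I invoke uniqueness: ellipticity \eqref{elliptic} together with concavity \eqref{concave} yield the comparison principle for admissible solutions of \eqref{mainequ}, whence $\hat u$ must coincide with the solution of $F(\mathfrak{g}[\hat u])=\psi$, $\hat u|_{\partial M}=\hat\varphi$.

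I do not expect a genuine obstacle here; the content of the statement is the structural observation that the inhomogeneous correction $\mathfrak{g}[\tilde u+\pi_2^*v]-\mathfrak{g}[\tilde u]$ is the $\pi_2$-pullback of a $(1,1)$-form on $S$, which can be annihilated by a single scalar linear elliptic solve on the Riemann surface factor. The only points needing (routine) care are the solvability and $C^{2,\beta}$-regularity of the linear problem \eqref{harmonic-varphi}, and checking that $\pi_2^*$ intertwines $(\partial_S,\overline{\partial}_S)$ with $(\partial,\overline{\partial})$ because $\pi_2$ is holomorphic. This reduction is exactly what will allow arbitrary $C^{2,\beta}(\partial S)$ boundary data to be replaced by data compatible with the explicit subsolutions constructed in Section \ref{constructionofsubsolutions}.
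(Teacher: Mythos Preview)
Your proposal is correct and takes essentially the same approach as the paper: the paper's entire proof is the one-line identity $\mathfrak{g}[\hat u]=\mathfrak{g}[\tilde u]+\pi_2^*\big(\sqrt{-1}\partial_S\overline{\partial}_S v+\sqrt{-1}\partial_S v\wedge \overline{\eta_S^{1,0}}+\sqrt{-1}\eta_S^{1,0}\wedge\overline{\partial}_S v\big)=\mathfrak{g}[\tilde u]$, which is exactly your core computation. Your additional remarks on solvability of \eqref{harmonic-varphi}, preservation of admissibility, and uniqueness via the comparison principle are correct elaborations that the paper leaves implicit.
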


\begin{proof}
$\mathfrak{g}[u]=\mathfrak{g}[\tilde{u}]+\pi_2^*(\sqrt{-1}\partial_S\overline{\partial}_S v+\sqrt{-1}\partial_S v\wedge \overline{\eta_{S}^{1,0}}  +\sqrt{-1} \eta_{S}^{1,0}\wedge \overline{\partial}_S v)=\mathfrak{g}[\tilde{u}]. $
\end{proof}
\begin{corollary}
Let $\tilde{u}$, $\hat{u}$ are  respectively admissible solutions of 
\begin{equation}
 \begin{aligned}
  F(\mathfrak{g}[\tilde{u}])=\psi \mbox{ in } M, \mbox{  } \tilde{u}=\tilde{\varphi} \mbox{ on } \partial M, \mbox{  } \nonumber
   F(\mathfrak{g}[\hat{u}])=\psi \mbox{ in } M, \mbox{  } \hat{u}=\hat{\varphi} \mbox{ on } \partial M, 
 \end{aligned} 
 \end{equation}
 and $\hat{\varphi}-\tilde{\varphi}=\varphi$ for some $\varphi\in C^{2,\beta}(\partial S)$. Then $\hat{u}-\tilde{u}=v$, where $v$ solves \eqref{harmonic-varphi}.
\end{corollary}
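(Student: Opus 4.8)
The plan is to read off the Corollary from Theorem~\ref{solvability1-equiv} together with the uniqueness of admissible solutions. First I would record that \eqref{harmonic-varphi} is a linear, uniformly elliptic second order Dirichlet problem on the compact Riemann surface with boundary $(S,J_S,\omega_S)$, with data $\varphi\in C^{2,\beta}(\partial S)$; by classical Schauder theory it has a unique solution $v\in C^{2,\beta}(\bar S)$. I will not distinguish $v$ from its pullback $\pi_2^*v=v\circ\pi_2$ to $M=X\times S$, which in the product coordinates of Section~\ref{constructionofsubsolutions} is locally a function of $z_n$ alone.

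Next I would check that $\tilde u+v$ is an admissible solution of the second Dirichlet problem. On the boundary, $(\tilde u+v)|_{\partial M}=\tilde\varphi+\varphi=\hat\varphi$. In the interior, since $\eta^{1,0}=\pi_2^*\eta^{1,0}_S$ and $v$ is constant along the $X$-factor, each of the $(1,1)$-forms $\sqrt{-1}\partial\overline{\partial}v$, $\sqrt{-1}\partial v\wedge\overline{\eta^{1,0}}$, $\sqrt{-1}\eta^{1,0}\wedge\overline{\partial}v$ equals the $\pi_2$-pullback of the corresponding form on $S$, so that, exactly as in the proof of Theorem~\ref{solvability1-equiv},
\[
\mathfrak{g}[\tilde u+v]=\mathfrak{g}[\tilde u]+\pi_2^*\bigl(\sqrt{-1}\partial_S\overline{\partial}_S v+\sqrt{-1}\partial_S v\wedge\overline{\eta^{1,0}_S}+\sqrt{-1}\eta^{1,0}_S\wedge\overline{\partial}_S v\bigr)=\mathfrak{g}[\tilde u],
\]
the last step being precisely equation \eqref{harmonic-varphi}. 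Consequently $\lambda(\mathfrak{g}[\tilde u+v])=\lambda(\mathfrak{g}[\tilde u])\in\Gamma$, so $\tilde u+v$ is admissible, and $F(\mathfrak{g}[\tilde u+v])=F(\mathfrak{g}[\tilde u])=\psi$.

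Finally I would invoke the uniqueness of admissible solutions of the Dirichlet problem $F(\mathfrak{g}[\cdot])=\psi$ in $M$ with prescribed boundary value $\hat\varphi$ — which follows from the comparison principle, itself a consequence of \eqref{elliptic} and \eqref{concave}, cf.\ Theorem~\ref{thm1-diri-estimate}. Both $\hat u$ and $\tilde u+v$ are admissible solutions of that problem, hence they coincide, i.e.\ $\hat u-\tilde u=v$. The argument is entirely soft; the only point requiring care is the bookkeeping that $\eta^{1,0}=\pi_2^*\eta^{1,0}_S$ forces all the first order correction terms in $\mathfrak{g}[\cdot]$, when applied to a function pulled back from $S$, to themselves be pullbacks from $S$, so that no $X$-derivatives or mixed curvature/torsion terms of the ambient metric $\omega$ intrude — which is transparent in the product coordinates used above.
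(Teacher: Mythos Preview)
Your proof is correct and follows essentially the same approach as the paper: the key computation $\mathfrak{g}[\tilde u+v]=\mathfrak{g}[\tilde u]$ is exactly the one-line proof of Theorem~\ref{solvability1-equiv}, and the Corollary then follows from the uniqueness of admissible solutions via the comparison principle, just as you argue. Your write-up is somewhat more explicit about the Schauder existence for $v$ and the uniqueness step than the paper (which states the Corollary without proof), but the substance is identical.
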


\vspace{1mm}
\begin{proof}
[Proof of Theorem \ref{mainthm-09}]
Here we give two proofs. 

\vspace{1mm}
\noindent{\bf First proof}: 
By Theorem \ref{solvability1-equiv} we only need to consider the special case when the boundry data $\varphi=0$.
  As above Dirichlet problem \eqref{approx-equ-homogeneous1}
  admits \textit{admissible} subsolutions $\underline{u}=Nh$ of  for large $N$, where $h$ is the solution to \eqref{possion-def}.
  Theorems \ref{dege-thm-c2alpha} and \ref{solvability1-equiv}, together with  the construction of subsolution, immediately yield
Theorem \ref{mainthm-09}. 

\vspace{1mm} 
Besides,  
we construct the approximating Dirichlet problems precisely as in the following, and 
 also give another constructive proof 
 by only using $h$ and maximum principle but without using the regularity theory of elliptic equations.

   \vspace{1mm} 
Note that $h$ is a function on $S$ with $\frac{\partial h}{\partial\nu}|_{\partial S}<0$. 
Similarly, there is a sequence of smooth approximating functions $\{h^{(k)}\}$ on $S$ such that 
 $h^{(k)}\rightarrow h$ in $C^{2,\beta}(\bar S)$,  
 and 
 \begin{equation}
 \begin{aligned}
  \frac{1}{2}\omega_S\leq \sqrt{-1}\partial_S\overline{\partial}_S h^{(k)}+\sqrt{-1}\partial_S h^{(k)}\wedge \overline{\eta_{S}^{1,0}} 
   +\sqrt{-1} \eta_{S}^{1,0}\wedge \overline{\partial}_S h^{(k)}
   \mbox{ in } S,  \nonumber
   \end{aligned}
   \end{equation}
 (here we use \eqref{possion-def}),
 moreover, we also get a sequence of level sets of $h^{(k)}$, say $\{h^{(k)}=-\frac{\alpha_k}{N}\}$
 and use it to enclose a smooth
complex submanifold of complex dimension one, say $S^{(k)}$, such that 
 $\cup S^{(k)}=S$ and $\partial S^{(k)}$ converge to $\partial S$ in the norm of $C^{2,\beta}$. 
  
 \vspace{1mm} 
  Let us denote $M^{(k)}=X\times S^{(k)}$, and
 $\underline{u}^{(k)}= Nh^{(k)}$. 
  Then $\underline{u}^{(k)}$ satisfies \eqref{subsolution-newregularity}, $\underline{u}^{(k)}|_{\partial M^{(k)}}=  -\alpha_k$ and
 \begin{equation}
 \label{supersolution-k-product}
\begin{aligned}
 \frac{N}{2}\mathrm{tr}_\omega (\pi_2^*\omega_S)\leq \Delta \underline{u}^{(k)} + g^{i\bar j}(\underline{u}^{(k)})_i \eta_{\bar j} +g^{i\bar j}\eta_i (\underline{u}^{(k)})_{\bar j}  
 \mbox{ in } M^{(k)}.  
\end{aligned}
\end{equation}
In other words, Dirichlet problem \eqref{solution-newregularity} has an \textit{admissible} subsolution $\underline{u}^{(k)}$.
To derive \eqref{supersolution-k-product} we use the fact that $J$ is the induced complex structure.

\vspace{1mm} 
Let $u^{(k)}$ be the solution to  \eqref{solution-newregularity}.
We only need to prove \eqref{key-proof-newregularity1}.  
Let $N\geq  \kappa_1^{-1}\sup_M \mathrm{tr}_\omega \tilde{\chi}$, 
where $\kappa_1=\inf_M  \frac{1}{2}\mathrm{tr}_\omega (\pi_2^*\omega_S)$.  
Applying comparison principle to \eqref{supersolution-k} and \eqref{supersolution-k-product}, 
$$w^{(k)}\leq -\underline{u}^{(k)} -2\alpha_k  \mbox{ in } M^{(k)}, \mbox{ and }  \frac{\partial w^{(k)}}{\partial \nu} \leq -\frac{\partial \underline{u}^{(k)}}{\partial \nu} \mbox{ on } \partial M^{(k)},$$
which is exactly \eqref{key-proof-newregularity1}. Here we don't use the regularity theory of elliptic equations.
Therefore, we get the $C^{2,\alpha}$ \textit{admissible} solution $u$ to \eqref{approx-equ-homogeneous1}.




\vspace{1mm}
\noindent{\bf Second proof}: The second proof is based on Theorem \ref{mix-Leviflat-thm1-product}, and so the boundary data of approximating Dirichlet problems need not be constant. It is straightforward by using standard approximation.

\vspace{1mm} 
Let $v$ be the solution to \eqref{harmonic-varphi}.
The subsolution is given by  $\underline{u}=v+Nh$ for $N\gg1$.
The conditions of $\eta^{1,0}=\pi_2^*\eta^{1,0}_S$  and $\lambda(\tilde{\chi}+c\pi_2^*\omega_S)\in \Gamma$ for some $c>0$ yield $\underline{u}$ is \textit{admissible} for large $N$.

\vspace{1mm} 
Let $S^{(k)}\subset S$ be smooth domains which approximate $S$ in the $C^{2,\beta}$ norm 
($\partial S^{(k)}\rightarrow \partial S$ in $C^{2,\beta}$ norm). We also approximate $\underline{u}$ by smooth functions $\underline{u}^{(k)}$ and then obtain a sequence of approximating Dirichlet problems
\begin{equation}
\begin{aligned}
F(\mathfrak{g}[u^{(k)}])= \psi-\beta_k \mbox{ in } X\times S^{(k)}, 
\mbox{  } u^{(k)}=\underline{u}^{(k)} \mbox{ on } X\times \partial S^{(k)},
\end{aligned}
\end{equation}
with \textit{admissible} subsolution $\underline{u}^{(k)}$ $(\beta_k\rightarrow 0, \beta_k>0)$. Thus there is a unique  smoothly \textit{admissible} solution $u^{(k)}\in C^{\infty}(X\times\bar S^{(k)})$ for each $k\gg1$. 

\vspace{1mm} 
It suffices to prove \eqref{uniform-c0-c1}.  Let $w^{(k)}$ be the solution to
 \begin{equation}
\begin{aligned}
\Delta w^{(k)} + g^{i\bar j}w_i^{(k)} \eta_{\bar j} +g^{i\bar j}\eta_i w^{(k)}_{\bar j} +g^{i\bar j}\tilde{\chi}_{i\bar j} =0 \mbox{ in } X\times S^{(k)}, \mbox{  } w^{(k)}=\underline{u}^{(k)}  \mbox{ on } X\times\partial S^{(k)}. \nonumber
\end{aligned}
\end{equation}
Thus one has \eqref{approxi-boundary1} and \eqref{key-proof-newregularity1} by using standard regularity theory of elliptic equations.

\end{proof}

\begin{proof}
[Proof of Theorem \ref{mainthm-10-degenerate}]
We prove it by using standard method of approximation. 
Let's choose a sequence of smooth Riemann surfaces/domains $S_k\subset S$ such that $\cup_k S_k=S$, and $\partial S_k\rightarrow \partial S$ in $C^{2,1}$-norm as $k\rightarrow +\infty$. 
Also, we denote $M_k=X\times S_k$. 

\vspace{1mm} 
Let's consider a family of approximating problems
 \begin{equation}
 \label{appro-problems3}
\begin{aligned}
F(\mathfrak{g}[u^{(k)}])=\psi \mbox{ in } M_k, \mbox{   } u^{(k)}|_{\partial M_k}=w|_{\partial M_k}.
\end{aligned}
\end{equation}
The condition \eqref{cone-condition1} allows us applying the solution of 
$\Delta_S h^{(k)}=1\mbox{ in } S_k, \mbox{   } h^{(k)}|_{\partial S_k}=0$
 to construct  admissible subsolutions 
$\underline{u}^{(k)}$ to \eqref{appro-problems3}. Then  \eqref{appro-problems3} has a unique admissible solutions  $u^{(k)}$
according to Theorem 
   \ref{opti-regularity-thm1}. 
Similarly,  the standard regularity theory of elliptic equations gives  \eqref{approxi-boundary1} and \eqref{key-proof-newregularity1}.
This completes the proof.
\end{proof}

\subsubsection{Discussion on significant phenomena  on weakening  regularity assumptions} 

When $M=X\times S$  
or $\partial M$ obeys \eqref{bdry-assumption1-strictly}, 
according to Theorem \ref{opti-regularity-thm1-degenerate},
it is only required to assume  $\varphi\in C^{2,1}$ and $\partial M\in C^{2,1}$ to solve Dirichlet problem of degenerate equations. 
Such regularity assumptions on the boundary and boundary data
are impossible for homogeneous Monge-Amp\`ere equation on certain
bounded domains in $\mathbb{R}^n$ as shown by some counterexamples in Caffarelli-Nirenberg-Spruck \cite{CNS-deg} 
who show that the $C^{3,1}$-regularity assumptions on the boundary and boundary data
are optimal for the $C^{1,1}$ global regularity of the weak 
solution to homogeneous real Monge-Amp\`ere equation on $\Omega$.
(For a strictly convex domain and the homogeneous boundary data,
 Guan \cite{GuanP1997Duke} proved that $\partial \Omega$ can be weakened to be $C^{2,1}$).   
 Theorem \ref{mainthm-10-degenerate} gives specific examples to support this new feature.
 We find another significant  phenomenon on weakening regularity assumptions on boundary
  and boundary data. More precisely,
  when 
 $M=X\times S$ and the boundary data only varies along $\partial S$ (which is independent of $X$),
  the regularity of $\partial M$ and $\varphi$ can be weakened to be $C^{2,\beta}$; while
  such $C^{2,\beta}$ regularity assumptions on the boundary and boundary data are impossible even for Dirichlet problem of
  certain nondegenerate real Monge-Amp\`ere equation on certain bounded domains $\Omega\subset \mathbb{R}^2$,
as shown by Wang \cite{WangXujia-1996}   
 the optimal regularity assumptions on
the boundary and boundary data  are both $C^3$-smooth for such real Monge-Amp\`ere equations.
Theorem \ref{mainthm-09} and Corollary \ref{dege-thm-c2alpha-special} show specific 
examples to support the surprising phenomenon.  
We also have new phenomenon on weakening regularity assumption on boundary in Theorems \ref{dege-thm-c2alpha} and \ref{thm4-diri-de-c2alpha}  
where the boundary data is constant. 

 \vspace{1mm} 
 As a contrast, for (real) Monge-Amp\`ere equation with less regular right-hand side, 
we are referred to the book \cite{Figalli2017MA}   and references therein  including  \cite{Caffarelli1990Ann,Caffarelli1990Ann-W,Caffarelli1991CPAM,De-2013-Figalli,Savin2013,Wang1995counterexample}.

\section{The Dirichlet problem of  Monge-Amp\`ere equation for $(n-1)$-PSH functions and extensions}
\label{appendix-gauduchon}

 Gauduchon \cite{Gauduchon77} proved  every closed Hermitian manifold $(M,\omega)$ of complex dimension $n\geq 2$
  admits a unique (up to rescaling)  Gauduchon metric
 ($\omega$ is called Gauduchon if $\partial\overline{\partial}(\omega^{n-1})=0$)
 which is conformal to the original Hermitian metric, and furthermore
 conjectured in \cite{Gauduchon84} that the Calabi-Yau theorem 
 for Gauduchon metric is also true.
 On  complex surfaces and astheno-K\"ahler manifolds (i.e. $\partial\overline{\partial}(\omega^{n-2})=0$)  introduced in  \cite{Jost1993Yau},
 the Gauduchon conjecture was proved by  Cherrier \cite{Cherrier} and Tosatti-Weinkove \cite{Tosatti2013Weinkove},  respectively.
   In order to look for Gauduchon metric, say $\Omega_u$, on  general Hermitian manifolds, with prescribed volume form
 $$\Omega_u^{n}= e^\phi\omega^n $$
with  $\Omega_u^{n-1}=\omega_0^{n-1}+\sqrt{-1}\partial \overline{\partial}u \wedge \omega^{n-2} + \mathfrak{Re}(\sqrt{-1}\partial u \wedge\overline{\partial}\omega^{n-2})$ (i.e. \eqref{Omega_u},
and if $\omega_0$ is  Gauduchon  then so is $\Omega_u$), by using
\begin{equation}
\label{trans-lambda}
\begin{aligned}
 \lambda_i \left(*\frac{\Theta^{n-1}}{(n-1)!}\right)= \lambda_1(\Theta)\cdots \lambda_{i-1}(\Theta) \widehat{\lambda_i(\Theta)}\lambda_{i+1}(\Theta)\cdots \lambda_n(\Theta)
\end{aligned}
\end{equation}
 for each $i$ and  real $(1,1)$-form $\Theta$, where $\widehat{*}$ indicates deletion,
it can be reduced to solve Monge-Amp\`ere equation   for $(n-1)$-PSH  functions  
  in the sense of Harvey-Lawson \cite{Harvey2012Lawson}:
  \begin{equation}
 \label{gamma-cone}
\begin{aligned}
*\left(\frac{1}{(n-1)!}\Omega_u^{n-1}\right)=\tilde{\chi}+\frac{1}{n-1}((\Delta u) \omega-\sqrt{-1}\partial \overline{\partial}u)+Z >0 \mbox{ in } \bar M,
\end{aligned}
\end{equation}
where $\tilde{\chi}_{i\bar j}=\left(\frac{1}{(n-1)!}*( \omega_0^{n-1}) )\right)_{i\bar j}$, $Z_{i\bar j} 
=\left(\frac{1}{(n-1)!}*\mathfrak{Re}(\sqrt{-1}\partial u\wedge \bar\partial (\omega^{n-2}))\right)_{i\bar j}$.
  That is 
\begin{equation}
\label{mainequ-gauduchon}
\begin{aligned}
\log P_{n-1}(\lambda(\mathfrak{g}[u]))=\psi \mbox{ in }M,
\end{aligned}
\end{equation}
for $\psi=(n-1)\phi+n\log (n-1)$, where  
$\log P_{n-1}(\lambda) = \sum_{i=1}^n \log(\lambda_{{1}}+\cdots+\lambda_{i-1}+\widehat{\lambda_i}+ \lambda_{{i+1}}+\cdots+\lambda_n)$
 corresponding to 
 $\mathcal{P}_{n-1}=\{\lambda\in \mathbb{R}^n: \lambda_{{1}}+\cdots+\lambda_{i-1}+\widehat{\lambda_i}+ \lambda_{{i+1}}+\cdots+\lambda_n>0, \mbox{  } \forall 1\leq i\leq n\},$
 $\mathfrak{g}_{i\bar j}=u_{i\bar j}+\chi_{i\bar j}+W_{i\bar j}$,
here $\chi_{i\bar j}=(\mathrm{tr}_\omega \tilde{\chi})g_{i\bar j}-(n-1)\tilde{\chi}_{i\bar j}$,
$W_{i\bar j}=(\mathrm{tr}_\omega Z)g_{i\bar j}-(n-1)Z_{i\bar j}$ (see also  \cite{Popovici2015,Tosatti2013Weinkove}).
Locally,
\begin{equation}
\label{Z-tensor1}
\begin{aligned}
Z_{i\bar j}=\,& \frac{1}{2(n-1)} \left( g^{p\bar q}   \bar T^l_{ql} g_{i\bar j}u_p
+g^{p\bar q} T^{k}_{pk} g_{i\bar j}u_{\bar q}
-g^{k\bar l}g_{i\bar q} \bar T^q_{lj}u_k
 -g^{k\bar l}g_{q\bar j}T^{q}_{ki}u_{\bar l}
-\bar T^{l}_{jl}u_i -T^{k}_{ik}u_{\bar j}  \right), 
\end{aligned}
\end{equation}
see also \cite{GTW15}.
One can verify that condition \eqref{gamma-cone} is equivalent to 
$\lambda(\mathfrak{g}[u])\in \mathcal{P}_{n-1} \mbox{ in } \bar M,$
which allows one to seek the solutions of  \eqref{mainequ-gauduchon} or equivalently  \eqref{MA-n-1}
 within the framework of elliptic equations, since
 $f=\log P_{n-1}$ satisfies \eqref{elliptic}, \eqref{concave} and  \eqref{addistruc} in  $\mathcal{P}_{n-1}$. 

\vspace{1mm} 
 In \cite{Tosatti2013Weinkove} Tosatti-Weinkove 
 further showed that the Gauduchon conjecture for general case can be reduced to show the second order estimate of form \eqref{sec-estimate-quar1} 
 for the solution $u$ to \eqref{mainequ-gauduchon} with $\sup_M u=0$ and
$\lambda(\mathfrak{g}[u])\in \mathcal{P}_{n-1}$.

\vspace{1mm} 
However, it is much more complicated for general cases, 
 since the equation \eqref{mainequ-gauduchon}  (or equivalently \eqref{MA-n-1}) involves gradient terms.
 Recently, by carefully  dealing with the special structures of $W(\partial u,\overline{\partial} u)$ and $\log P_{n-1}$,
Sz\'ekelyhidi-Tosatti-Weinkove  \cite{GTW15}
derived \eqref{sec-estimate-quar1} 
(see Theorem \ref{GTW-second} below) 
and then
completely proved Gauduchon's conjecture.
See also \cite{guan-nie} for related work.
Also, we are referred to
\cite{ChuHuangZhu2018,ChuHuangZhu2019,Phong-Picard-Zhang2017,Phong-Picard-Zhang2018arxiv,GTW15} for the progress on Fu-Yau equation \cite{Fu2007Yau,Fu2008Yau}
and Form-type Calabi-Yau equation \cite{FuWangWuFormtype2010,FuWangWuFormtype2015},
and to \cite{GQY2018} for 
 $\mathfrak{g}[u]=(\Delta u) \omega-\beta_1\sqrt{-1}\partial \overline{\partial}u+\chi(z,u,\partial u,\overline{\partial}u)$ with
  $\beta_1<1$.

\begin{theorem}
 \label{GTW-second}
 Let $\psi\in C^2(M)$, and we assume $u\in C^4(M)$ is the solution to \eqref{mainequ-gauduchon} with $\sup_M u=0$ and
   $\lambda(\mathfrak{g}[u])\in \mathcal{P}_{n-1}$,
then one has the second order estimate \eqref{sec-estimate-quar1}.

\end{theorem}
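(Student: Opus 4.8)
The plan is to adapt the Hou--Ma--Wu \cite{HouMaWu2010} and Sz\'ekelyhidi \cite{Gabor} maximum principle argument, in the form used for the proof of Theorem \ref{globalsecond-Diri}, to the tensor $\mathfrak{g}_{i\bar j}=u_{i\bar j}+\chi_{i\bar j}+W_{i\bar j}$ appearing in \eqref{mainequ-gauduchon}, where $\chi$ is a fixed background $(1,1)$-form and $W=W(\partial u,\overline{\partial}u)$ is the torsion term with local expression \eqref{Z-tensor1}. First I would reduce \eqref{sec-estimate-quar1} to an upper bound on the largest eigenvalue $\lambda_1$ of $\{g^{i\bar q}\mathfrak{g}_{j\bar q}\}$, since $\mathrm{tr}_\omega\mathfrak{g}=n\Delta u+O(1+|\nabla u|)$ and $\lambda_1\le\mathrm{tr}_\omega\mathfrak{g}$. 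I would then run the maximum principle on a test function of the form $Q=\log\lambda_1+\Phi(|\nabla u|^2)+\Psi(u)$ used in Theorem \ref{globalsecond-Diri} (the subsolution taken to be a constant, $\Phi(t)=-\frac{1}{2}\log(1-\frac{t}{2K})$, $K=1+\sup_M|\nabla u|^2$); since $f=\log P_{n-1}$ satisfies \eqref{elliptic}, \eqref{concave} and \eqref{addistruc} on $\mathcal{P}_{n-1}$ and constants are $\mathcal{C}$-subsolutions there (because $\sup_{\mathcal{P}_{n-1}}\log P_{n-1}=+\infty$), the structural Lemmas \ref{guan2014}, \ref{gabor'lemma} and \ref{asymptoticcone1} all apply. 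Since $\lambda_1$ need not be twice differentiable at the interior maximum point $p_0$, I would use the perturbation trick of \cite{Gabor}, replacing the endomorphism $A=\{g^{i\bar q}\mathfrak{g}_{j\bar q}\}$ by $A+B$ for a small diagonal matrix $B$ and then letting $B\to 0$.

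At $p_0$, in holomorphic coordinates with $g_{i\bar j}=\delta_{ij}$, $\mathfrak{g}_{i\bar j}=\lambda_i\delta_{ij}$ and $\lambda_1\ge\cdots\ge\lambda_n$, I would differentiate $f(\lambda(\mathfrak{g}))=\psi$ once and twice. Concavity gives $-F^{i\bar j,k\bar l}\mathfrak{g}_{i\bar j 1}\mathfrak{g}_{k\bar l \bar 1}\ge 0$ together with the positive third-order term $\sum_{\lambda_i\ne\lambda_1}\frac{f_i-f_1}{\lambda_1-\lambda_i}\frac{|\mathfrak{g}_{i\bar 1 1}|^2}{\lambda_1}$, while differentiating the equation for $\mathfrak{g}_{1\bar 1}$ twice produces $F^{i\bar i}\mathfrak{g}_{1\bar 1 i\bar i}$ modulo commutators of size $O(\lambda_1\sum_i f_i)$ and modulo the contributions of $W_{1\bar 1 i\bar i}$. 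The crucial observation is that $W_{i\bar j}$ is \emph{linear} in $\nabla u$ with smooth (torsion) coefficients, so $\mathfrak{g}_{i\bar j k}$ and $\mathfrak{g}_{i\bar j k\bar l}$ have formally the same shape --- complex second derivatives of $u$ contracted against smooth coefficients, plus lower order terms --- as the tensor $\chi[u]$ treated in \eqref{mainequ}. I would then split into the two cases of Theorem \ref{globalsecond-Diri}: if $\delta\lambda_1\ge-\lambda_n$, the gain $\Phi''F^{i\bar i}|(|\nabla u|^2)_i|^2$ together with the dichotomy of Lemma \ref{gabor'lemma} absorbs the bad terms arising when one compares $|\mathfrak{g}_{i\bar 1 1}|^2$ with $|\widetilde{\lambda_{1}}_{,i}|^2$; if $\delta\lambda_1<-\lambda_n$, then $F^{n\bar n}|u_{n\bar n}|^2$ is bounded below by $c\delta^2\lambda_1^2\sum_i f_i-C\sum_i f_i$, which closes the argument. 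In either case the bounds $\sum_i f_i\ge c_0$ and $\lambda_1\sum_i f_i\ge c_0$ --- valid because \eqref{addistruc} holds for $\log P_{n-1}$ --- yield $\lambda_1\le CK$, that is \eqref{sec-estimate-quar1}.

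The main obstacle will be controlling the cross terms between the full complex Hessian of $u$ and the torsion coefficients of $W$ that enter $\mathfrak{g}_{i\bar 1 1}$: one must show that $|\mathfrak{g}_{i\bar 1 1}|^2-|\widetilde{\lambda_{1}}_{,i}|^2$ is controlled by quantities absorbable into $\frac{1}{K}F^{i\bar i}(|u_{ki}|^2+|u_{k\bar i}|^2)$, into $|\Psi'|$-terms and into $\sum_i f_i$-terms, using only the precise algebraic form of $W$ recorded in \eqref{Z-tensor1} and the structural features of $\log P_{n-1}$ on $\mathcal{P}_{n-1}$ (positivity of the $f_i$ and control of the ratios $f_i/f_1$). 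This estimate is the technical core of Sz\'ekelyhidi--Tosatti--Weinkove \cite{GTW15}, from which \eqref{sec-estimate-quar1} is quoted; the discussion above indicates how the present tensor fits the framework developed for Theorem \ref{globalsecond-Diri}.
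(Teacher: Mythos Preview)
Your outline is broadly correct and matches the paper's treatment: the paper does not prove Theorem~\ref{GTW-second} either but quotes it from \cite{GTW15}, recording only the two structural ingredients that make the Hou--Ma--Wu/Sz\'ekelyhidi scheme go through, namely (i) the special structure of $Z$ (``the assumption in Page 187 of \cite{GTW15}'') and (ii) the lower bound $\tfrac{\partial\tilde f}{\partial\lambda_i}\ge\tfrac{1}{n(n-1)}\sum_k\tfrac{\partial\tilde f}{\partial\lambda_k}$ for $i\ge 2$ when $\lambda_1\ge\cdots\ge\lambda_n$. You identify exactly these two features at the end of your proposal.

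One point in your write-up is inaccurate, however, and it is precisely where the difficulty sits. You say that $W_{i\bar j}$ ``has formally the same shape'' as $\chi[u]$ in \eqref{mainequ}. It does not: the whole point of the structural condition \eqref{key-chi} is that $\chi_{i\bar j,\zeta_\alpha}=\delta_{i\alpha}\bar\eta_j$, so that in the expansion of $\mathfrak{g}_{i\bar 1 1}$ the only second-order terms of $u$ that appear are $u_{1i}$ and $u_{i1}$, which is what makes the estimate \eqref{key119111} work. For the torsion tensor $W$ in \eqref{Z-tensor1} this fails --- differentiating $W_{1\bar 1}$ produces $u_{k i}$ and $u_{\bar k i}$ for \emph{all} $k$, contracted against torsion coefficients. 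These extra cross terms cannot be absorbed by the argument of Theorem~\ref{globalsecond-Diri} alone; it is exactly here that one must invoke ingredient (ii), using that for $\log P_{n-1}$ every $f_i$ with $i\ge 2$ is bounded below by a fixed multiple of $\sum_k f_k$, so that the good term $\tfrac{1}{K}\sum_i F^{i\bar i}(|u_{ki}|^2+|u_{k\bar i}|^2)$ genuinely controls all the unwanted $u_{ki}$ appearing. You allude to ``control of the ratios $f_i/f_1$'' but you should be aware that this is not a cosmetic refinement --- without it the two-case split you describe does not close, and this is the technical heart of \cite{GTW15}.
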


 This section is mainly devoted to investigating  Dirichlet problem of  equation \eqref{MA-n-1} 
 \begin{equation}
\label{mainequ-gauduchon1}
\begin{aligned}
\log P_{n-1}(\lambda(\mathfrak{g}[u]))=\psi \mbox{ in } M,  \mbox{   }
u=\varphi \mbox{ on }   \partial M
\end{aligned}
\end{equation}
and more general equations \eqref{mainequ-gauduchon-general*}
\begin{equation}
\label{mainequ-gauduchon-general**}
\begin{aligned}
 f(\lambda(*\Phi[u]))=\psi \mbox{ in } M,     \mbox{   }
u=\varphi \mbox{ on }   \partial M 
\end{aligned}
\end{equation}
on compact Hermitian manifolds with boundary satisfying \eqref{bdry-assumption1-Gauduchon}. 
 As stated in Introduction,  $\Phi[u] =*\chi+\frac{1}{(n-2)!}\sqrt{-1}\partial\overline{\partial}u\wedge\omega^{n-2}+\frac{\varrho}{(n-1)!}\mathfrak{Re}(\sqrt{-1}\partial u\wedge \overline{ \partial}\omega^{n-2})$.
    
 \vspace{1mm}
As a result, we prove Theorems \ref{Gauduchon-yuan} and \ref{thm1-general-equation}.  We 
now state the existence of weak solutions to degenerate equations as a complement.
\begin{theorem}
\label{thm1-general-equation-degenerate}
Let $(M,\omega)$ be a compact Hermitian manifold with smooth boundary. Suppose the boundary further obeys
 \eqref{bdry-assumption1-Gauduchon}.
In addition to \eqref{elliptic}, \eqref{concave} and \eqref{addistruc}, we assume
$f\in C^{\infty}(\Gamma)\cap C(\overline{\Gamma})$, $\varphi\in C^{2,1}(\bar M)$, $\varrho\in C^{\infty}(\bar M)$, and
the right-hand side satisfies  $\psi\in C^{1,1}(\bar M)$ and $\inf_M\psi=\sup_{\partial \Gamma}f$.
Suppose   there is a strictly $C^{2,1}$-admissible subsolution to Dirichlet problem 
  \eqref{mainequ-gauduchon-general**}. 
  Then it admits  a weak  $C^{1,\alpha}$  $(\forall 0<\alpha<1)$ solution $u$ with
 $\lambda(*\Phi[u])\in\overline{\Gamma}$ and $\Delta u\in L^\infty(\bar M)$.

Furthermore, if $(M,\omega)=(X\times S, \pi_1^*\omega_X+\pi_2^*\omega_S)$ and $\omega_X$ is balanced, then we can construction such strictly subsolutions, provided \eqref{cone-condition1-general} holds 
for an admissible function $\underline{v}\in C^{2,1}(\bar M)$ with $\underline{v}|_{\partial M}=\varphi$.

\end{theorem}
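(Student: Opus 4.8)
The plan is to treat the degenerate problem \eqref{mainequ-gauduchon-general**} by vanishing--viscosity approximation, reducing it to the non--degenerate existence theorem already in hand, Theorem \ref{thm1-general-equation}. Since $\inf_M\psi=\sup_{\partial\Gamma}f$ and $f(\lambda(*\Phi[\underline{u}]))\geq\psi+\delta_0$ for the given strictly admissible subsolution $\underline{u}\in C^{2,1}(\bar M)$, I would first mollify: pick smooth $\psi_j\to\psi$ uniformly on $\bar M$ with $\sup_{\partial\Gamma}f<\inf_M\psi_j$ (so $\delta_{\psi_j,f}>0$) and $\psi_j\leq\psi+\tfrac12\delta_0$, and smooth $\varphi_j\to\varphi$ in $C^{2,1}(\partial M)$, adjusting $\underline{u}$ by a small extension of $\varphi_j-\varphi$ so that $\underline{u}_j=\varphi_j$ on $\partial M$ and still $f(\lambda(*\Phi[\underline{u}_j]))\geq\psi_j$. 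Theorem \ref{thm1-general-equation} then yields a smooth admissible solution $u_j$ of $f(\lambda(*\Phi[u_j]))=\psi_j$ with $u_j=\varphi_j$ on $\partial M$; note that $f\in C^\infty(\Gamma)\cap C(\bar\Gamma)$ is smooth along the relevant trajectory because $f(\lambda(*\Phi[u_j]))\equiv\psi_j$ stays strictly above $\sup_{\partial\Gamma}f$, so no regularization of $f$ is needed.

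Next I would collect a priori estimates for $u_j$ that are uniform in $j$, i.e. depending neither on $(\delta_{\psi_j,f})^{-1}=(\inf_M\psi_j-\sup_{\partial\Gamma}f)^{-1}$ nor on $\sup_M|\nabla u_j|$. The $C^0$ bound and the boundary gradient bound come from $\underline{u}_j\leq u_j\leq w$, with $w$ the solution of the linear trace equation \eqref{supersolution}, together with $u_j=\varphi_j$ on $\partial M$. The global second order estimate of the form \eqref{quantitative-2nd-boundary-estimate} holds for equation \eqref{mainequ-gauduchon-general*} by the argument behind Theorem \ref{GTW-second} and Theorem \ref{globalsecond-Diri} (handling the two types of complex derivatives created by the gradient term, using the explicit structure of the torsion term and of $\Phi$); the quantitative boundary estimate \eqref{bdy-sec-estimate-quar1} then follows from Proposition \ref{proposition1-normal}, whose key inequality \eqref{yuan-prop1} uses \eqref{bdry-assumption1-Gauduchon} and \eqref{addistruc} and has constant independent of $(\inf_M\psi)^{-1}$ and $\sup_M|\nabla u_j|$. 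Combining these gives $\sup_M\Delta u_j\leq C(1+\sup_M|\nabla u_j|^2)$ with $C$ independent of $j$. The gradient estimate is then obtained by the blow--up argument of Section \ref{mainproof}: if $\sup_M|\nabla u_j|\to\infty$ along a subsequence, rescaling and using the just--derived Hessian bound produces, in the limit, an entire maximal $\Gamma$--admissible solution on $\mathbb{C}^n$ (the quantitative boundary bounds keep the blow--up point at definite distance from, or else control the half--space picture near, $\partial M$), which is constant by Sz\'ekelyhidi's Liouville theorem (extending Dinew--Ko{\l}odziej, using \eqref{addistruc}) --- a contradiction. Hence $\|u_j\|_{C^{1,1}(\bar M)}\leq C$ uniformly, in particular $\Delta u_j$ is uniformly bounded in $L^\infty(\bar M)$.

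With the uniform $C^{1,1}$ bound, Arzel\`a--Ascoli gives a subsequence $u_j\to u$ in $C^{1,\alpha}(\bar M)$ for every $\alpha\in(0,1)$ and weak--$*$ in $W^{2,\infty}(\bar M)$; the limit satisfies $u=\varphi$ on $\partial M$, $\Delta u\in L^\infty(\bar M)$, $\lambda(*\Phi[u])\in\overline\Gamma$ a.e., and $f(\lambda(*\Phi[u]))=\psi$ in the viscosity/almost--everywhere sense, using continuity of $f$ up to $\partial\Gamma$ and $\psi_j\to\psi$. Uniqueness of the weak solution follows from the comparison principle for admissible sub-- and super--solutions of the degenerate equation, as in Section \ref{further-discussion}. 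The \textbf{main obstacle} is not the limit passage but making the second order estimates genuinely uniform as $\delta_{\psi_j,f}\to0$ together with running the blow--up argument up to the boundary; however both are supplied by the stated $(\delta_{\psi,f})^{-1}$--independence of Theorem \ref{GTW-second}/Theorem \ref{globalsecond-Diri} and Proposition \ref{proposition1-normal}, so the proof is a matter of assembling these pieces.

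For the product case, assume $(M,\omega)=(X\times S,\pi_1^*\omega_X+\pi_2^*\omega_S)$ with $\omega_X$ balanced and \eqref{cone-condition1-general} for an admissible $\underline{v}\in C^{2,1}(\bar M)$ with $\underline{v}|_{\partial M}=\varphi$. I would take $h\in C^{2,1}(\bar S)$ solving the Poisson problem \eqref{possion-def-local}, $\Delta_S h=1$ in $S$, $h=0$ on $\partial S$, so that $h<0$ in $S$ and $\tfrac{\partial}{\partial\nu}h|_{\partial S}<0$ by the maximum principle, and set $\underline{u}=\underline{v}+N\pi_2^*h$. Because $\dim_{\mathbb{C}}S=1$ one has $\sqrt{-1}\partial\overline{\partial}(\pi_2^*h)=\pi_2^*\omega_S$ and $\Delta(\pi_2^*h)=1$, whence $(\Delta(\pi_2^*h))\omega-\sqrt{-1}\partial\overline{\partial}(\pi_2^*h)=\pi_1^*\omega_X$; the balanced condition $d\omega_X^{n-2}=0$ kills (or renders harmless) the $\varrho$--torsion contribution of $\pi_2^*h$, so a direct computation gives $*\Phi[\underline{u}]=*\Phi[\underline{v}]+N\pi_1^*\omega_X$. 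Then $\lambda(*\Phi[\underline{u}])\in\Gamma$ for $N\gg1$ (admissibility from the cone structure) and \eqref{cone-condition1-general} yields $f(\lambda(*\Phi[\underline{u}]))\geq\psi+\delta_0$ for some $\delta_0>0$, while $\tfrac{\partial}{\partial\nu}\underline{u}|_{\partial M}<0$ for $N$ large relative to $\tfrac{\partial}{\partial\nu}\underline{v}$. Thus $\underline{u}$ is the desired strictly $C^{2,1}$--admissible subsolution and the first part applies.
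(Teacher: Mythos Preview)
Your proposal is correct and follows essentially the same approach the paper takes (implicitly, since the paper does not spell out a separate proof for this theorem): approximate the degenerate problem by non-degenerate ones, apply Theorem~\ref{thm1-general-equation}, and pass to the limit using the uniform $C^{1,1}$ bounds coming from Proposition~\ref{proposition1-normal} and the interior estimate of Theorem~\ref{GTW-second} type, all of which are explicitly stated to be independent of $(\delta_{\psi,f})^{-1}$. Your construction of the strict subsolution on the product is exactly the one in Subsection~\ref{construction2-subsection}: $\underline{u}=\underline{v}+N\pi_2^*h$ with $h$ solving \eqref{possion-def-local}, so that the balanced hypothesis on $\omega_X$ kills the $\varrho$--term for $\pi_2^*h$ and yields $*\Phi[\underline{u}]=*\Phi[\underline{v}]+N\pi_1^*\omega_X$.
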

 

\subsection{Construction of subsolutions on   products with  balanced factors}
\label{construction2-subsection}
On the standard product $(M,\omega)=(X\times S, \pi_1^*\omega_X+\pi_2^*\omega_S)$ with  $(X,\omega_X)$ being balanced ($d\omega_{X}^{n-2}=0$,  note $\mathrm{dim}_{\mathbb{C}}=n-1$),
 we can construct (strictly) subsolution with 
$\frac{\partial \underline{u}}{\partial \nu}|_{\partial M}<0$, provided that
for given boundary value $\varphi$, there exists an extension, say $\underline{v}$, being an admissible function 
with $\lambda(*\Phi[\underline{v}])\in \Gamma$ such that \eqref{cone-condition1-general} holds. 


\vspace{1mm}
Let $\underline{v} \in C^{2}(\bar M)$ be an admissible function  with $\underline{v}|_{\partial M}=\varphi$ and \eqref{cone-condition1-general}
 as mentioned above.
Let    $h$  be the solution to  \eqref{possion-def-local}, then
 $$*(\sqrt{-1}\partial\overline{\partial}h\wedge\omega^{n-2})= *(\pi_2^*\omega_S\wedge \pi_1^*\omega_X)=(n-2)!\pi_1^*\omega_X.$$
Since 
$\partial h\wedge \overline{\partial}\omega^{n-2}
=\partial h\wedge  \overline{\partial}(\pi_1^*\omega_X^{n-2}),$
the obstruction to construct subsolutions 
automatically vanishes if  $\overline{\partial}\omega_X^{n-2}=0$, i.e. $\omega_X$ is balanced. 
More precisely, 
 when $\omega_X$ is balanced, the subsolution is given by
$$\underline{u}=\underline{v}+Ah, \mbox{  }A\gg1.$$ 
Straightforward computation shows 
$\Phi[{\underline{u}}]=\Phi[\underline{v}] +\frac{A}{(n-2)!}\pi_2^*\omega_S\wedge\omega^{n-2}=\Phi[\underline{v}] +\frac{A}{(n-2)!}\pi_1^*\omega_X\wedge\pi_2^*\omega_S$ and $*\Phi[{\underline{u}}]=*\Phi[{\underline{v}}]+ A\pi_1^*\omega_X.$

\subsection{The Dirichlet problem with less regularity assumptions on  products}
 Similarly, 
 when the boundary has less regularity $\partial S\in C^{2,\beta}$ ($0<\beta<1$),
we have

\begin{theorem}
\label{main-thm4}
Let $(M,\omega)=(X\times S, \pi_1^*\omega_X+\pi_2^*\omega_S)$ with balanced factor $(X,\omega_X)$.
Let $\partial S\in C^{2,\beta}$
and  $\psi\in C^{2}(\bar M)$,
then equation \eqref{mainequ-gauduchon1} with homogeneous boundary data admits  a unique $C^{2,\alpha}$-smooth $(n-1)$-PSH 
solution for some 
$0<\alpha\leq\beta$.

Moreover, the balanced assumption on $\omega_X$ can be removed if $\psi\leq \log P_{n-1}(\lambda(\tilde{\chi}))$.
\end{theorem}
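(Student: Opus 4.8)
The plan is to deduce Theorem~\ref{main-thm4} from the existence result on manifolds with smooth boundary (Theorem~\ref{thm1-general-equation} / Theorem~\ref{Gauduchon-yuan}, specialized to $f=\log P_{n-1}$, $\varrho=n-1$) by the domain‑approximation scheme already used in the proofs of Theorems~\ref{dege-thm-c2alpha}, \ref{mainthm-09} and \ref{mainthm-10-degenerate}. Since $f=\log P_{n-1}$ satisfies \eqref{elliptic}, \eqref{concave}, \eqref{addistruc} on $\mathcal P_{n-1}$ and $\sup_{\mathcal P_{n-1}}\log P_{n-1}=+\infty$, condition \eqref{nondegenerate} is automatic for $\psi\in C^2(\bar M)$. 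First I would exhaust $S$ by smooth Riemann surfaces with boundary $S_k\uparrow S$ with $\partial S_k\to\partial S$ in $C^{2,\beta}$, set $M_k=X\times S_k$, and (if needed) mollify $\psi$ to $\psi_k\to\psi$ in $C^2$. A key structural point is that $\partial M_k=X\times\partial S_k$ is automatically \emph{holomorphically flat}: realizing $\partial S_k$ as a level set of a function on $S$, its pull‑back has vanishing Levi form on $T^{1,0}(\partial M_k)=T^{1,0}X$, so $\partial M_k$ is mean pseudoconcave and \eqref{bdry-assumption1-Gauduchon} holds.

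To apply Theorem~\ref{thm1-general-equation} on each $M_k$ I need an admissible subsolution with the constant boundary value $0$. Because the eigenvalues of $\pi_1^*\omega_X$ with respect to $\omega$ are $(1,\dots,1,0)\in\mathcal P_{n-1}$, condition \eqref{admiss-value2} holds for the zero extension, so there is an admissible $\underline v\in C^{2,\beta}(\bar M)$ with $\underline v|_{\partial M}=0$; and since $\sup_{\mathcal P_{n-1}}f=+\infty$, condition \eqref{unbound-strong}, hence \eqref{cone-condition1-general}, is satisfied. The construction of Section~\ref{construction2-subsection} — which is exactly where the hypothesis that $\omega_X$ is balanced enters (it forces the torsion contribution $\mathfrak{Re}(\sqrt{-1}\partial h\wedge\bar\partial\omega^{n-2})$ to vanish on the product) — then yields $\underline u=\underline v+A\,\pi_2^*h$ for $A\gg1$, with $\underline u|_{\partial M}=0$ and $\frac{\partial}{\partial\nu}\underline u|_{\partial M}<0$, where $h$ solves \eqref{possion-def-local}. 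Approximating $h$ by smooth solutions $h^{(k)}$ of $\Delta_S h^{(k)}=1$ on $S_k$ produces smooth admissible subsolutions $\underline u^{(k)}$ on $M_k$, and Theorem~\ref{thm1-general-equation} supplies unique smooth $(n-1)$‑PSH solutions $u^{(k)}$ of \eqref{mainequ-gauduchon1} (with $\psi_k$) on $M_k$ and $u^{(k)}|_{\partial M_k}=0$.

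The heart of the argument is to make all a priori estimates for $u^{(k)}$ uniform in $k$. Sandwiching $\underline u^{(k)}\le u^{(k)}\le w^{(k)}$ between $\underline u^{(k)}$ and the solution $w^{(k)}$ of a suitable linear equation with zero boundary data gives uniform $C^0$ and boundary $C^1$ bounds (tangential boundary derivatives vanish, the boundary data being constant). Since $\partial M_k$ is holomorphically flat and the boundary data is constant, the quantitative boundary estimate of Section~\ref{appendix-gauduchon} — Proposition~\ref{proposition1-normal} together with the barrier construction, in the refined form analogous to Theorem~\ref{mix-Leviflat-thm1} — bounds $\sup_{\partial M_k}\Delta u^{(k)}$ by $C(1+\sup_{\partial M_k}|\nabla u^{(k)}|^2)(1+\sup_{M_k}|\nabla u^{(k)}|^2)$ with $C$ depending only on $\partial S_k$ up to \emph{second} derivatives and on $|\underline u^{(k)}|_{C^2},\sup|\nabla\psi_k|$, hence uniformly in $k$ because $\partial S_k\to\partial S$ in $C^{2,\beta}$. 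Combining this with the boundary version of the global second order estimate \eqref{sec-estimate-quar1} for \eqref{mainequ-gauduchon1} (Theorem~\ref{GTW-second} of \cite{GTW15} together with the boundary treatment of Theorem~\ref{globalsecond-Diri}) gives $\sup_{M_k}\Delta u^{(k)}\le C(1+\sup_{M_k}|\nabla u^{(k)}|^2)$; feeding this into the blow‑up argument of \cite{Dinew2017Kolo,Gabor,Chen}, which uses \eqref{addistruc}, yields a uniform gradient bound and then a uniform bound on $\Delta u^{(k)}$. Now $\lambda(\mathfrak g[u^{(k)}])$ lies in a fixed compact subset of $\mathcal P_{n-1}$, so the equations are uniformly elliptic and, as in \cite{Guan2010Li}, the full real Hessians $|\nabla^2 u^{(k)}|$ are uniformly bounded; Evans–Krylov in the interior and the boundary regularity of Silvestre–Sirakov \cite{Silvestre2014Sirakov} (valid under only $C^{2,\beta}$ boundary) give $|u^{(k)}|_{C^{2,\alpha}(\bar M_k)}\le C$ with $\alpha\le\beta$ independent of $k$. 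Passing to a limit (the $C^{2,\beta}$‑convergence $\partial S_k\to\partial S$ lets the boundary estimates pass to $\bar M$) produces a $C^{2,\alpha}$ $(n-1)$‑PSH solution of \eqref{mainequ-gauduchon1} with homogeneous boundary data, and uniqueness follows from the comparison principle for the concave operator $\log P_{n-1}$. For the final assertion, if $\psi\le\log P_{n-1}(\lambda(\tilde\chi))$ — which amounts to $\underline u\equiv0$ being an admissible subsolution of \eqref{mainequ-gauduchon1} — then $\underline u^{(k)}\equiv0$ is an admissible subsolution on each $M_k$, so the construction of Section~\ref{construction2-subsection}, and hence the balanced hypothesis, is no longer needed: one repeats the above with $\underline u^{(k)}\equiv0$, the lower barrier giving $\frac{\partial}{\partial\nu}u^{(k)}\ge0$ on $\partial M_k$ and $w^{(k)}$ giving the upper bound. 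The main obstacle I anticipate is precisely the uniformity in $k$ of the quantitative boundary second‑order estimate — verifying that for the holomorphically flat boundaries $X\times\partial S_k$ with constant data the constant depends only on the second derivatives of $\partial S_k$ — after which everything reduces to the now‑standard approximation and blow‑up package.
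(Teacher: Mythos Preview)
Your approach is correct and follows the paper's implicit scheme: approximate by $M_k=X\times S_k$, build subsolutions via Section~\ref{construction2-subsection} (this is where the balanced hypothesis enters), obtain uniform quantitative boundary estimates from Propositions~\ref{boundary-mixed1} and~\ref{proposition1-normal} together with the interior bound of \cite{GTW15}, run the blow-up argument for the gradient, and finish with Silvestre--Sirakov; for the final clause you correctly take $\underline u\equiv 0$ as subsolution so Section~\ref{construction2-subsection} is not needed. One terminological slip: vanishing Levi form makes $\partial M_k$ \emph{Levi flat}, not \emph{holomorphically flat}---the latter would force $\partial S_k$ to be real analytic by Schwarz reflection---but this is harmless here, since the refined dependence on only second derivatives of $\partial S_k$ that you invoke comes directly from the product structure (the ``$M=X\times S$'' clause of Proposition~\ref{boundary-mixed1}, cf.\ also Theorem~\ref{mix-Leviflat-thm1-product}), not from holomorphic flatness.
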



\begin{theorem}
\label{main-thm3}
Suppose the given data $\partial S\in C^{2,\beta}$ $(0<\beta<1)$.
Then $$\left(\tilde{\chi}+\frac{1}{n-1}((\Delta u) \omega-\sqrt{-1}\partial \overline{\partial}u)+Z\right)^{n}=0, \mbox{  } u|_{\partial M}=0,$$
admits  a weak  $C^{1,\alpha}$  $(\forall 0<\alpha<1)$ solution $u$ with
 $\tilde{\chi}+\frac{1}{n-1}((\Delta u) \omega-\sqrt{-1}\partial \overline{\partial}u)+Z\geq 0$, 
and $\Delta u\in L^\infty(\bar M)$.
\end{theorem}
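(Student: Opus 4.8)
The plan is to realize this as the degenerate limit of the construction carried out in the non-degenerate regime, following the pattern of the proof of Theorem \ref{mainthm-10-degenerate} but on products $M = X\times S$ with the balanced factor $X$ already exploited in Section \ref{construction2-subsection}. First I would observe that the stated equation is exactly the Monge-Amp\`ere equation for $(n-1)$-PSH functions \eqref{MA-n-1}, i.e. \eqref{mainequ-gauduchon1}, in the degenerate case $\psi \equiv -\infty$ after passing to $f = \log P_{n-1}$; more precisely one writes $\left(\tilde\chi + \tfrac{1}{n-1}((\Delta u)\omega - \sqrt{-1}\partial\bar\partial u) + Z\right)^n = e^{\psi_k}\omega^n$ with $\psi_k \downarrow -\infty$ (or works directly with $f = \log P_{n-1}$ and a right-hand side $\psi_k$ with $\inf_M \psi_k = \sup_{\partial\Gamma} f$ only in the limit). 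Since $f = \log P_{n-1}$ satisfies \eqref{elliptic}, \eqref{concave}, \eqref{addistruc} on $\mathcal P_{n-1}$, and $\partial M = X\times\partial S$ is holomorphically flat hence in particular mean pseudoconcave, so that \eqref{bdry-assumption1-Gauduchon} holds automatically, the hypotheses of Theorem \ref{thm1-general-equation-degenerate} (with $\varrho = n-1$) are within reach once a strictly admissible subsolution is produced.

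The key step is the construction of the subsolution, which is precisely what Section \ref{construction2-subsection} provides: take $h$ the solution of \eqref{possion-def-local} on $S$, extend it to $M$ via $\pi_2$, and set $\underline{u} = \underline{v} + Ah$ with $A \gg 1$, where $\underline{v}$ is any admissible extension of the (here homogeneous) boundary data. Since $\omega_X$ is balanced, $\partial h\wedge\bar\partial\omega^{n-2} = \partial h\wedge\bar\partial(\pi_1^*\omega_X^{n-2}) = 0$, so the torsion obstruction in $Z$ disappears and $*\Phi[\underline u] = *\Phi[\underline v] + A\pi_1^*\omega_X$; condition \eqref{cone-condition1-general} together with \eqref{unbound-strong} (valid for $f = \log P_{n-1}$ since $\Gamma = \mathcal P_{n-1}$ is of type 2, whence $\Gamma_\infty = \mathbb R^{n-1}$) guarantees that for $A$ large $\underline u$ is strictly admissible with $\inf_M$-gap bounded below. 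Moreover the maximum principle for \eqref{possion-def-local} gives $\partial h/\partial\nu|_{\partial S} < 0$, hence $\partial\underline u/\partial\nu|_{\partial M} < 0$, which is exactly the sign condition needed to run the approximation argument of Theorems \ref{thm4-diri-de-c2alpha} and \ref{mainthm-10-degenerate}.

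With the subsolution in hand I would approximate: choose smooth Riemann surfaces $S_k \subset S$ with $\partial S_k \to \partial S$ in $C^{2,\beta}$ and $\cup_k S_k = S$, set $M_k = X\times S_k$, and solve the non-degenerate problems $\log P_{n-1}(\lambda(\mathfrak g[u^{(k)}])) = \psi_k$ on $M_k$ with $\psi_k \downarrow -\infty$ and homogeneous boundary data, using the solvability in Theorem \ref{thm1-general-equation} (or \ref{mainthm-10-degenerate}/\ref{main-thm4}) together with the subsolutions $\underline u^{(k)} = \underline v^{(k)} + A h^{(k)}$ built from the solutions $h^{(k)}$ of $\Delta_{S}h^{(k)} = 1$ on $S_k$. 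The uniform $C^0$ and boundary-gradient bounds \eqref{uniform-c0-c1} come from comparison with the auxiliary linear problem \eqref{supersolution-k} as in the proof of Theorem \ref{dege-thm-c2alpha}; the uniform $\Delta u$ bound in $L^\infty$ follows from the quantitative boundary estimate of Proposition \ref{proposition1-normal} (applicable since $\partial M$ is mean pseudoconcave and the constant there does not depend on $(\inf_M\psi)^{-1}$) fed into Theorem \ref{globalsecond-Diri}, and then the gradient estimate by the blow-up argument, all with constants independent of $k$. Passing to the limit $k\to\infty$ and $\psi_k\to-\infty$ yields $u \in C^{1,\alpha}(\bar M)$ for all $0<\alpha<1$ with $\tilde\chi + \tfrac{1}{n-1}((\Delta u)\omega - \sqrt{-1}\partial\bar\partial u) + Z \geq 0$ and $\Delta u \in L^\infty(\bar M)$.

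The main obstacle I expect is keeping all the a priori constants genuinely independent of the degeneracy parameter (i.e. of $(\delta_{\psi_k,f})^{-1}$ and of $\psi_k$) simultaneously with the approximation of the domain in only $C^{2,\beta}$; this is exactly the content of Remark \ref{addcond-remark-psiu} and the careful bookkeeping in Propositions \ref{mix-general}, \ref{mix-Leviflat}, \ref{proposition1-normal}, and the second-order estimate Theorem \ref{globalsecond-Diri}, and verifying that the holomorphic-flatness of $\partial M_k = X\times\partial S_k$ (needed to apply the sharper Theorem \ref{mix-Leviflat-thm1} with constants depending only on $\partial M_k$ to second order) survives the approximation. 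Once those uniform bounds are in place the limiting procedure is routine.
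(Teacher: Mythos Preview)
Your overall strategy --- approximate $S$ by smooth $S_k$, solve non-degenerate problems on $M_k=X\times S_k$, extract uniform bounds independent of both the approximation and the degeneracy parameter, then pass to the limit --- is correct and is what the paper does. But you have overcomplicated the subsolution step and thereby imported an extra hypothesis: the balancedness of $\omega_X$ is not assumed in Theorem~\ref{main-thm3}. The point, already signalled by the last sentence of Theorem~\ref{main-thm4} and parallel to Corollary~\ref{dege-thm-c2alpha-special}, is that for the homogeneous equation $(\,\cdot\,)^n=0$ the constant function $\underline u\equiv 0$ is itself a strict admissible subsolution: with $u=0$ one has $Z=0$ and the form reduces to $\tilde\chi=\tfrac{1}{(n-1)!}*\omega_0^{n-1}>0$, so $\lambda(U[0])\in\Gamma_n$ and $F(U[0])>\psi_k$ for every $\psi_k$. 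No balanced factor is needed to manufacture the subsolution; only the product structure $M=X\times S$ is used, to guarantee holomorphic flatness (hence mean pseudoconcavity) of $\partial M_k$, which is what makes Proposition~\ref{proposition1-normal} applicable. Your route via $\underline u=\underline v+Ah$ and Section~\ref{construction2-subsection} would prove only the weaker statement in which $\omega_X$ is balanced.

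Two smaller points. First, your claim that $\mathcal P_{n-1}$ is of type~2 is false for $n\ge 3$: its projection $\Gamma_\infty$ is the half-space $\{\sum_{\alpha=1}^{n-1}\lambda_\alpha>0\}$, not all of $\mathbb R^{n-1}$; and in the formulation~\eqref{cone-condition1-general} the relevant cone is $\Gamma_n$ (for $f=\log\det$ acting on $U=*\Phi$), which is certainly not of type~2 either. What you actually need, condition~\eqref{cone-condition1-general}, is trivially satisfied here because the right-hand side is $-\infty$. Second, the interior second-order estimate to quote is not Theorem~\ref{globalsecond-Diri} (which concerns equation~\eqref{mainequ}) but the analogue for~\eqref{mainequ-gauduchon-general}, namely the extension of Theorem~\ref{GTW-second} sketched in Subsection~\ref{proof-of-mainresults-mongeampere}.
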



The results can be further extended to general equations as follows:

\begin{theorem}
\label{thm3-general-equation}
Let $(M,\omega)=(X\times S, \pi_1^*\omega_X+\pi_2^*\omega_S)$ be a product with 
   the factor $(X,\omega_X)$ being a closed balanced manifold. Let $\partial S\in C^3$, 
   $\psi\in C^2(\bar M)$,
$\varrho\in C^{3}(\bar M)$,  $\varphi\in C^{3}(\partial M)$. Suppose in addition that \eqref{cone-condition1-general},  \eqref{elliptic}, \eqref{concave}, \eqref{nondegenerate} and \eqref{addistruc} hold.
Then Dirichlet problem \eqref{mainequ-gauduchon-general**} 
has a unique $C^{2,\alpha}$ admissible solution for some $0<\alpha<1$. 
If the data $\partial S, \psi, \varphi \in C^{\infty}$ then the admissible solution is smooth.
\end{theorem}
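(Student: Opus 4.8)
The plan is to reduce Theorem~\ref{thm3-general-equation} to the smooth–boundary case already settled in Theorem~\ref{thm1-general-equation}, by approximating $S$ from the interior exactly as in the proofs of Theorems~\ref{dege-thm-c2alpha}, \ref{mainthm-09} and \ref{mainthm-10-degenerate}. Two structural facts are used throughout. First, for the product $M=X\times S$ the boundary $\partial M=X\times\partial S$ is \textit{holomorphically flat}: in product coordinates $(z',z_n)$, with $z'$ local holomorphic coordinates on $X$ and $z_n$ on $S$, the boundary is locally $\mathfrak{Re}(z_n)=0$. Hence $\partial M$ is mean pseudoconcave, \eqref{bdry-assumption1-Gauduchon} holds automatically, and so Theorem~\ref{thm1-general-equation} together with the quantitative boundary estimate of Proposition~\ref{proposition1-normal} applies on any product $X\times S'$ with $S'\subset S$ a smooth Riemann surface with boundary. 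Second, the construction of Section~\ref{construction2-subsection}, starting from a solution $h$ of the Poisson-type problem \eqref{possion-def-local} on $S$ and from the admissible function $\underline v$ with $\underline v|_{\partial M}=\varphi$ satisfying \eqref{cone-condition1-general}, produces for $A\gg1$ a strictly admissible subsolution $\underline u=\underline v+Ah$ of \eqref{mainequ-gauduchon-general**} with $\frac{\partial}{\partial\nu}\underline u|_{\partial M}<0$; the balanced hypothesis $d\omega_X^{n-2}=0$ is precisely what makes the torsion obstruction coming from $\partial h\wedge\overline{\partial}(\pi_1^*\omega_X^{n-2})$ vanish, so that $*\Phi[\underline u]=*\Phi[\underline v]+A\pi_1^*\omega_X$.

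First I would set up the approximation. If $\partial S$ is already smooth there is nothing to do: $M=X\times S$ has smooth holomorphically flat boundary, the subsolution above is $C^\infty$, and Theorem~\ref{thm1-general-equation} gives the smooth admissible solution directly; this disposes of the last sentence of the statement. In general, since $\partial S\in C^3$, I would pick smooth $h^{(k)}\to h$ in $C^3(\bar S)$, choose by Sard's theorem regular values $-\alpha_k/A\to 0$, and set $S^{(k)}=\{h^{(k)}<-\alpha_k/A\}$, so $\bigcup_k S^{(k)}=S$ and $\partial S^{(k)}\to\partial S$ in $C^3$. Put $M^{(k)}=X\times S^{(k)}$, which again has smooth holomorphically flat boundary, take $\underline v^{(k)}\to\underline v$ in $C^{2,1}(\bar M)$ and $\psi^{(k)}\to\psi$ in $C^2(\bar M)$, and let $\underline u^{(k)}=\underline v^{(k)}+Ah^{(k)}$; for $k\gg1$ these are strictly admissible subsolutions of $f(\lambda(*\Phi[u^{(k)}]))=\psi^{(k)}$ on $M^{(k)}$ with boundary value $\underline u^{(k)}|_{\partial M^{(k)}}$. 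Theorem~\ref{thm1-general-equation} then yields a unique smooth admissible solution $u^{(k)}$ on $\overline{M^{(k)}}$.

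The core of the proof is uniform $C^{2,\alpha}$ control of $u^{(k)}$, independent of $k$. The $C^0$ bound comes from the maximum principle, squeezing $u^{(k)}$ between $\underline u^{(k)}$ and the solution $w^{(k)}$ of the associated trace equation $\mathrm{tr}_\omega\mathfrak{g}[w^{(k)}]=0$ with the same boundary data; differentiating the squeeze along $\partial M^{(k)}$ gives the boundary gradient bound. Proposition~\ref{proposition1-normal} then furnishes the key inequality \eqref{yuan-prop1}, hence $\sup_{\partial M^{(k)}}\Delta u^{(k)}\le C(1+\sup_{M^{(k)}}|\nabla u^{(k)}|^2)$ with $C$ depending only on $|\underline u^{(k)}|_{C^2(\bar M)}$, $\partial S^{(k)}$ up to second order, and other controlled data — uniform in $k$ because $\partial S^{(k)}\to\partial S$ in $C^3$ and $\underline u^{(k)}\to\underline u$ in $C^{2,1}$. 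Combined with the global second order estimate of the form $\sup_M\Delta u\le C(1+\sup_M|\nabla u|^2+\sup_{\partial M}|\Delta u|)$ — the boundary analogue of Theorem~\ref{GTW-second}, valid here since the structural assumption $(1.6)$ of \cite{yuan2018CJM} is satisfied by $*\Phi[u]$ — this gives $\sup_{M^{(k)}}\Delta u^{(k)}\le C(1+\sup_{M^{(k)}}|\nabla u^{(k)}|^2)$, and the blow-up argument of Section~\ref{mainproof} upgrades it to a uniform gradient bound. I expect this last step, together with the bookkeeping needed to guarantee that every constant above is truly $k$-independent, to be the main obstacle.

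Once $|\nabla u^{(k)}|+\Delta u^{(k)}$ is uniformly bounded and $\lambda(*\Phi[u^{(k)}])$ stays in a fixed compact subset of $\Gamma$, the equations are uniformly elliptic; interior Evans--Krylov and the Silvestre--Sirakov boundary regularity of \cite{Silvestre2014Sirakov} (using $\partial M^{(k)}\in C^3$, in particular $C^{2,\beta}$) yield uniform $C^{2,\alpha}$ bounds, and the $C^3$-convergence $\partial M^{(k)}\to\partial M$ lets me extract a limit $u\in C^{2,\alpha}(\bar M)$ solving \eqref{mainequ-gauduchon-general**} with $u|_{\partial M}=\varphi$; uniqueness is immediate from the comparison principle for the elliptic, concave operator $f(\lambda(*\Phi[\cdot]))$. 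Finally, when $\partial S$, $\psi$, $\varphi$ are smooth the approximation is unnecessary and Schauder bootstrapping on the uniformly elliptic equation promotes $u$ to $C^\infty(\bar M)$, which completes the proof.
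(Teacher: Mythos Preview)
Your proposal is correct and follows essentially the same route the paper takes for the analogous results (Theorems~\ref{opti-regularity-thm1} and~\ref{mainthm-10-degenerate}): construct the subsolution $\underline u=\underline v+Ah$ via Section~\ref{construction2-subsection}, approximate $S$ from inside by smooth $S^{(k)}$, solve on each $M^{(k)}=X\times S^{(k)}$ by Theorem~\ref{thm1-general-equation}, and pass to the limit using the uniform quantitative boundary estimate from Proposition~\ref{proposition1-normal}, the global second-order estimate adapted from~\cite{GTW15}, the blow-up gradient bound, and Evans--Krylov with Silvestre--Sirakov at the boundary. A couple of minor points: you do not need to approximate $\psi$, since $\psi\in C^2(\bar M)$ is already fixed; and the paper's own approximation in the proof of Theorem~\ref{mainthm-10-degenerate} takes boundary data $w|_{\partial M_k}$ and solves the Poisson problem afresh on each $S_k$, rather than using sublevel sets of a smoothing of $h$ as you do (your version is closer to the proof of Theorem~\ref{dege-thm-c2alpha}), but both choices yield the same uniform estimates.
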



\begin{theorem}
\label{thm2-general-equation}
Suppose $(M,\omega)=(X\times S, \pi_1^*\omega_X+\pi_2^*\omega_S)$ is a product whose factor $(X,\omega_X)$ is balanced. Let $\psi\in C^2(\bar M)$,
$ \partial S \in C^{2,\beta}$, $\varrho \in C^{2,\beta}(\bar M) $ 
 for some $0<\beta<1$.
In additon to  \eqref{elliptic}, \eqref{concave}, \eqref{nondegenerate} and \eqref{addistruc} hold, there is an $C^{2,\beta}$ admissible function with $\underline{v}|_{\partial M}=0$ to satisfy
  \eqref{cone-condition1-general}.
Then  Dirichlet problem \eqref{mainequ-gauduchon-general**} with homogeneous boundary data has a unique $C^{2,\alpha}$-admissible solution for some $0<\alpha\leq\beta$.
\end{theorem}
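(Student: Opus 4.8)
The plan is to prove Theorem~\ref{thm2-general-equation} by the approximation-by-subdomains scheme already used for Theorems~\ref{dege-thm-c2alpha} and~\ref{mainthm-09}, fed by the subsolution construction of Subsection~\ref{construction2-subsection}. The first observation is that since $M=X\times S$, for each $x_0\in\partial M$ one may take holomorphic coordinates $z'=(z_1,\dots,z_{n-1})$ on $X$ together with a holomorphic coordinate $z_n$ on $S$ straightening $\partial S$ to $\{\mathfrak{Re}(z_n)=0\}$; hence $\partial M=X\times\partial S$ is \emph{holomorphically flat}, in particular mean pseudoconcave, so that \eqref{bdry-assumption1-Gauduchon} holds automatically and all the quantitative boundary estimates of Section~\ref{appendix-gauduchon} behind Proposition~\ref{proposition1-normal} are at our disposal with constants independent of $(\delta_{\psi,f})^{-1}$, depending on the boundary only up to second order derivatives and on $|\underline u|_{C^2}$, $|\psi|_{C^1}$. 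The same straightening applies to any smooth domain $S_k\subset S$, so each $\partial M_k=X\times\partial S_k$ is again holomorphically flat.

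Next I would set up the approximating problems. Choose smooth Riemann surfaces/domains $S_k\subset S$ with $\bigcup_k S_k=S$ and $\partial S_k\to\partial S$ in $C^{2,\beta}$, and put $M_k=X\times S_k$. Let $\underline v\in C^{2,\beta}(\bar M)$ be the given admissible function with $\underline v|_{\partial M}=0$ and $\lambda(*\Phi[\underline v])\in\Gamma$ satisfying \eqref{cone-condition1-general}, let $h$ solve \eqref{possion-def-local} on $S$ (so $h<0$ in $S$ and $\partial_\nu h|_{\partial S}<0$), and form $\underline u=\underline v+Ah$ with $A\gg1$. As in Subsection~\ref{construction2-subsection}, the balancedness of $\omega_X$ makes the torsion obstruction vanish and yields $*\Phi[\underline u]=*\Phi[\underline v]+A\pi_1^*\omega_X$, so \eqref{cone-condition1-general} guarantees that $\underline u$ is a strictly admissible subsolution of \eqref{mainequ-gauduchon-general**} with $\underline u|_{\partial M}=0$ and $\partial_\nu\underline u|_{\partial M}<0$. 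Approximating $\underline v$ and $h$ by smooth functions (and, if needed, $\varrho$ and $\psi$ slightly), one obtains for $k\gg1$ smooth strictly admissible subsolutions $\underline u^{(k)}$ of the corresponding equation on $M_k$, with smooth boundary data $\underline u^{(k)}|_{\partial M_k}\to 0$; since $\partial M_k$ is holomorphically flat, Theorem~\ref{thm1-general-equation} (or Theorem~\ref{thm3-general-equation}) then produces a unique smooth admissible solution $u^{(k)}$ on $M_k$.

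The heart of the matter is making the estimates uniform in $k$. The $C^0$ bound and the boundary gradient bound follow independently of $k$ by trapping $u^{(k)}$ between $\underline u^{(k)}$ and the solution $w^{(k)}$ of the \emph{linear} Dirichlet problem $\mathrm{tr}_\omega(*\Phi[w^{(k)}])=0$ in $M_k$ with $w^{(k)}=\underline u^{(k)}|_{\partial M_k}$ — using that $\lambda(*\Phi[u^{(k)}])\in\Gamma\subset\Gamma_1$ forces $\mathrm{tr}_\omega(*\Phi[u^{(k)}])>0$, the maximum principle gives $\underline u^{(k)}\le u^{(k)}\le w^{(k)}$, and the $C^{2,\beta}$ convergence $\partial M_k\to\partial M$ makes the linear estimates uniform. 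With the $C^0$ and boundary $C^1$ bounds in hand, the quantitative boundary estimate $\sup_{\partial M_k}\Delta u^{(k)}\le C(1+\sup_{M_k}|\nabla u^{(k)}|^2)$ from Proposition~\ref{proposition1-normal} and the global second order estimate combine to \eqref{sec-estimate-quar1}, and then the blow-up argument of Section~\ref{mainproof}, relying on the Liouville theorem granted by \eqref{addistruc}, upgrades this to a uniform gradient bound; hence $|u^{(k)}|_{C^2(\bar M_k)}\le C$ uniformly. The equations being then uniformly elliptic, the real Hessian bound follows as in \cite{Guan2010Li}, and Silvestre–Sirakov's boundary estimate \cite{Silvestre2014Sirakov} furnishes $|u^{(k)}|_{C^{2,\alpha}}\le C$ near $\partial M_k$ with $\alpha\le\beta$ chosen once and for all; passing to the limit along $k$ (the $C^{2,\beta}$ convergence of $\partial S_k$ being exactly what the limit needs) produces a $C^{2,\alpha}$ solution with $\lambda(*\Phi[u])\in\Gamma$, and uniqueness is immediate from the comparison principle.

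The main obstacle is precisely this uniform gradient estimate: the blow-up argument requires \eqref{bdy-sec-estimate-quar1} on each $M_k$ with a constant independent of $k$, which forces one to check that the constants in the product version of the quantitative boundary estimate depend only on $|\underline u^{(k)}|_{C^2}$, $|\psi|_{C^1}$ and $\partial S_k$ up to second derivatives — all of which stay bounded along the approximation — and to verify that the Hölder exponent in the Silvestre–Sirakov step, together with the bookkeeping in the limiting-manifold construction, can be fixed uniformly in $k$. Granting these points, the passage to the limit and the uniqueness argument are routine.
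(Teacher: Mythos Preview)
Your approach is essentially the one the paper has in mind: the paper states Theorem~\ref{thm2-general-equation} without proof, leaving it as the Gauduchon-equation analogue of the approximation scheme in Section~\ref{Dirichlet-problem-less-rugularity} (Theorems~\ref{dege-thm-c2alpha} and~\ref{mainthm-09}), fed by the subsolution construction of Subsection~\ref{construction2-subsection} and the boundary estimates of Subsection~\ref{proof-of-mainresults-mongeampere}. Your identification of the ingredients---products $M_k=X\times S_k$ are holomorphically flat hence mean pseudoconcave, the subsolution $\underline u=\underline v+Ah$ with $\partial_\nu\underline u|_{\partial M}<0$, Propositions~\ref{proposition1-normal} and~\ref{boundary-mixed1} for the quantitative boundary estimate, the blow-up gradient bound, and Silvestre--Sirakov for the final $C^{2,\alpha}$ regularity---is correct.

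One point deserves more care than you give it. The ``product version'' of the tangential-normal estimate (the second clause of Proposition~\ref{boundary-mixed1}) requires the boundary data to lie in $C^2(\partial S)$, i.e.\ to depend only on the $S$-factor; but your boundary data $\underline u^{(k)}|_{\partial M_k}$ depends on $X$ through $\underline v$, so the general clause applies instead and brings in $|\varphi|_{C^3}$, which is not uniformly controlled since $\underline v\in C^{2,\beta}$. The clean fix, paralleling the first proof of Theorem~\ref{mainthm-09}, is to arrange \emph{constant} boundary data on $\partial M_k$: since $\underline v|_{\partial M}=0$ and $\partial M_k\to\partial M$ in $C^{2,\beta}$, one may subtract from a smoothing of $\underline v$ a small $C^2$-extension of its trace on $\partial M_k$, obtaining a smooth admissible $\tilde{\underline v}_k$ with $\tilde{\underline v}_k|_{\partial M_k}=0$ and $|\tilde{\underline v}_k|_{C^2}$ uniformly bounded; then $\tilde{\underline v}_k+Ah_k$ (with $h_k$ solving \eqref{possion-def-local} on $S_k$) is a smooth subsolution with zero boundary data, and the holomorphically-flat/constant-$\varphi$ version of the estimate applies with constants depending only on second derivatives. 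With this adjustment your argument goes through.
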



 


One can check if $\omega_X$ is both astheno-K\"ahler and Gauduchon,  
then so is  $\omega=\pi_1^*\omega_X+\pi_2^*\omega_S$. 
(Such closed non-K\"ahler manifolds, which further endow with  balanced metrics $\omega_{X,0}$,
were constructed in
\cite{Latorre2017Ugarte}).
 It is worthy to note that our results in this section also hold for the Form-type Calabi-Yau equation on such manifolds, 
 since the Form-type Calabi-Yau equation on such manifolds can be reduced to $\Psi_{u}^{n}=e^\psi \omega^n$ with
 \begin{equation}
 \label{FWW-equ}
 \begin{aligned}
 \Psi_u^{n-1}=\omega_0^{n-1}+\sqrt{-1}\partial \overline{\partial}u \wedge \omega^{n-2} + 2\mathfrak{Re}(\sqrt{-1}\partial u \wedge\overline{\partial}(\omega^{n-2})),
 \end{aligned}
 \end{equation}
 where $\omega_0$ is balanced. 
As above we can construct subsolutions if $\omega_X$ is further balanced. However,
a balanced metric  on a closed complex manifold cannot be astheno-K\"ahler unless it is K\"ahler (see \cite{Matsuo2001Takahashi}).



  \subsection{Proof of main estimates}
\label{proof-of-mainresults-mongeampere}

  Equation \eqref{mainequ-gauduchon-general*} is   as in the following
 \begin{equation}
\label{mainequ-gauduchon-general}
\begin{aligned}
F(U[u])=f(\lambda(U[u]))=\psi \mbox{ in } M,
\end{aligned}
\end{equation}
in which $U[u]:=*\Phi[u]=\chi+ (\Delta u)\omega-\sqrt{-1}\partial\overline{\partial}u +\varrho Z(\partial u,\overline{\partial}u)$.
   In what follows, 
   \begin{equation}
   \label{g-gau-form1}
   \begin{aligned}
   \mathfrak{g}[u]=\sqrt{-1}\partial\overline{\partial}u+  \check{\chi}
   +\frac{\varrho}{n-1}W(\partial u,\overline{\partial}u), 
   \mbox{   } \check{\chi}= \frac{1}{n-1}(\mathrm{tr}_\omega\chi)\omega-\chi,
   \end{aligned}
   \end{equation} 
 \begin{equation}
 \label{eigenvalues-denote1}
\begin{aligned}
 \lambda(\mathfrak{g}[u])=(\lambda_1,\cdots,\lambda_n) \mbox{ and } \lambda(U[u])=(\mu_1,\cdots,\mu_n),
 \end{aligned}
\end{equation}
then $\mu_i=\lambda_1+ \cdots+ \widehat{\lambda}_i+\cdots +\lambda_n$.
Let $P': \Gamma \longrightarrow P'(\Gamma)=:\widetilde{\Gamma}$ be a map given by
\begin{equation}
\begin{aligned}
(\mu_1,\cdots,\mu_n)  \longrightarrow (\lambda_1,\cdots,\lambda_n)=(\mu_1,\cdots,\mu_n) Q^{-1},
\end{aligned}
\end{equation}
where $Q=(q_{ij})$ and $q_{ij}=1-\delta_{ij}$ ($Q$ is symmetric). Here $Q^{-1}$ is well defined, since
 $\mathrm{det}Q=(-1)^{n-1} (n-1)\neq 0$.
 Thus, $\widetilde{\Gamma}$ is also an open symmetric convex cone of $\mathbb{R}^n$.

\vspace{1mm} 
Let's define $\tilde{f}: \widetilde{\Gamma}\rightarrow \mathbb{R}$ by $f(\mu)=\tilde{f}(\lambda)$.
Thus equation \eqref{mainequ-gauduchon-general} is rewritten as
\begin{equation}
\begin{aligned}
\tilde{f}(\lambda(\mathfrak{g}[u]))=f(\lambda(U[u])).
\end{aligned}
\end{equation}
In particular, for equation \eqref{mainequ-gauduchon},
 $f(\mu)=\sum_{i=1}^n \log \mu_i, \mbox{  } \tilde{f}(\lambda)= \log P_{n-1}(\lambda).   $
  
 \vspace{1mm}  One can verify that $\widetilde{f}$ also satisfies \eqref{elliptic}, \eqref{concave}  and \eqref{addistruc} in $\widetilde{\Gamma}$. 
 The linearized operator $\tilde{\mathcal{L}}$ of equation \eqref{mainequ-gauduchon-general} is given by
$$\tilde{\mathcal{L}}v=G^{i\bar j}v_{i\bar j}+\varrho F^{i\bar j}Z_{i\bar j,\zeta_{k}}v_k +\varrho F^{i\bar j}Z_{i\bar j,\zeta_{\bar k}}v_{\bar k},$$
where
$F^{i\bar j}=\frac{\partial F}{\partial U_{i\bar j}}$,
 $G^{i\bar j}=\sum_{k,l=1}^n(F^{k\bar l}g_{k\bar l})g^{i\bar j}-F^{i\bar j}$, and
  $U_{i\bar j}=\chi_{i\bar j}+\Delta u g_{i\bar j}-u_{i\bar j}+\varrho Z_{i\bar j}$.

\vspace{1mm}
 By constructing supersolution we can use maximum principle to derive \eqref{herer}, i.e.
 $$\sup_{ M}|u|+\sup_{\partial M}|\nabla u|\leq C.$$
  Theorem \ref{GTW-second} essentially shows
\textit{admissible} solutions to 
 \eqref{mainequ-gauduchon1} obey \eqref{quantitative-2nd-boundary-estimate}, i.e.
  $$\sup_M \Delta u\leq C(1+\sup_M |\nabla u|^2+\sup_{\partial M}|\Delta u|).$$
    Indeed, 
 \eqref{quantitative-2nd-boundary-estimate}  for   \eqref{mainequ-gauduchon-general} can be derived by following the outline 
 in \cite{GTW15}. 
 We sketch briefly the proof but omit more details to cut the paper's length.
Let's first recall two crucial ingredients in their proof of Theorem \ref{GTW-second} for equation \eqref{mainequ-gauduchon}: One is about the special structure of $Z$, and the other one is about the coefficient matrix
$(G^{i\bar j})$  of $\tilde{\mathcal{L}}$, whose eigenvalues are
$(\frac{\partial\tilde{f}}{\partial\lambda_1},\cdots,\frac{\partial\tilde{f}}{\partial\lambda_n})$.
More precisely,
 \begin{itemize}
 \item $Z$ satisfies the assumption in Page 187 of \cite{GTW15}.
 \item If $\lambda_1\geq \cdots \geq \lambda_n$, then 
$\frac{\partial \tilde{f}}{\partial \lambda_i}=\sum_{j\neq i} \frac{1}{\mu_j}\geq  \frac{1}{\mu_1}\geq  \frac{1}{n(n-1)} \sum_{k=1}^n \frac{\partial \tilde{f}}{\partial \lambda_k}$ for each $i\geq 2$.
 \end{itemize}

  For equation \eqref{mainequ-gauduchon-general}, as in \cite{GTW15}, $\varrho Z$ clearly 
  satisfies the assumption in Page 187 of \cite{GTW15}.
On the other hand, if $\lambda_1\geq \cdots \geq \lambda_n$, then
$\frac{\partial \tilde{f}}{\partial \lambda_i}= \sum_{j\neq i}\frac{\partial f}{\partial \mu_j}\geq
\frac{\partial f}{\partial \mu_1}\geq \frac{1}{n(n-1)}\sum_{k=1}^n \frac{\partial \tilde{f}}{\partial \lambda_k}$ for each $i\geq 2$.
The same argument as  
in \cite{GTW15} 
works and then derives 
\eqref{quantitative-2nd-boundary-estimate}.

\vspace{1mm}
The main goal is to derive gradient estimate. 
To derive the gradient estimate, as above,
it is also required to set up the  quantitative boundary estimate \eqref{bdy-sec-estimate-quar1}, i.e.
$$\sup_{\partial M} \Delta u \leq C (1+\sup_M |\nabla u|^2).$$
Such an 
estimate 
 immediately follows from \eqref{herer},
Propositions  \ref{boundary-mixed1} and \ref{proposition1-normal}.

 \begin{proposition}
\label{boundary-mixed1}
 Let $(M,\omega)$ be a compact Hermitian manifold with $C^3$ boundary.
Suppose \eqref{elliptic}, \eqref{concave} and \eqref{nondegenerate} hold.
Let $u\in C^{3}(M)\cap C^2(\bar M)$ be an admissible solution to Dirichlet problem \eqref{mainequ-gauduchon-general**}. 
Assume there is a $C^2$ admissible subsolution $\underline{u}$.
Then there is a uniformly positive constant  $C$ depending on $|\varphi|_{C^{3}(\bar M)}$,
$\sup_{M}|\nabla\psi|$,
$|\underline{u}|_{C^{2}(\bar M)}$, $\partial M$
up to second derivatives
and other known data 
\begin{equation}
\label{mixed-2}
\begin{aligned}
\sup_{\partial M}|\mathfrak{g}_{\alpha \bar n}| \leq C(1+\sup_{\partial M}|\nabla u|)(1+\sup_{M}|\nabla u|), 1\leq\alpha\leq n-1.
\end{aligned}
\end{equation}
Moreover, if $M=X\times S$ and 
 $\varphi\in C^2(\partial S)$ 
then the $C$ depends only on $\partial M$
up to second order derivatives, $|\varphi|_{C^{2}(\bar S)}$,
$\sup_{M}|\nabla\psi|$ and $|\underline{u}|_{C^{2}(\bar M)}$
and other known data.
\end{proposition}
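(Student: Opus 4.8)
The plan is to run the barrier-function argument used for Propositions \ref{mix-general}, \ref{mix-Leviflat} and \ref{mix-Leviflat-product}, now in the ``$\mathfrak{g}$-picture'' of equation \eqref{mainequ-gauduchon-general**}. After the linear change of variables $P'$ of Subsection \ref{proof-of-mainresults-mongeampere}, the Dirichlet problem becomes $\widetilde{f}(\lambda(\mathfrak{g}[u]))=\psi$, $u|_{\partial M}=\varphi$, where $\mathfrak{g}[u]=\sqrt{-1}\partial\overline{\partial}u+\check{\chi}+\frac{\varrho}{n-1}W(\partial u,\overline{\partial}u)$ is a genuine Hermitian $(1,1)$-form with eigenvalues $\lambda=(\lambda_1,\dots,\lambda_n)$, $\widetilde{f}$ satisfies \eqref{elliptic} and \eqref{concave} on $\widetilde{\Gamma}$, and the linearized operator $\tilde{\mathcal{L}}v=G^{i\bar j}v_{i\bar j}+\varrho F^{i\bar j}(Z_{i\bar j,\zeta_k}v_k+Z_{i\bar j,\bar\zeta_k}v_{\bar k})$ has positive definite coefficient matrix $(G^{i\bar j})$ with eigenvalues $\widetilde{f}_1,\dots,\widetilde{f}_n$. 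First I would record that the effective first-order correction $\check{\chi}+\frac{\varrho}{n-1}W(\partial u,\overline{\partial}u)$, which by \eqref{Z-tensor1} is linear in $\nabla u$ with coefficients built from the metric, the torsion and $\varrho$, obeys structural identities entirely analogous to \eqref{key-chi}; consequently the computation of Lemma \ref{Lnablau} and the boundary computations of Subsection \ref{bdestimates} — in particular the lower bound for $\tilde{\mathcal{L}}(\pm u_{t_\alpha})$ and Lemma \ref{yuan-key0} with $\mathcal{L}$ replaced by $\tilde{\mathcal{L}}$ — carry over, up to harmless extra first-order terms of size $(1+|\nabla u|)\sum\widetilde{f}_i$, and with all constants independent of $\sup_M|\nabla u|$ and $(\delta_{\psi,f})^{-1}$.

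Fixing $x_0\in\partial M$ and the coordinates \eqref{goodcoordinate1}, I would take $v=(\underline{u}-u)-t\sigma+N\sigma^2$ and, as in \eqref{Psi},
\[
\widetilde{\Psi}=A_1\sqrt{b_1}\,v-A_2\sqrt{b_1}\,\rho^2+\frac{1}{\sqrt{b_1}}\sum_{\tau<n}|\widetilde{u}_{\tau}|^2+A_3\Phi \quad \mbox{ on } \Omega_\delta,
\]
with $\Phi$ as in \eqref{Phi-def1}. One has $\widetilde{\Psi}\le0$ on $\partial\Omega_\delta$ and $\widetilde{\Psi}(x_0)=0$, so it suffices to prove $\tilde{\mathcal{L}}\widetilde{\Psi}\ge0$ on $\Omega_\delta$; then $(\nabla_\nu\widetilde{\Psi})(x_0)\le0$ gives $|\nabla^2u(X,\nu)(x_0)|\le C(1+\sup_{\partial M}|\nabla u|)(1+\sup_M|\nabla u|)$ for tangential $X$, hence \eqref{mixed-2} once the controlled tangential--tangential and lower-order parts of $u_{\alpha\bar n}$ are absorbed. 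For $\tilde{\mathcal{L}}\widetilde{\Psi}\ge0$ I would re-run the chain of estimates \eqref{L-v}--\eqref{bound-lambda} of the proof of Proposition \ref{mix-general}, substituting $G^{i\bar j}$ for $F^{i\bar j}$ and $\widetilde{f}$ for $f$: the convexity input $G^{i\bar j}\underline{\mathfrak{g}}_{i\bar j}\ge\sum\widetilde{f}_i\underline{\lambda}_i$ (Lemma 6.2 of \cite{CNS3}); the positivity $\tilde{\mathcal{L}}(\sum_{\tau<n}|\widetilde{u}_{\tau}|^2)\ge\frac18\sum_{i\neq r}\widetilde{f}_i\lambda_i^2-C_1'\sqrt{b_1}\sum\widetilde{f}_i|\lambda_i|-C_1'b_1\sum\widetilde{f}_i-C_1'\sqrt{b_1}$ (Proposition 2.19 of \cite{Guan12a}); and the control of $\sum\widetilde{f}_i|\lambda_i|$ via \eqref{inequ-1}--\eqref{flambda} — using Lemma \ref{asymptoticcone1} when \eqref{addistruc} holds and the device of \eqref{inequ-2} in general. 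The dichotomy is then Case I / Case II of that proof: when $|\nu_\lambda-\nu_{\underline{\lambda}}|\ge\beta$ invoke Lemma \ref{guan2014}; when $|\nu_\lambda-\nu_{\underline{\lambda}}|<\beta$, use $\widetilde{f}_i\ge\frac{\beta}{\sqrt{n}}\sum_j\widetilde{f}_j$, the bound \eqref{bdy22} on $\sum_{i\neq r}\widetilde{f}_i\lambda_i^2$ and \eqref{bound-lambda}, and choose $A_1N\gg1$, $A_2\gg A_3\gg1$.

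For the sharper statement on $M=X\times S$ with $\varphi\in C^2(\partial S)$, I would exploit that $\partial M=X\times\partial S$ is holomorphically flat: work in holomorphic coordinates with $\partial M=\{\mathfrak{Re}(z_n)=0\}$, take $\gamma=0$ and $\Phi=Du$ with $D=\pm\frac{\partial}{\partial x_\alpha},\pm\frac{\partial}{\partial y_\alpha}$ ($z'=(z_1,\dots,z_{n-1})$ a coordinate on $X$) and $\widetilde{u}_{\tau}=u_{\tau}$. Since $\varphi$ is pulled back from $\partial S$ one has $Du=D(u-\varphi)=0$ on $\partial M$, so $\Phi$ vanishes there and enters only through $|\varphi|_{C^2(\bar S)}$; re-running the computation with no $\gamma$-terms then yields \eqref{mixed-2} with $C$ depending only on $\partial S$ up to second derivatives, $|\varphi|_{C^2(\bar S)}$, $\sup_M|\nabla\psi|$ and $|\underline{u}|_{C^2(\bar M)}$, exactly as in Propositions \ref{mix-Leviflat} and \ref{mix-Leviflat-product}.

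The step I expect to be the main obstacle is the bookkeeping of the gradient contributions of $\varrho Z$ and $\varrho W$: I must check that replacing $\chi[u]$ by $\check{\chi}+\frac{\varrho}{n-1}W$ — whose first-order part carries the torsion tensor — creates no term of size $|\nabla u|^2\sum\widetilde{f}_i|\lambda_i|$ or larger beyond what the good terms $\frac{1}{16\sqrt{b_1}}\sum_{i\neq r}\widetilde{f}_i\lambda_i^2$ and $2A_1N\sqrt{b_1}\,G^{i\bar j}\sigma_i\sigma_{\bar j}$ can absorb, and that the presence of $(G^{i\bar j})$ in place of $(F^{i\bar j})$ is harmless because Lemma \ref{guan2014}, Lemma \ref{asymptoticcone1} and Proposition 2.19 of \cite{Guan12a} all apply to $\widetilde{f}$ on $\widetilde{\Gamma}$. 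With these two points settled, \eqref{mixed-2} follows from the barrier machinery above word for word.
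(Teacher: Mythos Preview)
Your proposal is correct and follows exactly the approach the paper takes: the paper's own proof consists of the single sentence ``The proof of Proposition \ref{boundary-mixed1} is almost the same as that of Proposition \ref{mix-general}, which goes through word by word,'' and you have correctly filled in what that entails---passing to $\widetilde{f}$ on $\widetilde{\Gamma}$ via the linear map $P'$, observing from \eqref{Z-tensor1} that the first-order part $\frac{\varrho}{n-1}W$ is linear in $\nabla u$ with bounded coefficients (so the analogue of \eqref{key-chi} holds), and then re-running the barrier argument \eqref{L-v}--\eqref{bound-lambda} with $\tilde{\mathcal{L}}$, $G^{i\bar j}$, $\widetilde{f}_i$ in place of $\mathcal{L}$, $F^{i\bar j}$, $f_i$. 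Your identified ``main obstacle'' is not in fact an obstacle: since $W_{i\bar j,\zeta_k}$ and $W_{i\bar j,\bar\zeta_k}$ are built from the metric and torsion alone (independent of $\nabla u$), no term of order $|\nabla u|^2\sum\widetilde{f}_i|\lambda_i|$ appears, and the bookkeeping is identical to the $\chi[u]$ case.
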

The proof of Proposition \ref{boundary-mixed1} is  almost the same as that of Proposition  \ref{mix-general}, which goes through word by word. So we omit the proof.



  \begin{proof}
 [Proof of Proposition \ref{proposition1-normal}]
 Fix $x_0\in \partial M$. 
Around $x_0$ we set local holomorphic coordinates $z=(z_1,\cdots,z_n)$ defined as in \eqref{goodcoordinate1},
and we  further assume that $ ({\mathfrak{g}}_{\alpha\bar\beta})$ is diagonal at $x_0$.
 In the proof the discussion is done at $x_0$, and 
 the Greek letters, such as $\alpha, \beta$, range from $1$ to $n-1$.
Let's denote
\begin{equation}
{\tilde{A}}(R)=\left(
\begin{matrix}
R-\mathfrak{{g}}_{1\bar 1}&&  &-\mathfrak{g}_{1 \bar n}\\
&\ddots&&\vdots \\
& &  R-\mathfrak{{g}}_{{(n-1)} \overline{(n-1)}}&- \mathfrak{g}_{(n-1) \bar n}\\
-\mathfrak{g}_{n \bar 1}&\cdots& -\mathfrak{g}_{n \overline{(n-1)}}& \sum_{\alpha=1}^{n-1}\mathfrak{g}_{\alpha \bar\alpha}  \nonumber
\end{matrix}
\right),
\end{equation}
\begin{equation}
\tilde{\underline{A}}(a,R)=\left(
\begin{matrix}
R-\mathfrak{{g}}_{1\bar 1}&&  &-\mathfrak{g}_{1\bar n}\\
&\ddots&&\vdots \\
& & R-\mathfrak{{g}}_{{(n-1)}  \overline{(n-1)}}&- \mathfrak{g}_{(n-1) \bar n}\\
-\mathfrak{g}_{n \bar1}&\cdots& -\mathfrak{g}_{n \overline{(n-1)}}& \sum_{\alpha=1}^{n-1}\underline{\mathfrak{g}}_{\alpha\bar\alpha}-a  \nonumber
\end{matrix}
\right),
\end{equation}
\begin{equation}
B(R)=\left(
\begin{matrix}
R-\mathfrak{\underline{g}}_{1\bar1}&-\mathfrak{\underline{g}}_{1 \bar 2}&\cdots &-\mathfrak{\underline{g}}_{1\overline{(n-1)}} &-\mathfrak{\underline{g}}_{1\bar n}\\
-\mathfrak{\underline{g}}_{2\bar1} &R-\mathfrak{\underline{g}}_{2\bar 2}&\cdots& -\mathfrak{\underline{g}}_{2\overline{(n-1)}}&-\mathfrak{\underline{g}}_{2\bar n}\\
\vdots&\vdots&\ddots&\vdots&\vdots \\
-\mathfrak{\underline{g}}_{(n-1)\bar1}&-\mathfrak{\underline{g}}_{(n-1)\bar 2}& \cdots&  R-\mathfrak{\underline{g}}_{{(n-1)} \overline{(n-1)}}& -\mathfrak{\underline{g}}_{(n-1)\bar n}\\
-\mathfrak{\underline{g}}_{n\bar1}&-\mathfrak{\underline{g}}_{n\bar 2}&\cdots&-\mathfrak{\underline{g}}_{n \overline{(n-1)}}& \sum_{\alpha=1}^{n-1}\underline{\mathfrak{g}}_{\alpha\bar \alpha}  \nonumber
\end{matrix}\right).\end{equation}
In particular, 
$\tilde{A}(\mathrm{tr}_\omega(\mathfrak{g}))=\left(U_{i\bar j}([u])\right)$, 
$B(\mathrm{tr}_\omega(\mathfrak{\underline{g}}))=\left(U_{i\bar j}([\underline{u}])\right)$.
We also denote eigenvalues of $(n-1)\times (n-1)$ matrix $\left(\mathfrak{\underline{g}}_{\alpha \bar\beta}\right)$ by
 $\underline{\lambda}'=(\underline{\lambda}'_1,\cdots, \underline{\lambda}'_{n-1})$.
Similar as in \eqref{opppp}, 
$$ f(R_1-\underline{\lambda}'_1,\cdots, R_1-\underline{\lambda}'_{n-1},\sum_{\alpha=1}^{n-1}\underline{\mathfrak{g}}_{\alpha \bar\alpha})\geq F(\lambda(B(\mathrm{tr}_\omega(\mathfrak{\underline{g}}))))\geq \psi, $$
and $(R_1-\underline{\lambda}'_1,\cdots, R_1-\underline{\lambda}'_{n-1},\sum_{\alpha=1}^{n-1}\underline{\mathfrak{g}}_{\alpha\bar\alpha})\in\Gamma$  for 
$R_1>0$ 
depending only on $\mathfrak{\underline{g}}$.
Therefore, there are positive constants $\varepsilon_{0}$, $R_{0}$
 depending  on  $\mathfrak{\underline{g}}$  and $f$, such that $(R_0-\underline{\lambda}'_1-\varepsilon_{0},\cdots, R_0-\underline{\lambda}'_{n-1}-\varepsilon_{0},\sum_{\alpha=1}^{n-1}\underline{\mathfrak{g}}_{\alpha\bar\alpha}-\varepsilon_{0})\in \Gamma$, and
\begin{equation}
\label{opppp-Gauduchon}
\begin{aligned}
  f(R_0-\underline{\lambda}'_1-\varepsilon_{0},\cdots, R_0-\underline{\lambda}'_{n-1}-\varepsilon_{0},\sum_{\alpha=1}^{n-1}\underline{\mathfrak{g}}_{\alpha\bar\alpha}-\varepsilon_{0})\geq   \psi=F(\tilde{A}(\mathrm{tr}_\omega(\mathfrak{g}))).
\end{aligned}
\end{equation}

\vspace{1mm}
Let $\epsilon'$ be a uniformly positive constant with
 $\epsilon'\sup_{\partial M}(w-\underline{u})_{\nu}\leq\frac{\varepsilon_0}{4}$ (so $\epsilon'\sup_{\partial M}(u-\underline{u})_{\nu}\leq\frac{\varepsilon_0}{4}$), where $w$ satisfies \eqref{supersolution}. 
 Since  
 $\mathrm{tr}_{\omega'}(-{L}_{\partial M})\in \overline{\Gamma}^\infty_{\mathbb{R}^1}$ there is a positive constant 
 $c'=C(\epsilon',\mathrm{tr}_{\omega'}(-{L}_{\partial M}))$ such that 
 $(c',\cdots,c',\epsilon'-\mathrm{tr}_{\omega'}({L}_{\partial M}))\in \Gamma.$
 Next, we check
 \begin{equation}
\begin{aligned}
 \tilde{{A}}(R)=\tilde{\underline{A}}(\epsilon'(u-\underline{u})_{\nu},R-c'(u-\underline{u})_{\nu})
 +(u-\underline{u})_{\nu}\mathrm{diag}(c',\cdots, c', \epsilon'-\mathrm{tr}_{\omega'}({L}_{\partial M})). \nonumber
\end{aligned}
\end{equation}
By \eqref{bdr-ind}, \eqref{Z-tensor1},  $T_{nn}^n=0,$ and $g_{i\bar j}=\delta_{ij}$, one has
\begin{equation}
\begin{aligned}
2(n-1)Z_{n\bar n}=\,&  \sum_{p,k=1}^n (\bar T^k_{pk}  u_p
+ T^{k}_{pk}  u_{\bar p})
-  \sum_{k=1}^n(\bar T^n_{kn}u_k
 + T^{n}_{kn}u_{\bar k}
+\bar T^{k}_{nk}u_n + T^{k}_{nk}u_{\bar n} )  \\
=\,&  \sum_{\alpha, \beta=1}^{n-1}  (\bar T^\beta_{\alpha \beta}  u_\alpha
+ T^{\beta}_{\alpha \beta}  u_{\bar \alpha})
 =  \sum_{\alpha,\beta=1}^{n-1}   (\bar T^\beta_{\alpha \beta}  \underline{u}_\alpha
+ T^{\beta}_{\alpha \beta}  \underline{u}_{\bar \alpha}) \nonumber
  = 2(n-1)\underline{Z}_{n\bar n}.
\end{aligned}
\end{equation}
Thus we achieve the goal, 
since $W_{i\bar j}=(\mathrm{tr}_\omega Z)g_{i\bar j}-(n-1)Z_{i\bar j}$, \eqref{bdr-ind-2} and
\eqref{bdr-ind}
\begin{equation}
\begin{aligned} \sum_{\alpha=1}^{n-1}\mathfrak{g}_{\alpha\bar\alpha} = \,& \sum_{\alpha=1}^{n-1}(u_{\alpha\bar\alpha}+ \check{\chi}_{\alpha\bar\alpha})+\frac{\varrho}{n-1}\sum_{\alpha=1}^{n-1}W_{\alpha\bar\alpha}
=  \sum_{\alpha=1}^{n-1}(u_{\alpha\bar\alpha}+  \check{\chi}_{\alpha\bar\alpha})+ \varrho Z_{n\bar n}  \\
= \,& \sum_{\alpha=1}^{n-1}(\underline{u}_{\alpha\bar\alpha}+ \check{\chi}_{\alpha\bar\alpha})
-(u-\underline{u})_{\nu}\mathrm{tr}_{\omega'}({L}_{\partial M}) +\varrho\underline{Z}_{n\bar n} \\
 =\,&
  \sum_{\alpha=1}^{n-1}\underline{\mathfrak{g}}_{\alpha\bar\alpha}-(u-\underline{u})_{\nu}\mathrm{tr}_{\omega'}({L}_{\partial M}). \nonumber
\end{aligned}
\end{equation}

\vspace{1mm} Let's pick  $\epsilon=\frac{\varepsilon_0}{2(n-1)}$ in  Lemma  \ref{yuan's-quantitative-lemma} and set
\begin{equation}
\begin{aligned}
 R_s=\,& \frac{2(n-1)(2n-3)}{\varepsilon_0}
\sum_{\alpha=1}^{n-1} | \mathfrak{g}_{\alpha \bar n}|^2
+ (n-1)\sum_{\alpha=1}^{n-1} | \mathfrak{{g}}_{\alpha \bar\alpha}| 
    + \sum_{\alpha=1}^{n-1}( \mathfrak{\underline{g}}_{\alpha \bar\alpha}
 + |\underline{\lambda}'_\alpha| )
 +R_0 +\varepsilon_0+
    c'(u-\underline{u})_{\nu}, \nonumber
\end{aligned}
\end{equation}
where $\varepsilon_0$ and $R_0$ are fixed constants so that \eqref{opppp-Gauduchon} holds, and  
$c'=C(\epsilon',\mathrm{tr}_{\omega'}(-{L}_{\partial M}))$ is chosen above.

\vspace{1mm}
 Let $\lambda(\tilde{\underline{A}}(\epsilon'(u-\underline{u})_{\nu},R_s-c'(u-\underline{u})_{\nu}))=(\lambda_1(\epsilon',R_s),\cdots,\lambda_n(\epsilon',R_s))$ be the eigenvalues of $\tilde{\underline{A}}(\epsilon'(u-\underline{u})_{\nu},R_s-c'(u-\underline{u})_{\nu})$.
 It follows from  Lemma  \ref{yuan's-quantitative-lemma}   
 that 
\begin{equation}
\label{lemma12-yuan-Gauduchon}
\begin{aligned}
\lambda_\alpha(\epsilon',R_s) \geq \,& R_s-c'(u-\underline{u})_{\nu}- \mathfrak{g}_{1\bar 1}-\frac{\varepsilon_0}{2(n-1)},
\mbox{  } \forall 1\leq \alpha<n, \\
\lambda_n(\epsilon',R_s) \geq \,& \sum_{\alpha=1}^{n-1}\mathfrak{\underline{g}}_{\alpha\bar \alpha}-\epsilon'(u-\underline{u})_{\nu}
-\frac{\varepsilon_0}{2},
\end{aligned}
\end{equation}
in particular, $\lambda(\tilde{\underline{A}}(\epsilon'(u-\underline{u})_{\nu},R_s-c'(u-\underline{u})_{\nu}))\in\Gamma$. 
Therefore,  $\lambda(\tilde{{A}}(R_s))\in \Gamma$ and 
 \begin{equation}
\label{puretangential2-gauduchon}
\begin{aligned}
F( \tilde{{A}}(R_s))\geq F(\tilde{\underline{A}}(\epsilon'(u-\underline{u})_{\nu},R_s-c'(u-\underline{u})_{\nu})).
\end{aligned}
\end{equation}
Here,  we use \eqref{addistruc} as in proof of Proposition \ref{proposition-quar-yuan1}. 
 In addition, if $\partial M$ is mean pseudoconcave, then 
 $\sum_{\alpha=1}^{n-1} {\mathfrak{g}}_{\alpha\bar\alpha}
\geq  \sum_{\alpha=1}^{n-1}\underline{\mathfrak{g}}_{\alpha\bar\alpha}$, and so
 $F( \tilde{{A}}(R))\geq F(\tilde{\underline{A}}(0,R).$
   So \eqref{addistruc} can  be removed in the case when $\partial M$ is mean pseudoconcave.
 
 \vspace{1mm}
 Putting \eqref{elliptic},  \eqref{opppp-Gauduchon}, \eqref{puretangential2-gauduchon} and \eqref{lemma12-yuan-Gauduchon} together,  
  we get
   $\mathrm{tr}_\omega(\mathfrak{g}) \leq R_s.$



 \end{proof}

\section{Further discussions}
\label{further-discussion}

\subsection{Quantitative boundary estimate revisited}
An extension of Proposition  \ref{proposition-quar-yuan1} is as follows.
\begin{proposition} 
\label{extension2-supersolution}
Let $\psi\in C^0(\bar M)$, $\varphi\in C^2(\partial M)$.
In addition to  \eqref{elliptic}, \eqref{concave}, \eqref{nondegenerate}, \eqref{addistruc} and \eqref{unbound}, we assume that
$\lambda_{\omega'}({L}_{\partial M})\in \overline{\Gamma}_{\infty}$ and there is an admissible function $\breve{w}\in C^2(\bar M)$ with satisfying
\begin{equation}
\label{supersolution*}
\begin{aligned}
f(\lambda(\mathfrak{g}[\breve{w}]))\leq \psi \mbox{ in } M, \mbox{  } \breve{w}=\varphi \mbox{ on } \partial M.
\end{aligned}
\end{equation}
Let $u$ be a $C^2$-smooth admissible solution to Dirichlet problem \eqref{mainequ} and \eqref{mainequ1},
then \eqref{yuan-prop1} holds.
\end{proposition}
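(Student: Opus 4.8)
The plan is to follow the same blueprint as the proof of Proposition \ref{proposition-quar-yuan1}, but to replace the supersolution $w$ coming from \eqref{supersolution} with the admissible supersolution $\breve{w}$ furnished by \eqref{supersolution*}, which is the natural object to compare against when $\partial M$ is \emph{pseudoconvex} rather than pseudoconcave (note the hypothesis here is $\lambda_{\omega'}({L}_{\partial M})\in\overline{\Gamma}_\infty$, i.e.\ the sign of the Levi form is reversed). First I would fix $x_0\in\partial M$ and choose local holomorphic coordinates as in \eqref{goodcoordinate1}, arranged so that $g_{i\bar j}(x_0)=\delta_{ij}$ and so that $(\breve{\mathfrak g}_{\alpha\bar\beta})$ — the restriction of $\mathfrak{g}[\breve{w}]$ to $T^{1,0}_{\partial M}$ — is diagonal at $x_0$. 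By the maximum principle applied to \eqref{supersolution*} versus the equation, together with the matching boundary data $u=\breve{w}=\varphi$ on $\partial M$, one gets $u\geq\breve{w}$ in $M$ and hence $(u-\breve{w})_\nu\leq 0$ on $\partial M$; more importantly, the boundary relation \eqref{bdr-ind} gives
\begin{equation}
(u_{\alpha\bar\beta}-\breve{w}_{\alpha\bar\beta})(x_0)=-(u-\breve{w})_\nu(x_0)\,{L}_{\partial M}(\partial_\alpha,\overline{\partial}_\beta)(x_0),
\end{equation}
with $(u-\breve{w})_\nu(x_0)\le 0$, so the tangential correction term now enters with the opposite sign and is controlled by $\lambda_{\omega'}({L}_{\partial M})\in\overline{\Gamma}_\infty$ rather than by $\lambda_{\omega'}(-{L}_{\partial M})$.

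Next I would reproduce the two preparatory facts used in \cite{yuan2017} and in Proposition \ref{proposition-quar-yuan1}: via Lemma \ref{yuan's-quantitative-lemma} combined with \eqref{A-B} (ellipticity, concavity and Lemma 6.2 of \cite{CNS3}) one produces uniform constants $\varepsilon_0,R_0>0$ depending only on $\breve{\mathfrak g}$ and $f$ such that $(\breve{\mathfrak g}_{1\bar1}-\varepsilon_0,\dots,\breve{\mathfrak g}_{(n-1)\overline{(n-1)}}-\varepsilon_0,R_0)\in\Gamma$ and $f$ of this vector is $\geq\psi$; the point is that $F(\lambda(U[\breve{w}]))\le\psi$ only helps here, since by \eqref{A-B} and concavity it still forces a lower bound $f(\breve{\mathfrak g}_{1\bar1},\dots,\breve{\mathfrak g}_{(n-1)\overline{(n-1)}},R)\ge\psi$ for $R$ large, which is all that is needed to run the openness-of-$\Gamma$ argument. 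Then, exactly as in the proof of Proposition \ref{proposition-quar-yuan1}, I would fix $\epsilon'>0$ with $\epsilon'\sup_{\partial M}|(u-\breve{w})_\nu|\le\varepsilon_0/4$, use $\lambda_{\omega'}({L}_{\partial M})\in\overline{\Gamma}_\infty$ to pick $C(\epsilon',{L}_{\partial M})$ so that the matrix $\mathrm{diag}\big({L}_{\partial M}(\partial_\alpha,\overline{\partial}_\beta)+\epsilon' I_{n-1},\,C(\epsilon',{L}_{\partial M})\big)$ has eigenvalues in $\Gamma$, and write the decomposition analogous to \eqref{cha1}
\begin{equation}
A(R)=\underline{A}(\epsilon',R)+|(u-\breve{w})_\nu|\,\mathrm{diag}\big({L}_{\partial M}(\partial_\alpha,\overline{\partial}_\beta)+\epsilon' I_{n-1},\,C(\epsilon',{L}_{\partial M})\big).
\end{equation}
Applying Lemma \ref{yuan's-quantitative-lemma} with $\epsilon=\varepsilon_0/4$ to $\underline{A}(\epsilon',R_c)$ for the appropriate threshold $R_c$ (quadratic in $|\mathfrak g_{\alpha\bar n}|$, linear in $|\breve{\mathfrak g}_{\alpha\bar\alpha}|$), one gets $\lambda(A(R_c))\in\Gamma$ and then, using \eqref{A-B}, Lemma 6.2 of \cite{CNS3} and Lemma \ref{asymptoticcone1} (this is where \eqref{addistruc} is used), the chain of inequalities $F(A(R_c))\ge F(\underline{A}(\epsilon',R_c))\ge f(\dots)\ge\psi$, forcing $\mathfrak g_{n\bar n}(x_0)\le R_c$, which is precisely \eqref{yuan-prop1}.

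The one genuinely new point compared with Proposition \ref{proposition-quar-yuan1} is the role of the extra hypothesis \eqref{unbound}. In the pseudoconcave case of Proposition \ref{proposition-quar-yuan1} the comparison $A(R)\ge\underline{A}(R)$ was automatic; here the sign of the Levi-form correction makes $A(R)$ \emph{smaller} than the naive comparison matrix in the last $(n{-}1)$ tangential slots, so to keep $\lambda(\underline{A}(\epsilon',R_c))\in\Gamma$ after subtracting $\epsilon'|(u-\breve{w})_\nu|$ from each tangential entry I need the cone $\Gamma$ to remain non-empty under this shift — and condition \eqref{unbound}, together with Lemma \ref{asymptoticcone1}, is exactly what guarantees that increasing the single large entry $R$ can compensate for a bounded decrease of the other entries while staying in $\Gamma^\psi$. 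So the main obstacle will be bookkeeping the two competing sign effects (the Levi form enters with sign $+{L}_{\partial M}$, and the $\epsilon'$-perturbation is subtracted) and verifying that the interval of admissible $R$ is non-empty and large enough; this is where \eqref{unbound} and the supersolution inequality \eqref{supersolution*} must be invoked in tandem, exactly as in the treatment of the pseudoconvex side of the boundary estimate. The rest is a line-by-line transcription of the argument in Section~\ref{Diri-pseudoconcave}.
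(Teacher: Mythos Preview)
Your overall blueprint is correct and matches the paper's sketch: replace $\underline u$ by $\breve w$, flip the sign of the Levi-form term, and rerun the argument of Proposition~\ref{proposition-quar-yuan1}. However, two points need correcting.

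First, a sign slip: the comparison principle gives $\breve w\ge u$ in $M$ (a supersolution lies \emph{above} the solution), not $u\ge\breve w$. Your conclusion $(u-\breve w)_\nu\le 0$, equivalently $(\breve w-u)_\nu\ge 0$, is nonetheless the right one and is what the paper records.

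Second, and more substantively, you misidentify where hypothesis~\eqref{unbound} enters. You write that the supersolution inequality $F(\mathfrak g[\breve w])\le\psi$ ``still forces a lower bound $f(\breve{\mathfrak g}_{1\bar1},\dots,\breve{\mathfrak g}_{(n-1)\overline{(n-1)}},R)\ge\psi$ for $R$ large''; it does not. That inequality points the wrong way and plays no role in obtaining the analogue of~\eqref{opppp}. What the paper actually uses is that $\breve w$ is \emph{admissible}, so $\lambda(\mathfrak g[\breve w])\in\Gamma$, and then~\eqref{unbound} forces $\lim_{t\to\infty}f(\lambda(\mathfrak g[\breve w])+te_i)=\sup_\Gamma f>\psi$ for every $i$; in other words, \eqref{unbound} makes every admissible function automatically a $\mathcal C$-subsolution in the sense of~\eqref{existenceofsubsolution2}. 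This is precisely what yields the analogue of~\eqref{opppp} with $\breve{\mathfrak g}$ in place of $\underline{\mathfrak g}$. Once that is in hand, the decomposition~\eqref{cha1} goes through with $(\breve w-u)_\nu\ge 0$ multiplying a matrix with eigenvalues in $\Gamma$ (now built from $+{L}_{\partial M}$), and Lemma~\ref{asymptoticcone1} is invoked exactly as before to get $F(A(R_c))\ge F(\breve A(\epsilon',R_c))$. So the ``genuinely new'' role of~\eqref{unbound} is at the \eqref{opppp} step, not in the decomposition; the supersolution inequality~\eqref{supersolution*} is used only for the comparison $(\breve w-u)_\nu\ge 0$.
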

A somewhat interesting fact is that it does not require the subsolution obeying \eqref{existenceofsubsolution}  in 
Proposition  \ref{extension2-supersolution}, and the subsolution 
is only used to derive 
$\sup_{M}|u|+\sup_{\partial M}|\nabla u|\leq C.$
Comparison principle yields  $(\breve{w}-u)_\nu|_{\partial M}\geq 0$; 
moreover, we have an inequality analogous to \eqref{opppp} since $\breve{w}$ is a $\mathcal{C}$-subsolution of  equation \eqref{mainequ}.

\vspace{1mm}
It is noteworthy that, comparing with that of  Proposition  \ref{proposition-quar-yuan1}, the upper bound asserted in Proposition 
\ref{extension2-supersolution} may not depend on $(\delta_{\psi,f})^{-1}$. So it can not be applied to degenerate equations.

\vspace{1mm}
The discussion above also works for Dirichlet problem \eqref{mainequ-gauduchon-general*} and \eqref{mainequ1}.  We obtain

\begin{theorem}
 Let $(M,\omega)$ be a compact Hermitian manifold with general smooth boundary.
 Let $u\in C^3(M)\cap C^2(\bar M)$ be an admissible solution to Dirichlet problem 
 \eqref{mainequ-gauduchon-general*} and \eqref{mainequ1}.
Suppose, in addition to \eqref{elliptic}, \eqref{concave}, \eqref{nondegenerate},  $\psi\in C^\infty(\bar M)$, $\varphi\in C^\infty(\partial M)$, that \eqref{existenceofsubsolution}, \eqref{addistruc}, \eqref{unbound-strong} and \eqref{supersolution*} hold.
Then the quantitative boundary estimate \eqref{bdy-sec-estimate-quar1} holds, and the Dirichlet problem has a unique smooth admissible solution.
\end{theorem}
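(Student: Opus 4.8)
The plan is to reduce the statement to two a priori estimates already developed in the paper---a quantitative boundary estimate of the form \eqref{bdy-sec-estimate-quar1} and the global second order estimate \eqref{quantitative-2nd-boundary-estimate} for equation \eqref{mainequ-gauduchon-general*}---and then to run the blow-up argument of Section~\ref{mainproof} together with the regularity and continuity scheme of Section~\ref{proof-of-mainresults-mongeampere}. First I would record the lower order bounds: the admissible subsolution $\underline{u}$, the supersolution $\breve{w}$ of \eqref{supersolution*} (together with a supersolution of the auxiliary linear problem with boundary value $\varphi$) give, via the maximum and comparison principles, $\sup_M|u|+\sup_{\partial M}|\nabla u|\le C$, that is, \eqref{herer}, together with the one-sided normal comparisons $(\breve{w}-u)_\nu|_{\partial M}\ge 0$ and $(u-\underline{u})_\nu|_{\partial M}\ge 0$; on the boundary it is $\breve{w}$, not $\underline{u}$, that will carry the double-normal argument, while the global second order estimate still uses $\underline{u}$ in the usual way.

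Second, the quantitative boundary estimate. The tangential--normal part is insensitive to the shape of $\partial M$: Proposition~\ref{boundary-mixed1}, whose proof copies that of Proposition~\ref{mix-general}, yields $\sup_{\partial M}|\mathfrak{g}_{\alpha\bar n}|\le C(1+\sup_{\partial M}|\nabla u|)(1+\sup_M|\nabla u|)$. The double-normal estimate is the heart of the proof and is where \eqref{unbound-strong}, \eqref{addistruc} and the supersolution take the place of a Levi-form hypothesis. I would rerun the matrix-decomposition argument from the proof of Proposition~\ref{proposition1-normal} with $\breve{w}$ in place of $\underline{u}$ at the boundary point: since $f(\lambda(*\Phi[\breve{w}]))\le\psi<\sup_\Gamma f$ and $\breve{w}$ is admissible, $\breve{w}$ is a $\mathcal{C}$-subsolution, so Lemma~\ref{yuan's-quantitative-lemma} together with \eqref{addistruc} (through Lemma~\ref{asymptoticcone1}) supplies constants $\varepsilon_0,R_0$ and an inequality analogous to \eqref{opppp-Gauduchon}. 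Using the boundary identity $Z_{n\bar n}[u]=Z_{n\bar n}[\breve{w}]$ (the tangential data of $u$ and $\breve{w}$ coincide), the relation expressing $\sum_{\alpha<n}\mathfrak{g}_{\alpha\bar\alpha}[u]$ through $\breve{w}$ and $(u-\breve{w})_\nu L_{\partial M}$, and crucially \eqref{unbound-strong}---which asserts that $f$ attains $\sup_\Gamma f$ along shifts of the first $n-1$ eigenvalues, exactly the direction produced when one decomposes the transposed operator $*\Phi$---one keeps the perturbed Hermitian matrix in $\Gamma$ at the threshold value $R_s$ furnished by Lemma~\ref{yuan's-quantitative-lemma}, even with $L_{\partial M}$ now unrestricted in sign. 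This gives the key inequality \eqref{yuan-prop1}, and combining it with the tangential--normal bound and $\sup_{\partial M}|\nabla u|\le C$ gives \eqref{bdy-sec-estimate-quar1}.

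With \eqref{bdy-sec-estimate-quar1} established I would invoke the extension of Theorem~\ref{GTW-second} indicated in Section~\ref{proof-of-mainresults-mongeampere}: the Sz\'ekelyhidi--Tosatti--Weinkove computation carries over verbatim, because $\varrho Z$ satisfies the structural assumption of \cite{GTW15} and $\frac{\partial\tilde{f}}{\partial\lambda_i}\ge\frac{1}{n(n-1)}\sum_k\frac{\partial\tilde{f}}{\partial\lambda_k}$ for $i\ge 2$; this yields $\sup_M\Delta u\le C(1+\sup_M|\nabla u|^2+\sup_{\partial M}\Delta u)$, hence $\sup_M\Delta u\le C(1+\sup_M|\nabla u|^2)$. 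Then the blow-up argument of \cite{Dinew2017Kolo,Gabor,Chen} applies: if $\sup_M|\nabla u|$ were not controlled, rescaling at a maximizing sequence and using the second order bound to extract a limit would produce an entire admissible solution on $\mathbb{C}^n$, which by Sz\'ekelyhidi's Liouville theorem---again using \eqref{addistruc}---must be constant, a contradiction; boundary blow-up is excluded by the boundary estimates just obtained, as in \cite{Chen}. Thus $\sup_M|\nabla u|\le C$ and $\sup_M\Delta u\le C$, the equation is uniformly elliptic, and the complex Evans--Krylov theorem, boundary $C^{2,\alpha}$ regularity (legitimate since $\psi$, $\varphi$, $\partial M$ are smooth, e.g.\ through \cite{Silvestre2014Sirakov}) and Schauder bootstrapping give $u\in C^\infty(\bar M)$ with uniform bounds. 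Existence then follows by the continuity method, or by Leray--Schauder degree, along a deformation to a model problem admitting the subsolution; uniqueness follows from the comparison principle using \eqref{elliptic}--\eqref{concave}.

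The step I expect to be the real obstacle is the double-normal estimate on a genuinely general boundary: one must verify that \eqref{unbound-strong}---strictly stronger than \eqref{unbound} and imposed here with no sign restriction on $L_{\partial M}$---together with admissibility of $\breve{w}$ up to $\partial M$ suffices to keep the perturbed matrix inside $\Gamma$ when its corner entry $\sum_{\alpha<n}\mathfrak{g}_{\alpha\bar\alpha}$ is displaced by the possibly large, possibly wrong-signed Levi term. This forces a careful accounting of which quantities are genuinely bounded by the lower order estimates and which must be routed through the asymptotic cone, and it is the only point at which the transposed structure of $*\Phi$ interacts nontrivially with the geometry of $\partial M$.
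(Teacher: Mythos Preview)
Your proposal is correct and follows the paper's own approach. In fact, the paper gives essentially no proof of this theorem: it merely states that ``the discussion above also works for Dirichlet problem \eqref{mainequ-gauduchon-general*} and \eqref{mainequ1}'', referring to the short argument preceding Proposition~\ref{extension2-supersolution}. Your outline fleshes this out correctly: lower-order bounds via $\underline{u}$ and $\breve{w}$, the tangential--normal estimate from Proposition~\ref{boundary-mixed1}, the double-normal estimate by rerunning the matrix argument of Proposition~\ref{proposition1-normal} with $\breve{w}$ in place of $\underline{u}$, the global second order estimate \`a la \cite{GTW15}, then blow-up, Evans--Krylov and continuity.

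One small refinement worth making in the double-normal step. You invoke ``$\breve{w}$ is a $\mathcal{C}$-subsolution'' to obtain the analogue of \eqref{opppp-Gauduchon}. That phrasing is lifted from the discussion of Proposition~\ref{extension2-supersolution}, where condition \eqref{unbound} is in force and does make any admissible function a $\mathcal{C}$-subsolution. Here only the weaker \eqref{unbound-strong} is assumed, and it does not by itself guarantee the full $\mathcal{C}$-subsolution condition (a shift along a \emph{single} $e_i$). What you actually need---and what \eqref{unbound-strong} provides directly---is that $f$ tends to $\sup_\Gamma f$ when the \emph{first $n-1$} eigenvalues of $U[\breve{w}]$ are pushed to $+\infty$ with the last held fixed; this is precisely the direction in which $R$ enters the arrowhead matrix $\tilde{A}(R)$, and (together with $(1,\dots,1,0)\in\overline{\Gamma}$, so that $\lambda(U[\breve{w}])+t(1,\dots,1,0)\in\Gamma$) it yields the required threshold inequality without passing through the $\mathcal{C}$-subsolution notion. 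You already identify this role of \eqref{unbound-strong} in your text, so the correction is only to the justification, not the mechanism.
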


As a result, with assumptions that both \eqref{existenceofsubsolution}  and  \eqref{supersolution*} hold, we can solve Dirichlet problem of Monge-Amp\`ere equation for $(n-1)$-PSH functions on general compact Hermitian manifolds without mean pseudoconcave boundary restriction.

\subsection{Uniqueness of weak solution} \label{uniqueness-weak-solution}
We prove the weak solutions what we obtain in this paper
by using continuity method are weak $C^0$-solutions to Dirichlet problem correspondingly
 in the sense of Definition \ref{def-c0-weak}. 
Following Chen \cite{Chen}, we define 
\begin{definition}
\label{def-c0-weak}
A continuous function $u\in C(\bar M)$ is   a weak $C^0$-solution to degenerate equation \eqref{mainequ-ios}  with prescribed boundary data $\varphi$ if,
for any $\epsilon>0$ there is a $C^2$-\textit{admissible} function $\widetilde{u}$ such that $|u-\widetilde{u}|<\epsilon$, where $\widetilde{u}$ solves
\begin{equation}
\begin{aligned}
F(\mathfrak{g}[\widetilde{u}])=\psi+\rho_{\epsilon} \mbox{ in } M, \mbox{  } \widetilde{u}=\varphi \mbox{ on } \partial M. \nonumber
\end{aligned}
\end{equation}
Here $\rho_{\epsilon}$ is a function satisfying $0<\rho_{\epsilon}<C(\epsilon)$, and $C(\epsilon)\rightarrow 0$ as $\epsilon\rightarrow 0$.
\end{definition}

\begin{theorem}
\label{weakc0comparison}
Suppose $u^1$, $u^2$ are two $C^0$-weak solutions to  degenerate equation  \eqref{mainequ} with 
boundary data $\varphi^1$, $\varphi^2$. 
Then $\sup_{M}|u^1-u^2|\leq \sup_{\partial M}|\varphi^1-\varphi^2|.$
\end{theorem}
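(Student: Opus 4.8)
The plan is to prove a comparison principle for the weak $C^0$-solutions in the sense of Definition \ref{def-c0-weak}, from which the stated estimate follows immediately. First I would fix $\epsilon>0$ and, using the definition of weak $C^0$-solution, pick $C^2$-\emph{admissible} functions $\widetilde{u}^1$, $\widetilde{u}^2$ with $|u^j-\widetilde{u}^j|<\epsilon$ on $\bar M$, where $\widetilde{u}^j$ solves $F(\mathfrak{g}[\widetilde{u}^j])=\psi+\rho_\epsilon^j$ in $M$ and $\widetilde{u}^j=\varphi^j$ on $\partial M$, with $0<\rho_\epsilon^j<C(\epsilon)$, $C(\epsilon)\to 0$. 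The heart of the argument is then a comparison principle between $\widetilde{u}^1$ and $\widetilde{u}^2$ at the level of the smooth (non-degenerate) equations they satisfy.

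The key step is the following: suppose $\widetilde{u}^1-\widetilde{u}^2$ attains its maximum over $\bar M$ at an interior point $p_0\in M$. At $p_0$ one has $\sqrt{-1}\partial\overline{\partial}(\widetilde{u}^1-\widetilde{u}^2)\leq 0$ and $\partial(\widetilde{u}^1-\widetilde{u}^2)=0$, so $\partial\widetilde{u}^1=\partial\widetilde{u}^2$ and $\bar\partial\widetilde{u}^1=\bar\partial\widetilde{u}^2$ there; since $\mathfrak{g}[u]=\tilde\chi+\sqrt{-1}\partial\bar\partial u+\sqrt{-1}(\partial u\wedge\overline{\eta^{1,0}}+\eta^{1,0}\wedge\bar\partial u)$ depends on $u$ only through $\partial u$, $\bar\partial u$ and $\partial\bar\partial u$, it follows that $\mathfrak{g}[\widetilde{u}^1](p_0)\leq \mathfrak{g}[\widetilde{u}^2](p_0)$ as Hermitian matrices. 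By ellipticity \eqref{elliptic} (monotonicity of $F$ in the matrix argument, which gives $F(A)\geq F(B)$ whenever $A\geq B$ with both eigenvalue vectors in $\Gamma$), we get $\psi(p_0)+\rho_\epsilon^1(p_0)=F(\mathfrak{g}[\widetilde{u}^1](p_0))\leq F(\mathfrak{g}[\widetilde{u}^2](p_0))=\psi(p_0)+\rho_\epsilon^2(p_0)$, hence $\rho_\epsilon^1(p_0)\leq\rho_\epsilon^2(p_0)\leq C(\epsilon)$. This gives no contradiction directly, but it localizes the issue: to run a genuine comparison one perturbs, replacing $\widetilde{u}^1$ by $\widetilde{u}^1+\theta(\underline u-\widetilde{u}^1)$ for small $\theta>0$ using a strictly admissible function, or more simply one argues that since $\rho_\epsilon^1,\rho_\epsilon^2$ are both small the interior maximum of $\widetilde{u}^1-\widetilde{u}^2$ cannot exceed its boundary maximum by more than a quantity controlled by $C(\epsilon)$ together with the modulus of ellipticity — i.e. one obtains $\sup_M(\widetilde{u}^1-\widetilde{u}^2)\leq \sup_{\partial M}(\widetilde{u}^1-\widetilde{u}^2)+o(1)$ as $\epsilon\to0$. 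Then $\sup_M(\widetilde{u}^1-\widetilde{u}^2)\leq\sup_{\partial M}(\varphi^1-\varphi^2)+o(1)$, and combining with $|u^j-\widetilde{u}^j|<\epsilon$ yields $\sup_M(u^1-u^2)\leq\sup_{\partial M}(\varphi^1-\varphi^2)+2\epsilon+o(1)$. Letting $\epsilon\to0$ and symmetrizing in the roles of $u^1,u^2$ gives $\sup_M|u^1-u^2|\leq\sup_{\partial M}|\varphi^1-\varphi^2|$.

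I expect the main obstacle to be the degeneracy handled in the previous paragraph: at an interior maximum of $\widetilde{u}^1-\widetilde{u}^2$ the naive inequality $\rho_\epsilon^1(p_0)\leq\rho_\epsilon^2(p_0)$ is too weak, so one must either (a) exploit the strict positivity $\rho_\epsilon^j>0$ to push the maximum to the boundary after a small shift, or (b) introduce an auxiliary perturbation by a strictly admissible subsolution (available here, e.g.\ the $\underline u$ of the hypotheses) to make one of the two equations strictly supersolution-like, so that the classical comparison principle for the non-degenerate equation applies cleanly. Option (b) is the robust one: replace $\widetilde{u}^2$ by $\widetilde{u}^2_s=(1-s)\widetilde{u}^2+s\,\underline u$, note $\lambda(\mathfrak{g}[\widetilde{u}^2_s])\in\Gamma$ by convexity of $\Gamma$ and that by concavity \eqref{concave} one has $F(\mathfrak{g}[\widetilde{u}^2_s])\geq(1-s)F(\mathfrak{g}[\widetilde{u}^2])+sF(\mathfrak{g}[\underline u])\geq\psi+\rho_\epsilon^2$ pointwise when $\underline u$ is a subsolution; then $\widetilde{u}^1-\widetilde{u}^2_s$ cannot have an interior max where both equations hold with $F(\mathfrak{g}[\widetilde{u}^1])=\psi+\rho_\epsilon^1<F(\mathfrak{g}[\widetilde{u}^2_s])$, provided $\rho_\epsilon^1$ is chosen (or arranged) smaller than the gap, and one finishes by letting $s\to0$ then $\epsilon\to0$. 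Apart from this point the argument is routine maximum-principle bookkeeping plus the uniform estimate $\sup_M|u|+\sup_{\partial M}|\nabla u|\leq C$ already established, so no new analytic input is needed.
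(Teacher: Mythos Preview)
Your overall strategy---approximate, apply a comparison principle to the $C^2$ approximations, pass to the limit---is the right one and is indeed Chen's. The gap is in the ``key step'': at an interior maximum of $\widetilde u^1-\widetilde u^2_s$ one obtains $\mathfrak g[\widetilde u^1]\leq \mathfrak g[\widetilde u^2_s]$ and hence $F(\mathfrak g[\widetilde u^1])\leq F(\mathfrak g[\widetilde u^2_s])$. For a contradiction you would need the \emph{equations} to force the \emph{opposite} strict inequality $F(\mathfrak g[\widetilde u^1])>F(\mathfrak g[\widetilde u^2_s])$. Your option (b) goes the wrong way: concavity gives only a \emph{lower} bound $F(\mathfrak g[\widetilde u^2_s])\geq (1-s)(\psi+\rho^2_\epsilon)+sF(\mathfrak g[\underline u])$, which is consistent with, not in contradiction to, the max-principle inequality. (Also, the theorem assumes no subsolution $\underline u$; none is needed.) The hand-wavy claim that the interior excess is controlled by ``$C(\epsilon)$ together with the modulus of ellipticity'' is not justified either---ellipticity may degenerate.

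The clean fix, which is exactly Chen's device, is to use \emph{different} approximation parameters for $u^1$ and $u^2$. Fix $\epsilon>0$ and take $\widetilde u^1$ with $|u^1-\widetilde u^1|<\epsilon$ and $F(\mathfrak g[\widetilde u^1])=\psi+\rho^1$, $0<\rho^1<C(\epsilon)$. Since $\rho^1$ is continuous and positive on the compact $\bar M$, $\delta:=\inf_M\rho^1>0$. Now choose $\epsilon'>0$ with $C(\epsilon')<\delta$ and take $\widetilde u^2$ with $|u^2-\widetilde u^2|<\epsilon'$ and $F(\mathfrak g[\widetilde u^2])=\psi+\rho^2$, $0<\rho^2<C(\epsilon')<\delta\leq\rho^1$. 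Then $F(\mathfrak g[\widetilde u^1])>F(\mathfrak g[\widetilde u^2])$ everywhere, so an interior maximum of $\widetilde u^1-\widetilde u^2$ is impossible (it would force $F(\mathfrak g[\widetilde u^1])\leq F(\mathfrak g[\widetilde u^2])$ there). Hence $\sup_M(\widetilde u^1-\widetilde u^2)=\sup_{\partial M}(\varphi^1-\varphi^2)$, and $u^1-u^2\leq \sup_{\partial M}(\varphi^1-\varphi^2)+\epsilon+\epsilon'$. Let $\epsilon'\to0$, then $\epsilon\to0$, and symmetrize.
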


The proof is almost parallel to that of Theorem 4 in \cite{Chen}. 
We thus omit it here. 
\begin{corollary}
\label{unique-weak-solution}
The weak $C^0$-solution to Dirichlet problem \eqref{mainequ} and \eqref{mainequ1} for degenerate equation 
 is unique provided the boundary data is fixed.
\end{corollary}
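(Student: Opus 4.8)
The plan is to obtain the Corollary as an immediate consequence of the comparison estimate in Theorem~\ref{weakc0comparison}. Indeed, if $u^1$ and $u^2$ are two weak $C^0$-solutions of \eqref{mainequ} and \eqref{mainequ1} in the sense of Definition~\ref{def-c0-weak} sharing the \emph{same} boundary data $\varphi$, then applying Theorem~\ref{weakc0comparison} with $\varphi^1=\varphi^2=\varphi$ gives
\[
\sup_M|u^1-u^2|\leq \sup_{\partial M}|\varphi^1-\varphi^2|=0,
\]
so $u^1\equiv u^2$ on $\bar M$. Thus the whole content is the comparison inequality, and all that really needs to be said is how that inequality is established; following Chen \cite{Chen}, I would proceed as follows.

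Fix $\epsilon>0$. By Definition~\ref{def-c0-weak} there are $C^2$-admissible functions $\widetilde{u}^{1},\widetilde{u}^{2}$ with $|u^i-\widetilde{u}^{i}|<\epsilon$ and
\[
F(\mathfrak{g}[\widetilde{u}^{i}])=\psi+\rho_\epsilon^{i}\ \text{ in }M,\qquad \widetilde{u}^{i}=\varphi^i\ \text{ on }\partial M,
\]
with $0<\rho_\epsilon^{i}<C(\epsilon)$ and $C(\epsilon)\to 0$. The key ingredient is the comparison principle for the \emph{non-degenerate} equations: if $a,b\in C^2(\bar M)$ are admissible, $F(\mathfrak{g}[a])>F(\mathfrak{g}[b])$ in $M$ and $a\leq b$ on $\partial M$, then $a\leq b$ in $M$. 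Its proof is standard — if $a-b$ had a positive interior maximum at $x_0$, then $\partial(a-b)(x_0)=0$ and $(a_{i\bar j}-b_{i\bar j})(x_0)\leq 0$; by the structure \eqref{def-chi} the gradient terms of $\mathfrak{g}[a]$ and $\mathfrak{g}[b]$ agree at $x_0$, so $\mathfrak{g}[a](x_0)\leq \mathfrak{g}[b](x_0)$ as Hermitian forms, and the ellipticity \eqref{elliptic} (equivalently, monotonicity of $F$ in the Hermitian matrix ordering) forces $F(\mathfrak{g}[a])(x_0)\leq F(\mathfrak{g}[b])(x_0)$, a contradiction. Applying this to suitable additive modifications of $\widetilde{u}^{1}$ and $\widetilde{u}^{2}$ — shifting one of them by the constant $\sup_{\partial M}|\varphi^1-\varphi^2|$ together with a small multiple of $\epsilon$ chosen to dominate the difference $\rho_\epsilon^{1}-\rho_\epsilon^{2}$ — and then letting $\epsilon\to 0$, one gets $\sup_M(u^1-u^2)\leq \sup_{\partial M}|\varphi^1-\varphi^2|$; interchanging the roles of $u^1$ and $u^2$ yields the estimate of Theorem~\ref{weakc0comparison}.

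The step I expect to be the main obstacle is the bookkeeping of the two approximation parameters: the perturbations $\rho_\epsilon^{1}$ and $\rho_\epsilon^{2}$ attached to the two weak solutions are not a priori comparable, so the comparison principle cannot be applied to $\widetilde{u}^{1}$ and $\widetilde{u}^{2}$ directly. The remedy, exactly as in \cite{Chen}, is that one only needs the right-hand sides to be strictly ordered after an $O(\epsilon)$-perturbation of one solution, which is legitimate since $C(\epsilon)\to 0$; the gradient terms in $\mathfrak{g}[\cdot]$ create no extra difficulty because, being linear in $\partial u$ (see \eqref{def-chi}), they drop out at any interior extremum. Hence the comparison estimate holds, and with it the uniqueness of the weak $C^0$-solution with prescribed boundary data.
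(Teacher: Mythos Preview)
Your deduction of the Corollary from Theorem~\ref{weakc0comparison} is exactly what the paper does: set $\varphi^1=\varphi^2$ and read off $u^1\equiv u^2$. The paper itself gives no proof of Theorem~\ref{weakc0comparison} beyond ``almost parallel to Theorem~4 in \cite{Chen}'', so your sketch is already more than the paper provides, and it is in the right spirit.

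One imprecision in your sketch is worth flagging. You write that one should shift $\widetilde{u}^i$ by a constant ``chosen to dominate the difference $\rho_\epsilon^{1}-\rho_\epsilon^{2}$''. But adding a constant to $\widetilde{u}^i$ leaves $\mathfrak{g}[\widetilde{u}^i]$ unchanged (both $\partial\overline{\partial}$ and the gradient terms in \eqref{def-chi} kill constants), so it cannot alter the right-hand side $\psi+\rho_\epsilon^i$ at all. The way this is actually handled---and this is what Chen does---is to use \emph{asymmetric} approximation levels: fix $\epsilon_1$ for $u^1$, note that $\rho_{\epsilon_1}^{1}$ is strictly positive on $\bar M$ (so has a positive infimum, since the approximating equation is nondegenerate), and then choose $\epsilon_2$ for $u^2$ so small that $C(\epsilon_2)<\inf_M\rho_{\epsilon_1}^{1}$. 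This forces $\psi+\rho_{\epsilon_1}^{1}>\psi+\rho_{\epsilon_2}^{2}$ pointwise, and the comparison principle (which you stated correctly, including the observation that the gradient terms vanish at an interior extremum) then yields $\widetilde{u}^{1}\le \widetilde{u}^{2}+\sup_{\partial M}|\varphi^1-\varphi^2|$. Passing to the limit $\epsilon_1\to 0$ gives the one-sided inequality; the other follows by symmetry.
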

In addition, Theorem \ref{weakc0comparison} and Corollary \ref{unique-weak-solution} are both valid for degenerate 
equation \eqref{mainequ-gauduchon-general*}, correspondingly.

\begin{appendix}

\section{Proof of Lemma \ref{yuan's-quantitative-lemma}}
  \label{appendix}

In this appendix we present the proof of Lemma \ref{yuan's-quantitative-lemma} 
 for  convenience and completeness. 

\vspace{1mm} 
We start with   $n=2$. In this case,
we can verify that  if $\mathrm{{\bf a}} \geq \frac{|a_1|^2}{ \epsilon}+ d_1$ then
$$0\leq d_1- \lambda_1=\lambda_2-\mathrm{{\bf a}} <\epsilon.$$
It is much more complicated for $n\geq 3$. To achieve our goal the author  proposed  in \cite{yuan2017}
the following lemma which states that, for the Hermitian matrix $\mathrm{A}$ (stated as in Lemma  \ref{yuan's-quantitative-lemma}), if $\mathrm{{\bf a}}$ satisfies a quadratic
 growth condition \eqref{guanjian2}, 
 then the eigenvalues  concentrate near certain  diagonal elements and the number of eigenvalues near the corresponding diagonal elements is stable, which
 enables us  to count  the eigenvalues near the diagonal elements
via a deformation argument.
\begin{lemma}
[\cite{yuan2017}]
\label{refinement}
Let $\mathrm{A}$ be a Hermitian $n$ by $n$ matrix defined as in Lemma  \ref{yuan's-quantitative-lemma}
with $d_1,\cdots, d_{n-1}, a_1,\cdots, a_{n-1}$ fixed, and with $\mathrm{{\bf a}}$ variable.
Denote
$\lambda=(\lambda_1,\cdots, \lambda_n)$ by the the eigenvalues of $\mathrm{A}$ with the order
$\lambda_1\leq \lambda_2 \leq\cdots \leq \lambda_n$.
Fix a positive constant $\epsilon$.
Suppose that the parameter $\mathrm{{\bf a}}$ in the matrix $\mathrm{A}$ satisfies  the 
 quadratic growth condition
\begin{equation}
\label{guanjian2}
\begin{aligned}
\mathrm{{\bf a}} \geq \frac{1}{\epsilon}\sum_{i=1}^{n-1} |a_i|^2+\sum_{i=1}^{n-1}  [d_i+ (n-2) |d_i|]+ (n-2)\epsilon.
\end{aligned}
\end{equation}
Then for   any $\lambda_{\alpha}$ $(1\leq \alpha\leq n-1)$ there exists  an  $d_{i_{\alpha}}$
with 
$1\leq i_{\alpha}\leq n-1$ such that
\begin{equation}
\label{meishi}
\begin{aligned}
 |\lambda_{\alpha}-d_{i_{\alpha}}|<\epsilon,
\end{aligned}
\end{equation}
\begin{equation}
\label{mei-23-shi}
\begin{aligned}
0\leq \lambda_{n}-\mathrm{{\bf a}} <(n-1)\epsilon + |\sum_{\alpha=1}^{n-1}(d_{\alpha}-d_{i_{\alpha}})|.
\end{aligned}
\end{equation}
\end{lemma}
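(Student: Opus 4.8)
\textbf{Proof plan for Lemma \ref{refinement}.}
The strategy is a deformation argument in the variable $\mathbf{a}$, exploiting the fact that the Hermitian matrix $\mathrm{A}$ depends on $\mathbf{a}$ only through the single diagonal entry in the last slot. First I would treat the quantitative eigenvalue concentration: by the Gershgorin circle theorem applied to $\mathrm{A}$ (or rather to a suitably rescaled version), together with the growth hypothesis \eqref{guanjian2}, one shows that $\mathbf{a}$ is so large that $\lambda_n$ is forced to lie in a disc centered near $\mathbf{a}$ of radius $O(\sum|a_i|^2/\mathbf{a}) + \sum|d_i|$, which under \eqref{guanjian2} gives the crude bound $\lambda_n \geq \mathbf{a}$; combined with $\operatorname{tr}\mathrm{A} = \mathbf{a} + \sum d_i$ this already constrains $\sum_{\alpha<n}\lambda_\alpha$ near $\sum d_i$. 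To get the sharper statement \eqref{meishi} that \emph{each} $\lambda_\alpha$ is within $\epsilon$ of \emph{some} $d_{i_\alpha}$, I would use the interlacing/continuity idea: consider the family $\mathrm{A}(t)$ obtained by replacing $\mathbf{a}$ with $\mathbf{a} + t$ for $t \geq 0$. As $t \to +\infty$, the top eigenvalue escapes to $+\infty$ while the remaining $n-1$ eigenvalues converge to the eigenvalues of the fixed $(n-1)\times(n-1)$ diagonal block $\operatorname{diag}(d_1,\dots,d_{n-1})$ — this is a standard perturbation fact (the Schur complement of the last entry tends to that block). So for $t$ large the bottom $n-1$ eigenvalues are within $\epsilon$ of $\{d_1,\dots,d_{n-1}\}$.

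The key point is then \emph{stability of the count}: one must show that along the whole ray $t \in [0,\infty)$ no eigenvalue $\lambda_\alpha$ ($\alpha \leq n-1$) ever leaves the union of $\epsilon$-neighborhoods $\bigcup_i \{|z-d_i|<\epsilon\}$, so that the labeling $\alpha \mapsto i_\alpha$ established at $t=+\infty$ is already valid at $t=0$. This requires a lower bound showing that no eigenvalue of $\mathrm{A}(t)$ can sit in the annular "forbidden" region between the clusters around the $d_i$'s while $\mathbf{a}+t$ satisfies \eqref{guanjian2}; here one uses that an eigenvalue $\mu$ away from all $d_i$ would, via the secular equation $\mathbf{a}+t - \mu = \sum_{i=1}^{n-1} \frac{|a_i|^2}{\mu - d_i}$, force $\mu$ to be comparable to $\mathbf{a}+t$, hence either in the top cluster or contradicting the separation. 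The growth condition \eqref{guanjian2} is calibrated precisely to make this quantitative, with the $(n-2)$ factors arising from summing over the $n-1$ off-diagonal contributions and the worst-case spread of the $d_i$.

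Finally, \eqref{mei-23-shi} follows by bookkeeping: since $\operatorname{tr}\mathrm{A} = \mathbf{a} + \sum_{\alpha=1}^{n-1} d_\alpha = \lambda_n + \sum_{\alpha=1}^{n-1}\lambda_\alpha$, we get $\lambda_n - \mathbf{a} = \sum_{\alpha=1}^{n-1}(d_\alpha - \lambda_\alpha) = \sum_{\alpha=1}^{n-1}(d_\alpha - d_{i_\alpha}) + \sum_{\alpha=1}^{n-1}(d_{i_\alpha} - \lambda_\alpha)$, and the second sum is bounded by $(n-1)\epsilon$ by \eqref{meishi} while $\lambda_n \geq \mathbf{a}$ from the first step gives the lower bound $0 \leq \lambda_n - \mathbf{a}$. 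The main obstacle I anticipate is making the stability-of-count step fully rigorous with the \emph{stated} constants: one must rule out eigenvalue collisions or escapes along the deformation uniformly, which is essentially a careful application of Rouch\'e's theorem to the characteristic polynomial (or the secular equation) on the boundary of the forbidden region, checking that the dominant-term estimate survives for all $t \geq 0$. Once Lemma \ref{refinement} is in hand, Lemma \ref{yuan's-quantitative-lemma} follows by specializing: the hypothesis \eqref{guanjian1-yuan} is a strengthening of \eqref{guanjian2} that additionally forces $i_\alpha = \alpha$ (the clusters become genuinely separated so the matching is the identity), giving $|d_\alpha - \lambda_\alpha| < \epsilon$ and hence $\sum(d_\alpha - d_{i_\alpha}) = 0$ in \eqref{mei-23-shi}, which yields the clean conclusion $0 \leq \lambda_n - \mathbf{a} < (n-1)\epsilon$.
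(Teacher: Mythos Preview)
Your ingredients are right but you have the architecture of the two lemmas reversed. The paper's proof of Lemma~\ref{refinement} uses \emph{no} deformation: it partitions $\{1,\dots,n-1\}$ into the ``good'' set $\mathbf{G}=\{\alpha:|\lambda_\alpha-d_i|<\epsilon\text{ for some }i\}$ and the ``bad'' set $\mathbf{B}$, observes that any $\lambda_\alpha$ with $\alpha\in\mathbf{B}$ satisfies (via the secular equation you wrote down) $|\lambda_\alpha-\mathbf{a}|\le\frac{1}{\epsilon}\sum|a_i|^2$, and then gets a direct contradiction from the trace: if $|\mathbf{B}|\ge1$ then $\operatorname{tr}\mathrm{A}\ge \lambda_n+\sum_{\mathbf{B}}\lambda_\alpha+\sum_{\mathbf{G}}\lambda_\alpha>\mathbf{a}+\big(\mathbf{a}-\tfrac{1}{\epsilon}\sum|a_i|^2\big)-(n-2)\big(\sum|d_i|+\epsilon\big)\ge \mathbf{a}+\sum d_i=\operatorname{tr}\mathrm{A}$, using \eqref{guanjian2}. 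Your deformation-in-$t$ wrapper is redundant here, and your ``stability of count'' step would anyway need exactly this trace contradiction at each $t$ to rule out a second eigenvalue escaping to the top cluster.

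Conversely, the deformation/continuity argument you describe \emph{is} how the paper passes from Lemma~\ref{refinement} to Lemma~\ref{yuan's-quantitative-lemma}, and your last paragraph mischaracterizes that step. It is not that the stronger hypothesis \eqref{guanjian1-yuan} separates the $d_i$'s so that $i_\alpha=\alpha$ is forced pointwise; the $d_i$'s can coincide. Instead, the paper applies Lemma~\ref{refinement} with $\epsilon$ replaced by $\epsilon/(2n-3)$, groups the overlapping intervals $I'_\alpha=(d_\alpha-\tfrac{\epsilon}{2n-3},d_\alpha+\tfrac{\epsilon}{2n-3})$ into connected components $J_1,\dots,J_m$, and uses Lemma~\ref{refinement} along the ray $\mathbf{a}\mapsto\mathbf{a}+t$ to show the eigenvalue count in each $J_k$ is continuous, hence constant, hence equal to its value at $t=+\infty$ (namely the number of $d_i$'s in $J_k$). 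The bound $|\lambda_\gamma-d_\gamma|<\epsilon$ then comes from the diameter of $J_k$. So the deformation lives in the proof of Lemma~\ref{yuan's-quantitative-lemma}, not of Lemma~\ref{refinement}.
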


\begin{proof}
Without loss of generality, we assume $\sum_{i=1}^{n-1} |a_i|^2>0$ and  $n\geq 3$
(otherwise we are done).
It is  well known that, for a Hermitian matrix,
 every diagonal element is less than or equals to the  largest eigenvalue.
 In particular,
 \begin{equation}
 \label{largest-eigen1}
 \lambda_n \geq \mathrm{{\bf a}}.
 \end{equation}

We only need to prove   \eqref {meishi}, since  \eqref{mei-23-shi} is a consequence of  \eqref{meishi}, \eqref{largest-eigen1}  and
\begin{equation}
\label{trace}
 \sum_{i=1}^{n}\lambda_i=\mbox{tr}(\mathrm{A})=\sum_{\alpha=1}^{n-1} d_{\alpha}+\mathrm{{\bf a}}.
 \end{equation}

 Let's denote   $I=\{1,2,\cdots, n-1\}$. We divide the index set   $I$ into two subsets  by
$${\bf B}=\{\alpha\in I: |\lambda_{\alpha}-d_{i}|\geq \epsilon, \mbox{   }\forall i\in I\} $$
and $ {\bf G}=I\setminus {\bf B}=\{\alpha\in I: \mbox{There exists an $i\in I$ such that } |\lambda_{\alpha}-d_{i}| <\epsilon\}.$

\vspace{1mm} 
To complete the proof we only need to prove ${\bf G}=I$ or equivalently ${\bf B}=\emptyset$.
    It is easy to see that  for any $\alpha\in {\bf G}$, one has
   \begin{equation}
   \label{yuan-lemma-proof1}
   \begin{aligned}
   |\lambda_\alpha|< \sum_{i=1}^{n-1}|d_i| + \epsilon.
   \end{aligned}
   \end{equation}

   Fix $ \alpha\in {\bf B}$,  we are going to give the estimate for $\lambda_\alpha$.
The eigenvalue $\lambda_\alpha$ satisfies
\begin{equation}
\label{characteristicpolynomial}
\begin{aligned}
(\lambda_{\alpha} -\mathrm{{\bf a}})\prod_{i=1}^{n-1} (\lambda_{\alpha}-d_i)
= \sum_{i=1}^{n-1} (|a_{i}|^2 \prod_{j\neq i} (\lambda_{\alpha}-d_{j})).
\end{aligned}
\end{equation}
By the definition of ${\bf B}$, for $\alpha\in {\bf B}$, one then has $|\lambda_{\alpha}-d_i|\geq \epsilon$ for any $i\in I$.
Therefore
\begin{equation}
\begin{aligned}
|\lambda_{\alpha}-\mathrm{{\bf a}} |
\leq \sum_{i=1}^{n-1} \frac{|a_i|^2}{|\lambda_{\alpha}-d_{i}|}\leq
\frac{1}{\epsilon}\sum_{i=1}^{n-1} |a_i|^2, \mbox{ if } \alpha\in {\bf B}.
\end{aligned}
\end{equation}
Hence,  for $\alpha\in {\bf B}$, we obtain
\begin{equation}
\label{yuan-lemma-proof2}
\begin{aligned}
 \lambda_\alpha \geq \mathrm{{\bf a}}-\frac{1}{\epsilon}\sum_{i=1}^{n-1} |a_i|^2.
\end{aligned}
\end{equation}

For a set ${\bf S}$, we denote $|{\bf S}|$ the  cardinality of ${\bf S}$.
We shall use proof by contradiction to prove  ${\bf B}=\emptyset$.
Assume ${\bf B}\neq \emptyset$.
Then $|{\bf B}|\geq 1$, and so $|{\bf G}|=n-1-|{\bf B}|\leq n-2$. 

\vspace{1mm} 
We compute   the trace of the matrix $\mathrm{A}$  as follows:
\begin{equation}
\begin{aligned}
\mbox{tr}(\mathrm{A})=\,&
\lambda_n+
\sum_{\alpha\in {\bf B}}\lambda_{\alpha} + \sum_{\alpha\in  {\bf G}}\lambda_{\alpha}\\
> \,&
\lambda_n+
|{\bf B}| (\mathrm{{\bf a}}-\frac{1}{\epsilon}\sum_{i=1}^{n-1} |a_i|^2 )-|{\bf G}| (\sum_{i=1}^{n-1}|d_i|+\epsilon ) \\
\geq \,&
 2\mathrm{{\bf a}}-\frac{1}{\epsilon}\sum_{i=1}^{n-1} |a_i|^2 -(n-2) (\sum_{i=1}^{n-1}|d_i|+\epsilon )
\\
\geq \,& \sum_{i=1}^{n-1}d_i +\mathrm{{\bf a}}= \mbox{tr}(\mathrm{A}),
\end{aligned}
\end{equation}
where we use  \eqref{guanjian2},   \eqref{largest-eigen1}, \eqref{yuan-lemma-proof1} and \eqref{yuan-lemma-proof2}.
This is a contradiction. So  ${\bf B}=\emptyset$.
\end{proof}


\begin{proof}
[Proof of Lemma  \ref{yuan's-quantitative-lemma}]
The proof is based on Lemma  \ref{refinement} and a deformation argument.
 Fix $d_1,\cdots, d_{n-1}, a_1,\cdots, a_{n-1}$, and we let $\mathrm{{\bf a}}$ be variable.
 Denote $\lambda_1(\mathrm{{\bf a}}), \cdots, \lambda_n(\mathrm{{\bf a}})$ by
 the eigenvalues of $\mathrm{A}$. 
  Clearly,  the eigenvalues $\lambda_i(\mathrm{{\bf a}})$ can be viewed as  continuous functions 
  of $\mathrm{{\bf a}}$.
  For simplicity, we write $\lambda_i=\lambda_i(\mathrm{{\bf a}})$.  
\vspace{1mm}
Without loss of generality, we may assume $n\geq 3$, $\sum_{i=1}^{n-1} |a_i|^2>0$,
 $d_1\leq d_2\leq \cdots \leq d_{n-1}$ and 
 $\lambda_1\leq \lambda_2 \leq \cdots \lambda_{n-1}\leq \lambda_n.$

\vspace{1mm} 
Fix $\epsilon>0$.  Let $I'_\alpha=(d_\alpha-\frac{\epsilon}{2n-3}, d_\alpha+\frac{\epsilon}{2n-3})$ and
$P_0'=\frac{2n-3}{\epsilon}\sum_{i=1}^{n-1} |a_i|^2+ (n-1)\sum_{i=1}^{n-1} |d_i|+ \frac{(n-2)\epsilon}{2n-3}.$
 In what follows we assume  $\mathrm{{\bf a}}\geq P_0'$ (i.e. \eqref{guanjian1-yuan} holds).

\vspace{1mm}  
The connected components of $\bigcup_{\alpha=1}^{n-1} I_{\alpha}'$ are as in the following:
$$J_{1}=\bigcup_{\alpha=1}^{j_1} I_\alpha',
J_2=\bigcup_{\alpha=j_1+1}^{j_2} I_\alpha'  \cdots, J_i =\bigcup_{\alpha=j_{i-1}+1}^{j_i} I_\alpha' \cdots,
 J_{m} =\bigcup_{\alpha=j_{m-1}+1}^{n-1} I_\alpha',$$
 (here we denote $j_0=0$ and $j_m=n-1$).
 Moreover, $J_i\bigcap J_k=\emptyset, \mbox{ for }   1\leq i<k\leq m$.

\vspace{1mm} 
Let $\mathrm{{\bf \widetilde{Card}}}_k:[P_0',+\infty)\rightarrow \mathbb{N}$
be the function that counts the eigenvalues which lie in $J_k$.
   (When the eigenvalues are not distinct,  the function $\mathrm{{\bf \widetilde{Card}}}_k$ denotes  the summation of all the multiplicities of  distinct eigenvalues which
 lie in $J_k$).
  This function measures the number of the  eigenvalues which lie in $J_k$.
  
 \vspace{1mm} 
  The crucial ingredient is that  Lemma \ref{refinement}  yields the continuity of   $\mathrm{{\bf \widetilde{Card}}}_i(\mathrm{{\bf a}})$ if
   $\mathrm{{\bf a}}\geq P_0'$. More explicitly,
by  Lemma \ref{refinement}  
we conclude that
 if   $\mathrm{{\bf a}}$ satisfies 
  \eqref{guanjian1-yuan} then
   \begin{equation}
  \label{yuan-lemma-proof5}
  \begin{aligned}
    \lambda_\alpha \in \bigcup_{i=1}^{n-1} I_{i}'=\bigcup_{i=1}^m J_{i} \mbox{ for } 1\leq\alpha<n, \mbox{ and }
   \lambda_n \in \mathbb{R}\setminus (\bigcup_{k=1}^{n-1} \overline{I_k'})
   =\mathbb{R}\setminus (\bigcup_{i=1}^m \overline{J_i}).
  \end{aligned}
  \end{equation}
Hence,  $\mathrm{{\bf \widetilde{Card}}}_i(\mathrm{{\bf a}})$ is a continuous function
of the variable $\mathrm{{\bf a}}$. So it is a constant.
 Together with  the line of the proof Lemma 1.2  of Caffarelli-Nirenberg-Spruck \cite{CNS3}
in the setting of Hermitian matrices  we see
 that $ \mathrm{{\bf \widetilde{Card}}}_i(\mathrm{{\bf a}}) =j_i-j_{i-1}$ for sufficiently large $\mathrm{{\bf a}}$.
The constant of $ \mathrm{{\bf \widetilde{Card}}}_i$  therefore follows that
$$ \mathrm{{\bf \widetilde{Card}}}_i(\mathrm{{\bf a}})=j_i-j_{i-1}.$$
We thus know that the   $(j_i-j_{i-1})$ eigenvalues
$\lambda_{j_{i-1}+1}, \lambda_{j_{i-1}+2}, \cdots, \lambda_{j_i}$
lie in the connected component $J_{i}$.
Thus, for any $j_{i-1}+1\leq \gamma \leq j_i$,  we have $I_\gamma'\subset J_i$ and  $\lambda_\gamma$
   lies in the connected component $J_{i}$.
Therefore,
$$|\lambda_\gamma-d_\gamma| < \frac{(2(j_i-j_{i-1})-1) \epsilon}{2n-3}\leq \epsilon.$$
Here we also use the fact that $d_\gamma$ is midpoint of  $I_\gamma'$ and every $J_i\subset \mathbb{R}$ is an open subset.

\vspace{1mm} 
Roughly speaking, for each fixed index $1\leq i\leq n-1$, if the eigenvalue $\lambda_i(P_0')$ lies in $J_{\alpha}$ for some $\alpha$, 
then  Lemma \ref{refinement} implies that, for any ${\bf a}>P_0'$, the corresponding eigenvalue  $\lambda_i({\bf a})$ lies in the same  interval $J_{\alpha}$.
Adapting the outline of proof the Lemma 1.2  of 
\cite{CNS3} to our context,
 we get the asymptotic behavior as $\mathrm{\bf a}$ goes to infinity.
\end{proof}

\end{appendix}


\bigskip

\textit{Added in proof}:  
 As the present work neared completion,
 I learned of a preprint  \cite{TW123} by V. Tosatti and B. Weinkove  which
considers  complex 
Monge-Amp\`ere equation on closed Hermitian manifolds as a special case of \eqref{mainequ}.  
  I append some further discussion in this version.
Comparing with previous version, 
this new version 
 discusses the Monge-Amp\`ere equation for $(n-1)$-PSH functions associated with Gauduchon's conjecture 
 (the second part of this paper), 
 and as a result the present paper was posted later.



\bigskip

\noindent


\small
\bibliographystyle{plain}

\end{document}